\newtheorem{theorem}{Theorem}[section]
\newtheorem{lemma}[theorem]{Lemma}
\newtheorem{coroll}[theorem]{Corollary}
\def\proofbox{\begin{picture}(6.5,6.5)
\put(0,0){\framebox(6.5,6.5){}}\end{picture}}
\newenvironment{proof}{\noindent{\it Proof.\quad}}{\hfill\proofbox}
\begin{document}

\def\CC{\mathcal C} \def\Id{{\rm Id}}

\title{Superinjective Simplicial Maps of the Two-sided Curve Complexes on Nonorientable Surfaces}
\author{Elmas Irmak and Luis Paris}

\maketitle

\renewcommand{\sectionmark}[1]{\markright{\thesection. #1}}

\thispagestyle{empty}
\maketitle
\begin{abstract} Let $N$ be a compact, connected, nonorientable surface of genus $g$ with $n$ boundary components with $g \geq 5$,
$n \geq 0$. Let $\mathcal{T}(N)$ be the two-sided curve complex of $N$. If $\lambda :\mathcal{T}(N) \rightarrow \mathcal{T}(N)$ is
a superinjective simplicial map, then there exists a homeomorphism $h : N \rightarrow N$ unique up to isotopy such that $H(\alpha) = \lambda(\alpha)$
for every vertex $\alpha$ in $\mathcal{T}(N)$ where $H=[h]$.
\end{abstract}

\maketitle

{\small Key words: Mapping class groups, simplicial maps, nonorientable surfaces

MSC2010: 57M99, 20F38}

\section{Introduction}
Let $N$ be a compact, connected, nonorientable surface of genus $g$ with $n$ boundary components. Mapping class group,
$Mod_N$, of $N$ is defined to be the group of isotopy classes of all self-homeomorphisms of $N$. The \textit{complex of curves},
$\mathcal{C}(N)$, on $N$ is an abstract simplicial complex defined as follows: A simple closed curve on $N$ is called nontrivial
if it does not bound a disk, a M\"{o}bius band, and it is not isotopic to a boundary component of $N$. The vertex set of $\mathcal{C}(N)$
is the set of isotopy classes of nontrivial simple closed curves on $N$. A set of vertices forms a simplex in $\mathcal{C}(N)$ if
they can be represented by pairwise disjoint simple closed curves. Let $\mathcal{T}(N)$ be the subcomplex of $\mathcal{C}(N)$ which
is spanned by all 2-sided curves on $N$. The geometric intersection number $i([a], [b])$ of two vertices $[a]$, $[b]$ in
$\mathcal{T}(N)$ is the minimum number of points of $x \cap y$ where $x \in [a]$ and $y \in [b]$. A simplicial map
$\lambda : \mathcal{T}(N) \rightarrow \mathcal{T}(N)$ is called superinjective if the following condition holds:
if $\alpha, \beta$ are two vertices in $\mathcal{T}(N)$ such that $i(\alpha,\beta) \neq 0$, then
$i(\lambda(\alpha),\lambda(\beta)) \neq 0$.

The main result is the following:

\begin{theorem} \label{A} Let $N$ be a compact, connected, nonorientable surface of genus $g$ with $n$ boundary components with $g \geq 5$,
$n \geq 0$. If $\lambda :\mathcal{T}(N) \rightarrow \mathcal{T}(N)$ is a superinjective simplicial map, then there exists
a homeomorphism $h : N \rightarrow N$ unique up to isotopy such that $H(\alpha) = \lambda(\alpha)$
for every vertex $\alpha$ in $\mathcal{T}(N)$ where $H=[h]$.\end{theorem}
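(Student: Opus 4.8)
The plan is to follow the now-standard route from combinatorial rigidity of a curve complex to a realizing homeomorphism, in the form developed by Ivanov and by Irmak for superinjective maps, adapted here to the two-sided complex of a nonorientable surface. First I would record the elementary consequences of the hypotheses: since $\lambda$ is simplicial, disjoint curves map to disjoint curves, so together with superinjectivity one has $i(\alpha,\beta)=0$ if and only if $i(\lambda(\alpha),\lambda(\beta))=0$; in particular $\lambda$ is injective on vertices and sends each $k$-simplex onto a $k$-simplex. Next I would give a purely combinatorial characterization of the relation $i(\alpha,\beta)=1$ in $\mathcal{T}(N)$ --- recognizing, from the pattern of vertices disjoint from or meeting both $\alpha$ and $\beta$, that a regular neighborhood of $\alpha\cup\beta$ is a one-holed torus or a one-holed Klein bottle (the only possibilities when $\alpha$ and $\beta$ are two-sided and cross once) --- and deduce that $\lambda$ preserves the relation $i=1$. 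These two facts are the raw material for everything that follows.

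The heart of the argument is to show that $\lambda$ preserves the topological type of a vertex and of a pants decomposition. Here one uses that the link of a vertex $\alpha$ in $\mathcal{T}(N)$ is isomorphic to $\mathcal{T}(N_\alpha)$, where $N_\alpha$ is $N$ cut along $\alpha$ (a join of two such complexes when $\alpha$ separates), and that $\lambda$ restricts to a superinjective map on each link; this makes available an induction on the complexity of $N$, with the hypothesis $g\ge 5$ guaranteeing that the base cases and the combinatorial characterizations are valid. Combinatorially one detects and transports across $\lambda$: whether a two-sided curve is separating; for a separating curve, the homeomorphism type of the ordered pair of complementary pieces --- their genera, their numbers of boundary components, and, crucially in the nonorientable setting, their orientability --- in particular the curves cutting off a pair of pants and those cutting off a twice-holed projective plane; and for a non-separating curve, whether $N_\alpha$ is orientable. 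One also shows that $\lambda$ carries a maximal simplex of a given topological type to a maximal simplex of the same type; note that on a nonorientable surface a maximal simplex need not be recognizable by its cardinality alone, since crosscaps can be traded against the genus produced by gluings, so the type must be tracked explicitly throughout.

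With this dictionary in place, I would fix a maximal simplex $P$ of a convenient type --- a pants decomposition of $N$ by two-sided curves whose complementary pieces are pairs of pants together with the least possible number of twice-holed projective planes --- so that $\lambda(P)$ is a maximal simplex of the same type, and then apply the change-of-coordinates principle for nonorientable surfaces to obtain a homeomorphism $h_0\colon N\to N$ with $h_0(P)=\lambda(P)$ respecting the pieces. Enlarging $P$ to a finite collection $\Gamma$ of two-sided curves that fills $N$ --- adjoining curves dual to those of $P$ so as to pin down the gluing and twisting parameters and to break the symmetries of the complementary pieces --- and using that $\lambda$ preserves $i=0$ and $i=1$, I would correct $h_0$ by Dehn twists and crosscap slides supported near $P$ so that $[h_0]$ and $\lambda$ agree on all of $\Gamma$. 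Since $\Gamma$ fills $N$ and $[h_0]^{-1}\circ\lambda$ then fixes $\Gamma$ while still preserving disjointness and transverse-once intersection, a standard argument --- reducing to the rigidity of superinjective maps of the small subsurfaces filled by a pants curve together with a transverse dual --- forces it to fix every vertex of $\mathcal{T}(N)$, so $H:=[h_0]$ satisfies $H(\alpha)=\lambda(\alpha)$ for all $\alpha$. For uniqueness, if $h_1$ and $h_2$ both realize $\lambda$ then $h_2^{-1}h_1$ fixes every vertex of $\mathcal{T}(N)$, and since for $g\ge 5$ the action of $Mod_N$ on isotopy classes of two-sided curves is faithful, we get $[h_1]=[h_2]$.

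The step I expect to be the main obstacle is the preservation of topological type in the second paragraph: one must produce combinatorial invariants that see the orientability of complementary subsurfaces and that correctly locate the curves sitting next to crosscaps, even though the one-sided curves --- which a homeomorphism also permutes --- are entirely absent from $\mathcal{T}(N)$. Carrying this information faithfully through the induction on links, together with a careful treatment of the small-genus base cases and of the boundary components, is what forces the bound $g\ge 5$ and constitutes the technical core of the proof.
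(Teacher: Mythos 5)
Your first two stages --- showing that $\lambda$ preserves $i=0$ and $i=1$, detects enough topological types, and then agrees with some homeomorphism $h_0$ on a pants decomposition enlarged by dual curves --- run parallel to Section 2 of the paper, although the paper obtains the type-recognition from explicit curve configurations and adjacency with respect to top-dimensional $P$-$S$ decompositions rather than from an induction on links. Be aware that the link-of-a-vertex induction you sketch is delicate here: to compare the link of $\alpha$ with the link of $\lambda(\alpha)$ as complexes of cut surfaces you must already know that $N_\alpha$ and $N_{\lambda(\alpha)}$ are homeomorphic, which is part of what is being proved, and maximal simplices of $\mathcal{T}(N)$ do not all have the same dimension, so the usual dimension bookkeeping from the orientable case does not transfer verbatim. (Also, two two-sided curves meeting once always have an orientable neighborhood, so the one-holed Klein bottle alternative you allow does not occur.)

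The genuine gap is the final step. From ``$\mu:=[h_0]^{-1}\circ\lambda$ fixes a filling collection $\Gamma$ and preserves $i=0$ and $i=1$'' you conclude by ``a standard argument'' that $\mu$ fixes every vertex. No such off-the-shelf statement exists for $\mathcal{T}(N)$ (nor, in this form, for the orientable curve complex): fixing a filling set vertexwise does not formally constrain $\mu$ on an arbitrary vertex, and the proposed reduction to ``rigidity of superinjective maps of the small subsurfaces filled by a pants curve together with a transverse dual'' is not meaningful, since superinjectivity does not restrict to subsurfaces (images of curves lying in the subsurface need not stay in it) and the two-sided complexes of such small pieces are essentially discrete. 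Supplying this step is the bulk of the paper: Section 3 enlarges the initial configuration to the families $\mathcal{B}_0,\mathcal{B}_1,\mathcal{B}_2$, proves that explicit finite configurations have trivial stabilizer in $Mod_N$ (Lemmas \ref{abc} and \ref{abcd}, via cutting arguments and Epstein's theorem), takes an explicit generating set $G$ of $Mod_N$ containing, besides Dehn twists and half-twists, the crosscap slides and boundary slides special to nonorientable surfaces (Korkmaz), verifies $\lambda=h$ on $L_f\cup f(L_f)$ for a trivial-stabilizer set $L_f$ attached to each generator $f$ (a long list of ``unique curve with prescribed disjointness/intersection'' identifications), and only then propagates the agreement to all of $\mathcal{T}(N)$ by the Aramayona--Leininger-style exhaustion $\mathcal{X}_n=\mathcal{X}_{n-1}\cup\bigcup_{f\in G}\bigl(f(\mathcal{X}_{n-1})\cup f^{-1}(\mathcal{X}_{n-1})\bigr)$. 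Either this mechanism, or a full Ivanov/Irmak-style induction on intersection numbers with the fixed decomposition, has to be carried out; your plan currently contains neither. Your uniqueness argument is fine in substance, but note that faithfulness of the $Mod_N$-action on two-sided curves for $g\geq 5$ is itself exactly the trivial-stabilizer statement the paper proves rather than a fact one can simply quote.
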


An application of our theorem will be given by the authors in the proof of the following theorem given in \cite{IrP}.

\begin{theorem} (Irmak-Paris) \label{other-result} Let $N$ be a compact, connected, nonorientable surface of genus $g$ with $n$ boundary
components with $g \geq 5$, $n \geq 0$. Let $K$ be a finite index subgroup of $Mod_N$. If $f: K \to Mod_N$ is an injective homomorphism,
then $f$ is induced by a homeomorphism of $N$, (i.e. for some $g \in Mod_N$, $f(k) = gkg^{-1}$ for all $k \in K$).\end{theorem}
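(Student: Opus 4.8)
The plan is to deduce Theorem~\ref{other-result} from Theorem~\ref{A}, following the scheme of Ivanov and Irmak: first promote the injective homomorphism $f$ to a superinjective simplicial self-map $\lambda$ of $\mathcal{T}(N)$, realize $\lambda$ by a homeomorphism $h$ using Theorem~\ref{A}, and then check that $f$ is conjugation by $H=[h]$. To begin, fix a finite-index normal subgroup $\Gamma\trianglelefteq Mod_N$ all of whose elements are pure (such subgroups exist, e.g.\ a suitable congruence subgroup) and set $K'=K\cap\Gamma$; then $K'$ has finite index in $Mod_N$, and $K'$ is normal in $K$ because $\Gamma$ is normal in $Mod_N$. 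For every two-sided curve $a$ on $N$ the cyclic group generated by the Dehn twist $t_a$ meets $K'$ in a finite-index subgroup, so $t_a^n\in K'$ for some $n\geq 1$, and every such power is pure of infinite order. (The twist $t_a$ depends on a choice of co-orientation of $a$, but this is harmless since we only use powers of twists and the curves underlying them.)

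The crucial step, and the main obstacle, is to show that after replacing $n$ by a suitable multiple, $f(t_a^n)$ is a nonzero power of a Dehn twist about a nontrivial two-sided curve, which I call $t_{\lambda(a)}$. The idea, following Ivanov and Irmak, is to isolate a purely group-theoretic property $(\star)$ of an element of $Mod_N$ --- formulated through the structure of its centralizer and of the abelian subgroups of $Mod_N$ (ordered by inclusion) containing it, for instance: the element is reducible, of infinite order, and is not a product of two commuting mapping classes with disjoint nonempty supports --- such that, for $g\geq 5$, an element has $(\star)$ precisely when some positive power of it is a nonzero power of a Dehn twist about a nontrivial two-sided curve. Property $(\star)$ is invariant under conjugation and under passing to positive powers, and, being phrased entirely in terms of commutation and the abelian subgroups, it is preserved by any injective homomorphism between finite-index subgroups of $Mod_N$; since $t_a^n$ evidently has $(\star)$, so does $f(t_a^n)$, and a short further argument (comparing powers) upgrades ``some power of $f(t_a^n)$ is a twist power'' to the assertion above. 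This is where the nonorientable case is substantially harder than the orientable one --- one must analyze centralizers and abelian subgroups in the presence of one-sided curves and crosscap slides --- and it is where $g\geq 5$ is truly used, both for that analysis and so that Theorem~\ref{A} applies.

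Now set $\lambda(a)$ to be the curve just produced. This is well defined: if $t_a^n,t_a^{n'}\in K'$ both have twist-power images, then $f(t_a^n)$ and $f(t_a^{n'})$ commute and generate an infinite cyclic group, so the two curves must coincide (distinct disjoint curves give twists generating a rank-$2$ abelian group). For the simplicial and superinjective properties, use the standard fact that, for nonzero exponents, $t_c^p$ and $t_d^q$ commute if and only if $i(c,d)=0$: when $i(a,b)=0$, $t_a^n$ and $t_b^n$ commute, hence so do their images, which are powers of $t_{\lambda(a)}$ and $t_{\lambda(b)}$, whence $i(\lambda(a),\lambda(b))=0$; when $i(a,b)\neq 0$, $t_a^n$ and $t_b^n$ do not commute, hence neither do their images, whence $i(\lambda(a),\lambda(b))\neq 0$. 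Thus $\lambda$ carries simplices to simplices and is superinjective, so by Theorem~\ref{A} there is a homeomorphism $h\colon N\to N$, unique up to isotopy, with $H(a)=\lambda(a)$ for every two-sided curve $a$, where $H=[h]\in Mod_N$.

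Finally, let $k\in K$ and let $a$ be a two-sided curve, with $n\geq 1$ chosen (as in the obstacle step) so that $t_a^n\in K'$ and $f(t_a^n)=t_{\lambda(a)}^{e}$ with $e\neq 0$. Then $k\,t_a^n\,k^{-1}=t_{k(a)}^{\pm n}$ lies in $K'$ since $K'$ is normal in $K$; applying $f$ gives $t_{f(k)(\lambda(a))}^{e}=f(k)\,t_{\lambda(a)}^{e}\,f(k)^{-1}=f(t_{k(a)}^{\pm n})$, and the last term, being the $f$-image of a power of $t_{k(a)}$ lying in $K'$, has a nonzero power equal to a nonzero power of $t_{\lambda(k(a))}$. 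Comparing, a nonzero power of $t_{f(k)(\lambda(a))}$ equals a nonzero power of $t_{\lambda(k(a))}$, which forces $f(k)(\lambda(a))=\lambda(k(a))$. Since $\lambda=H$ on two-sided curves, this reads $f(k)(H(a))=H(k(a))=(HkH^{-1})(H(a))$ for every two-sided $a$, so $f(k)$ and $HkH^{-1}$ act identically on every isotopy class of two-sided curve. But the action of $Mod_N$ on $\mathcal{T}(N)$ is faithful for $g\geq 5$ --- by the uniqueness in Theorem~\ref{A} applied to $\lambda=\Id$, no nonidentity mapping class fixes every two-sided curve --- so $f(k)=HkH^{-1}$; as $k\in K$ was arbitrary, $f$ is conjugation by $H=[h]$, i.e.\ induced by the homeomorphism $h$. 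Moreover $H$ is the unique such element: if $gkg^{-1}=f(k)$ for all $k\in K$, then taking $k=t_a^n$ forces $g(a)=\lambda(a)$ for every two-sided $a$, so $g$ induces $\lambda$ and $g=H$ by Theorem~\ref{A}. The one genuinely hard input is the obstacle step above, namely the group-theoretic detection of powers of Dehn twists inside $Mod_N$ in the nonorientable setting.
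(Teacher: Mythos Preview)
The paper does not actually prove Theorem~\ref{other-result}. It is stated only as an announcement: the sentence immediately preceding it says ``An application of our theorem will be given by the authors in the proof of the following theorem given in \cite{IrP},'' and \cite{IrP} is listed as ``work in progress.'' So there is no proof in the paper to compare your proposal against.

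That said, your outline is exactly the expected strategy and almost certainly the one carried out in \cite{IrP}: build a superinjective self-map of $\mathcal{T}(N)$ from the injective homomorphism, invoke Theorem~\ref{A}, and conclude by faithfulness of the $Mod_N$-action on $\mathcal{T}(N)$. Your reduction steps (well-definedness of $\lambda$, its simpliciality and superinjectivity, and the final conjugation argument) are standard and correctly stated; in particular you handle the sign in $k\,t_a^n\,k^{-1}=t_{k(a)}^{\pm n}$, which matters on nonorientable surfaces.

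You are also right to flag the one genuine obstacle: the purely algebraic characterization of (powers of) Dehn twists about two-sided curves inside $Mod_N$. You describe a plausible property $(\star)$ but do not prove it suffices, and you acknowledge this. That step is the entire substance of \cite{IrP} and is not in the present paper; indeed, the introduction here explicitly says the authors do not know how to ``detect'' one-sided curves algebraically, which is why they pass to $\mathcal{T}(N)$ rather than $\mathcal{C}(N)$. So your proposal is a correct high-level reduction to Theorem~\ref{A} plus a black box whose contents lie outside this paper.
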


Here are some known results for compact, connected, orientable surfaces: Ivanov proved that any automorphism of complex of curves
is induced by a homeomorphism of the surface if the genus is at least two, \cite{Iv1}. By using this result he gave the classification
of isomorphisms between two finite index subgroups of mapping class groups, \cite{Iv1}. Korkmaz and Luo extended Ivanov's results to
small genus, see \cite{K1}, \cite{L}. Ivanov-McCarthy proved that injective homomorphisms between mapping class groups are induced
by homeomorphisms \cite{IvMc}. The first author proved that superinjective simplicial maps of complex of curves are induced by
homeomorphisms when the genus is at least two. By using this result she gave the classification of injective homomorphisms from
finite index subgroups of the extended mapping class group to the whole group for genus at least two, \cite{Ir1},
\cite{Ir2}, \cite{Ir3}. Behrstock-Margalit and Bell-Margalit proved these results for small genus cases \cite{BhM}, \cite{BeM}.
Shackleton proved that locally injective simplicial maps of complex of curves are induced by homeomorphisms and obtained similar
results (strong local co-Hopfian results) for mapping class groups in \cite{Sh}. Aramayona-Leininger proved the existence of
finite rigid sets for locally injective simplicial maps, \cite{AL1}. They proved that there is an exhaustion of the curve
complex by a sequence of finite rigid sets, \cite{AL2}.

On compact, connected, nonorientable surfaces Atalan-Korkmaz proved that any automorphism of complex of curves
is induced by a homeomorphism of the surface \cite{AK}. The first author proved that any injective simplicial map of the whole complex of
curves is induced by a homeomorphism of the surface \cite{Ir4}. However, we do not know if the results of \cite{Ir4} can be used to prove
Theorem \ref{other-result}, as we do not know how to ``detect" one-sided curves with the mapping class group.

\section{Superinjective simplicial maps on $\mathcal{T}(N)$}

In this section we assume that $\lambda : \mathcal{T}(N) \rightarrow \mathcal{T}(N)$ is a superinjective simplicial map.
We will prove some properties of $\lambda$. First we will give some definitions.

A set $P$ of pairwise disjoint, nonisotopic, nontrivial 2-sided simple closed curves on $N$ is called a $P$-$S$ decomposition, if each
component of the surface $N_P$, obtained by cutting $N$ along $P$, is a pair of pants or a projective plane with two boundary components.
Let $a$ and $b$ be two distinct elements in a $P$-$S$ decomposition $P$. Then $a$ is called {\it adjacent} to $b$ w.r.t. $P$ iff
there exists a pair of pants in $P$ which has $a$ and $b$ on its boundary or there exists a projective plane with two boundary components
having $a$ and $b$ on its boundary. Let $P$ be a $P$-$S$ decomposition of $N$. Let $[P]$ be the set of isotopy classes of elements
of $P$. Note that $[P]$ is a maximal simplex of $\mathcal{T}(N)$. Every maximal simplex $\sigma$ of $\mathcal{T}(N)$ is equal to
$[P]$ for some $P$-$S$ decomposition $P$ of $N$. There are different dimensional maximal simplices in $\mathcal{T}(N)$
(see Figure \ref{Fig0}). In the figure we see cross signs. This means that the interiors of the disks with cross signs inside
are removed and the antipodal points on the resulting boundary components are identified.

\begin{figure}
\begin{center}
\epsfxsize=2.7in \epsfbox{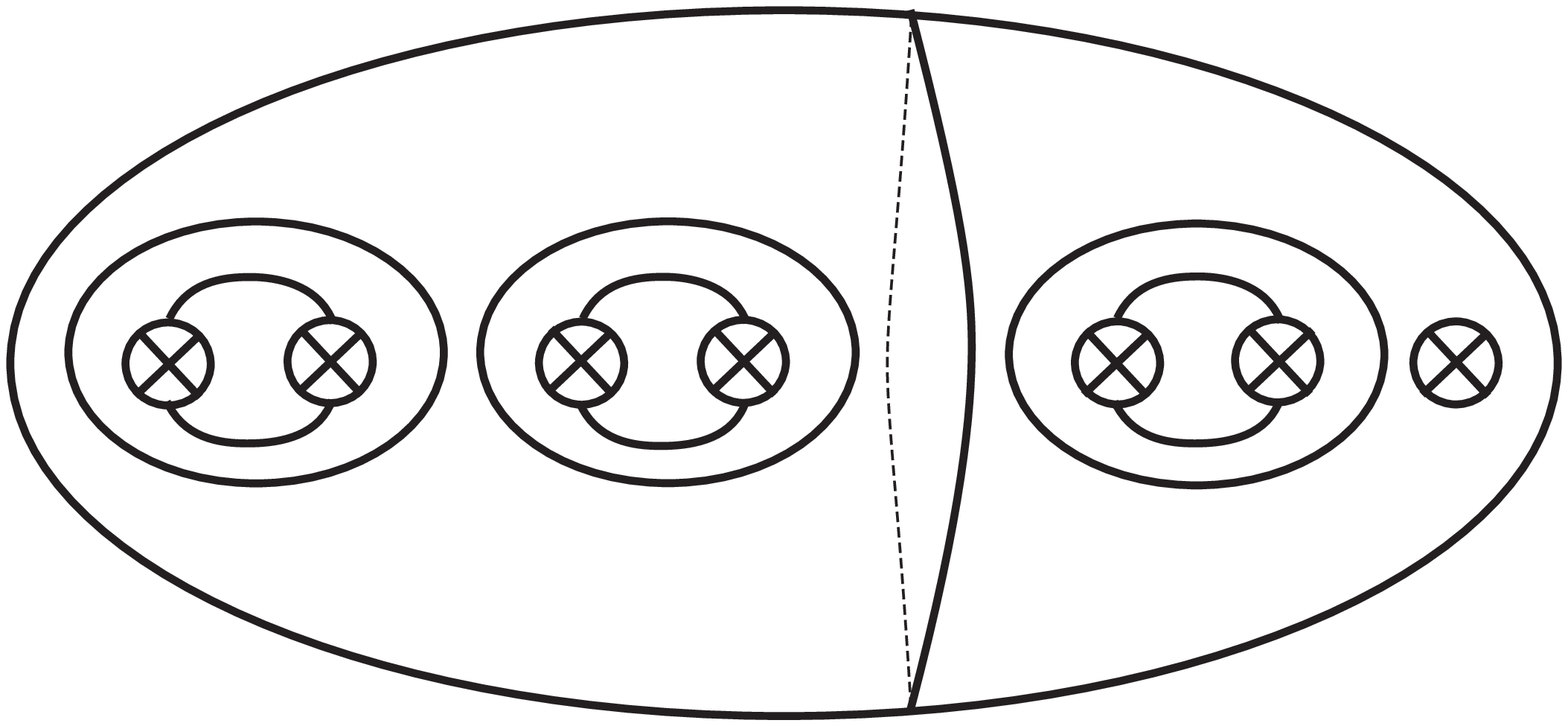} \hspace{0.in}
\epsfxsize=2.7in \epsfbox{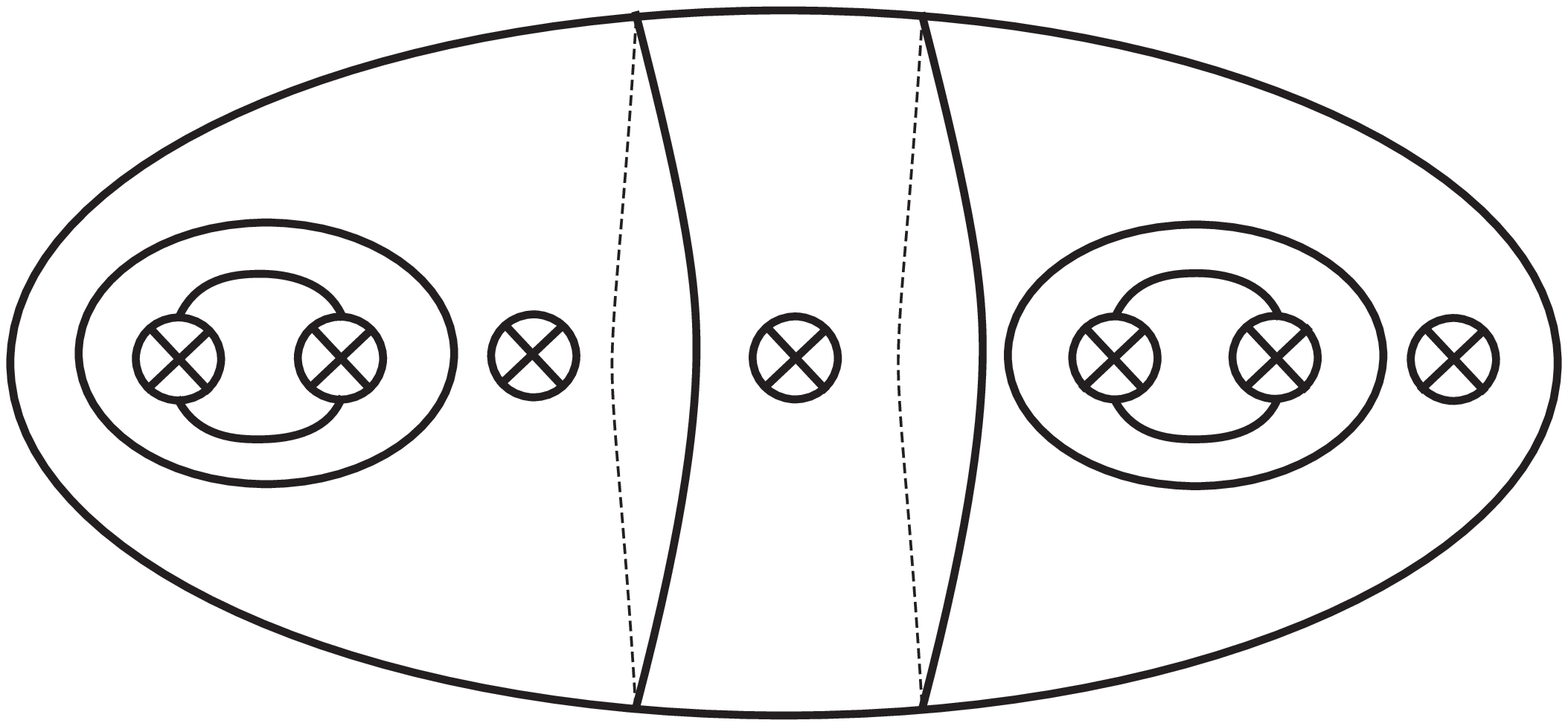}
\caption{Some maximal simplices in $\mathcal{T}(N)$}
\label{Fig0}
\end{center}
\end{figure}

\begin{lemma}
\label{tp} Let $g \geq 2$. Suppose that $(g, n) \neq (2, 0)$. Every top dimensional maximal simplex in $\mathcal{T}(N)$ has
dimension $3r+n-3$ if $g=2r+1$, and $3r+n-4$ if $g=2r$.
\end{lemma}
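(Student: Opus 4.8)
The plan is to reduce the statement to a counting problem about $P$-$S$ decompositions, separated into an upper bound (a purely numerical Euler characteristic argument) and a matching lower bound (an explicit construction). Since every maximal simplex of $\mathcal{T}(N)$ is of the form $[P]$ for a $P$-$S$ decomposition $P$, and $\dim[P]=|P|-1$, a top dimensional maximal simplex corresponds to a $P$-$S$ decomposition of $N$ maximizing the number of curves $|P|$. So it suffices to determine $\max|P|$ over all $P$-$S$ decompositions.

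For the upper bound, fix a $P$-$S$ decomposition $P$ and let $p$ and $q$ be the numbers of pair-of-pants components and of projective-plane-with-two-holes components of $N_P$, respectively. Cutting along a $2$-sided simple closed curve leaves the Euler characteristic unchanged, and each piece has Euler characteristic $-1$, so $p+q=-\chi(N)=g+n-2$. Counting the boundary circles of $N_P$ in two ways gives $3p+2q=n+2|P|$, since each of the $|P|$ curves, being $2$-sided, produces two boundary circles of $N_P$, while the remaining $n$ come from $\partial N$. Subtracting twice the first relation from the second yields $2|P|=p+2g+n-4$, and reducing the boundary-circle relation modulo $2$ gives $p\equiv n\pmod 2$, hence $q\equiv g\pmod 2$. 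If $g=2r$, then $q\geq 0$ forces $p\leq g+n-2$, so $|P|\leq \tfrac{3g}{2}+n-3=3r+n-3$; if $g=2r+1$, then $q$ is odd, so $q\geq 1$, forcing $p\leq g+n-3$ and $|P|\leq 3r+n-2$. This gives the asserted upper bounds on the dimension.

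For the lower bound one must exhibit, for each admissible $(g,n)$, a $P$-$S$ decomposition meeting the bound, i.e. one with $q=0$ when $g$ is even and $q=1$ when $g$ is odd. When $g=2r$, realize $N$ as the surface obtained from an orientable surface $S$ of genus $r-1$ with $n+2$ boundary components by gluing two of its boundary circles together via a homeomorphism for which the result is nonorientable; a pants decomposition of $S$ (classically $3r+n-4$ curves cutting $S$ into $2r+n-2$ pairs of pants, and empty when $S$ is itself a pair of pants) together with the image of the glued circle is a $P$-$S$ decomposition of $N$ with $3r+n-3$ curves, all of whose pieces are pairs of pants. Here the unique excluded even case $(g,n)=(2,0)$ is precisely the one in which $S$ degenerates to an annulus. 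When $g=2r+1$, realize $N$ by gluing the two boundary circles of a copy of the projective plane with two holes to two boundary circles of an orientable surface $S$ of genus $r-1$ with $n+2$ boundary components; a pants decomposition of $S$ together with the two images of the glued circles is a $P$-$S$ decomposition of $N$ with $3r+n-2$ curves, $2r+n-2$ pairs of pants and one projective-plane-with-two-holes piece. The sole degenerate subcase $(g,n)=(3,0)$, where $S$ is an annulus, is handled directly by writing $N_{3,0}$ as a projective plane with two holes whose two boundary circles are glued together.

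The conceptual heart of the argument is the upper bound, which is just an Euler characteristic count together with a parity observation. The main obstacle is the bookkeeping in the lower bound: in each family one must check that the curves produced are $2$-sided, nontrivial (bounding neither a disk nor a M\"obius band and not isotopic to a boundary component), and pairwise nonisotopic, and must verify that the glued-up surface has exactly the prescribed nonorientable genus and number of boundary components; one must also isolate and treat by hand the low-complexity degenerate cases. A byproduct is that every surface satisfying the hypotheses admits at least one $P$-$S$ decomposition, so that "top dimensional maximal simplex" is meaningful.
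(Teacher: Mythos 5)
Your proposal is correct, and its numerical core is the same as the paper's: every complementary piece of a $P$-$S$ decomposition has Euler characteristic $-1$, so the number of pieces is $g+n-2$, and counting boundary circles of $N_P$ ties the number of pieces of each type to the number of curves. Where you genuinely differ is in how the structure of a top dimensional decomposition is justified. The paper simply asserts (``it is easy to see'') that such a decomposition has no projective-plane-with-two-holes piece when $g$ is even and exactly one when $g$ is odd, and then computes; you instead split this into an upper bound, coming from the purely combinatorial parity relation $q \equiv g \pmod 2$ for the number $q$ of projective-plane pieces (a consequence of the two counting identities alone, with no further topology), and a matching lower bound via explicit gluing constructions, with the degenerate cases $(g,n)=(2,0)$ and $(3,0)$ isolated. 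This buys a self-contained argument that in addition proves the existence of top dimensional $P$-$S$ decompositions, which the paper's phrasing tacitly assumes. The one place where you are briefer than a final write-up should be is the verification, in the lower-bound constructions, that the chosen curves are $2$-sided, nontrivial in $N$ (none bounds a disk or a M\"obius band, none is boundary-parallel) and pairwise nonisotopic in $N$ rather than merely in the orientable subsurface; you flag this, and it is routine --- an annulus, disk or M\"obius band in $N$ cobounded by these curves would force a subsurface of the orientable piece with impossible Euler characteristic --- but it does need to be recorded.
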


\begin{proof} Let $P$ be a $P$-$S$ decomposition which corresponds to a top dimensional maximal simplex, $\Delta$, in $\mathcal{T}(N)$.
It is easy to see that if $g$ is odd, then there is exactly one projective plane with two boundary components in $N_P$, and all the other
pieces of $N_P$ are pairs of pants. If $g$ is even then all the pieces in $N_P$ are pairs of pants.

Let $m$ be the number of curves in $P$, and $s$ be the number of pieces in $N_P$. Since each piece in $N_P$ has Euler characteristic $-1$
and the Euler characteristic of $N$ is $2-g-n$, $s = g+n-2$. Suppose $g=2r+1$, $r \geq 1$. Then there are $3s -1$ boundary components in $N_P$.
Since each curve in $P$ corresponds to two boundary components in $N_P$, we have $3s -1 = 2m +n$. This gives $3(g+n-2) -1 = 3(2r+n-1) -1 = 2m +n$. So, $m = 3r + n -2$. Now suppose $g=2r$, $r \geq 1$. Then there are $3s$ boundary components in $N_P$. Since each curve in $P$ corresponds to
two boundary components in $N_P$, we have $3s = 2m +n$. This gives $3(g+n-2) = 3(2r+n-2) = 2m +n$. So, $m = 3r + n -3$. Hence, $\Delta$ has dimension
$3r+n-3$ if $g=2r+1$, and $3r+n-4$ if $g=2r$.\end{proof}

\begin{lemma}
\label{inj} Suppose that $g \geq 4$. Then $\lambda : \mathcal{T}(N) \rightarrow \mathcal{T}(N)$ is injective.\end{lemma}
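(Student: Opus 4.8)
The standard strategy is to show that $\lambda$ cannot collapse two distinct vertices. Suppose, for contradiction, that $\alpha$ and $\beta$ are distinct vertices of $\mathcal{T}(N)$ with $\lambda(\alpha) = \lambda(\beta)$. The key dichotomy is whether $i(\alpha,\beta) = 0$ or $i(\alpha,\beta) \neq 0$. If $i(\alpha,\beta) \neq 0$, then superinjectivity forces $i(\lambda(\alpha),\lambda(\beta)) \neq 0$, which is absurd since $\lambda(\alpha) = \lambda(\beta)$ and a curve has geometric intersection zero with itself. So the real work is the case $i(\alpha,\beta) = 0$: here $\alpha$ and $\beta$ are disjoint and nonisotopic, and I need to produce some third vertex $\gamma$ that intersects exactly one of them. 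Concretely, I would find $\gamma$ with $i(\gamma,\alpha) \neq 0$ but $i(\gamma,\beta) = 0$ (this is a standard change-of-coordinates argument — since $\alpha \neq \beta$, there is a 2-sided curve crossing $\alpha$ but disjoint from $\beta$, provided the complexity is large enough, which is where $g \geq 4$ enters). Then superinjectivity gives $i(\lambda(\gamma),\lambda(\alpha)) \neq 0$, i.e. $i(\lambda(\gamma),\lambda(\beta)) \neq 0$; but $\lambda$ simplicial and $i(\gamma,\beta) = 0$ means $\lambda(\gamma)$ and $\lambda(\beta)$ are disjoint (or equal), so $i(\lambda(\gamma),\lambda(\beta)) = 0$ — contradiction. (If $\lambda(\gamma) = \lambda(\beta)$ one still needs $i(\lambda(\gamma),\lambda(\alpha)) \neq 0$ to fail, which it does since then it equals $i(\lambda(\beta),\lambda(\alpha)) = i(\lambda(\alpha),\lambda(\alpha)) = 0$.)

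The one subtlety is the existence of the separating curve $\gamma$: I want a 2-sided $\gamma$ with $i(\gamma,\alpha) \neq 0$ and $i(\gamma,\beta) = 0$. To get this I would cut $N$ along $\beta$; the complement $N_\beta$ still contains $\alpha$ as a nontrivial 2-sided curve, and as long as the component of $N_\beta$ containing $\alpha$ is "big enough" (not a pair of pants or a small surface where $\alpha$ is central), there is a 2-sided curve in that component crossing $\alpha$. The hypothesis $g \geq 4$ guarantees enough room; one may need to treat separately the low-complexity positions of $\alpha$ relative to $\beta$ (e.g. $\alpha$ bounding a one-holed Klein bottle or similar), using that on a nonorientable surface of genus $\geq 4$ one can always embed the needed configuration. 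This case analysis — ensuring $\gamma$ exists for every possible topological type of the disjoint pair $(\alpha,\beta)$ on a nonorientable surface — is the main obstacle, since nonorientable surfaces have more subsurface types (Möbius bands, Klein bottles with holes) than orientable ones, and one must verify 2-sidedness of the witnessing curve throughout. Once existence of $\gamma$ is established in all cases, the contradiction is immediate and the lemma follows.
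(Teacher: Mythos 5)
Your proposal follows essentially the same route as the paper's proof: handle $i(\alpha,\beta)\neq 0$ directly by superinjectivity, and for disjoint curves produce a third two-sided curve meeting exactly one of them, with the same special case (one curve bounding a one-holed Klein bottle containing the other, where the roles of the two curves must be swapped because the inner curve is the unique two-sided curve in that Klein bottle). The paper's argument is exactly this, stated directly rather than by contradiction, so your plan is correct and matches it.
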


\begin{proof} Let $[a]$ and $[b]$ be two distinct vertices in $\mathcal{T}(N)$. If $i([a], [b]) \neq 0$, then
$i(\lambda([a]), \lambda([b])) \neq 0$ since $\lambda$ is superinjective. This implies that $\lambda([a]) \neq \lambda([b])$ as both are
isotopy classes of 2-sided curves. If $i([a], [b]) = 0$, we consider the following two cases: (i) If $a$ bounds a one-holed Klein bottle, $K$,
and $b$ is inside $K$, then we choose a vertex $[c]$ of $\mathcal{T}(N)$ such that $i([c], [b]) = 0$, and
$i([c], [a]) \neq 0$, and hence $i(\lambda([c]), \lambda([b])) = 0$, $i(\lambda([c]), \lambda([a])) \neq 0$. (ii) In all other cases,
we choose a vertex $[c]$ of $\mathcal{T}(N)$ such that $i([c], [a]) = 0$,
$i([c], [b]) \neq 0$, and hence $i(\lambda([c]), \lambda([a])) = 0$, $i(\lambda([c]), \lambda([b])) \neq 0$. In both cases we see
that $\lambda([a]) \neq \lambda([b])$. Hence, $\lambda$ is injective.\end{proof}\\

Since a superinjective map is injective it sends top dimensional maximal simplices to top dimensional maximal simplices. In the following
lemma we will see that adjacency is preserved w.r.t. top dimensional maximal simplices.

\begin{lemma}
\label{adjacent} Suppose that $g \geq 4$. Let $P$ be a top dimensional $P$-$S$ decomposition on $N$. Let $a, b \in P$ such that $a$ is
adjacent to $b$ w.r.t. $P$. There exists $a' \in \lambda([a])$ and $b' \in \lambda([b])$ such that $a'$ is adjacent to $b'$ w.r.t. $P'$ where
$P'$ is a set of pairwise disjoint curves representing $\lambda([P])$ containing $a', b'$.
\end{lemma}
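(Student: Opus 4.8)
The plan is to characterize adjacency of $a$ and $b$ w.r.t.\ a top-dimensional $P$-$S$ decomposition $P$ in purely combinatorial terms inside $\mathcal{T}(N)$, so that the property transfers under $\lambda$. First I would note that by Lemma~\ref{inj} the map $\lambda$ is injective, and since top-dimensional maximal simplices have a fixed dimension (Lemma~\ref{tp}) and $\lambda$ carries simplices to simplices, $\lambda([P])$ is again a top-dimensional maximal simplex. Thus I may fix a $P$-$S$ decomposition $P'$ representing $\lambda([P])$ and work inside it. The task reduces to: if $a,b\in P$ lie on a common pair of pants or a common one-holed-projective-plane-with-two-boundary-components piece of $N_P$, then the corresponding curves $a',b'\in P'$ with $[a']=\lambda([a])$, $[b']=\lambda([b])$ lie on a common piece of $N_{P'}$.

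The key idea is to detect "non-adjacency'' via the existence of a ``swap'' or ``exchange'' curve. Concretely, if $a$ is \emph{not} adjacent to $b$ w.r.t.\ $P$, then one can find a vertex $[c]$ of $\mathcal{T}(N)$ with $i([c],[a])\neq 0$ and $i([c],[x])=0$ for all $x\in P\setminus\{a\}$ (a curve supported in the union of the two pieces adjacent to $a$, meeting only $a$), and similarly $[d]$ meeting only $b$ among $P$; whereas if $a$ \emph{is} adjacent to $b$, any curve meeting $a$ but nothing else in $P\setminus\{a,b\}$ is forced (by an Euler-characteristic/maximality count) to meet $b$ as well, because the two pieces incident to $a$ already involve $b$. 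More precisely I would argue: $[P]$ is a maximal simplex, so $(P\setminus\{a\})\cup\{c\}$ is contained in a maximal simplex only if $c$ is disjoint from everything in $P\setminus\{a\}$; the set of such $c$ (up to isotopy) that are essential in $N$ consists precisely of curves in the ``star'' of $a$, i.e.\ supported in the subsurface $\Sigma_a$ obtained by regluing the one or two pieces of $N_P$ meeting $a$. Adjacency of $a$ and $b$ is then equivalent to: $b\in\partial\Sigma_a$, equivalently every essential $c$ with $(P\setminus\{a\})\cup\{c\}$ extending to a top-dimensional maximal simplex and $i([c],[a])\neq 0$ also satisfies $i([c],[b])=0$ is \emph{false} unless $b$ bounds $\Sigma_a$ — one must phrase the criterion as an explicit statement about which vertices $[c]$ can be ``added back'' and what they can intersect. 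Since all the ingredients (dimension of top-dimensional maximal simplices, intersection-zero relations, being a maximal simplex) are preserved by the injective simplicial superinjective map $\lambda$, the combinatorial criterion transfers, yielding that $a'$ and $b'$ lie on a common piece of $N_{P'}$.

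Carrying this out, I would proceed as follows. Step one: fix representatives and reduce to the combinatorial setting as above, using Lemmas~\ref{tp} and \ref{inj}. Step two: for a curve $a$ in $P$, describe the pieces of $N_P$ incident to $a$ and the regular neighborhood subsurface $\Sigma_a$ they form; enumerate the possible local pictures (two pairs of pants glued along $a$; a pair of pants and a projective-plane-with-two-holes; etc., depending on $g$ even/odd and whether $a$ is adjacent to the projective-plane piece). Step three: give the combinatorial characterization — $a$ is adjacent to $b$ w.r.t.\ $P$ if and only if there is a vertex $[c]$ with $i([c],[a])\neq 0$, $i([c],[b])\neq 0$, $i([c],[x])=0$ for all $x\in P\setminus\{a,b\}$, and $[P\setminus\{a,b\}]\cup\{[c]\}$ lies in a top-dimensional maximal simplex of $\mathcal{T}(N)$ — and verify both directions by the Euler characteristic count (a curve hitting only $a$ and $b$ among $P$ and extendable to a top-dimensional $P$-$S$ system forces $a,b$ to cobound exactly the subsurface it lives in). Step four: apply $\lambda$: it preserves $i=0$, sends $i\neq 0$ to $i\neq 0$ by superinjectivity, is injective, and sends top-dimensional maximal simplices to top-dimensional maximal simplices; hence the witness $[c]$ maps to a witness $[c']$ for $a',b'$, so $a'$ is adjacent to $b'$ w.r.t.\ $P'$. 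The main obstacle I anticipate is Step three — specifically proving the ``only if'' direction of the characterization, i.e.\ that the \emph{existence} of such a curve $[c]$ really does force adjacency rather than merely being necessary. This requires a careful case analysis of the possible subsurfaces cut out by $P\setminus\{a,b\}$ and an argument (again Euler-characteristic plus the top-dimensionality constraint, which pins down that no piece can be a Klein bottle or higher-genus nonorientable piece) that any such $c$ cannot ``jump over'' an intermediate curve of $P$; handling the pieces near the single projective-plane component when $g$ is odd, and the distinct local configurations, is where most of the work lies.
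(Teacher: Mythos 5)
Your plan is essentially the paper's proof: since $a$ is adjacent to $b$ there is a curve $c$ meeting only $a$ and $b$ nontrivially among $P$; superinjectivity and simpliciality transfer this intersection pattern to $\lambda([c])$ versus the top-dimensional decomposition $P'$, and a curve meeting exactly $\lambda([a])$ and $\lambda([b])$ among $P'$ forces those two classes to have adjacent representatives (the arc between consecutive intersection points with $a'$ and $b'$ lies in a single piece of $N_{P'}$), which is precisely the contradiction the paper draws. Two minor remarks that do not affect the argument: your added requirement that $[P\setminus\{a,b\}]\cup\{[c]\}$ extend to a top-dimensional maximal simplex is superfluous (the ``witness implies adjacency'' direction needs only the intersection conditions), and your earlier aside that adjacency of $a$ and $b$ forces every curve meeting only $a$ among $P\setminus\{a,b\}$ to meet $b$ is false as stated, though you rightly abandon it in favor of the correct criterion.
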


\begin{proof} Let $P$ be a top dimensional $P$-$S$ decomposition on $N$. Let $a, b \in P$ such that $a$ is adjacent to $b$ w.r.t. $P$. We can
find a 2-sided simple closed curve $c$ on $N$ such that $c$ intersects only $a$ and $b$ nontrivially (with nonzero geometric intersection) and
$c$ is disjoint from all the other curves in $P$. Let $P'$ be a set of pairwise disjoint curves representing $\lambda([P])$. Since $\lambda$
is injective by Lemma \ref{inj}, $\lambda$ sends top dimensional maximal simplices of $\mathcal{T}(N)$ to top dimensional maximal simplices
of $\mathcal{T}(N)$. So, $P'$ corresponds to a top dimensional maximal simplex. Assume that $\lambda([a])$ and $\lambda([b])$ do not have
adjacent representatives w.r.t. $P'$. Since $i([c], [a]) \neq 0$ and $i([c], [b]) \neq 0$, we have $i(\lambda([c]), \lambda([a])) \neq 0$
and $i(\lambda([c]), \lambda([b])) \neq 0$ by superinjectivity. Since $i([c], [e]) = 0$ for all $e \in P \setminus \{a, b\}$, we have
$i(\lambda([c]), \lambda([e])) = 0$ for all $e \in P \setminus \{a, b\}$. But this is not possible because $\lambda([c])$ has to intersect
geometrically essentially with some isotopy class other than $\lambda([a])$ and $\lambda([b])$ in $\lambda([P])$ to be able to make essential
intersections with $\lambda([a])$ and $\lambda([b])$ since $\lambda([P])$ is a top dimensional maximal simplex. This gives a contradiction to the assumption
that $\lambda([a])$ and $\lambda([b])$ do not have adjacent representatives w.r.t. $P'$.\end{proof}

\begin{lemma}
\label{piece1} Suppose that $g \geq 5, n \geq 0 $. Let $x, y, z$ be nontrivial nonseparating simple closed curves on $N$ which bound a
pair of pants on a genus two orientable subsurface $S$ of $N$. There exist nonseparating curves
$x' \in \lambda([x]), y' \in \lambda([y]), z' \in \lambda([z])$ such that $x', y', z'$ bound a pair of pants on $N$.\end{lemma}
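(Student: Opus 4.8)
The plan is to reduce the statement to a combinatorial fact about how $\lambda$ acts on the simplices visible inside a genus-two orientable subsurface. First I would fix a top dimensional $P$-$S$ decomposition $P$ of $N$ extending the triple $\{x,y,z\}$ so that one of the pair-of-pants pieces is exactly the pants bounded by $x,y,z$ in $S$. By Lemma~\ref{inj} and the remark preceding Lemma~\ref{adjacent}, $\lambda([P])$ is again a top dimensional maximal simplex, so it is $[P']$ for some top dimensional $P$-$S$ decomposition $P'$; Lemma~\ref{tp} forces $P'$ to have the same combinatorial type (same number of curves and pieces, one cross-cap piece iff $g$ is odd). The goal is to show that the images of $x,y,z$ can be realized as the boundary of one pants piece of $P'$.

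The key mechanism is adjacency. By Lemma~\ref{adjacent}, adjacency of curves with respect to $P$ is preserved under $\lambda$ with respect to $P'$: for any two adjacent $a,b\in P$ there are disjoint representatives $a'\in\lambda([a])$, $b'\in\lambda([b])$ sharing a pants or a one-holed-projective-plane piece of $P'$. Inside $S$ the three curves $x,y,z$ are pairwise adjacent (they cobound the pants piece), and — this is the point I would lean on — one can choose the rest of $P$ so that the ``adjacency graph'' distinguishes this triple: e.g. by picking auxiliary curves so that $x,y,z$ is the unique triangle of pairwise-adjacent curves of a prescribed local shape, or by using that a pants piece is detected by the property that its three boundary curves are mutually adjacent while no one-sided phenomena (Möbius-band fillings, Klein-bottle fillings) intervene. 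Applying Lemma~\ref{adjacent} to each of the pairs $\{x,y\},\{y,z\},\{x,z\}$, together with the structure of $P'$, I would conclude that $\lambda([x]),\lambda([y]),\lambda([z])$ are the three boundary classes of a single pair-of-pants component of $N_{P'}$, hence admit representatives $x',y',z'$ bounding a pants in $N$. That these can be taken nonseparating then follows because the three curves cut off a pants but do not separate the rest of $N$, which is connected of large enough genus.

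The main obstacle I expect is ruling out the ``wrong'' local configurations in $P'$: a priori the three images could be adjacent in $P'$ without cobounding a single pants — for instance they could be distributed among a pants and an adjacent cross-cap piece, or two of them could cobound a pants while the third is adjacent to that pants through a fourth curve. Handling this is exactly the kind of case analysis driven by Lemma~\ref{adjacent} and Lemma~\ref{tp} that makes the genus hypothesis $g\ge 5$ necessary: with enough genus one has room to add control curves that force the triangle $\{x,y,z\}$ to sit inside an \emph{orientable} pants piece, excluding the cross-cap alternatives. A secondary, more technical point is passing from ``adjacent with respect to $P'$'' (an assertion about some choice of disjoint representatives) to a coherent global choice of $P'$ realizing all the required adjacencies simultaneously; I would address this by choosing $P'$ once and for all as a minimal-position realization of $\lambda([P])$ and checking that adjacency is then intrinsic to the isotopy classes. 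Once the pants piece containing the images is isolated, the rest of the lemma is immediate.
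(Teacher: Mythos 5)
Your set-up matches the paper's framework---extend $\{x,y,z\}$ to a top dimensional $P$-$S$ decomposition, use Lemma \ref{inj} to see that the image is again a top dimensional maximal simplex $[P']$, and invoke Lemma \ref{adjacent}---but the decisive step is missing. As you yourself note, pairwise adjacency of $\lambda([x]),\lambda([y]),\lambda([z])$ with respect to $P'$ does not force them to cobound a single pair of pants: the images could be spread over a pants piece and the projective-plane piece, or two of them could share a pants whose third boundary curve is a fourth element of $P'$. You defer exactly this point to an unspecified ``case analysis'' with ``control curves,'' but that is the whole content of the lemma, and adjacency preservation alone cannot carry it. The paper's proof uses a second, independent mechanism: preservation of certain \emph{non}-adjacencies. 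For a non-adjacent pair such as $x,w$ in $P$ one picks two disjoint auxiliary curves $c,d$, with $c$ meeting only $x$ and $d$ meeting only $w$ among the curves of $P$; superinjectivity forces $\lambda([c]),\lambda([d])$ to be disjoint and to meet only $\lambda([x])$, respectively $\lambda([w])$, among the classes of $\lambda([P])$, which is impossible if $\lambda([x])$ and $\lambda([w])$ had adjacent representatives. The configuration is then chosen so that this information suffices: $\{x,y,z\}$ is completed to $\{x,y,z,t,w\}$ with $z$ adjacent to all of $x,y,t,w$ w.r.t.\ $P$, so the two pants of $P'$ containing $z'$ have $x',y',t',w'$ as their remaining boundary curves, and the non-adjacency of $w'$ with $x'$ and $y'$ pins down a pants with boundary exactly $x',y',z'$. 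Moreover, in the closed genus-five case one cannot place all five curves in one top dimensional decomposition; the paper works there with two decompositions $P=\{x,w,z,t\}$ and $R=\{x,y,z,t\}$ and analyzes the projective-plane piece carrying $z',t'$, a case your uniform set-up does not cover.

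A secondary gap is the nonseparating claim: what must be shown is that the \emph{image} representatives $x',y',z'$ are nonseparating, and this does not follow from the fact that $x,y,z$ fail to disconnect $N$. In the paper it is extracted from the configuration just located (the projective-plane piece with $z',t'$ disjoint from $x',y'$, or the fact that $w'$ is adjacent to neither $x'$ nor $y'$), and it is precisely this part of the argument that later yields Corollary \ref{nonsep}; so it needs a proof of its own rather than the one-line appeal you give.
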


\begin{proof} Suppose $g = 5, n = 0$. We can complete $\{x, y, z\}$ to a curve configuration $Q = \{x, y, z, t, w\}$ on $N$ as shown in
Figure \ref{fig2} (i). Let $P = \{x, w, z, t\}$, $R = \{x, y, z, t\}$. Let $P', R'$ be sets of pairwise disjoint curves representing
$\lambda([P]), \lambda([R])$ respectively. $P, R$ correspond to top dimensional maximal simplices. Since $\lambda$ is injective by
Lemma \ref{inj}, $\lambda$ sends top dimensional maximal simplices of $\mathcal{T}(N)$ to top dimensional maximal simplices of $\mathcal{T}(N)$.

\begin{figure}[htb]
\begin{center}
\hspace{0.2cm} \epsfxsize=2.07in \epsfbox{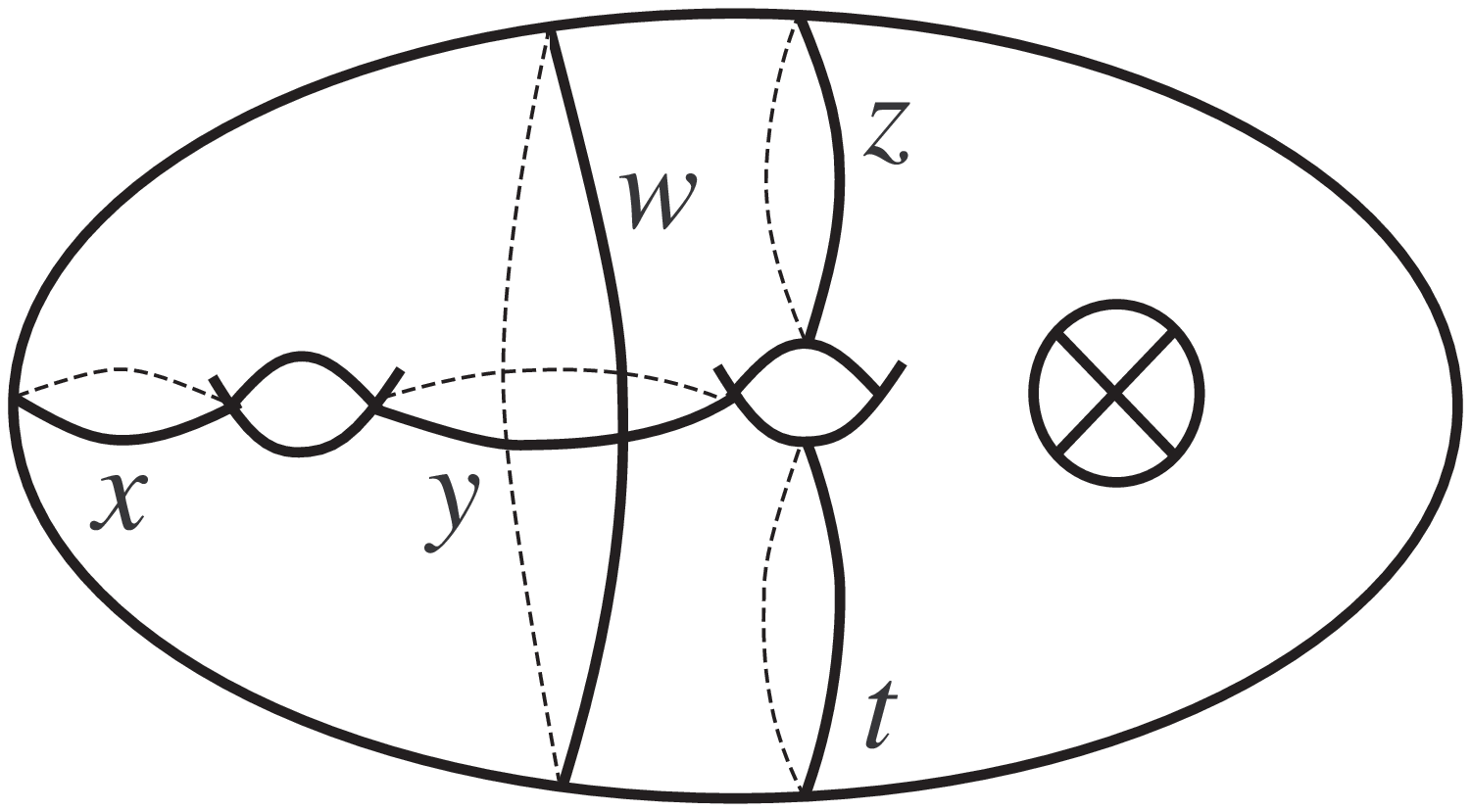} \hspace{0.1cm}
\epsfxsize=2.07in \epsfbox{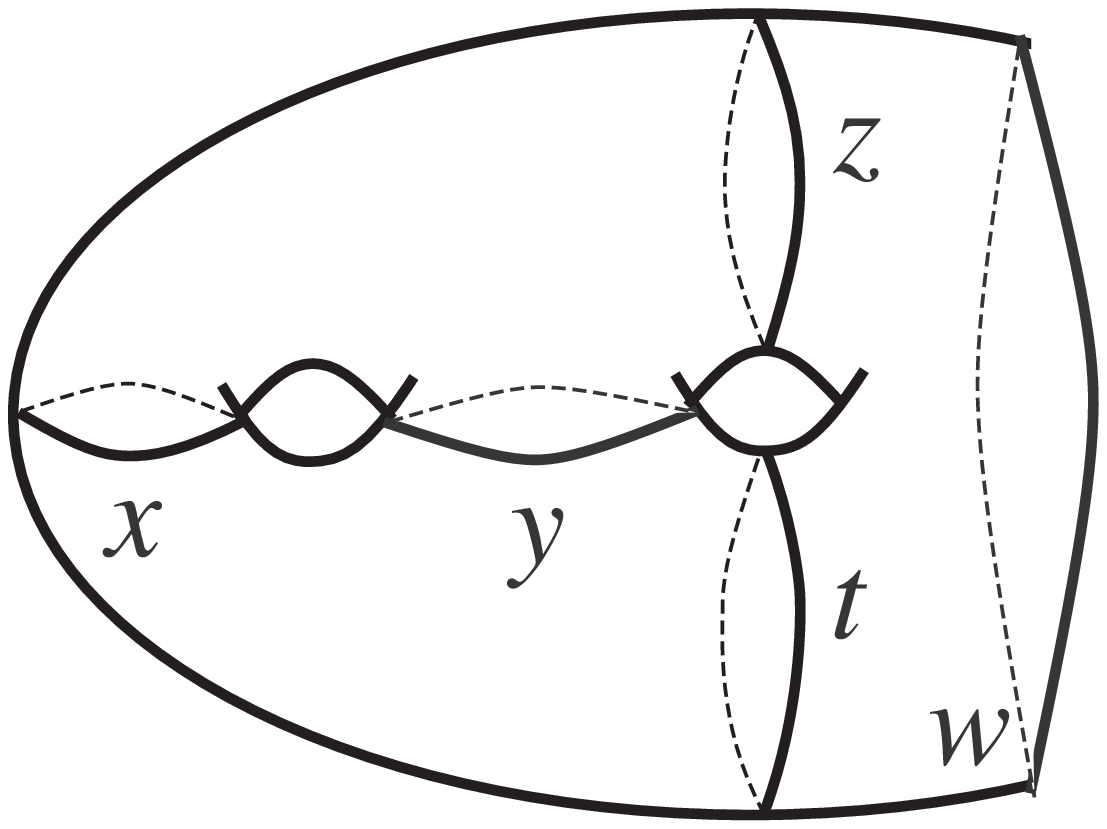} \hspace{-1.2cm}
\epsfxsize=2.07in \epsfbox{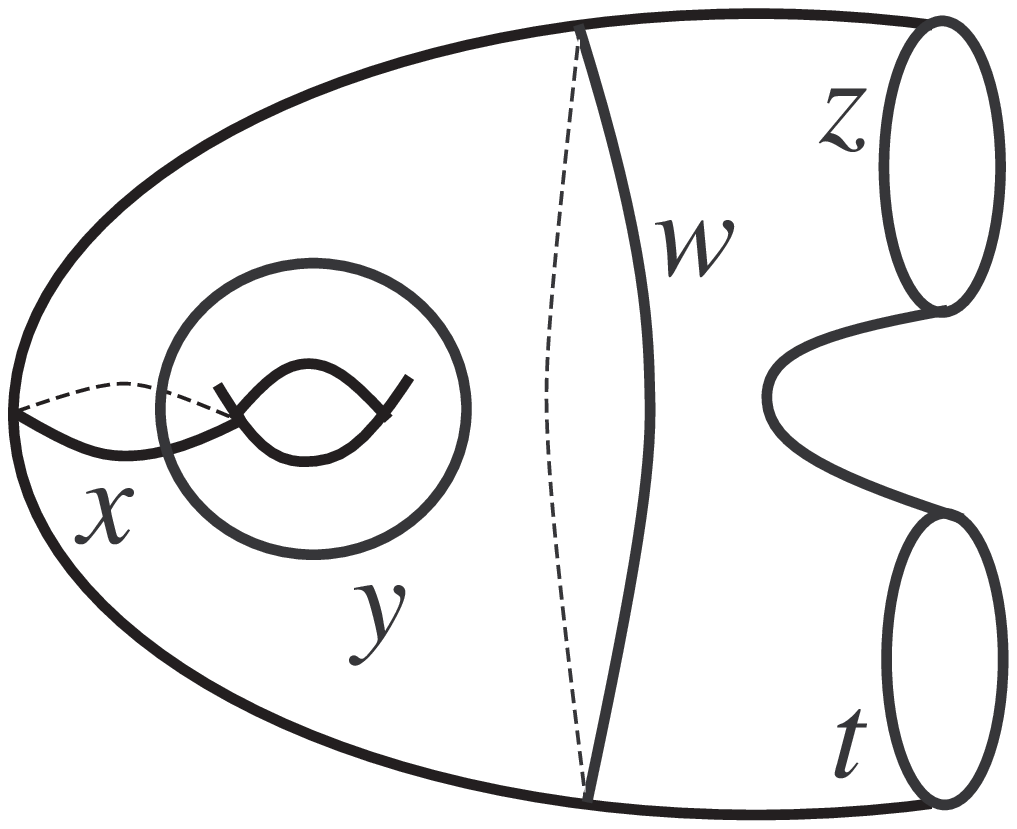}

\hspace{0.2cm}   (i) \hspace{3.9cm} (ii) \hspace{3.9cm} (iii)

\caption{Curve configuration I} \label{fig2}
\end{center}
\end{figure}

Let $x' \in \lambda([x]), z' \in \lambda([z]),  t' \in \lambda([t]), w' \in \lambda([w])$ be disjoint representatives in $P'$. Since $w$ is adjacent
to $x, z, t$ w.r.t. $P$, by Lemma \ref{adjacent} $w'$ is adjacent to $x', z', t'$ w.r.t. $P'$. So, there exist two pairs of pants, say $P_1$, $P_2$
on the surface $N$ having $w'$ as one of their boundary component such that each of $x', z', t'$ is a boundary component of $P_1$ or $P_2$. Note that
$P_1$ and $P_2$ are not necessarily essential and one of the boundary components of $P_1$ or $P_2$ can bound a M\"{o}bius band on the surface.
The curve $x$ is not adjacent to $z$ and $t$ w.r.t. $P$. We can find 2-sided simple closed curves $c$ and $d$ on $N$ such that $x, z, c, d$ are pairwise
nonisotopic, $c$ intersects only $x$ nontrivially and is disjoint from all the other curves in $P$, $d$ intersects only $z$ nontrivially
and is disjoint from all the other curves in $P$, and $c$ and $d$ are disjoint. Since $\lambda$
is injective $\lambda([x]), \lambda([z]), \lambda([c]), \lambda([d])$ are pairwise distinct. We also have
$i(\lambda([c]), \lambda([x])) \neq 0$, $i(\lambda([c]), \lambda([u])) = 0$ for all $u \in P \setminus \{x\}$,
$i(\lambda([d]), \lambda([z])) \neq 0$, $i(\lambda([d]), \lambda([v])) = 0$ for all $v \in P \setminus \{z\}$, and
$i(\lambda([c]), \lambda([d])) = 0$ by superinjectivity. This is possible only when $\lambda([x])$ and $\lambda([z])$ have representatives
which are not adjacent w.r.t. $P'$. So, $x'$ is not adjacent to $z'$ w.r.t. $P'$. Similarly, $x'$ is not adjacent to $t'$ w.r.t. $P'$.
This implies that one of the pair of pants, say $P_1$, has $w', z', t'$ as its boundary components. Then, $x'$ and $w'$ are boundary
components of $P_2$. Let $m$ be the other boundary component of $P_2$. Since $P'$ corresponds to a top dimensional maximal simplex,
and $x'$ is not adjacent to any of $z'$ and $t'$ we see that there exists a projective plane with two boundary components having $z', t'$
on its boundary. Now it is easy to see that $m$ is isotopic to $x'$ as $P'$ corresponds to a top dimensional maximal simplex. 
Let $y' \in \lambda([y])$ in $R'$ such that $y'$ has minimal intersection with $x', t', w', z'$. Since there exists a
projective plane with two boundary components having $z', t'$ on its boundary, and $z'$ should be adjacent to $x'$ and $y'$ w.r.t. $R'$,
we see that there exists a pair of pants that has $x', y', z'$ as its boundary components. We see that $x', z'$ are
nonseparating. To see that $y'$ is also nonseparating, it is enough to see that since $t'$ should be adjacent to $x'$ and $y'$ w.r.t. $R'$, and $t'$
and $z'$ are the boundary components of a projective plane with two boundary components disjoint from $x'$ and $y'$,  
there exists a pair of pants that has $x', y', t'$ as its boundary components.

Suppose that $g = 5, n \geq 1$ or $g \geq 6, n \geq 0$. We can complete $\{x, y, z\}$ to a curve configuration $Q = \{x, y, z, t, w\}$
on $S$ as shown in Figure \ref{fig2} (ii). Then we complete $Q$ to a top dimensional $P$-$S$ decomposition $P$ on $N$ in anyway we like.
Let $P'$ be a set of pairwise disjoint curves representing $\lambda([P])$. $P'$ corresponds to a top dimensional maximal simplex on $N$.
The curve $x$ is not adjacent to $w$ w.r.t. $P$. We can find 2-sided simple closed curves $c$ and $d$ on $N$ such that $x, w, c, d$ are
pairwise nonisotopic, $c$ intersects only $x$
nontrivially and is disjoint from all the other curves in $P$, $d$ intersects only $w$ nontrivially and is
disjoint from all the other curves in $P$, and $c$ and $d$ are disjoint. Since $\lambda$ is injective
by Lemma \ref{inj}, $\lambda$ sends top dimensional maximal simplices of $\mathcal{T}(N)$ to top dimensional maximal simplices of
$\mathcal{T}(N)$, and
$\lambda([x]), \lambda([w]), \lambda([c]), \lambda([d])$ are pairwise distinct. We also have
$i(\lambda([c]), \lambda([x])) \neq 0$, $i(\lambda([c]), \lambda([u])) = 0$ for all $u \in P \setminus \{x\}$,
$i(\lambda([d]), \lambda([w])) \neq 0$, $i(\lambda([d]), \lambda([v])) = 0$ for all $v \in P \setminus \{w\}$, and
$i(\lambda([c]), \lambda([d])) = 0$ by superinjectivity. This is possible
only when $\lambda([x])$ and $\lambda([w])$ have representatives which are not adjacent w.r.t. $P'$. Similar argument shows that
$\lambda([y])$ and $\lambda([w])$ have representatives which are not adjacent w.r.t. $P'$.

Let $x' \in \lambda([x]), y' \in \lambda([y]), z' \in \lambda([z]),  t' \in \lambda([t]), w' \in \lambda([w])$ be disjoint representatives.
Since $z$ is adjacent to $x, y, t, w$ w.r.t. $P$, by Lemma \ref{adjacent} $z'$ is adjacent to $x', y', t', w'$
w.r.t. $P'$. So, there exist two pairs of pants in $P'$ having $z'$ as one of its boundary components. The other boundary components
of these pairs of pants are $x', y', t', w'$. By the above arguments we know that $w'$ is not adjacent to any of $x'$ and $y'$. This implies that
one of the pairs of pants has $x', y', z'$ as its boundary components, and all of them are nonseparating.\end{proof}

\begin{coroll} \label{nonsep} Suppose that $g \geq 5, n \geq 0 $. Let $x$ be a 2-sided nonseparating simple closed curve with nonorientable
complement on $N$. Then $\lambda([x])$ is the isotopy class of a 2-sided nonseparating simple closed curve on $N$.\end{coroll}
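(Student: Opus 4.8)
The plan is to deduce this directly from Lemma \ref{piece1}.

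\emph{Step 1: reduction.} It is enough to show that the given curve $x$ can be completed to a triple $\{x,y,z\}$ of simple closed curves, all nonseparating in $N$, which bound an embedded pair of pants lying on a genus two orientable subsurface $S$ of $N$. Indeed, once this is known, Lemma \ref{piece1} yields nonseparating representatives $x'\in\lambda([x])$, $y'\in\lambda([y])$, $z'\in\lambda([z])$ bounding a pair of pants on $N$; since $\lambda([x])$ is a vertex of $\mathcal{T}(N)$ it is already the isotopy class of a $2$-sided curve, and now we see it contains the nonseparating curve $x'$, which is exactly the conclusion.

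\emph{Step 2: building the configuration.} Cutting $N$ along $x$ yields a connected nonorientable surface of genus $g-2$ with $n+2$ boundary components (the two sides of $x$ among them); as $g\geq 5$ this genus is at least $3$, so in particular the topological type of the pair $(N,x)$ is determined. Hence all $2$-sided nonseparating curves of $N$ with nonorientable complement lie in a single orbit of the self-homeomorphism group of $N$ (change of coordinates), so it suffices to exhibit one such curve that admits the configuration of Step 1. For this I would write $N=\Sigma\cup N'$, glued along a single circle, where $\Sigma$ is an orientable genus two subsurface with one boundary component and $N'$ is nonorientable; this is possible because $g\geq 5$ forces the nonorientable genus of $N'$ to be at least $1$. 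Realize $\Sigma$ as a genus two closed orientable surface with an open disk removed from the interior of one of the two pairs of pants determined by a pants decomposition $\{x_0,y_0,z_0\}$ cutting that closed surface into two pairs of pants. Then the other pair of pants $P_0$ lies in $\Sigma$ with $\partial P_0=x_0\cup y_0\cup z_0$; each of $x_0,y_0,z_0$ is nonseparating in $\Sigma$, hence nonseparating in $N$; and the complement of $x_0$ in $N$ contains $N'$, hence is nonorientable. A short Euler characteristic count confirms that this complement is a nonorientable surface of genus $g-2$ with $n+2$ boundary components, i.e. $x_0$ has the same topological type as $x$. Transporting by a homeomorphism of $N$ carrying $x_0$ to $x$ then produces the desired $y$, $z$, $S$ around $x$.

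\emph{Step 3.} I would then apply Lemma \ref{piece1} to $x,y,z$ and read off the conclusion as in Step 1. The step I expect to be most delicate is Step 2, specifically verifying carefully that $x_0,y_0,z_0$ remain nonseparating after passing from $\Sigma$ to $N$ and that the complement of $x_0$ in $N$ has \emph{precisely} the topological type of the complement of $x$ (connectedness, nonorientability, and the Euler characteristic/boundary count), so that the change of coordinates principle legitimately transfers the whole configuration from $x_0$ to $x$; Steps 1 and 3 are then routine.
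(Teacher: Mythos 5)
Your proposal is correct and follows essentially the same route as the paper: the paper's proof of this corollary is simply the observation that it follows from (the proof of) Lemma \ref{piece1}, which is exactly your reduction. Your Step 2, completing $x$ via change of coordinates to a triple $\{x,y,z\}$ bounding a pair of pants in a genus two orientable subsurface, just makes explicit the standard fact the paper leaves implicit (all 2-sided nonseparating curves with nonorientable complement form a single homeomorphism orbit), and your verification of that fact is sound.
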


\begin{proof} It follows from the proof of Lemma \ref{piece1}.
\end{proof}

\begin{lemma}
\label{sep1} Suppose that $g \geq 5, n \geq 0 $. Let $w$ be a separating curve on $N$ such that it separates a torus with one hole
having the curves $x, y$ as shown in Figure \ref{fig2} (iii). Then there exist $x' \in \lambda([x]), y' \in \lambda([y]), w'  \in \lambda([w])$
such that $w'$ is a separating curve on $N$ and it separates a torus with one hole containing $x', y'$.\end{lemma}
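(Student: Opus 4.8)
The plan is to transport, through $\lambda$, the local picture around $x$ and around $y$ by completing suitable pairs of curves to top‑dimensional $P$-$S$ decompositions, and then to recognize $\lambda([w])$ as the boundary of a one‑holed torus. First I would record what happens to $x$ and $y$. Since $g\ge 5$, the component of the surface obtained by cutting $N$ along $w$ that is not the one‑holed torus carries all the nonorientable genus of $N$, so $x$ and $y$ are $2$‑sided nonseparating curves with nonorientable complement; by Corollary~\ref{nonsep} there are nonseparating representatives $x'\in\lambda([x])$ and $y'\in\lambda([y])$. Because $i([x],[y])=1\neq 0$, superinjectivity gives $i([x'],[y'])\neq 0$, while Lemma~\ref{inj} gives $[x']\neq[y']$; moreover both classes are disjoint from $[w']:=\lambda([w])$.

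Next I would choose a decomposition adapted to $x$. Cutting the one‑holed torus along $x$ is a pair of pants, so $\{w,x\}$ extends to a top‑dimensional $P$-$S$ decomposition $P$ of $N$ in which $x$ is adjacent to $w$ and to no other curve. Let $P'$ be a set of pairwise disjoint curves representing $\lambda([P])$; by Lemmas~\ref{inj} and~\ref{tp} it is again a top‑dimensional $P$-$S$ decomposition, and by Lemma~\ref{adjacent} some representative $x'$ of $\lambda([x])$ is adjacent to some representative $w'$ of $\lambda([w])$ with respect to $P'$. For every other curve $v$ of $P$ the curve $x$ is not adjacent to $v$, and the argument used in the proof of Lemma~\ref{piece1} — pick disjoint curves $c,d$ meeting, among the curves of $P$, only $x$ respectively only $v$, apply superinjectivity, and observe that if $\lambda([x])$ and $\lambda([v])$ shared a piece of the decomposition then the images of $c$ and $d$ would be forced to cross inside that piece — shows that $\lambda([x])$ and $\lambda([v])$ are not adjacent with respect to $P'$. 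Hence $x'$ is adjacent, with respect to $P'$, only to $w'$.

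Then I would recognize $w'$. Combining ``$x'$ adjacent only to $w'$'' with ``$x'$ nonseparating'' (first step), a short case analysis on which pieces of $N_{P'}$ the two sides of $x'$ lie in — where $g\ge 5$ rules out the degenerate closed or small cases, and the nonseparability of $x'$ rules out every configuration in which a piece incident to $x'$ is attached to the rest of $N$ only along $x'$ — forces both sides of $x'$ into a single pair‑of‑pants piece whose third boundary curve is $w'$. Gluing that pair of pants along its two $x'$‑boundaries exhibits $w'$ as a separating curve cutting off a subsurface $V$ with $\chi(V)=-1$, that is, a one‑holed torus or a one‑holed Klein bottle, with $x'$ an essential, non‑boundary‑parallel curve in its interior. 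Running the previous step with $\{w,y\}$ in place of $\{w,x\}$ yields the same conclusion for $y'$, and since $\chi(N\setminus V)=3-g-n\le -2\neq -1$, the one‑holed‑torus‑or‑Klein‑bottle side of $w'$ is unique, so $y'$ lies in $V$ as well.

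Finally I would exclude the Klein bottle: if $V$ were a one‑holed Klein bottle, then $x'$ and $y'$ would both be $2$‑sided, nontrivial, non‑boundary‑parallel curves in $V$, but such a curve in a one‑holed Klein bottle is unique up to isotopy (its unique $2$‑sided nonseparating class), so $[x']=[y']$, contradicting the first step. Hence $V$ is a one‑holed torus, $w'$ is separating, and after isotoping $x'$ and $y'$ into $V$ we obtain the asserted configuration. The step I expect to be the real work is the recognition of $w'$: turning the combinatorial statement ``$x'$ is adjacent only to $w'$'' into the topological statement ``$w'$ bounds a one‑holed torus (or Klein bottle) around $x'$'' means ruling out every way the two sides of $x'$ could be distributed among the pieces of $N_{P'}$, and it is precisely there that the hypotheses $g\ge 5$ and Corollary~\ref{nonsep} are indispensable.
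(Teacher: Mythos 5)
Your proposal is correct and follows essentially the same route as the paper: complete $\{w,x\}$ to a top-dimensional $P$-$S$ decomposition, use Lemma \ref{adjacent} together with the superinjectivity (disjoint $c,d$) trick to show $x'$ is adjacent only to $w'$, invoke Corollary \ref{nonsep} and the top-dimensionality to force a pair of pants with boundary $x',x',w'$, and exclude the one-holed Klein bottle via Scharlemann's uniqueness of the $2$-sided essential curve there. The only cosmetic difference is in placing $y'$: the paper puts it inside the subsurface bounded by $w'$ directly from $i(\lambda([y]),\lambda([x]))\neq 0$ and $i(\lambda([y]),\lambda([w]))=0$, whereas you re-run the argument for $\{w,y\}$ and appeal to uniqueness of the $\chi=-1$ side of $w'$ --- both work.
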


\begin{proof} We complete $\{w\}$ to a curve configuration $Q = \{x, w, z, t\}$ on $N$ as shown in Figure \ref{fig2} (iii). Then we complete
$Q$ to a top dimensional $P$-$S$ decomposition $P$ on $N$ in anyway we like. Let $P'$ be a set of pairwise disjoint curves representing
$\lambda([P])$. $P'$ corresponds to a top dimensional maximal simplex on $N$.

Let $x' \in \lambda([x]), w' \in \lambda([w]), z' \in \lambda([z]),  t' \in \lambda([t])$ be disjoint representatives in $P'$. Since $w$
is adjacent to $x, z, t$ w.r.t. $P$, by Lemma \ref{adjacent} $w'$ is adjacent to $x', z', t'$ w.r.t. $P'$. So, there exist two pairs of pants,
say $P_1$, $P_2$ on the surface $N$ having $w'$ as one of their boundary components such that each of $x', z', t'$ is a boundary component of $P_1$ or
$P_2$. Note that $P_1$ and $P_2$ are not necessarily essential and one of the boundary components of $P_1$ or $P_2$ can bound a M\"{o}bius band on
the surface. We know that $x$ is not adjacent to $z$ and $t$ w.r.t. $P$. Consider the curve $y$ shown in the figure. Let $y' \in \lambda([y])$ such that
$y'$ intersects minimally with elements in $P'$. To see that $x'$ is not adjacent to $z'$ w.r.t. $P'$ we consider
the following: there exists a curve $v$ such that $v$ intersects $z$ nontrivially and $v$ is disjoint from all the
other elements in $P$ and $y$. The curve $y$ intersects $x$ nontrivially and $y$ is disjoint from all the
other elements in $P$. Let $v' \in \lambda([v])$ such that $v'$ intersects minimally with elements in $P'$ and $y'$. Since $\lambda$ is superinjective,
we have that $v'$ intersects $z'$ nontrivially and $v'$ is disjoint from all the
other elements in $P'$ and $y'$. The curve $y'$ intersects $x'$ nontrivially, and $y'$ is disjoint from all the
other elements in $P'$. This implies that $x'$ cannot be adjacent to $z'$ w.r.t. $P'$. With a similar argument we can see that $x'$
is not adjacent to $t'$ w.r.t. $P'$, and $x'$ is only adjacent
to $w'$ w.r.t. $P'$. Since $x'$ is not adjacent to $z'$ and $t'$ w.r.t. $P'$, one of the
pairs of pants, say $P_1$, has $w', z', t'$ as its boundary components. Then, $P_2$ has $x'$, $w'$ and a third boundary component, say $m$.
By using Corollary \ref{nonsep}, we see that $x'$ is nonseparating. Since $x'$ is nonseparating and $x'$ is only adjacent to $w'$ w.r.t. $P'$, $x'$ is isotopic to $m$.
Now there are two choices, either $w'$ separates a torus with one hole like $w$, or $w'$ separates a Klein bottle with one hole.
Suppose $w'$ separates a Klein bottle with one hole. Since $y$ intersects $x$ nontrivially, and $y$ is disjoint from $w$, $y'$ intersects
$x'$ nontrivially, and $y'$ is disjoint from $w'$. This would imply that $y'$ is in the Klein bottle bounded by $w'$. But since $y'$ is
not isotopic to $x'$ and $x'$ is the unique two sided curve up to isotopy in that Klein bottle (see \cite{Sc}), we get a contradiction.
So, $w'$ separates a torus with one hole which has $x'$ and $y'$ in it.\end{proof}

\begin{figure}
\begin{center}
\epsfxsize=2.7in \epsfbox{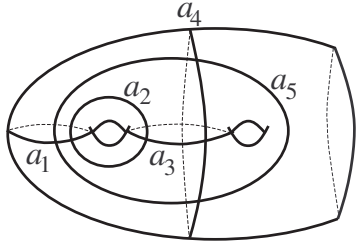} \caption{Curves
intersecting once}
\label{fig5-e}
\end{center}
\end{figure}

\begin{lemma}
\label{intone} Let $g \geq 5, n \geq 0$. Let $\lambda : \mathcal{T}(N) \rightarrow \mathcal{T}(N)$ be a superinjective
simplicial map. Let $\alpha_1$, $\alpha_2$ be two vertices of $\mathcal{T}(N)$. If $i(\alpha_1, \alpha_2)=1$,
then $i(\lambda(\alpha_1), \lambda(\alpha_2))=1$.
\end{lemma}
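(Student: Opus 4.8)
The plan is to reduce the claim to the structure already established for pairs of pants and one-holed tori. Suppose $i(\alpha_1,\alpha_2)=1$. The first step is to realize $\alpha_1,\alpha_2$ on a nice subsurface: a regular neighborhood of the union of representatives $a_1\cup a_2$ meeting in one point is a one-holed torus $T$ (here one uses that both curves are two-sided and meet once, so the neighborhood is orientable), and its boundary $w=\partial T$ is a two-sided simple closed curve which is either nontrivial or bounds a Klein bottle on the other side — in either case the configuration $\{a_1,a_2,w\}$ is of the type treated in Lemma~\ref{sep1} (after possibly relabeling, $w$ separates a one-holed torus containing $\alpha_1,\alpha_2$). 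So by Lemma~\ref{sep1} there are representatives $x'\in\lambda(\alpha_1)$, $y'\in\lambda(\alpha_2)$, $w'\in\lambda(w)$ with $w'$ separating a one-holed torus $T'$ on $N$ and $x',y'\subset T'$.

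The second step is to analyze the image configuration inside $T'$. Since $\lambda$ is injective (Lemma~\ref{inj}), $x'$ and $y'$ are distinct isotopy classes of two-sided curves inside the one-holed torus $T'$; since $i(\alpha_1,\alpha_2)\neq 0$ implies $i(\lambda(\alpha_1),\lambda(\alpha_2))\neq 0$ by superinjectivity, $x'$ and $y'$ intersect essentially. But two distinct essential simple closed curves on a one-holed torus intersect in at least one point, and the minimal intersection number of two distinct nonseparating curves on a one-holed torus is exactly $1$ (any two such curves are related by the $\mathrm{SL}_2(\Z)$ action on slopes, and $i = |p_1q_2-p_2q_1|\ge 1$, with equality achievable and forced only when... — more carefully, the only candidates are the nonseparating curves, since the unique separating curve is $\partial T'$ itself, which is not essential as a vertex if $w'$ is the boundary). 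The key point to extract here is that $x'$ and $y'$ are both nonseparating in $N$ (which follows from Corollary~\ref{nonsep}, or directly from the proof of Lemma~\ref{sep1}), hence both nonperipheral nonseparating in $T'$, so $i(x',y')\ge 1$; we must upgrade this to equality.

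The third step, which I expect to be the main obstacle, is ruling out $i(\lambda(\alpha_1),\lambda(\alpha_2))\ge 2$. The containment $x',y'\subset T'$ already bounds the intersection by the complexity of $T'$, but a priori two curves on a one-holed torus can intersect arbitrarily many times. The right tool is a ``third curve'' argument: on the original surface choose a curve $\gamma$ with $i(\gamma,\alpha_1)=1$ and $i(\gamma,\alpha_2)=0$ (e.g.\ a curve that together with $\alpha_1$ fills $T$ and is disjoint from $\alpha_2$ after an isotopy — on a one-holed torus, if $\alpha_1,\alpha_2$ have slopes $1/0,0/1$ then $\gamma$ of slope $1/1$ works, meeting $\alpha_1$ once, but meeting $\alpha_2$ once too; so instead take $\gamma$ outside $T$ linked with $\alpha_1$ through the complement). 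Then superinjectivity forces $i(\lambda(\gamma),\lambda(\alpha_1))\neq0$, $i(\lambda(\gamma),\lambda(\alpha_2))=0$, and $\lambda(\gamma)$ lies (up to isotopy) in $N\setminus y'$; combined with $x',y'\subset T'$ and a change-of-coordinates / Dehn twist argument — if $i(x',y')=k\ge 2$, then $T_{x'}^{\pm1}$ or a suitable mapping class moves $y'$ to a curve whose image under $\lambda^{-1}$-type constraints is incompatible — one derives a contradiction. Concretely, I would exhibit, using that $i(\alpha_1,\alpha_2)=1$, a collection of curves $\{\delta_j\}$ whose pairwise intersection pattern with $\alpha_1,\alpha_2$ pins down $i(\alpha_1,\alpha_2)$ combinatorially (for instance, the existence of a curve $\epsilon$ with $i(\epsilon,\alpha_1)=i(\epsilon,\alpha_2)=1$ and $i(\epsilon,\alpha_1\cdot\alpha_2\text{-region})$ constrained, which exists iff the intersection is exactly $1$), transport this via $\lambda$ and superinjectivity, and conclude $i(x',y')=1$. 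The cleanest formulation: show $x'$ and $y'$ lie in a common one-holed torus $T'$ and are both nonseparating there, hence $i(x',y')=1$ unless they are isotopic, but they are not isotopic by injectivity — this already gives the result provided one knows the only nonseparating curves in a one-holed torus pairwise distinct intersect exactly once, which is false in general, so the third-curve argument genuinely is needed to force $y'$ (and not merely some curve) to be the specific "slope $0/1$ relative to $x'$" curve in $T'$. I would close by invoking the complete classification of simple closed curves on a one-holed torus together with the disjointness data $i(\lambda(\gamma),\lambda(\alpha_2))=0$ to identify $y'$ uniquely, giving $i(\lambda(\alpha_1),\lambda(\alpha_2))=1$.
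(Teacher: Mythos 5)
Your first two steps essentially reproduce the paper's reduction: take the boundary $w$ of a regular neighborhood of $a_1\cup a_2$ (nontrivial because $g\geq 5$), and apply Lemma \ref{sep1} to obtain representatives $x'\in\lambda(\alpha_1)$, $y'\in\lambda(\alpha_2)$ lying in a one-holed torus $T'$ bounded by a representative $w'\in\lambda([w])$. The genuine gap is the third step, which you yourself flag as the main obstacle: nothing you actually write rules out $i(x',y')\geq 2$. As you correctly observe, two distinct nonseparating curves in a one-holed torus can intersect arbitrarily often, and the single auxiliary curve $\gamma$ with $i(\gamma,\alpha_1)\neq 0$ and $i(\gamma,\alpha_2)=0$ does not suffice: for any value of $i(x',y')$ there is a curve disjoint from $y'$ and meeting $x'$ essentially (close up an arc of $T'$ parallel to $y'$ through the complement), so transporting that pattern through $\lambda$ yields no contradiction. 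The passage beginning ``Concretely, I would exhibit \dots a collection of curves $\{\delta_j\}$'' is exactly the missing content: no such collection is produced, no combinatorial characterization of intersection number one is stated, and no argument identifying $y'$ inside $T'$ is carried out. So the conclusion $i(\lambda(\alpha_1),\lambda(\alpha_2))=1$ is asserted rather than proved.

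The paper closes this gap by completing $a_1,a_2$ to the specific five-curve configuration $\{a_1,a_2,a_3,a_4,a_5\}$ of Figure \ref{fig5-e} before applying $\lambda$, where $a_4$ plays the role of your $w$. Since $\lambda$ is superinjective, one has $i(\lambda(\alpha_i),\lambda(\alpha_j))=0$ if and only if $a_i\cap a_j=\emptyset$, i.e.\ the full disjointness/intersection pattern of all five curves is preserved in both directions; Lemma \ref{sep1} places $a_1',a_2'$ in the one-holed torus cut off by $a_4'$, and then the argument of Ivanov's Lemma~1 in \cite{Iv1} uses the preserved pattern of the two extra curves $a_3',a_5'$ to force $i(a_1',a_2')=1$. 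To repair your proof you would need to introduce these auxiliary curves (or an equivalent configuration characterizing intersection number one), verify that superinjectivity preserves their pattern, and then carry out, or correctly cite, the resulting computation in $T'$.
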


\begin{proof} Let $\alpha_1$, $\alpha_2$ be two vertices of $\mathcal{T}(N)$ such that $i(\alpha_1, \alpha_2)=1$. Let $a_1, a_2$ be minimally intersecting 
representatives of $\alpha_1$, $\alpha_2$ respectively. We complete $a_1, a_2$ to a curve configuration $\{a_1, a_2, a_3, a_4, a_5\}$ as shown in 
Figure \ref{fig5-e}. Let $\alpha_i = [a_i]$ for $i= 3, 4, 5$. We have $i(\alpha_{i},
\alpha_{j})=0$ if and only if the curves $a_i, a_ j$ are disjoint. Since $\lambda$ is superinjective we have $i(\lambda(\alpha_{i}),
\lambda(\alpha_{j}))=0$ if and only if the curves $a_i, a_ j$ are disjoint. By Lemma \ref{sep1} there exist 
$a_1' \in \lambda([a_1]), a_2' \in \lambda([a_2]), a_4'  \in \lambda([a_4])$
such that $a_4'$ is a separating curve on $N$ and it separates a torus with one hole containing the curves $a_1', a_2'$. Then by using the intersection information 
that $i(\lambda(\alpha_{i}), \lambda(\alpha_{j}))=0$ if and only if the curves $a_i, a_ j$ are disjoint, as in the proof of Ivanov's Lemma 1 given in \cite{Iv1}, 
we see that $i(\lambda(\alpha_1), \lambda(\alpha_2))=1$.\end{proof}
 
\begin{lemma}
\label{piece2-a} Let $g \geq 6$. Suppose that $g$ is even. Let $R \subset N$ be a projective plane with two boundary components, $a, b$
where $a, b$ are both nonseparating simple closed curves on $N$ such that the complement of $R$ is nonorientable. There exist
$a' \in \lambda([a])$ and $b' \in \lambda([b])$ such that $a'$ and $b'$ are nonseparating and they are the boundary components of
a projective plane with two boundary components on $N$.\end{lemma}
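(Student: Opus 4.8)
The plan is to follow the template of Lemmas \ref{piece1} and \ref{sep1}: extend $\{a,b\}$ to an explicit finite curve configuration, push it through $\lambda$, and recover the topology between $\lambda([a])$ and $\lambda([b])$ from combinatorial constraints. First I would dispose of the easy half of the conclusion. Cutting $N$ along $a$ leaves the nonorientable piece $R$ intact, so $a$ is nonseparating with nonorientable complement, and likewise $b$; hence by Corollary \ref{nonsep} the classes $\lambda([a])$ and $\lambda([b])$ are represented by nonseparating simple closed curves, and since $i([a],[b])=0$ they have disjoint representatives $a'$ and $b'$, which by Lemma \ref{inj} are non-isotopic. It remains to identify the component $R'$ of $N$ cut along $a'\cup b'$ lying between them as a projective plane with two boundary components having $a'\cup b'$ as its whole boundary and nonorientable complement.

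For this I would first set up a model. Using $g\ge 6$ and $g$ even, I would choose a top-dimensional $P$-$S$ decomposition $P_0$ of the complementary surface $N\setminus R$ (which is nonorientable of odd genus $g-3\ge 3$), so that $P=P_0\cup\{a,b\}$ is a $P$-$S$ decomposition of $N$ in which $R$ itself is one of the pieces; thus $[P]$ is a \emph{maximal} simplex of $\mathcal{T}(N)$, but, by Lemma \ref{tp}, of codimension one rather than top dimensional (no top-dimensional $P$-$S$ decomposition of an even-genus $N$ contains a projective-plane-with-two-holes piece). Into this picture I would further insert auxiliary curves supported in a collar of $a\cup b$ on the $N\setminus R$ side: a curve $c$ meeting only $a$, a curve $d$ meeting only $b$, a curve meeting $a$ in a single point (so that Lemma \ref{intone} applies), and a separating curve cutting off a one-holed torus containing two of the auxiliary curves (so that Lemma \ref{sep1} applies), all chosen disjoint from the pieces of $P_0$ they do not need to cross. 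Since $\lambda$ is injective and sends top-dimensional maximal simplices to top-dimensional maximal simplices, applying Lemma \ref{adjacent} to top-dimensional decompositions obtained by modifying $P$ on the $N\setminus R$ side, together with superinjectivity applied to $c$, $d$, and the intersection-one and one-holed-torus data, should force the image curves $a',b'$, together with the images of the auxiliary curves and of $P_0$, into a configuration of the same combinatorial shape on the $\lambda$-side.

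From this one pins down $R'$ by elimination. Tracking Euler characteristics: $[P]$ is a maximal simplex of codimension one, so $\lambda([P])$ is a simplex of the same dimension, and a bookkeeping of the topological types of the pieces of $N$ cut along (a maximal extension of) $\lambda([P])$ shows that either there are exactly two projective-plane-with-two-holes pieces, one of which is $R'$, or one piece has Euler characteristic $-2$; the auxiliary curves and their prescribed intersection pattern, fed through superinjectivity, rule out the latter by exhibiting an essential two-sided curve that would have to lie strictly between $a'$ and $b'$ — impossible, since inside $R$ there is no essential two-sided curve at all, so nothing in the configuration maps there. Together with Lemma \ref{inj} (killing the annulus) and, for the nonorientable $\chi=-2$ competitor, the use of Lemma \ref{sep1} (a separating one-holed torus of the configuration cannot be absorbed into a one-holed Klein bottle region) and the uniqueness up to isotopy of the two-sided curve in a one-holed Klein bottle (\cite{Sc}), this forces $R'$ to have exactly two boundary components, no essential two-sided curve, and be nonorientable — hence a projective plane with two boundary components bounded by $a'\cup b'$. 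Nonorientability of $N\setminus R'$ is then automatic, since $N$ is nonorientable and $R'$ carries only one cross-cap, leaving nonorientable genus $g-1\ge 5$.

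The main obstacle is precisely the point flagged in the second paragraph: unlike in Lemmas \ref{piece1} and \ref{sep1}, a projective-plane-with-two-holes piece cannot occur in a top-dimensional $P$-$S$ decomposition when $g$ is even, so Lemma \ref{adjacent} is not directly available for a decomposition containing both $a$ and $b$, and moreover $R$ itself contains no two-sided curve to propagate. One must therefore work with the codimension-one maximal simplex $[P]$, design the auxiliary configuration carefully enough that superinjectivity alone — together with Lemmas \ref{inj}, \ref{sep1}, \ref{intone} and Corollary \ref{nonsep} — determines the topology between $\lambda([a])$ and $\lambda([b])$, and separate the projective-plane-with-two-holes answer from the one-holed-Klein-bottle-flavoured competitors, which is the delicate case.
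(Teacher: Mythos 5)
Your reduction of the easy half (nonseparating images via Corollary \ref{nonsep}, disjoint nonisotopic representatives via Lemma \ref{inj}) is fine, but the heart of the lemma --- showing that $a'$ and $b'$ together bound a twice-holed projective plane --- is not actually carried out by your plan, and the mechanism you propose would not carry it out. You correctly flag the obstruction (for even $g$ no top-dimensional $P$-$S$ decomposition has a piece with boundary $a\cup b$, so Lemma \ref{adjacent} never sees the pair $(a,b)$ jointly, and $R$ contains no two-sided curve to propagate), but your remedy does not overcome it. First, $\lambda([P])$ for the codimension-one maximal simplex $[P_0\cup\{a,b\}]$ is only known to be a simplex of the same dimension; even after extending it to a maximal simplex (hence a $P$-$S$ decomposition), its pieces have boundaries among \emph{all} the curves of the extension, so the Euler-characteristic bookkeeping cannot by itself produce a piece whose boundary is exactly $a'\cup b'$. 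Second, speaking of ``the component $R'$ of $N$ cut along $a'\cup b'$ lying between them'' presupposes that $a'\cup b'$ separates $N$, which is not known at that stage and is not formally preserved by superinjectivity. Third, the elimination of the $\chi=-2$ competitor by ``exhibiting an essential two-sided curve that would have to lie strictly between $a'$ and $b'$'' is circular: none of your auxiliary curves can be forced to map into that region (nothing maps there if the conclusion is true), and their intersection pattern, all supported in a collar of $a\cup b$ on the $N\setminus R$ side, carries no information about what sits on the far side of $a'\cup b'$ --- which is exactly the crosscap one must detect.

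The paper's proof supplies precisely this missing mechanism, and it is global rather than collar-local: it completes $a,b$ to a long chain $a,b,a_1,\dots,a_{g-2}$ of curves meeting consecutively in one point, so that by Lemma \ref{intone} and Lemma \ref{piece1} (plus preservation of disjointness) the image configuration fills an explicit orientable subsurface of genus $\frac{g-2}{2}$ with two boundary components $u,v$, with the image curves in standard position on it; it then works with a genuinely top-dimensional decomposition $P$ (containing $b$ and the auxiliary curves $e,p$, but not $a$) and uses Lemma \ref{adjacent} to identify $p'$ and $e'$ with explicit curves, concluding that $p'$ bounds a one-holed Klein bottle containing $u$ and $v$; finally Scharlemann's classification of curves in the one-holed Klein bottle \cite{Sc}, together with $i(\lambda([a]),\lambda([e]))\neq 0$, forces $u$ and $v$ to bound M\"obius bands, and this is what produces the twice-holed projective plane cobounded by $a'$ and $b'$. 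Some device of this kind --- an image configuration that fills enough of $N$ to determine the complement of $a'\cup b'$ --- is needed; the few collar curves and the dimension count in your proposal do not reach that far.
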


\begin{figure}[htb]
\begin{center}
\epsfxsize=3.1in \epsfbox{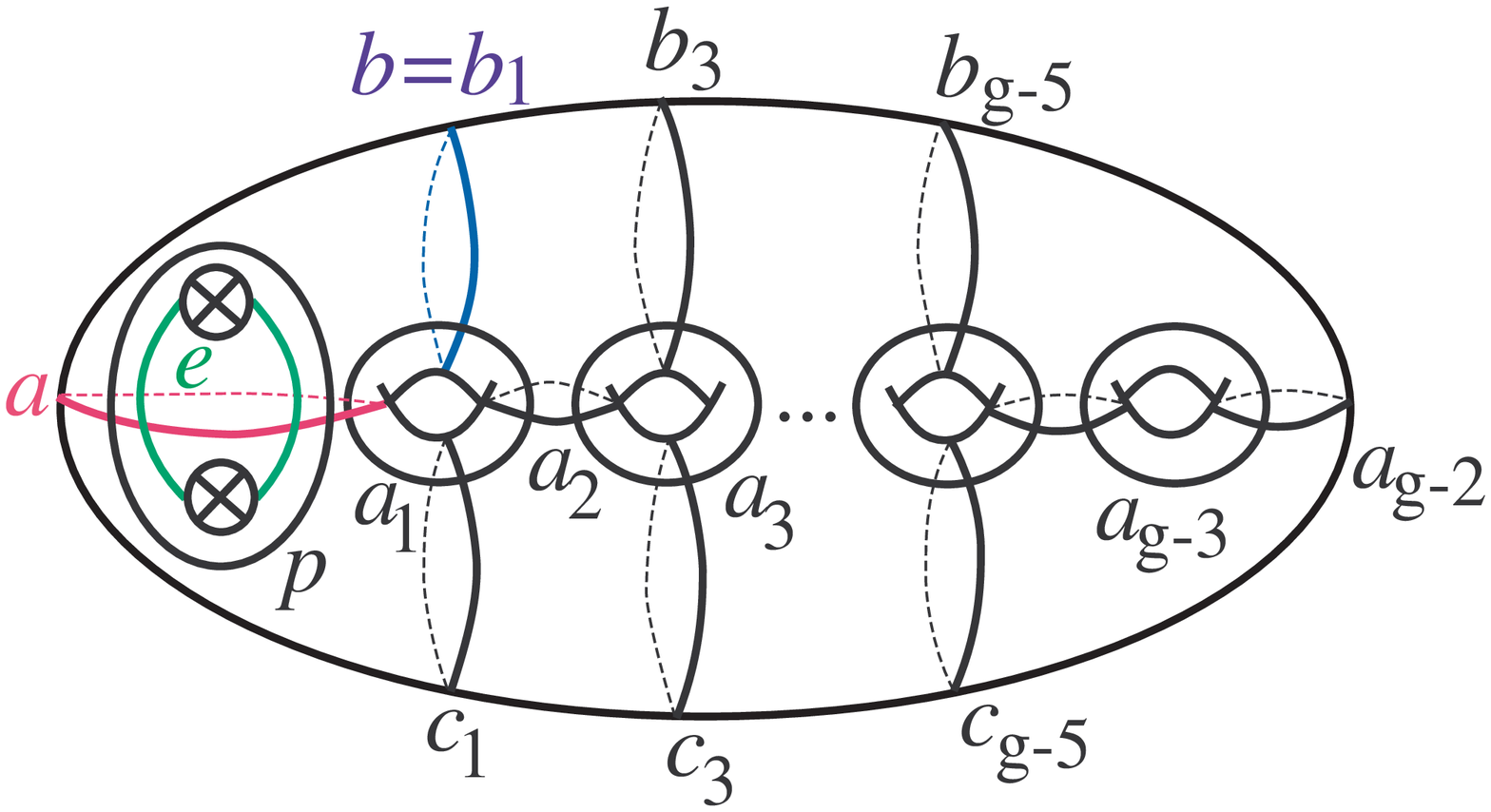} \hspace{-0.3cm} \epsfxsize=2.87in \epsfbox{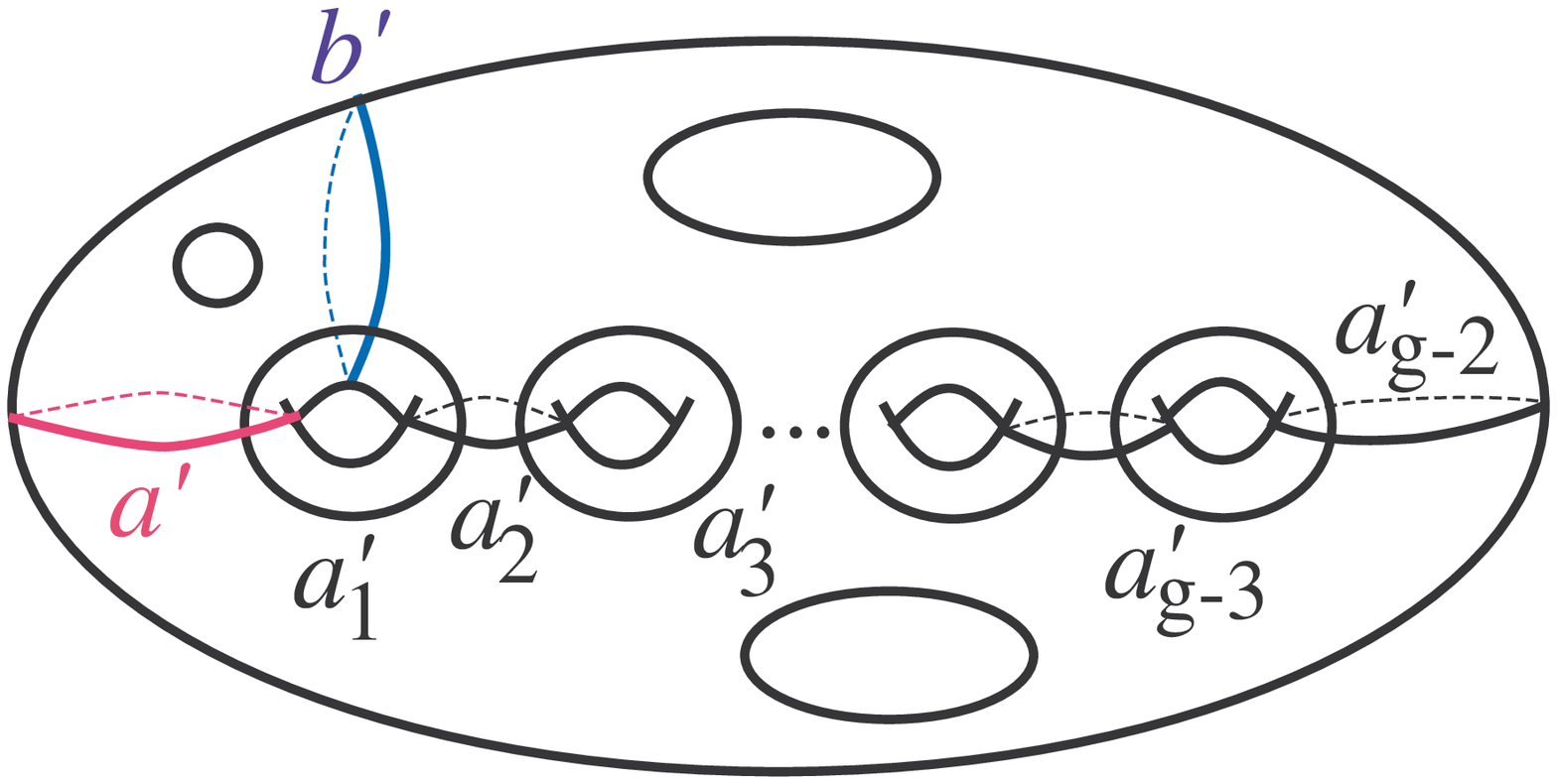}

\hspace{-0.9cm} (i) \hspace{6.5cm} (ii)

\epsfxsize=3.1in \epsfbox{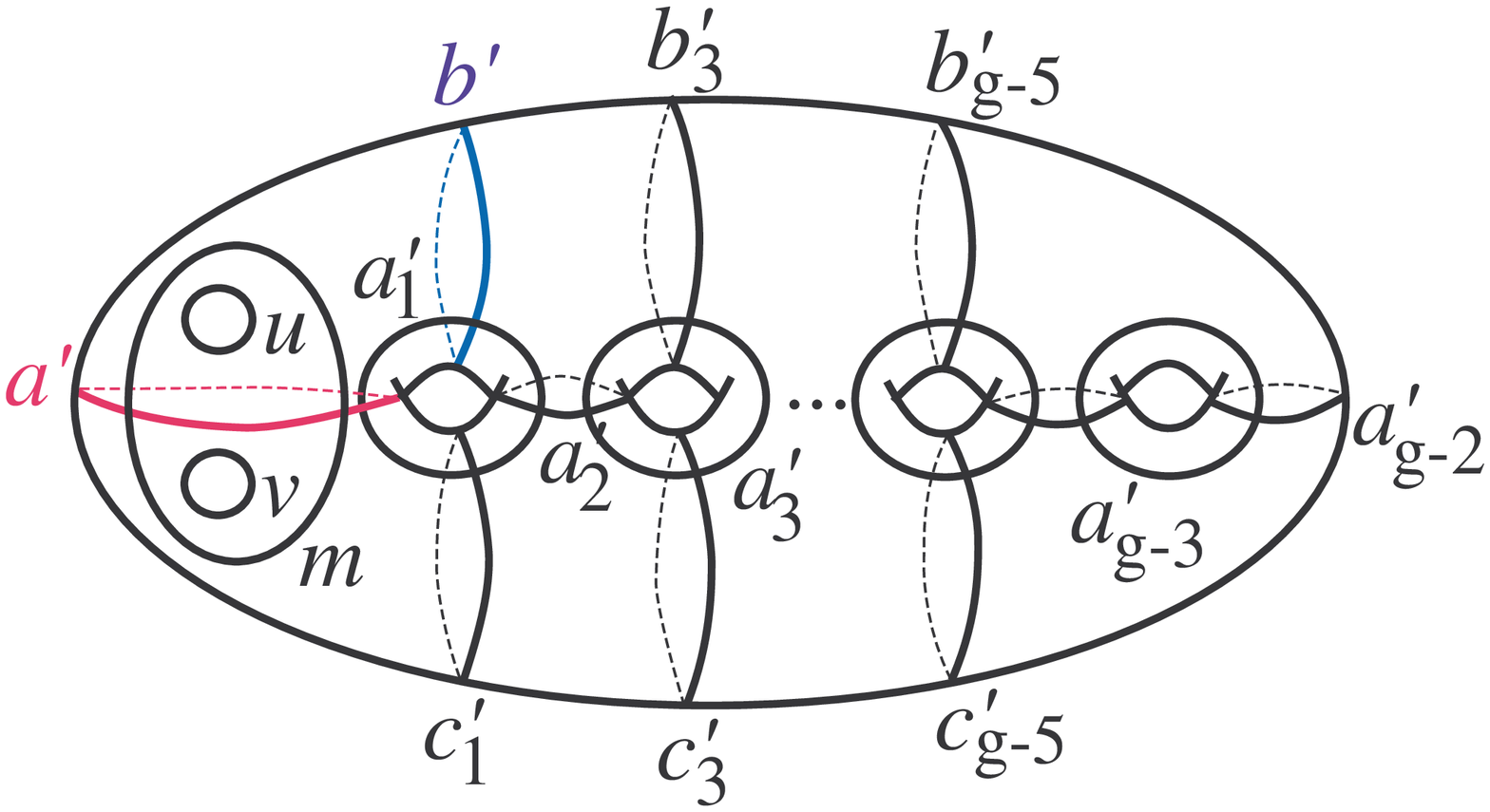}

\hspace{-1cm} (iii)

\caption{Curves on a closed surface of even genus, $g \geq 6$} \label{fig2a}
\end{center}
\end{figure}

\begin{proof} {\bf Case (i):} Assume that $N$ is closed. We complete $a, b$ to a curve configuration $\mathcal{C} = \{a, b, e, p, a_1,$
$a_2, \cdots, a_{g-2}, b_1, b_3,$ $b_5, \cdots, b_{g-5}, c_1, c_3, \cdots, c_{g-5} \}$ as shown in Figure \ref{fig2a} (i). Let
$P = \{e, p, a_2, a_4, a_6, \cdots, $ $a_{g-2}, b, b_3, b_5, \cdots, b_{g-5}, c_1, c_3, \cdots, c_{g-5}\}$. $P$ is a top dimensional
$P$-$S$ decomposition on $N$.

We have $i([a], [a_1]) =1, i([b], [a_1]) =1, i([a], [a_i]) =0, i([b], [a_i]) =0,$ for all $i= 2, 3, \cdots, g-2$, $i([a_j], [a_{j+1}]) =1$ for all
$j= 1, 2, \cdots, g-3$, and $i([a_j], [a_k]) =0$ for all $k \neq j \pm 1$. Let
$a' \in \lambda([a]), b' \in \lambda([b]), e' \in \lambda([e]), p' \in \lambda([p]), a'_1  \in \lambda([a_1]),$
$a'_2  \in \lambda([a_2]), \cdots, a_{g-2}' \in \lambda([a_{g-2}]), b'_3  \in \lambda([b_3]), b'_5  \in \lambda([b_5]),$
$ \cdots, b_{g-5}' \in \lambda([b_{g-5}]), c'_1  \in \lambda([c_1]), c'_3  \in \lambda([c_3]), \cdots,$ $ c_{g-5}' \in \lambda([c_{g-5}])$
be minimally intersecting representatives. By Lemma \ref{intone} geometric intersection number one is preserved. So,
$i([a'], [a'_1]) =1, i([b'], [a'_1]) =1, i([a'], [a'_i]) =0,  i([b'], [a'_i]) =0$ for all $i= 2, 3, \cdots, g-2, i([a'_j], [a'_{j+1}]) =1$ for all
$j= 1, 2, \cdots, g-3,$ and $i([a'_j], [a'_k]) =0$ for all $k \neq j \pm 1$. A regular neighborhood of $a' \cup b'\cup a_1' \cup \cdots \cup a_{g-2}'$
is an orientable surface of genus $\frac{g-2}{2}$ with three boundary components as shown in Figure \ref{fig2a} (ii). If three nonseparating
nontrivial curves (for example $b, a_2, b_3$) in $P$ bound a pair of pants, then it is easy to see that they bound a pair of pants
on a genus two orientable subsurface $S$ of $N$. So, by Lemma \ref{piece1}, they correspond to curves ($b', a_2', b_3'$ in our example)
which bound a pair of pants. We also know that geometric intersection zero and one are preserved. These imply that there is a subsurface
$R$ of $N$ such that $R$ is an orientable surface of genus $\frac{g-2}{2}$ with two boundary components $u, v$, and $R$ has
$a', b', a_1', \cdots, a_{g-2}', b_3', \cdots, b_{g-5}', c_1', \cdots, c_{g-5}'$ on it as shown in Figure \ref{fig2a} (iii).
Let $m$ be the curve that is shown in the figure. We will show that $p'$ is isotopic to $m$. Let
$P'= \{e', p', b', a'_2, a'_4, a'_6, \cdots, $ $a'_{g-2}, b'_3, b'_5, \cdots, b'_{g-5}, c'_1, c'_3, \cdots, c'_{g-5}\}$. We know that $P'$
is a top dimensional $P$-$S$ decomposition on $N$.
The curve $b$ is adjacent to four curves $p, c_1, b_3, a_2$ w.r.t. $P$. By Lemma \ref{adjacent} $b'$ is adjacent to four curves
$p', c'_1, b'_3, a'_2$ w.r.t. $P'$. Since there is already a pair of pants which has $b', a'_2, b_3'$ on its boundary,
there exists a pair of pants having $b', p', c_1'$ on its boundary. Since $p$ is disjoint from each of $b, a_1, c_1$, we see that $p'$
is disjoint from each of $b', a'_1, c'_1$. We also know that $p'$ is disjoint from all $a'_i, b'_j, c'_k$ in the figure. Since $p'$ is
disjoint from each of $b', a'_i, b'_j, c'_k$, and there exists a pair of pants having $b', p', c_1'$ on its boundary, we see that $p'$ is
isotopic to $m$. Since $p$ is adjacent to $e$ w.r.t. $P$, $p'$ is adjacent to $e'$  w.r.t. $P'$. So, there exists a pair of pants $Q$ on $N$
containing $p'$ and $e'$ on its boundary. Let $x$ be the third boundary component of $Q$. Since there are no other curves of $P'$ which lie
on the side of $p'$ containing $e'$, $x$ must be isotopic to $e'$. Since $N$ is a closed nonorientable surface of genus $g$ and one side of
$p'$ is an orientable surface of genus $\frac{g-2}{2}$, we see that $p'$ bounds a nonorientable surface of genus two with one boundary
component on the other side.

Now, consider the curves $u, v$ in Figure \ref{fig2a} (iii). They are two sided curves and they do lie in the Klein bottle with one boundary
component, $K$, having $m$ as the boundary. None of them can be bounding a disk because $a'$ is not isotopic to any of $b'$ or $c_1'$. So,
the curves $u, v$ cannot be isotopic to $m$. Hence, they either both have to bound M\"{o}bius bands or they have to be both nonseparating,
isotopic to each other and to $e'$, as up to isotopy that is the only nonseparating two sided curve in $K$, see \cite{Sc}. The second case gives a
contradiction as $a'$ should intersect nontrivially with $e'$, as $a$ intersects nontrivially with $e$. So, the first case happens, i.e.
$u, v$ both bound M\"{o}bius bands. Hence, $a'$ and $b'$ are nonseparating and they are the boundary components of a projective plane with
two boundary components on $N$.

{\bf Case (ii):} Assume that $N$ has boundary. We complete $a, b$ to a curve configuration
$\mathcal{C} = \{a, b, e, p, a_1,$ $a_2, \cdots, a_{g-3}, b_3, b_5, \cdots, $ $b_{g-3}, c_1, c_3, \cdots, c_{g-3}, d_1,$
$ d_2, \cdots, d_{n-1} \}$ as shown in Figure \ref{fig1b} (i). Let $P = \{e, p, b, a_2, a_4, a_6, \cdots, $
$a_{g-4}, b_3, b_5, \cdots,$ $ b_{g-3}, c_1, c_3, \cdots,$ $ c_{g-3}, d_1, d_2, \cdots, d_{n-1} \}$. $P$ is a top dimensional
$P$-$S$ decomposition on $N$. Let $a' \in \lambda([a]), b' \in \lambda([b]), e' \in \lambda([e]), p' \in \lambda([p]),$
$ a'_1  \in \lambda([a_1]), a'_2  \in \lambda([a_2]), \cdots, $ $ a_{g-3}' \in \lambda([a_{g-3}]),$
$ b'_3  \in \lambda([b_3]), b'_5  \in \lambda([b_5]), \cdots, b_{g-3}' \in \lambda([b_{g-3}]), c'_1  \in \lambda([c_1]),$
$ c'_3  \in \lambda([c_3]), \cdots, c_{g-3}' \in \lambda([c_{g-3}]), d'_1 \in \lambda([d_1]),$ $ d'_2 \in \lambda([d_2]),$
$ \cdots, d_{n-1}' \in \lambda([d_{n-1}])$ be minimally intersecting representatives.

\begin{figure}[htb]
\begin{center}
\epsfxsize=3.25in \epsfbox{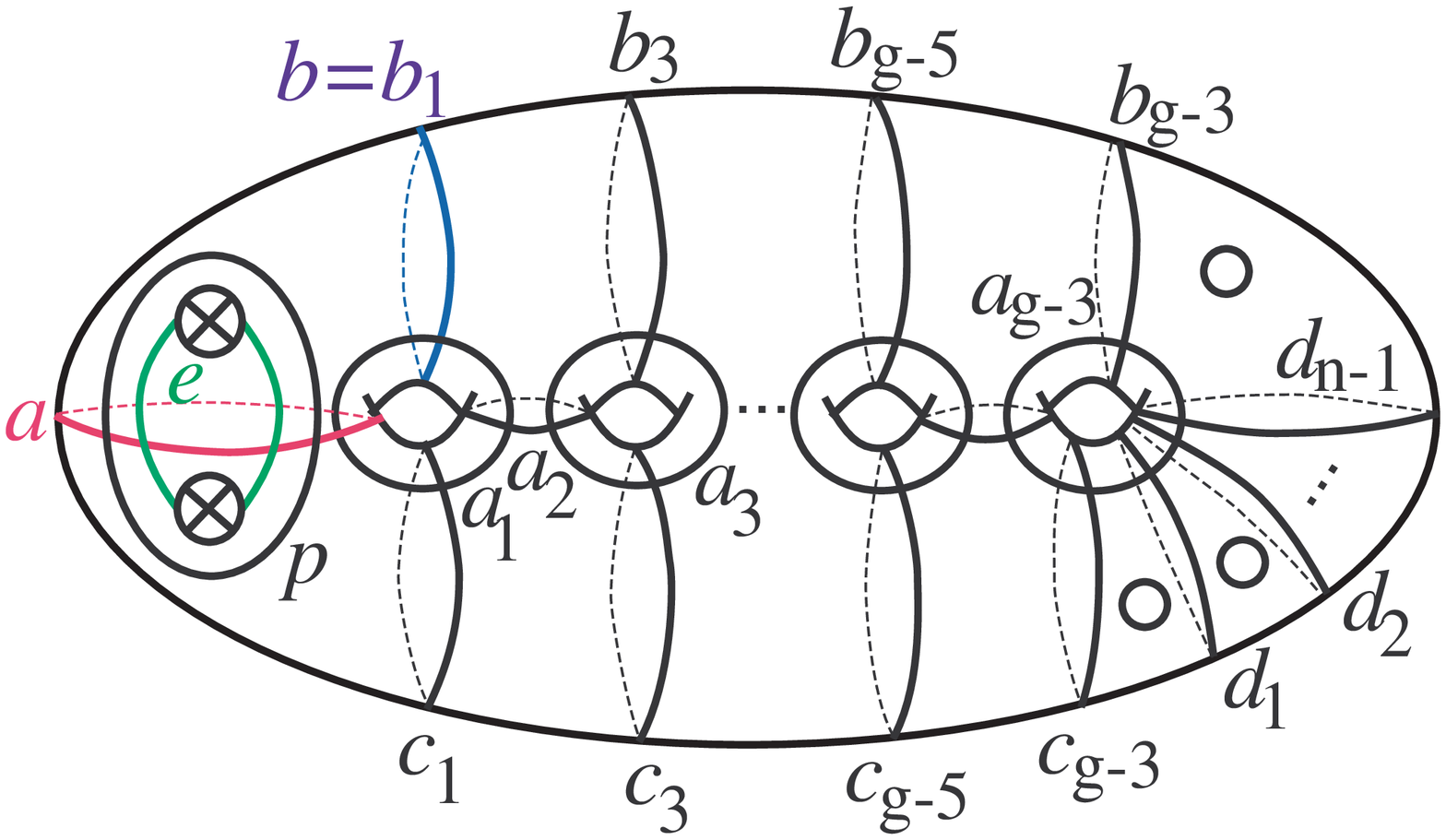} \hspace{-1.3cm} \epsfxsize=3.25in \epsfbox{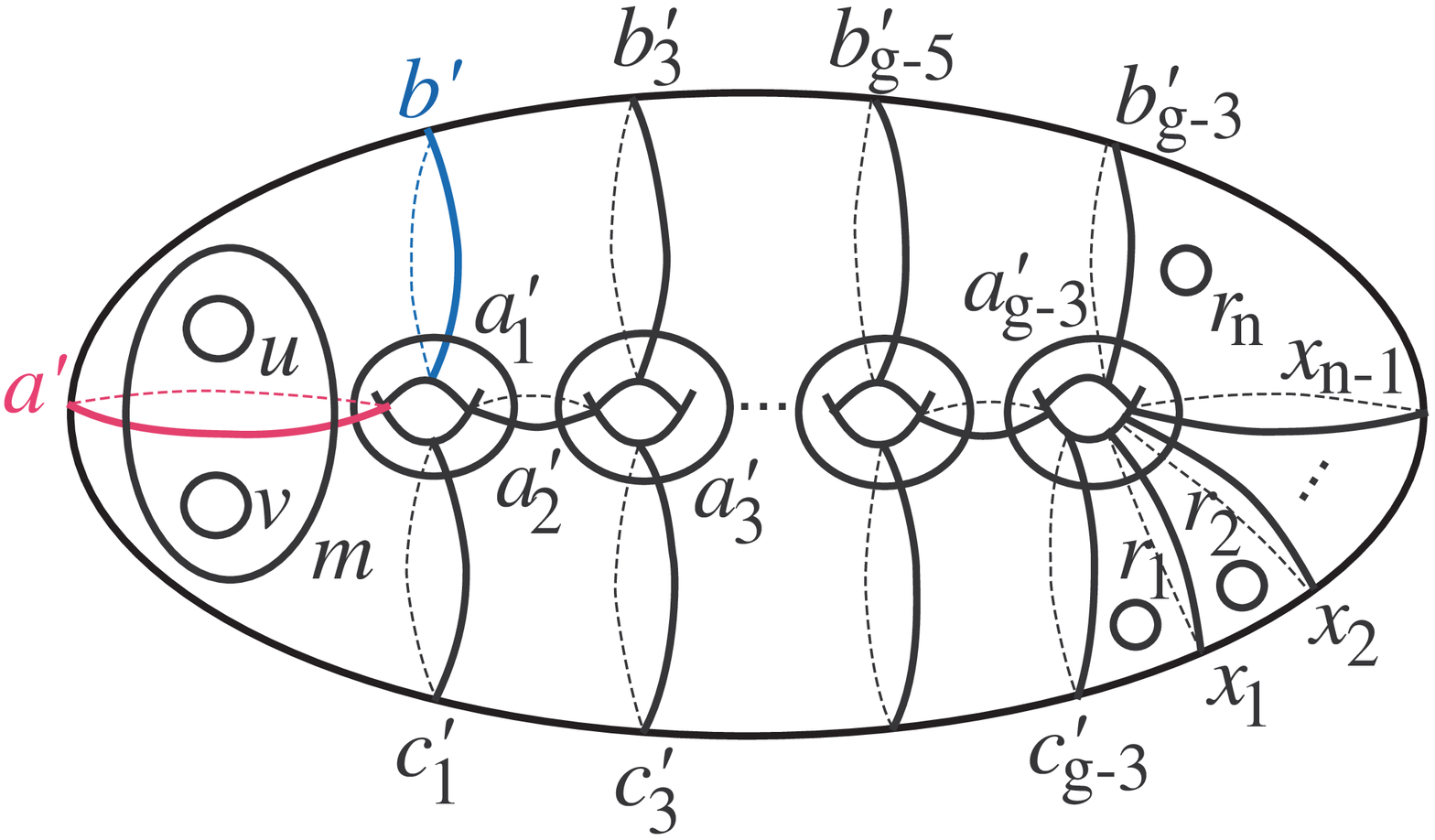}

\hspace{-1cm} (i) \hspace{6.5cm} (ii)
\caption{Curves on a surface with boundary, even genus, $g \geq 6$} \label{fig1b}
\end{center}
\end{figure}

By using that geometric intersection number zero and one is preserved we can see that a regular neighborhood of the union of all the
elements in $\{ a', b', a'_1,$ $a'_2, \cdots, a'_{g-3}, b'_3,$ $b'_5, \cdots,$ $b'_{g-3}, c'_1, c'_3, \cdots, c'_{g-3}, d'_1, d'_2,$
$ \cdots, d'_{n-1} \}$ is an orientable surface of genus $\frac{g-2}{2}$ with several boundary components. As in the first case, by
using that if three nonseparating nontrivial curves bound a pair of pants then they correspond to curves which bound a pair of pants,
we see that there is a subsurface $R$ of $N$ such that $R$ is an orientable surface of genus $\frac{g-2}{2}$ with $n+2$ boundary
components, and $R$ has
$a', b', a_1', \cdots, a_{g-3}', b_3', \cdots, b_{g-3}',$ $c_1', \cdots, c_{g-3}'$ on it as shown in Figure \ref{fig1b} (ii).
It is easy to see that the set $\{x_1, \cdots, x_{n-1}\}$, where the curves $x_i$ are as shown in the figure, correspond to the set
$\{d'_1, d'_2, \cdots, d'_{n-1} \}$.

Let $u, v, r_1, r_2, \cdots, r_n$ be the boundary components of $R$, and $m$ be the curve shown in the figure. We will show that $p'$
is isotopic to $m$. We have a top dimensional $P$-$S$ decomposition $P'= \{e', p', b', a'_2, a'_4, a'_6, \cdots, $
$a'_{g-4}, b'_3, b'_5, \cdots, b'_{g-3},$ $ c'_1, c'_3, \cdots, c'_{g-3}, d'_1, d'_2,$ $ \cdots, d'_{n-1}\}$. The curve $b$ is adjacent
to four curves $p, c_1, b_3, a_2$ w.r.t. $P$. By Lemma \ref{adjacent} $b'$ is adjacent to four curves $p', c'_1, b'_3, a'_2$ w.r.t. $P'$.
Since there is already a pair of pants which has $b', a'_2, b_3'$ on its boundary,
there exists a pair of pants having $b', p', c_1'$ on its boundary. Since $p$ is disjoint from each of $b, a_1, c_1$, we see that $p'$
is disjoint from each of $b', a'_1, c'_1$. We also know that $p'$ is disjoint from all $a'_i, b'_j, c'_k, d'_m$ in the figure. Since $p'$
is disjoint from each of $b', a'_i, b'_j, c'_k, d'_m$ and there exists a pair of pants having $b', p', c_1'$ on its boundary, we see that
$p'$ is isotopic to $m$.
Since $p$ is adjacent to $e$ w.r.t. $P$, $p'$ is adjacent to $e'$  w.r.t. $P'$. So, there exists a pair of pants $Q$ on $N$ containing $p'$
and $e'$ on its boundary. Let $x$ be the third boundary component of $Q$. The curve $e$ is not adjacent to $d_1$ w.r.t. $P$. There are two
2-sided nonisotopic disjoint curves $\xi_1, \xi_2$ such that $\xi_1$ intersects only $e$ and $p$ nontrivially in $P$, and $\xi_2$ intersects
only $d_1$ nontrivially in $P$. Let $\xi'_1, \xi'_2$ be the corresponding disjoint curves which have minimal intersection with the elements
of $P'$. Then $\xi'_1$ intersects only $e', p'$ nontrivially in $P'$, and $\xi'_2$ intersects only $d'_1$ nontrivially in $P'$. So, $e'$
cannot be adjacent to $d'_1$ w.r.t. $P'$. Similar arguments show that $e'$ cannot be adjacent to any of
$b'_{g-3}, c'_{g-3}, d'_2, d'_3, \cdots, d'_{n-1} $ w.r.t. $P'$. This implies that $e'$ is adjacent to only $p'$ in $P'$. It is easy to
see that $e'$ is a nonseparating curve. If $e'$ was a separating curve then since we have all the curves in $P'$ on the surface and $P'$
is a a top dimensional $P$-$S$ decomposition, our surface $N$ would have more than $n$ boundary components and that gives a contradiction.
Now we can see that $x$ is isotopic to $e'$, all the $r_i$'s are isotopic to boundary components of
$N$, and $p'$ bounds a nonorientable surface of genus 2 with one boundary component on its side containing $e'$.

Now, similar to the first case, consider the curves $u, v$ in Figure \ref{fig1b} (ii). They are 2 sided curves and they do lie in the Klein
bottle with one boundary component, $K$, having $m$ as the boundary. As in the first case, none of them can be bounding a disk because $a'$
is not isotopic to any of $b'$ or $c_1'$. So, the curves $u, v$ cannot be isotopic to $m$. Hence, they either have to both bound M\"{o}bius
bands or they have to be both nonseparating and isotopic to each other and to $e'$. The second case gives a contradiction as $a'$ should
intersect nontrivially with $e'$, as $a$ intersects nontrivially with $e$. So, the first case happens, i.e. $u, v$ both bound M\"{o}bius
bands. Hence, $a'$ and $b'$ are nonseparating and they are the boundary components of a projective plane with two boundary components on $N$.
Since $d_1$ is adjacent to $d_2$ w.r.t. $P$, $d'_1$ is adjacent to $d'_2$ w.r.t. $P'$. By using other similar adjacency relations, now it is
easy to see that $x_i$ is isotopic to $d'_i$ for each $i$.\end{proof}\\

In the proof of Case (ii) in the above lemma we also proved that if two nonseparating curves in $P$ cut a pair of pants $M$ on $N$
such that the third boundary component of $M$ is a boundary component of $N$ and the complement of $M$ is nonorientable, then the corresponding
nonseparating curves in $P'$ are also the boundary components of a pair of pants such that the third boundary component is a boundary
component of $N$. Therefore we have also proved the following:

\begin{lemma} \label{piece2-aa} Suppose that $g \geq 6, n \geq 1$ and $g$ is even. Let $a, b$ be two nonseperating curves on $N$ such that together with
a boundary component of $N$ they bound a pair of pants $P$ on $N$, and the complement of $P$ is connected and nonorientable. There exists $a'  \in \lambda([a])$
and $b'  \in \lambda([b])$ such that $a'$ and $b'$ are nonseparating and together with a boundary component of $N$ they bound a pair of pants
on $N$.\end{lemma}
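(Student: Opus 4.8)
The statement is exactly the one announced in the remark just above it, namely the extra conclusion extracted from the proof of Case~(ii) of Lemma~\ref{piece2-a}; so the plan is to run that argument with the pair of pants $M$ on $a,b$ in place of the relevant pieces there. Given $a,b$ as in the hypothesis, write $M$ for the pair of pants that $a$, $b$ and a boundary component $\partial_0$ of $N$ bound, and $N'=\overline{N\setminus M}$ for its complement, which is connected and nonorientable; a quick Euler characteristic count (using that $g$ is even) shows $N'$ has genus $g-2$ and $n+1$ boundary components. First I would complete $\{a,b\}$ to a top dimensional $P$-$S$ decomposition $P$ of $N$ having $M$ as one of its pieces, arranged as in the configuration of Figure~\ref{fig1b}(i), so that the reasoning of Case~(ii) of Lemma~\ref{piece2-a} applies with $a,b$ playing the role of the two nonseparating curves that cut off a pair of pants across $\partial N$. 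Let $P'$ be a set of pairwise disjoint curves representing $\lambda([P])$; by Lemma~\ref{inj}, $P'$ corresponds to a top dimensional maximal simplex, and I take $a'\in\lambda([a])$, $b'\in\lambda([b])$ to be members of $P'$.

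Then I would transport the combinatorics across $\lambda$ just as in Case~(ii): geometric intersection numbers $0$ and $1$ are preserved (superinjectivity and Lemma~\ref{intone}), and any three nonseparating curves bounding a pair of pants on a genus-two orientable subsurface are sent, by Lemma~\ref{piece1}, to three nonseparating curves bounding a pair of pants. These facts force the union of the relevant image curves to lie on an orientable subsurface $R\subset N$ of genus $\frac{g-2}{2}$, whose boundary circles split into a distinguished pair $u,v$ together with circles $r_i$; the $r_i$ are forced to be isotopic to boundary components of $N$, since $P'$ is a top dimensional $P$-$S$ decomposition and $N$ has exactly $n$ boundary components, so no curve that is meant to remain nonseparating can become separating. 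The curves $u,v$ are shown to bound M\"obius bands: they lie in the one-holed Klein bottle cut off on one side, they are neither trivial nor boundary-parallel (the images of $a$, $b$ and of a nearby curve are pairwise non-isotopic), and the only other possibility permitted by \cite{Sc} --- that they both be the unique two-sided nonseparating curve there --- is excluded because that curve must meet $a'$ essentially. Assembling this, $a'$ and $b'$ are nonseparating, and the third boundary circle of the image pair of pants is one of the $r_i$, hence a boundary component of $N$; this is the assertion.

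The step I expect to be the main (and essentially the only) obstacle is the boundary-component bookkeeping of the second paragraph: deciding which boundary circles of the image subsurface $R$ are genuine boundary circles of $N$ and which bound M\"obius bands, and making the count rigid enough to force $a',b'$ to be nonseparating with their third pants-boundary a component of $\partial N$. This is precisely the delicate part already carried out in Case~(ii) of Lemma~\ref{piece2-a}, so in the write-up I would keep the proof very short, invoking that argument and indicating only the minor relabelling needed to place $M$ among the pieces of $N_P$ that meet $\partial N$.
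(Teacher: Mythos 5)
Your proposal is correct and takes essentially the paper's own route: the paper also obtains this lemma as a byproduct of Case (ii) of the proof of Lemma \ref{piece2-a}, the only extra ingredient being the (implicit) change of coordinates that places the given pair $a,b$ as a standard pair cutting off a pair of pants along $\partial N$ in the configuration of Figure \ref{fig1b}. One small caution on labels when you invoke that argument: the curve whose image must intersect $e'$ essentially (used to rule out $u,v$ being the nonseparating two-sided curve of the Klein bottle) is the configuration's curve $a$ bounding the twice-holed projective plane, not your $a$, which plays a different role in the configuration.
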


\begin{figure}[htb]
\begin{center}
\epsfxsize=3.1in \epsfbox{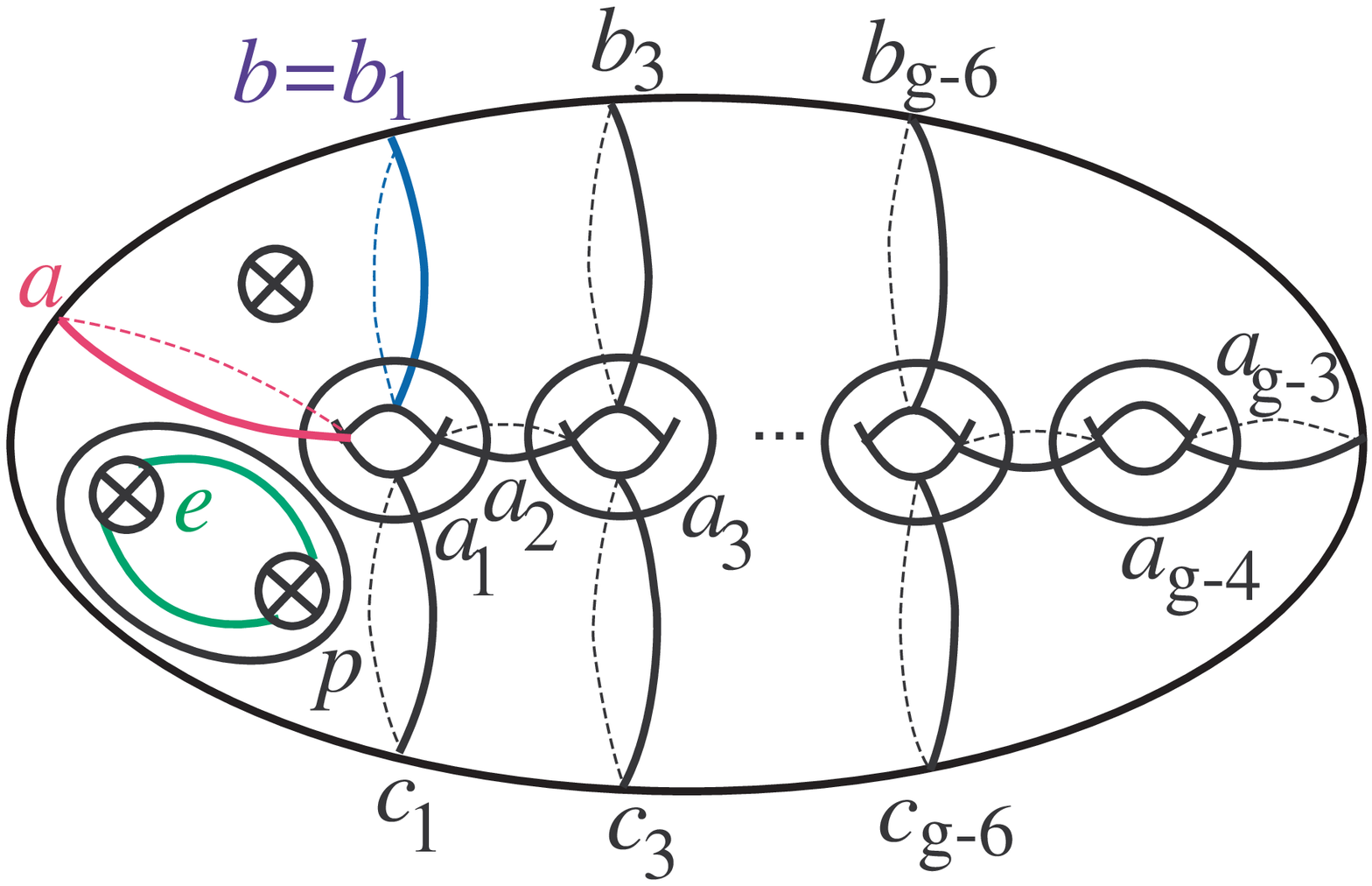} \hspace{-1cm}
\epsfxsize=3.1in \epsfbox{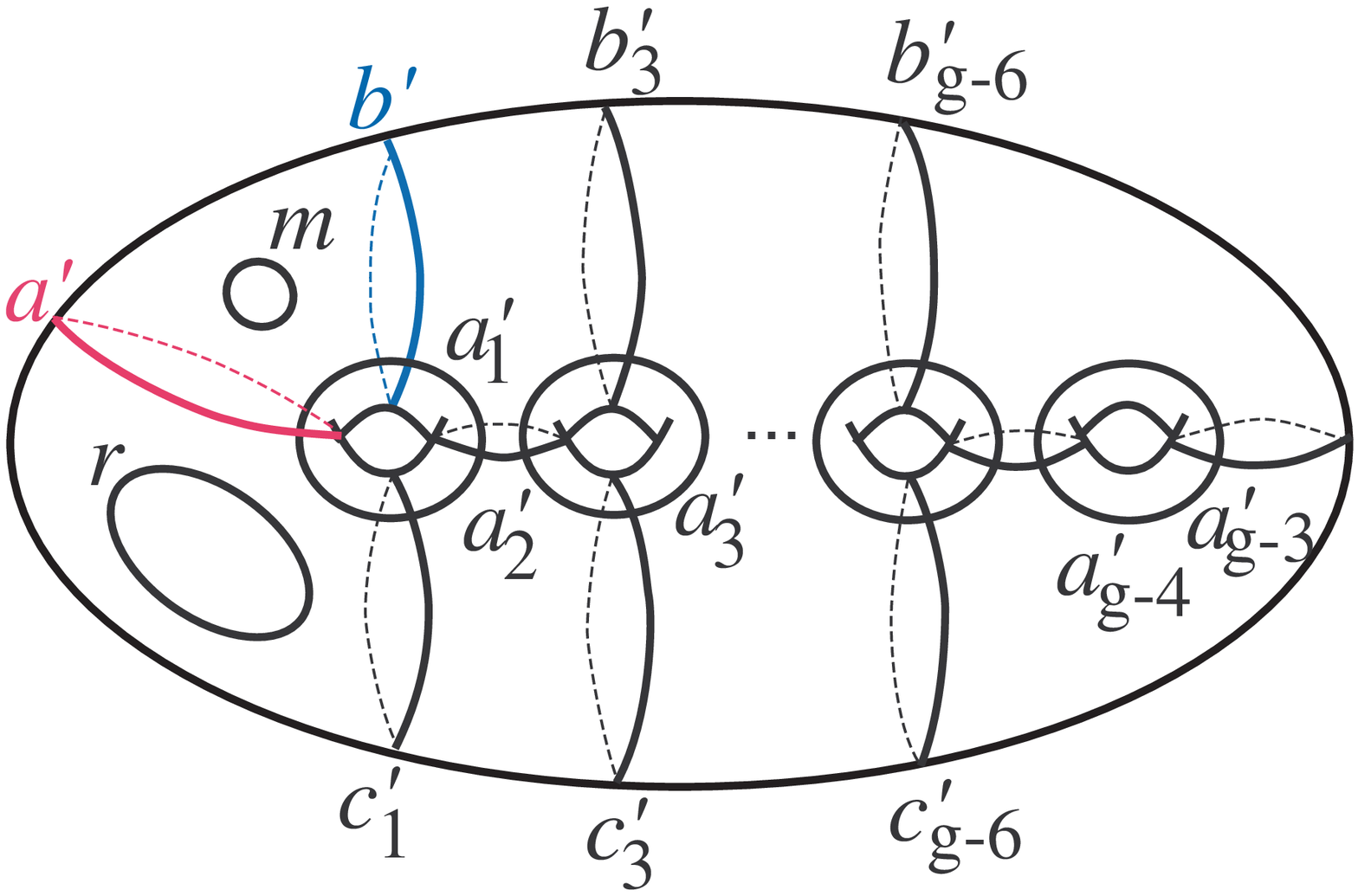}

\hspace{-1cm} (i) \hspace{6.5cm} (ii)
\caption{Curves on a closed surface of odd genus, $g \geq 7$} \label{fig3}
\end{center}
\end{figure}

\begin{lemma}
\label{piece2-b} Let $g \geq 5$. Suppose that $g$ is odd. Let $R \subset N$ be a projective plane with two boundary components, $a, b$
where $a, b$ are both nonseparating simple closed curves on $N$ such that the complement of $R$ is nonorientable. There exist
$a' \in \lambda([a])$ and $b'  \in \lambda([b])$ such that $a'$ and $b'$ are nonseparating and they are the boundary components of a
projective plane with two boundary components on $N$.\end{lemma}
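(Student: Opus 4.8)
The plan is to run the same argument as in Lemma \ref{piece2-a}, now using that by Lemma \ref{tp} a top dimensional $P$-$S$ decomposition on an odd genus surface contains \emph{exactly one} projective plane with two boundary components, every other piece being a pair of pants. As there, I would split into the case $N$ closed (so $g \ge 7$, with $g = 5$, $n = 0$ handled separately by an ad hoc configuration in the spirit of the $g=5$, $n=0$ case of Lemma \ref{piece1}) and the case $N$ with nonempty boundary, using the configurations of Figure \ref{fig3} for the former.

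For $N$ closed I would complete $a, b$ to the curve configuration of Figure \ref{fig3} (i), which contains a chain $a_1, a_2, \dots, a_{g-2}$ with $i([a_j],[a_{j+1}]) = 1$ and $i([a_j],[a_k]) = 0$ for $k \neq j\pm 1$, with $a$ and $b$ each meeting $a_1$ once and disjoint from the other $a_i$, together with auxiliary curves $e, p$ and side chains $b_i, c_i$; I would take $P$ to be the displayed top dimensional $P$-$S$ decomposition containing $e$, $p$, $b$ and a suitable sub-collection of the chain curves $a_i, b_j, c_k$. Passing to minimally intersecting representatives of the images and invoking Lemma \ref{intone} together with superinjectivity, these images realize exactly the same pattern of intersection numbers $0$ and $1$, so a regular neighborhood of $a' \cup b' \cup a_1' \cup \cdots \cup a_{g-2}'$ is an orientable surface with boundary; combining this with Lemma \ref{piece1} (three nonseparating curves bounding a pair of pants in a genus two orientable subsurface go to curves bounding a pair of pants) I would assemble an orientable subsurface $R_0 \subset N$ carrying $a', b', a_1', \dots, a_{g-2}', b_j', c_k'$ in the pattern of the figure, with two distinguished boundary curves $u, v$ and an outer curve $m$.

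Next, exactly as in Lemma \ref{piece2-a}, I would pin down $p'$ and $e'$: from the adjacencies of $p$ and $e$ in $P$ and Lemma \ref{adjacent}, together with the fact that $p'$ is disjoint from $b'$ and all the $a_i', b_j', c_k'$ and that $P'$ is a top dimensional $P$-$S$ decomposition, $p'$ must be isotopic to $m$, $e'$ must be the curve drawn, and the single projective-plane-with-two-holes piece of $N_{P'}$ must sit on the far side of $p'$ from $R_0$. It then follows, as in the even case, that $u$ and $v$ are two-sided, nontrivial, non-boundary-parallel curves lying in a one-holed Klein bottle $K$ with $\partial K = m$ (they cannot be isotopic to $m$ because $a'$ is isotopic to neither $b'$ nor $c_1'$); by the classification of curves in a one-holed Klein bottle \cite{Sc} each of $u, v$ either bounds a M\"obius band or is the unique two-sided nonseparating curve of $K$. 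The latter is impossible, since it would force $u$, hence $a'$, to be disjoint from $e'$, contradicting $i([a],[e]) \neq 0$. Hence $u, v$ bound M\"obius bands, so $a'$ and $b'$ cobound a projective plane with two boundary components, and they are nonseparating by Corollary \ref{nonsep}. For $n \ge 1$ the argument is identical with extra curves $d_1, \dots, d_{n-1}$: one shows, as in Case (ii) of Lemma \ref{piece2-a}, that $e'$ is adjacent only to $p'$ in $P'$ and hence nonseparating, that the remaining boundary curves of $R_0$ are isotopic to boundary components of $N$, and that the $d_i'$ fill the boundary-parallel slots $x_i$ of $R_0$.

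The step I expect to be the main obstacle is precisely the one forcing $u$ and $v$ to bound M\"obius bands: identifying $p'$ and $e'$ and tracking which pieces of $N_{P'}$ lie on each side of $p'$, so that the unique projective-plane-with-two-holes piece is pushed away from $K$ and the cross-cap count on the $K$-side is pinned down. This is where the oddness of $g$ enters essentially, and, combined with the uniqueness of the two-sided curve in a one-holed Klein bottle, it yields the contradiction. The $n \ge 1$ bookkeeping with the $d_i$ is routine but must be carried out carefully.
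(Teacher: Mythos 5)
Your overall strategy (the configuration of Figure \ref{fig3}, Lemmas \ref{intone}, \ref{piece1}, \ref{adjacent}, \ref{nonsep} to reproduce the intersection pattern of the image curves, then pinning down $p'$ and $e'$ and finishing with a crosscap count) is indeed the paper's, but the endgame you describe --- precisely the step you flag as the main obstacle --- is transplanted from the even-genus Lemma \ref{piece2-a} and cannot work when $g$ is odd. The orientable subsurface $R_0$ carrying $a', b', a_1', \dots$ produced by the odd-genus configuration has genus $\frac{g-3}{2}$ (note $\frac{g-2}{2}$ is not even an integer here), so its complement in the closed surface must account for \emph{three} crosscaps, not two. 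Hence it is impossible that both boundary curves $u, v$ of $R_0$ lie in a one-holed Klein bottle bounded by $p'$ and that each bounds a M\"obius band: that conclusion would make $N$ closed nonorientable of genus $2\cdot\frac{g-3}{2}+2 = g-1$, which is even. Equivalently, the side of $p'$ away from the orientable part would be nonorientable of genus three with one hole rather than a one-holed Klein bottle, so Scharlemann's classification of curves in a one-holed Klein bottle does not apply at that point and the contradiction you propose never arises.

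The paper's odd-genus argument is genuinely different exactly here, and this is where the parity enters. Its top-dimensional decomposition $P$ contains \emph{both} $a$ and $b$, so the unique non-pair-of-pants piece of $N_P$ is the projective plane they bound; the subsurface $R$ has two boundary curves $m$ and $r$, with $m$ cobounding a pair of pants with $a'$ and $b'$ and $r$ sitting at the $e,p$ end. One shows $p'$ is isotopic to $r$ (not to the curve cutting off the $a',b'$ crosscap), then uses adjacency of $p$ with $e$ and non-adjacency of $e$ with $b$ to show that the pair of pants containing $p'$ and $e'$ has its third boundary isotopic to $e'$; hence the far side of $p'$ is a one-holed torus \emph{or} a one-holed Klein bottle (both options must be allowed), and only then does the genus count force the single remaining curve $m$ to bound a M\"obius band --- one M\"obius band, not two, and that is what produces the projective plane cobounded by $a'$ and $b'$. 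Your assertion that the exceptional piece of $N_{P'}$ ``sits on the far side of $p'$'' also conflicts with the goal, since the piece one must ultimately exhibit is bounded by $a'$ and $b'$ inside $R_0$. Finally, $g=5$ with boundary also needs its own configuration (the paper's Case (iv)): the chains of the $g\geq 7$ bounded case degenerate there, so it cannot simply be folded into the general boundary case along with the routine $d_i$ bookkeeping.
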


\begin{proof} {\bf Case (i):} Assume that $N$ is closed and $g \geq 7$. We complete $a, b$ to a curve configuration
$\mathcal{C} = \{a,  b, e, p, a_1,$ $a_2, \cdots, a_{g-3}, b_3,$ $b_5, \cdots, b_{g-6}, c_1, c_3, \cdots, c_{g-6} \}$ as shown in
Figure \ref{fig3} (i). We have a top dimensional $P$-$S$ decomposition $P = \{a, b, e, p, a_2,$ $ a_4,$ $\cdots, $
$a_{g-3}, b_3, \cdots, b_{g-6}, c_1,$ $ c_3, \cdots, c_{g-6}\}$ on $N$.
Let $a' \in \lambda([a]), b' \in \lambda([b]), e' \in \lambda([e]), p' \in \lambda([p]), a'_1  \in \lambda([a_1]),$
$a'_2  \in \lambda([a_2]), \cdots, a_{g-3}' \in \lambda([a_{g-3}]), b'_3  \in \lambda([b_3]), b'_5  \in \lambda([b_5]), \cdots,$
$ b_{g-6}' \in \lambda([b_{g-6}]), c'_1  \in \lambda([c_1]), c'_3  \in \lambda([c_3]), \cdots, c_{g-6}' \in \lambda([c_{g-6}])$
be minimally intersecting representatives. Let $\mathcal{C'}$ be the set of these representatives. Let
$P' = \{a', b', e', p', a'_2, a'_4, \cdots, a'_{g-3}, b'_3, b'_5, \cdots, $ $ b'_{g-6}, c'_1, c'_3, \cdots, c'_{g-6}\}$.

As in the proof of Lemma \ref{piece2-a}, there is a subsurface $R$ of $N$ such that $R$ is an orientable surface of genus $\frac{g-3}{2}$
with two boundary components $m, r$, and $R$ has $a', b', a'_1, a'_2,$
$ \cdots, a'_{g-3}, b'_3, b'_5, \cdots, b'_{g-6}, c'_1, c'_3, \cdots, c'_{g-6}$ on it as shown in Figure \ref{fig3} (ii). The
curve $c_1$ is adjacent to four curves $a, p, a_2, c_3$ w.r.t. $P$. By Lemma \ref{adjacent} $c'_1$ is adjacent to four curves
$a', p', a'_2, c'_3$ w.r.t. $P'$. Since there is already a pair of pants which has $c'_1, a'_2, c'_3$ on its boundary,
there exists a pair of pants having $c'_1, a', p'$ on its boundary. Since $p$ is disjoint from each of $a, a_1, c_1$, we see that
$p'$ is disjoint from each of $a', a'_1, c'_1$. We also know that $p'$ is disjoint from all $a'_i, b'_j, c'_k$ in the figure.
Since $p'$ is disjoint from each of $b', a'_i, b'_j, c'_k$, and there exists a pair of pants having $c'_1, a', p'$ on its boundary,
we see that $p'$ is isotopic to $r$. Since $p$ is adjacent to $e$ w.r.t. $P$, $p'$ is adjacent to $e'$ w.r.t. $P'$. So, there exists a
pair of pants containing $p'$ and $e'$ as boundary components. Let $x$ be the other boundary component of this pair of pants. We see
that $e'$ is a nonseparating curve on $N$ by using Corollary \ref{nonsep}. By using the technique given in the proof of
Lemma \ref{piece2-a} (i), we see that since $e$ is not adjacent to $b$ w.r.t. $P$, $e'$ is not adjacent to $b'$ w.r.t. $P'$.
These imply that $x$ is isotopic to $e'$. Hence, $p'$ either bounds a torus with one boundary component or a nonorientable surface
of genus two with one boundary component. In either case, $m$ has to bound a M\"{o}bius band. Hence, $a'$ and $b'$ are nonseparating
and they are the boundary components of a projective plane with two boundary components on $N$.

\begin{figure}[htb]
\begin{center}
\epsfxsize=3.25in \epsfbox{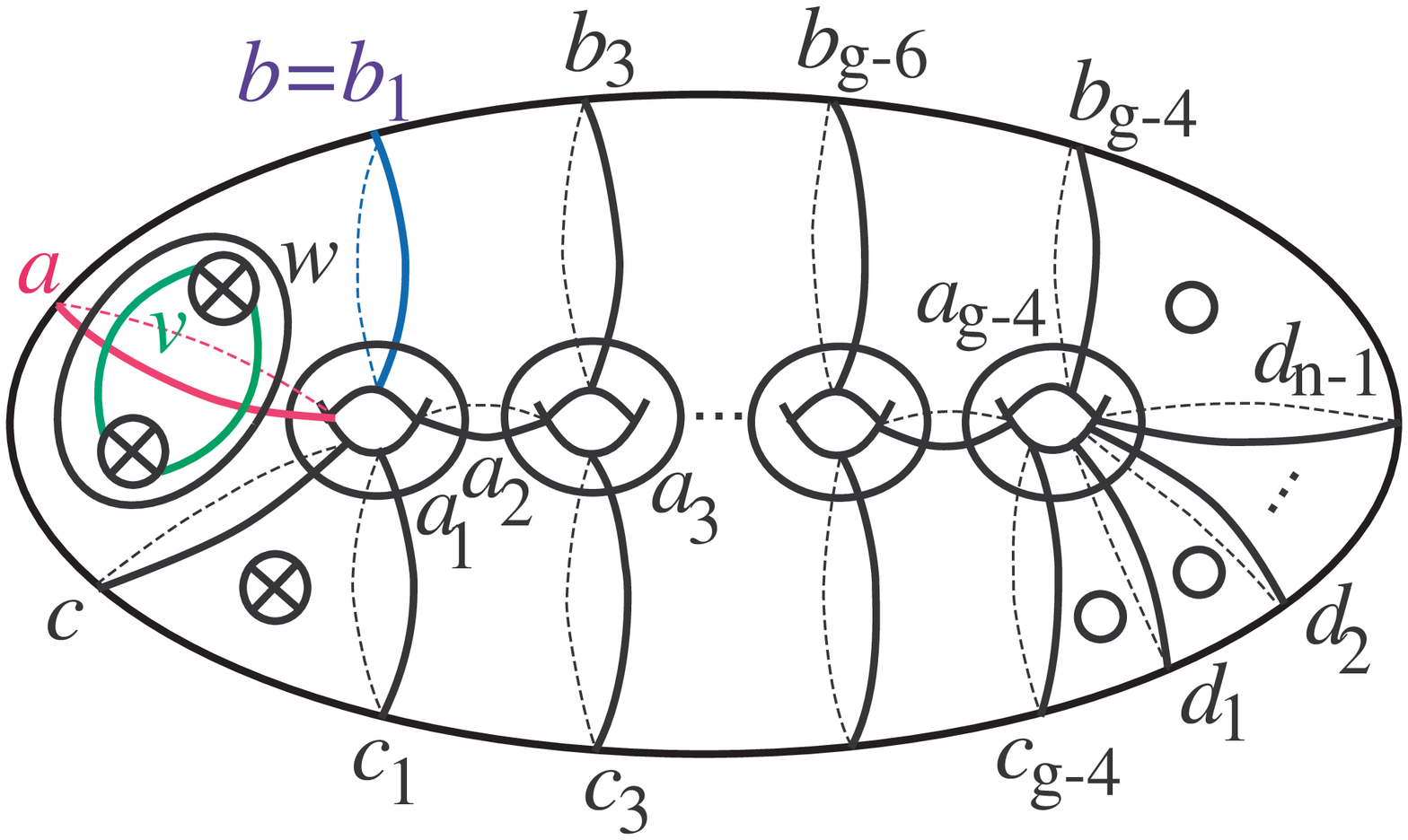} \hspace{-1.5cm} \epsfxsize=3.25in \epsfbox{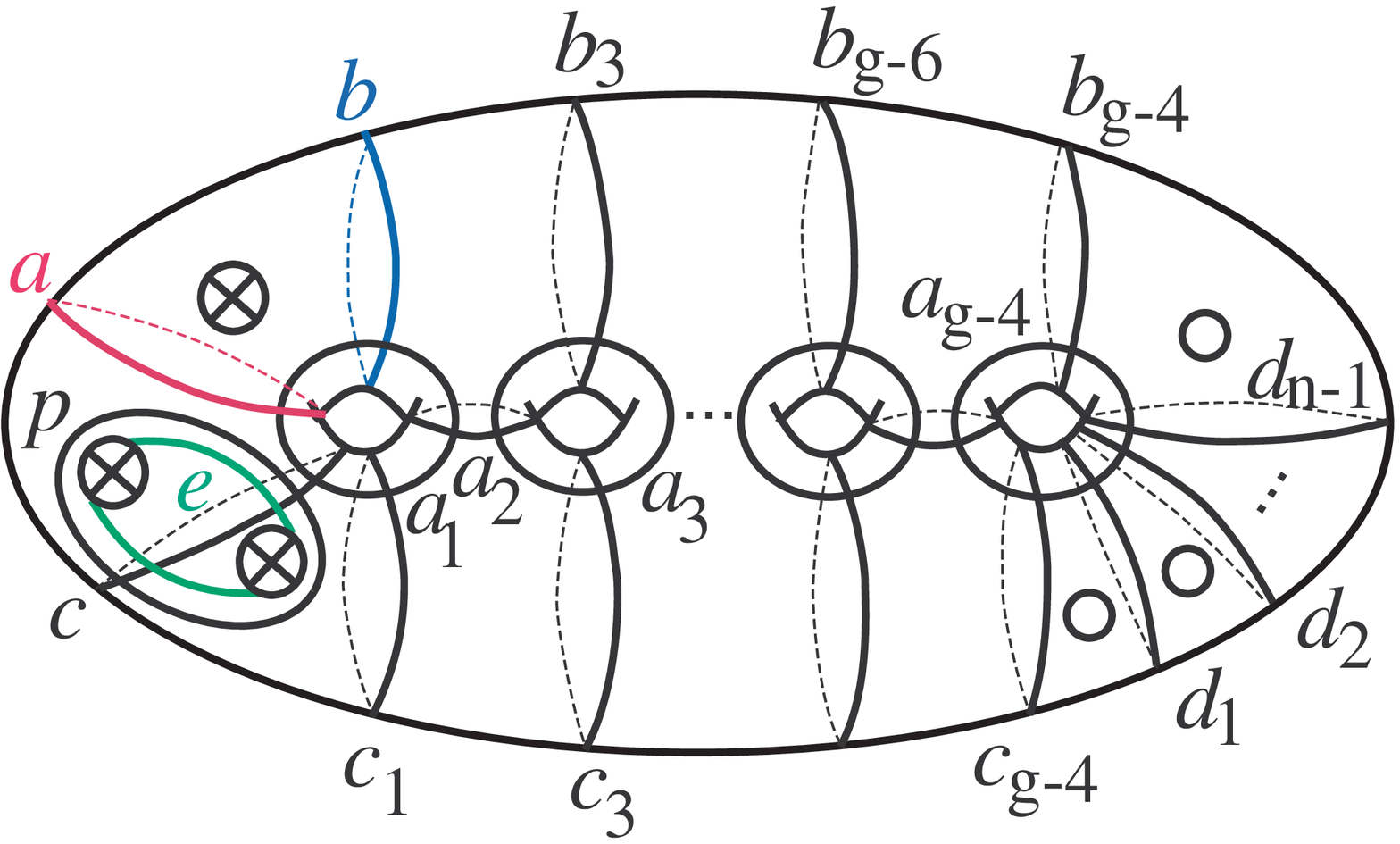}

\hspace{-1cm} (i) \hspace{6.5cm} (ii)
\caption{Curves on a surface with boundary, odd genus, $g \geq 7$} \label{fig1c}
\end{center}
\end{figure}

{\bf Case (ii):} Assume that $g \geq 7$ and $n \geq 1$. We complete $a, b$ to a curve configuration
$\mathcal{C} = \{a, b, c, e, p, v, w, a_1,$ $a_2, \cdots, a_{g-4}, b_3,$ $b_5, \cdots, $ $b_{g-4}, c_1, c_3, \cdots, c_{g-4}, d_1,$
$d_2, \cdots, $ $d_{n-1}\}$ as shown in Figure \ref{fig1c} (i), (ii). We have a top dimensional $P$-$S$ decomposition
$P = \{v, w, b, c, a_2, a_4, $ $ \cdots, $ $a_{g-5}, b_3, b_5, \cdots, b_{g-4}, c_1, c_3, \cdots, c_{g-4}, d_1, d_2, \cdots, d_{n-1}\}$ on $N$.

We choose minimally intersecting representatives of the elements in $\{\lambda([x]): x \in \mathcal{C}\}$. Let
$\mathcal{C'}= \{a', b', c', e', p', v', w', a'_1,$ $a'_2, \cdots, a'_{g-4}, b'_3,$ $b'_5, \cdots, $
$b'_{g-4}, c'_1, c'_3, \cdots, c'_{g-4}, d'_1,$ $d'_2, \cdots, $ $d'_{n-1}\}$ be
the set of these elements. We use the notation that $x' \in \lambda([x])$ for all $x \in \mathcal{C}$. As in the proof of
Lemma \ref{piece2-a}, by using that geometric intersection number zero, nonzero and one are preserved, adjacency w.r.t. $P$ is preserved and if
three nonseparating nontrivial curves bound a pair of pants then they correspond to curves which bound a pair of pants, we see that
there is a subsurface $R$ of $N$ such that $R$ is an orientable surface of genus $\frac{g-3}{2}$ with $n+3$ boundary components, and
$R$ has $a', b', c', a'_1, a'_2, \cdots, a'_{g-4}, b_3', b'_5, \cdots, b_{g-4}', c_1', c'_3, \cdots, c_{g-4}', d'_1, d'_2, \cdots, d'_{n-1}$
on it as shown in Figure \ref{fig1d}, $\{a', c'\} = \{x, y\}$ and $\{d'_1, d'_2, \cdots, d'_{n-1} \} = \{v_1, v_2, \cdots, v_{n-1} \}$
where $x, y, v_1, v_2, \cdots, v_{n-1}$ are curves as shown in the figure. By using that adjacency with respect to top dimensional $P$-$S$ decompositions and intersection zero and one are preserved 
it is easy to see that $d'_i = v_i$ for all $i = 1, 2, \cdots, n-1$. Now by using that intersection
zero and nonzero are preserved, we see that $x = a', y= c'$.

\begin{figure}[htb]
\begin{center}
\hspace{2cm} \epsfxsize=3.3in \epsfbox{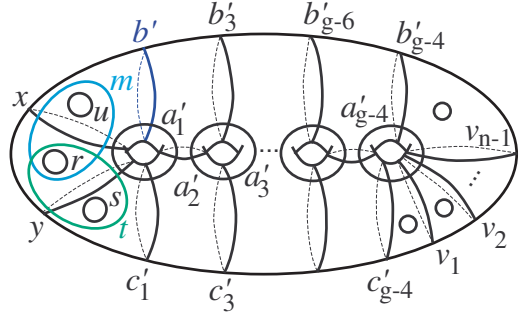}
\caption{Curve configuration II} \label{fig1d}
\end{center}
\end{figure}

$P' = \{v', w', b', c', a'_2, a'_4, \cdots, $ $a'_{g-5}, b'_3, b'_5, \cdots, b'_{g-4}, c'_1, c'_3, \cdots, c'_{g-4}, d'_1, d'_2, \cdots, d'_{n-1}\}$
is a top dimensional $P$-$S$ decomposition on $N$. Let $u, r, s$ be three of the boundary components of $R$, and let $m, t$ be the curves
as shown in the figure. Since $a', b', c', c'_1$ are pairwise nonisotopic, none of $u, r, s$ bounds a disk on $N$. We will show that $w'$
is isotopic to $m$. The curve $b$ is adjacent to four curves $w, c, a_2, b_3$ w.r.t. $P$. By Lemma \ref{adjacent} $b'$ is adjacent to four
curves $w', c', a'_2, b'_3$ w.r.t. $P'$. Since there is already a pair of pants which has $b', a'_2, b_3'$ on its boundary, there exists a
pair of pants having $w', b', c'$ on its boundary. Since $w$ is disjoint from each of $b, a_1, c$, we see that $w'$ is disjoint from each of
$b', a'_1, c'$. We also know that $w'$ is disjoint from all $a'_i, b'_j, c'_k, d'_m$ in the figure. Since $w'$ is disjoint from each of
$b', a'_i, b'_j, c'_k, d'_m$, and there exists a pair of pants having $w', b', c'$ on its boundary, we see that $w'$ is isotopic to $m$.
Since $w$ is adjacent to $v$ w.r.t. $P$, $w'$ is adjacent to $v'$  w.r.t. $P'$. So, there exists a pair of pants $Q$ (not necessarily essential,
i.e. one of he boundary components of $Q$ could bound a M\"{o}bius band) on $N$ containing $w'$
and $v'$ on its boundary. Let $z$ be the third boundary component of $Q$. As before we can see that since $v$ is not adjacent to any of the
curves $c_1, c_{g-4}, d_i$ for $i = 1, \cdots, n-1$ w.r.t. $P$,  $v'$ is not adjacent to $c'_1, c'_{g-4}, d'_i$ for $i = 1, \cdots, n-1$ w.r.t.
$P'$. Since $v$ is a nonseparating curve with nonorientable complement, $v'$ is a nonseparating curve by Corollary \ref{nonsep}. Now we see
that $z$ is isotopic to $v'$, and $w'$ either bounds a torus with one boundary component or a nonorientable surface of genus two with one
boundary component. This implies that either $s$ bounds a M\"{o}bius band or $s$ is isotopic to a boundary component of $N$. By using the
curves, $e, p$ and using similar arguments we see that $p'$ is isotopic to $t$, and $p'$ either bounds a torus with one boundary component
or a nonorientable surface of genus 2 with one boundary component. In either case, we see that $s$ can't be isotopic to a boundary component
of $N$. Hence, $s$ bounds a M\"{o}bius band. Then, $t$ bounds a nonorientable surface of genus two with one boundary component $K$, and
hence $r$ bounds a M\"{o}bius band. Similarly, $u$ bounds a M\"{o}bius band. This implies that each of the other boundary components of
$R$ is isotopic to a boundary component of $N$. Hence, $a'$ and $b'$ are nonseparating and they are the boundary components of a projective
plane with two boundary components on $N$. We proved that in all cases if two curves in $P$ are the boundary components of a pair of pants
such that the third boundary component is a boundary component of $N$, then the corresponding curves in $P'$ are also the boundary
components of a pair of pants such that the third boundary component is a boundary component of $N$.

\begin{figure}[htb]
\begin{center}
\hspace{0.7cm} \epsfxsize=2.5in \epsfbox{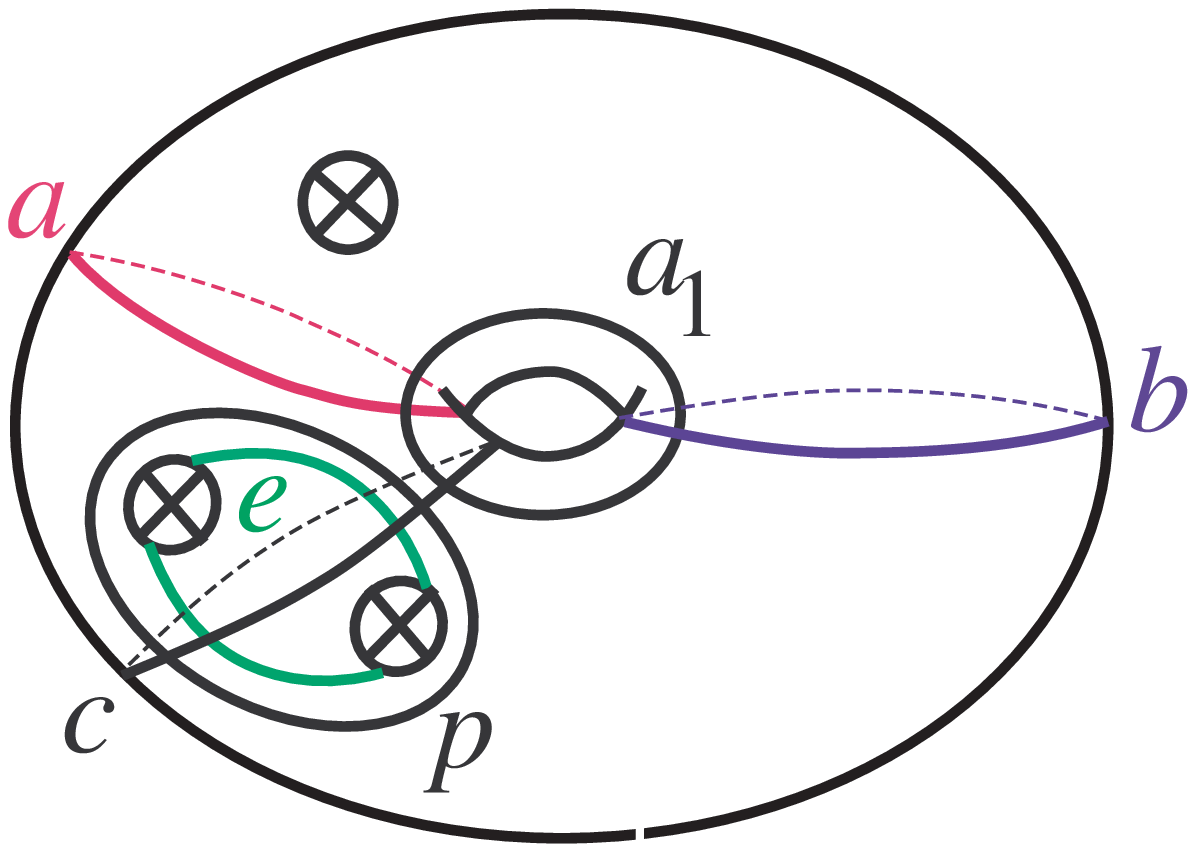} \hspace{-1cm}
\hspace{0.7cm} \epsfxsize=2.3in \epsfbox{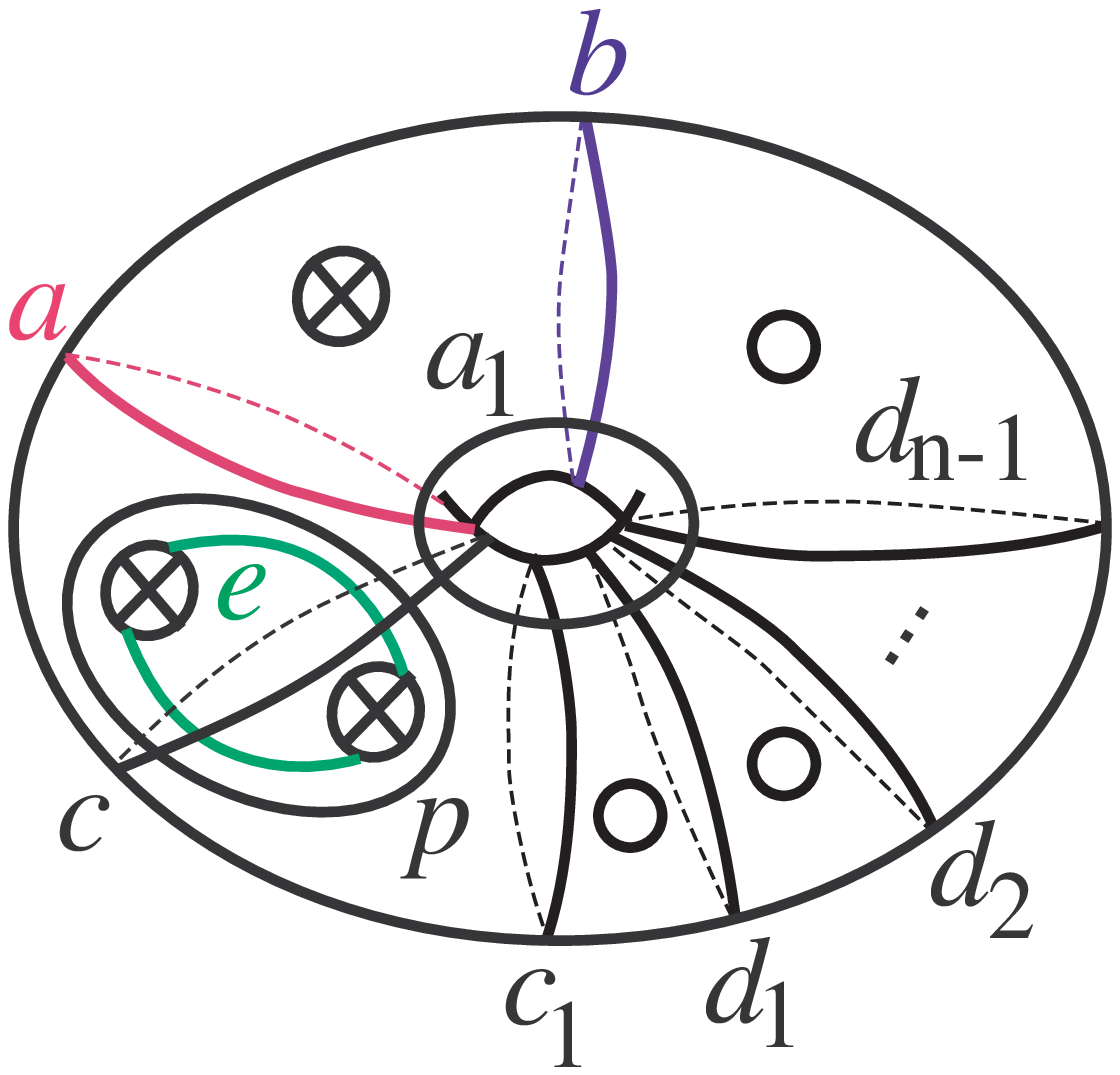}

\hspace{-0.7cm} (i) \hspace{5.5cm} (ii)

\hspace{0.8cm} \epsfxsize=2.3in \epsfbox{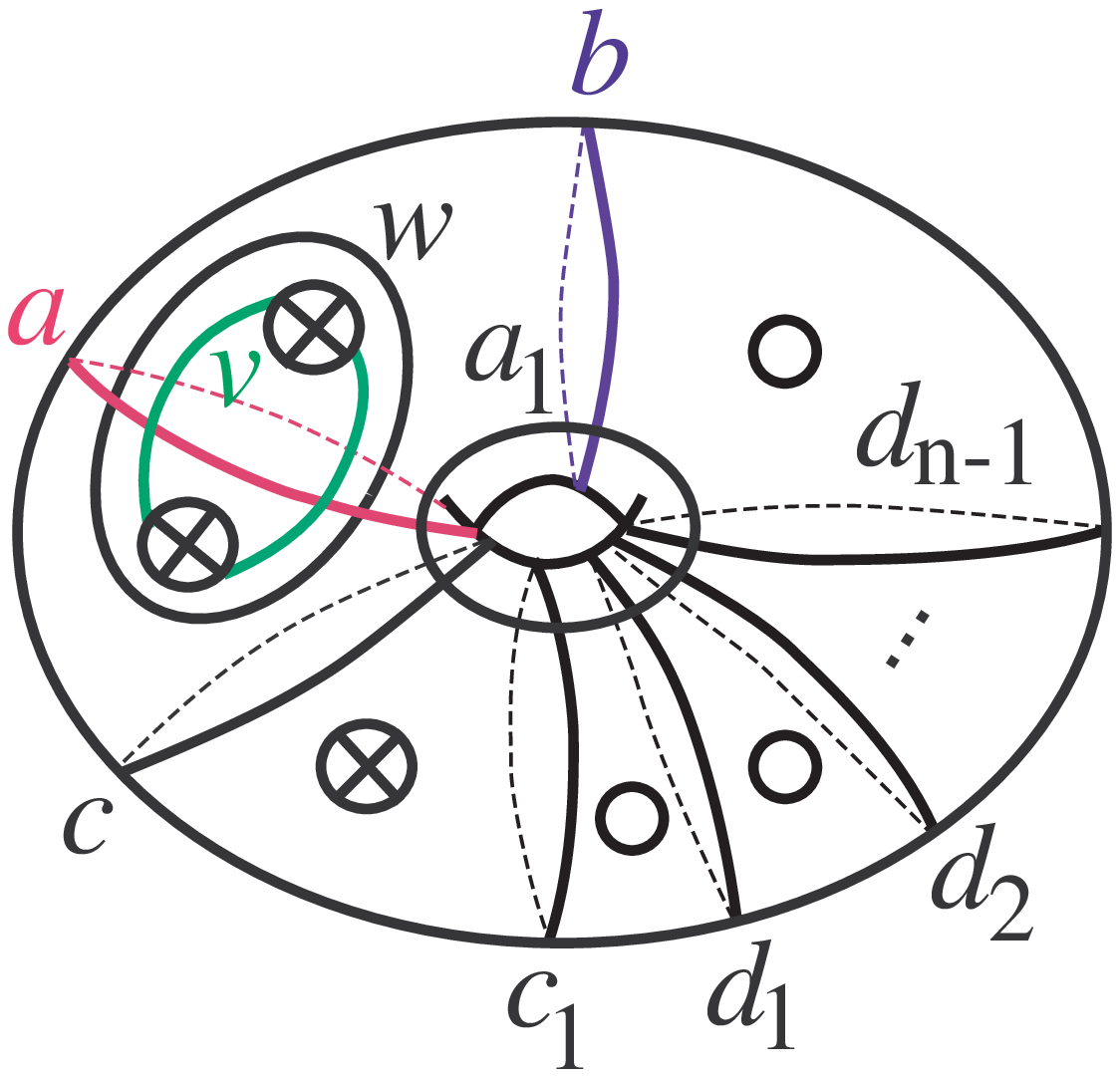}\hspace{-1cm}
\hspace{1.4cm} \epsfxsize=2.3in \epsfbox{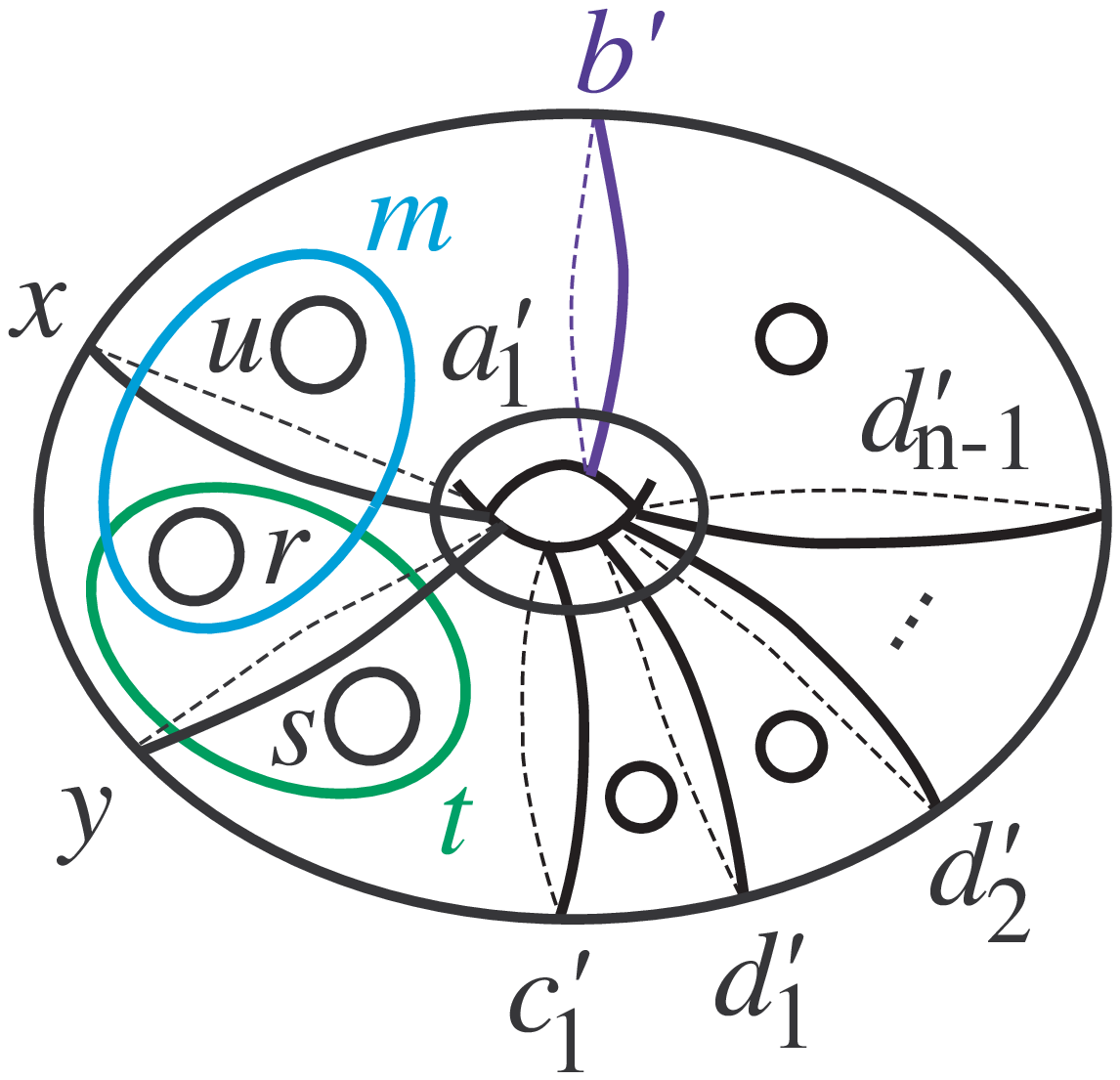}

\hspace{-0.7cm} (iii) \hspace{5.5cm} (iv)

\caption{Curves on a surface of genus 5} \label{fig3-c}
\end{center}
\end{figure}

\begin{figure}[htb]
\begin{center}
\hspace{0.1cm} \epsfxsize=1.8in \epsfbox{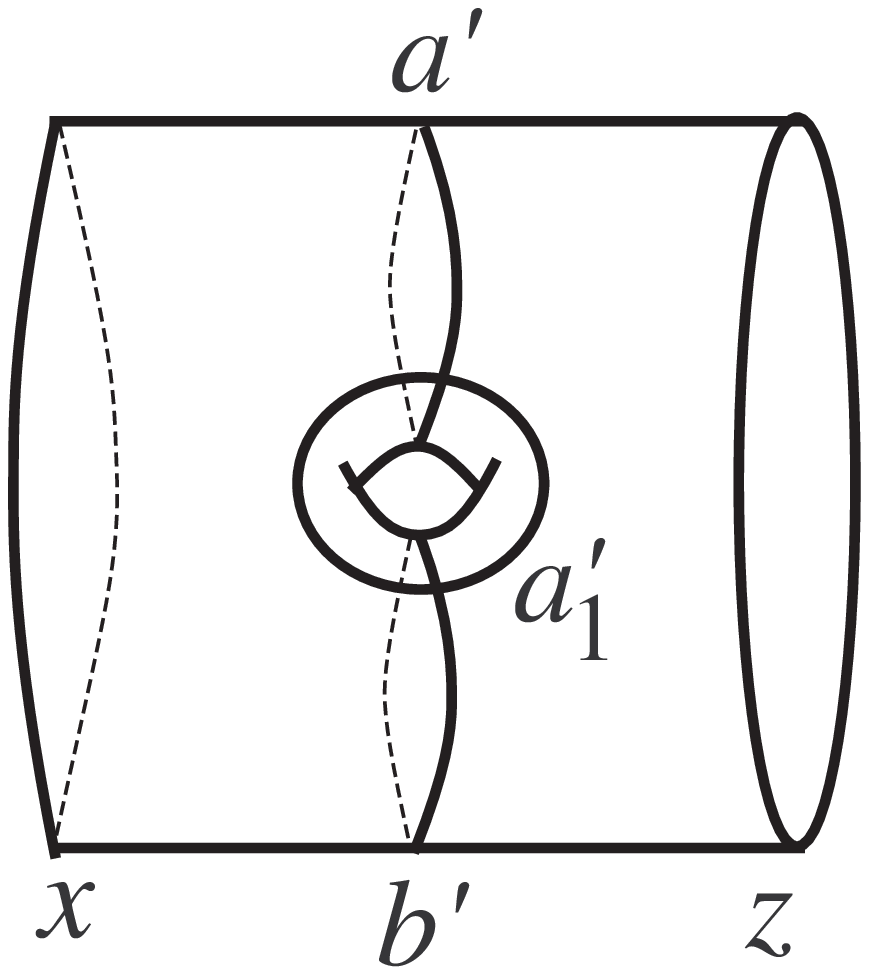} \hspace{-0.99cm}
\epsfxsize=2.4in \epsfbox{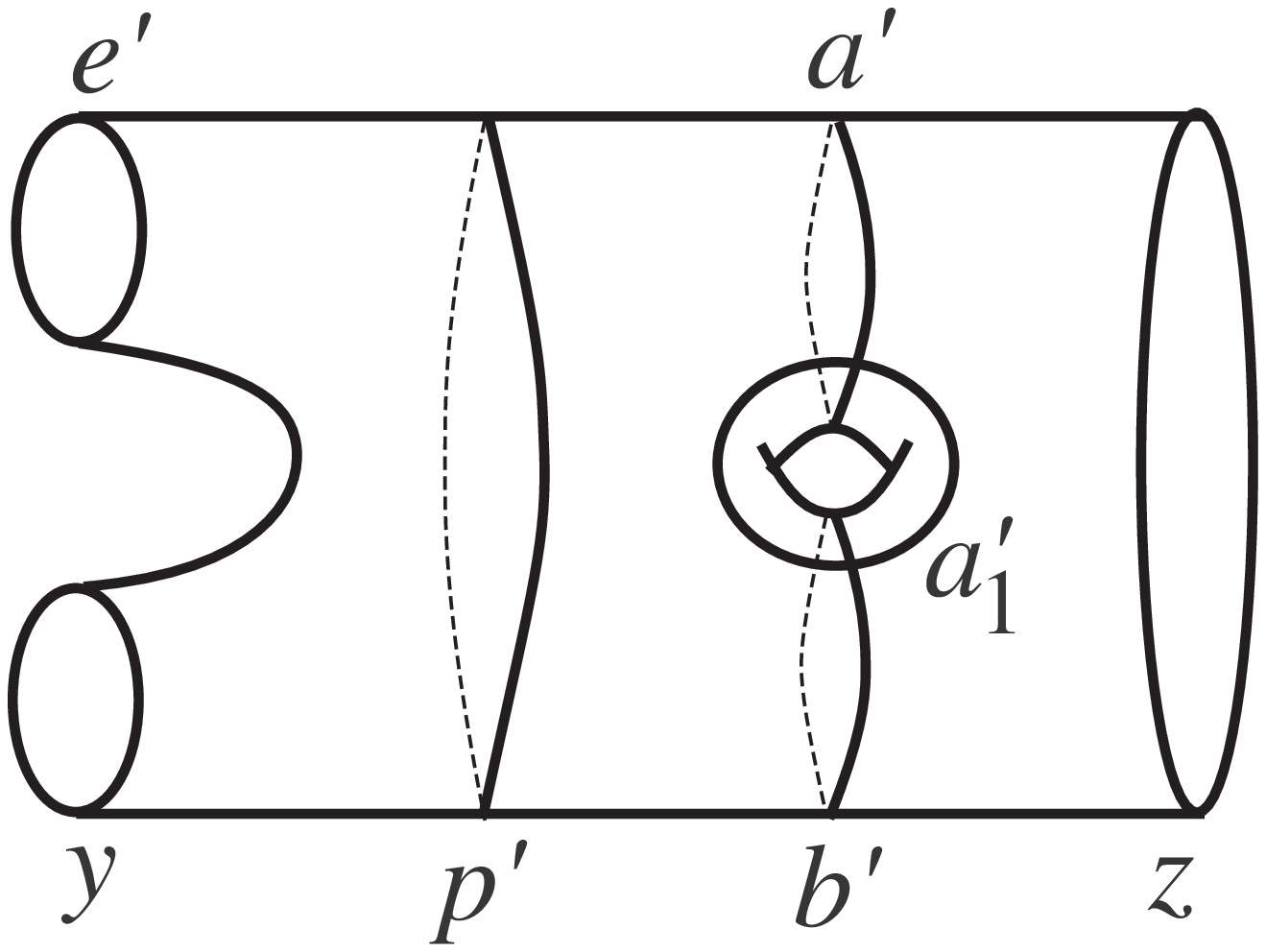} \hspace{-0.99cm}
\epsfxsize=2.4in \epsfbox{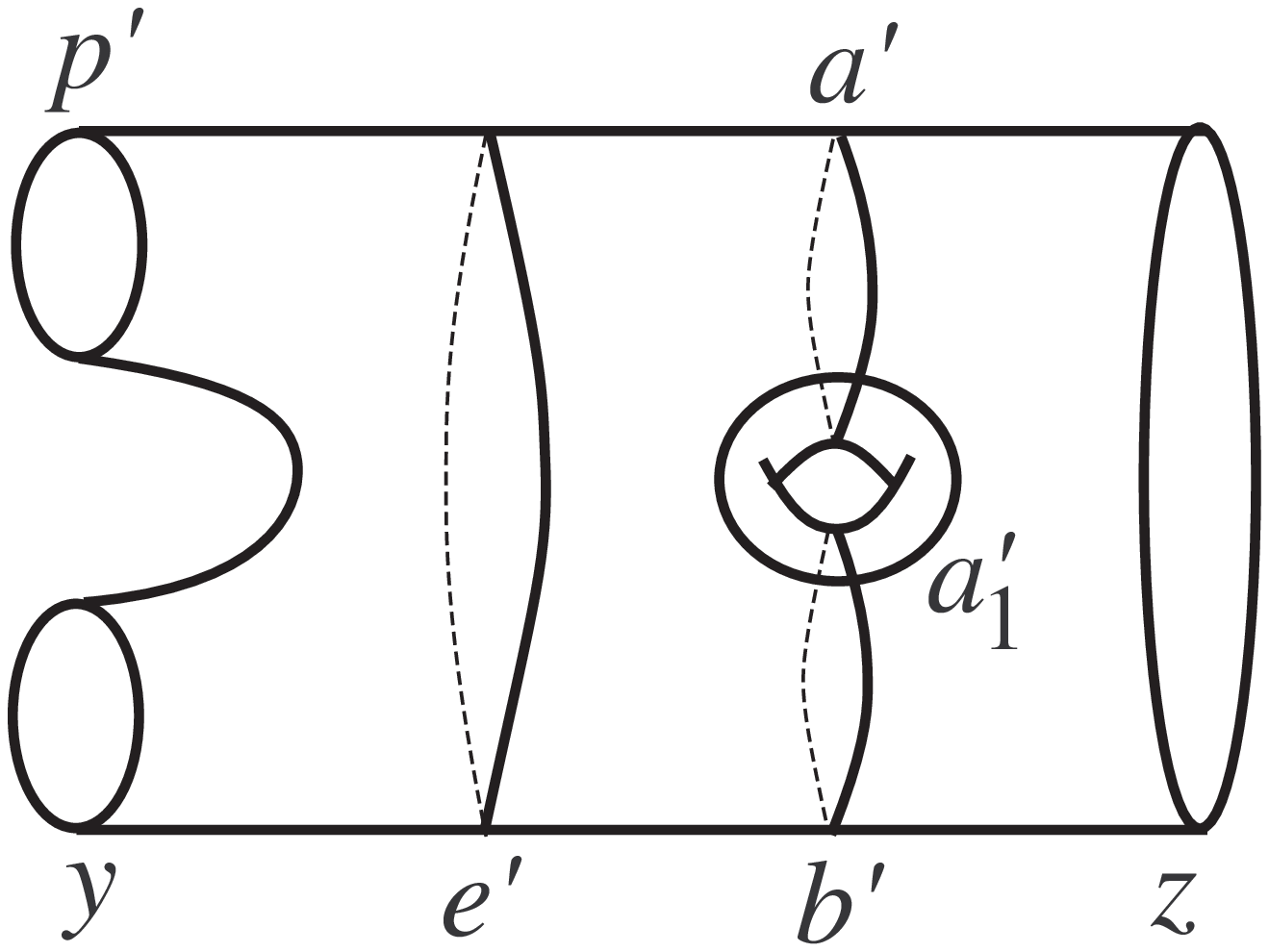}

\hspace{-1.6cm} (i) \hspace{4.2cm} (ii) \hspace{4.2cm} (iii)
\caption{Curves on a surface of genus 5} \label{fig3-c-2}
\end{center}
\end{figure}

{\bf Case (iii):} Assume that $N$ is closed and $g = 5$. We complete $a, b$ to a curve configuration
$\mathcal{C} = \{a, a_1, b, c, e, p\}$ as shown in Figure \ref{fig3-c} (i). We have a top dimensional $P$-$S$ decomposition
$P = \{a, b, e, p\}$ on $N$.
Let $a' \in \lambda([a]), a'_1  \in \lambda([a_1]), b' \in \lambda([b]), c' \in \lambda([c]), e' \in \lambda([e]), p' \in \lambda([p])$
be minimally intersecting representatives. Let $\mathcal{C'}$ be the set of these representatives. Let
$P' = \{a', b', e', p'\}$. Since geometric intersection zero and one are preserved, there is a subsurface $R$ of $N$ such that $R$ is an orientable
surface of genus one with two boundary components $x, z$, and $R$ has $a', b', a'_1$ on it as shown in Figure \ref{fig3-c-2} (i). Since
$e'$ and $p'$ are disjoint from $a' \cup b' \cup a_1'$ and $P' = \{a', b', e', p'\}$ is a top dimensional $P$-$S$ decomposition, we see
that one of the boundary components, say $x$ of $R$, is isotopic to $p'$ or $e'$. {\bf (a)} Suppose $x$ is isotopic to $p'$, see
Figure \ref{fig3-c-2} (ii). Since $p$ and $e$ are adjacent to each other w.r.t. $P$, we have $p'$ and $e'$ are adjacent to each other w.r.t. $P'$.
Then there exists a pair of pants $Q$ on the surface $N$ (not necessarily essential, i.e. one of the boundary components of it may bound
a M\"{o}bius band on the surface)
having $p'$ and $e'$ on its boundary as shown in Figure \ref{fig3-c-2} (ii). Let $y$ be the third boundary component of $Q$ as shown in the figure.
Since $P' = \{a', b', e', p'\}$ is a top dimensional $P$-$S$ decomposition, we have two choices: $e'$ is isotopic to $y$ or $z$. Consider the curve $c$.
Since $c$ is disjoint from $a$ and $b$, $c$ intersects $a_1$ once, and $c$ intersects each of $e$ and $p$ nontrivially,
$c'$ is disjoint from $a'$ and $b'$, $c'$ intersects $a'_1$ once, and $c'$ intersects each of $e'$ and $p'$ nontrivially.
This would give a contradiction if $e'$ is isotopic to $z$. So, $e'$ is isotopic to $y$. This implies that $z$ has to bound a M\"{o}bius band
which finishes the proof. {\bf (b)} If $x$ is isotopic to $e'$ (see Figure \ref{fig3-c-2} (iii)), then with a similar argument we get the result.

{\bf Case (iv):} Assume that $g = 5$ and $n \geq 1$. We complete $a, b$ to a curve configuration $\mathcal{C} = \{a, b, c, e, p, v, w, a_1, c_1,
d_1, d_2, \cdots, $ $d_{n-1}\}$ as shown in Figure \ref{fig3-c} (ii), (iii). We have a top dimensional $P$-$S$ decomposition
$P = \{v, w, b, c, c_1, d_1, d_2, \cdots, d_{n-1}\}$ on $N$. We choose minimally intersecting representatives of the elements
in $\{\lambda([x]): x \in \mathcal{C}\}$. Let $\mathcal{C'}= \{a', b', c', e', p', v', w', a'_1, d'_1, d'_2, \cdots, d'_{n-1}\}$ be
the set of these elements. We use the notation that $x' \in \lambda([x])$ for all $x \in \mathcal{C}$. As in the proof of
Lemma \ref{piece2-a}, by using that geometric intersection number zero and one is preserved, adjacency w.r.t. $P$ is preserved we see that
there is a subsurface $R$ of $N$ such that $R$ is an orientable surface of genus one with $n+3$ boundary components, $R$ has 
$a', b', c', a'_1, c'_1, d'_1, d'_2, \cdots, d'_{n-1}$ on it and $\{a', c'\} = \{x, y\}$
where all the curves are as shown in Figure \ref{fig3-c} (iv). By using that intersection zero and nonzero are preserved, it is easy to see that $x = a', y= c'$.
We have a top dimensional $P$-$S$ decomposition $P' = \{v', w', b', c', c'_1, d'_1, d'_2, \cdots, d'_{n-1}\}$ on $N$. Let $u, r, s$ be three of
the boundary components of $R$, and let $m, t$ be the curves
as shown in the figure. Since $a', b', c', c'_1$ are pairwise nonisotopic, none of $u, r, s$ bounds a disk on $N$. We will show that $w'$
is isotopic to $m$. It is easy to see that $v', w'$ are not adjacent to any of the curves $c'_1, d'_1, d'_2, \cdots, d'_{n-1}$, and
$v', w'$ are both disjoint from each of the curves $b', c', a'_1$. We also know that $P'$ is a top dimensional $P$-$S$ decomposition on $N$.
All this information implies that $m$ is isotopic to $v'$ or $w'$. Suppose $m$ is isotopic to $v'$. Since $w$ is adjacent to $v$ w.r.t. $P$,
$w'$ is adjacent to $v'$  w.r.t. $P'$. So, there exists a pair of pants $Q$ (not necessarily essential) on $N$ containing $w'$ and $v'$ on
its boundary. Let $z$ be the third boundary component of $Q$. Since $P'$ is a top dimensional $P$-$S$ decomposition on $N$, and $w'$ is not
adjacent to $c'_1, d'_i$ for $i = 1, \cdots, n-1$ w.r.t. $P'$, we have $z$ is isotopic to $w'$. This would imply that $v'$ is a separating
curve, which gives a contradiction by Corollary \ref{nonsep}. So, $m$ is not isotopic to $v'$. Hence, $m$ is isotopic to $w'$. With similar
ideas we could see that in this case $z$ is isotopic to $v'$, and hence $w'$ either bounds a torus with one boundary component or a nonorientable
surface of genus two with one boundary component. This implies that either $s$ bounds a M\"{o}bius band or $s$ is isotopic to a boundary
component of $N$. By using the
curves, $e, p$ and using similar arguments we see that $p'$ is isotopic to $t$, and $p'$ either bounds a torus with one boundary component
or a nonorientable surface of genus 2 with one boundary component. In either case, we see that $s$ can't be isotopic to a boundary component
of $N$. Hence, $s$ bounds a M\"{o}bius band. Then, $t$ bounds a nonorientable surface of genus two with one boundary component $K$, and
hence $r$ bounds a M\"{o}bius band. Similarly, $u$ bounds a M\"{o}bius band. This implies that each of the other boundary components of
$R$ is isotopic to a boundary component of $N$. This finishes the proof.\end{proof}\\

The proof of the following lemma is clearly included in the proof of Lemma \ref{piece2-b}.

\begin{lemma}
\label{piece2-bb} Suppose that $g \geq 5, n \geq 1$ and $g$ is odd. Let $a, b$ be two nonseperating curves on $N$ such that together with
a boundary component of $N$ they bound a pair of pants $P$ on $N$, and the complement of $P$ is connected and nonorientable. There exist
$a' \in \lambda([a])$ and $b'  \in \lambda([b])$ such that $a'$ and $b'$ are nonseparating and together with a boundary component of
$N$ they bound a pair of pants on $N$.\end{lemma}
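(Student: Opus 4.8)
The plan is to read off this statement from the proof of Lemma \ref{piece2-b}, just as Lemma \ref{piece2-aa} was read off from the proof of Lemma \ref{piece2-a}. First I would fix the topology. Let $\delta$ be the boundary component of $N$ appearing on the pair of pants $P$. Cutting $N$ along $a\cup b$ and removing $P$ leaves, by hypothesis, a connected nonorientable surface; computing Euler characteristics shows it has genus $g-2$ and $n+1$ boundary components (the copies coming from $a$ and $b$, together with the $n-1$ boundary components of $N$ distinct from $\delta$). Since $g$ is odd and $g\geq 5$ we have $g-2\geq 3$, so this complement admits a decomposition into pairs of pants together with exactly one projective plane with two boundary components. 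Adjoining $a$ and $b$, we obtain a top dimensional $P$-$S$ decomposition $Q$ of $N$ with $a,b\in Q$ and with $a$, $b$, $\delta$ bounding a pair of pants. By the change of coordinates principle we may take $Q$ to be homeomorphic to the top dimensional $P$-$S$ decomposition used in Case (ii) of the proof of Lemma \ref{piece2-b} when $g\geq 7$, and to the one used in Case (iv) when $g=5$.

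Second, I would note that $a$ and $b$ are nonseparating with nonorientable complement: the surface obtained by cutting $N$ along $a$ contains the nonorientable surface $N\setminus P$, hence is nonorientable, and the same holds for $b$. Therefore Corollary \ref{nonsep} guarantees that $\lambda([a])$ and $\lambda([b])$ are isotopy classes of nonseparating simple closed curves.

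Finally, the substantive content has already been established inside the proof of Lemma \ref{piece2-b}: at the end of Cases (ii) and (iv) it is shown that if two curves of $Q$ bound a pair of pants whose third boundary component is a boundary component of $N$, then the corresponding curves of a pairwise disjoint family representing $\lambda([Q])$ also bound a pair of pants whose third boundary component is a boundary component of $N$. Applying this to $a,b\in Q$ produces $a'\in\lambda([a])$ and $b'\in\lambda([b])$ that, together with a boundary component of $N$, bound a pair of pants on $N$; by the previous paragraph $a'$ and $b'$ are nonseparating, which is what we want. The only point demanding any care is the first step, the identification of $(a,b,\delta)$ with a sub-configuration of Figures \ref{fig1c} and \ref{fig3-c}, but this is immediate from the genus and boundary count above, so there is no real obstacle here---all the work is contained in Lemma \ref{piece2-b}.
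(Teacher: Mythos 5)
Your proposal is correct and follows essentially the same route as the paper, whose entire proof is the observation that the statement is already contained in Cases (ii) and (iv) of the proof of Lemma \ref{piece2-b}. You merely make explicit the change-of-coordinates reduction (and the appeal to Corollary \ref{nonsep}) that the paper leaves implicit, so there is nothing to add.
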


\begin{figure}[htb]
\begin{center}

\hspace{.2cm} \epsfxsize=3.4in \epsfbox{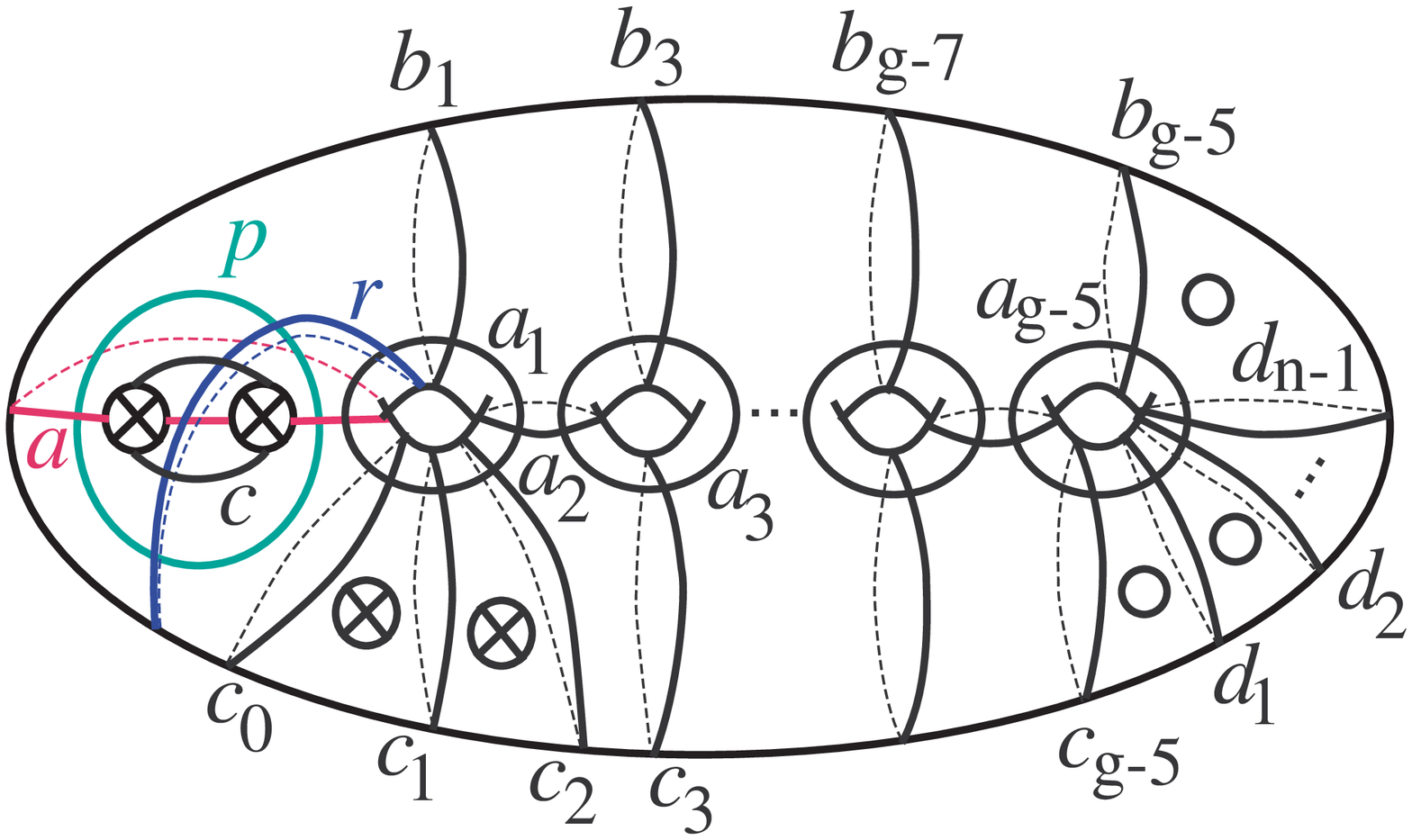}

\hspace{-1.3cm}(i)

\epsfxsize=3.17in \epsfbox{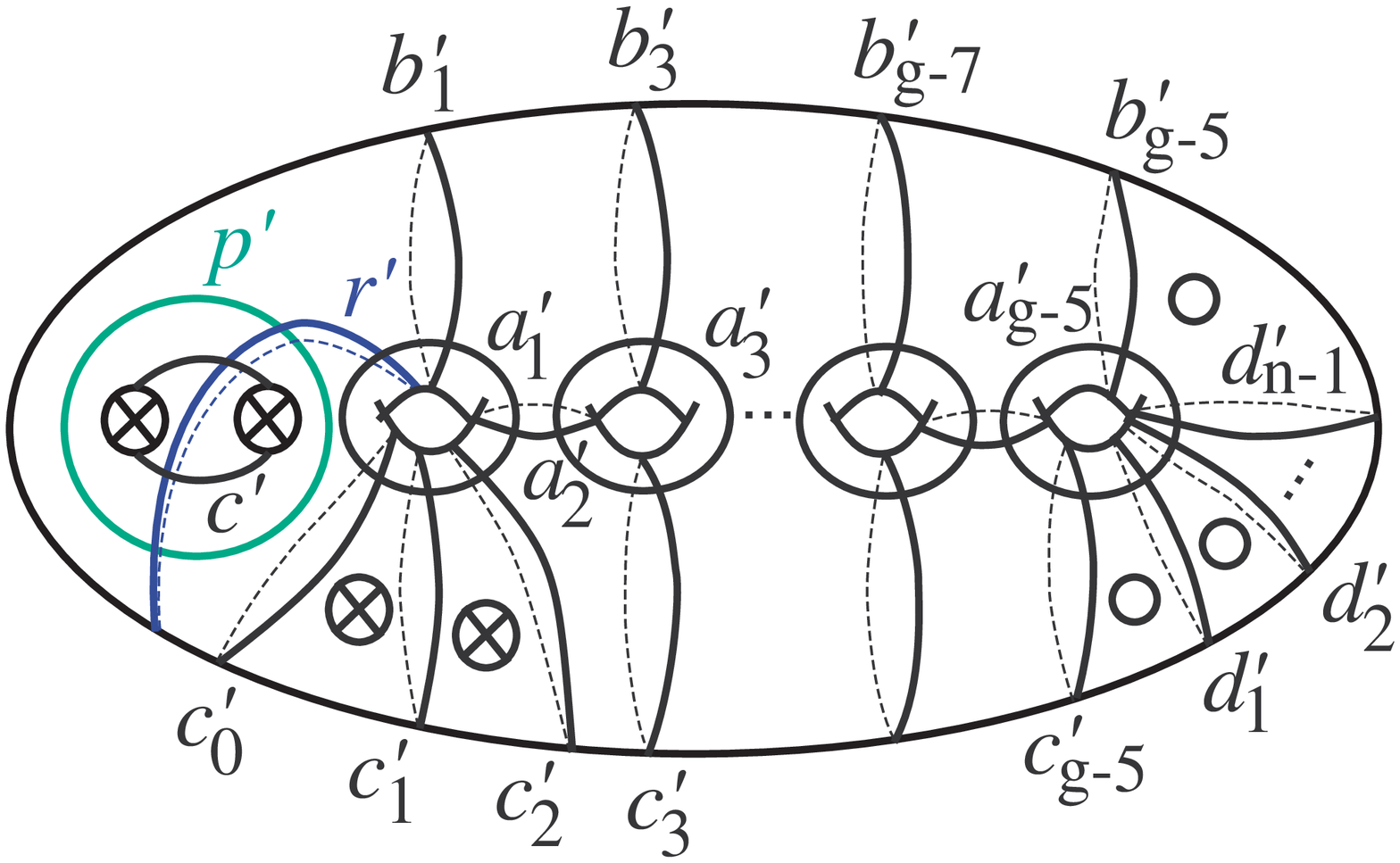} \hspace{-1.5cm} \epsfxsize=3.17in \epsfbox{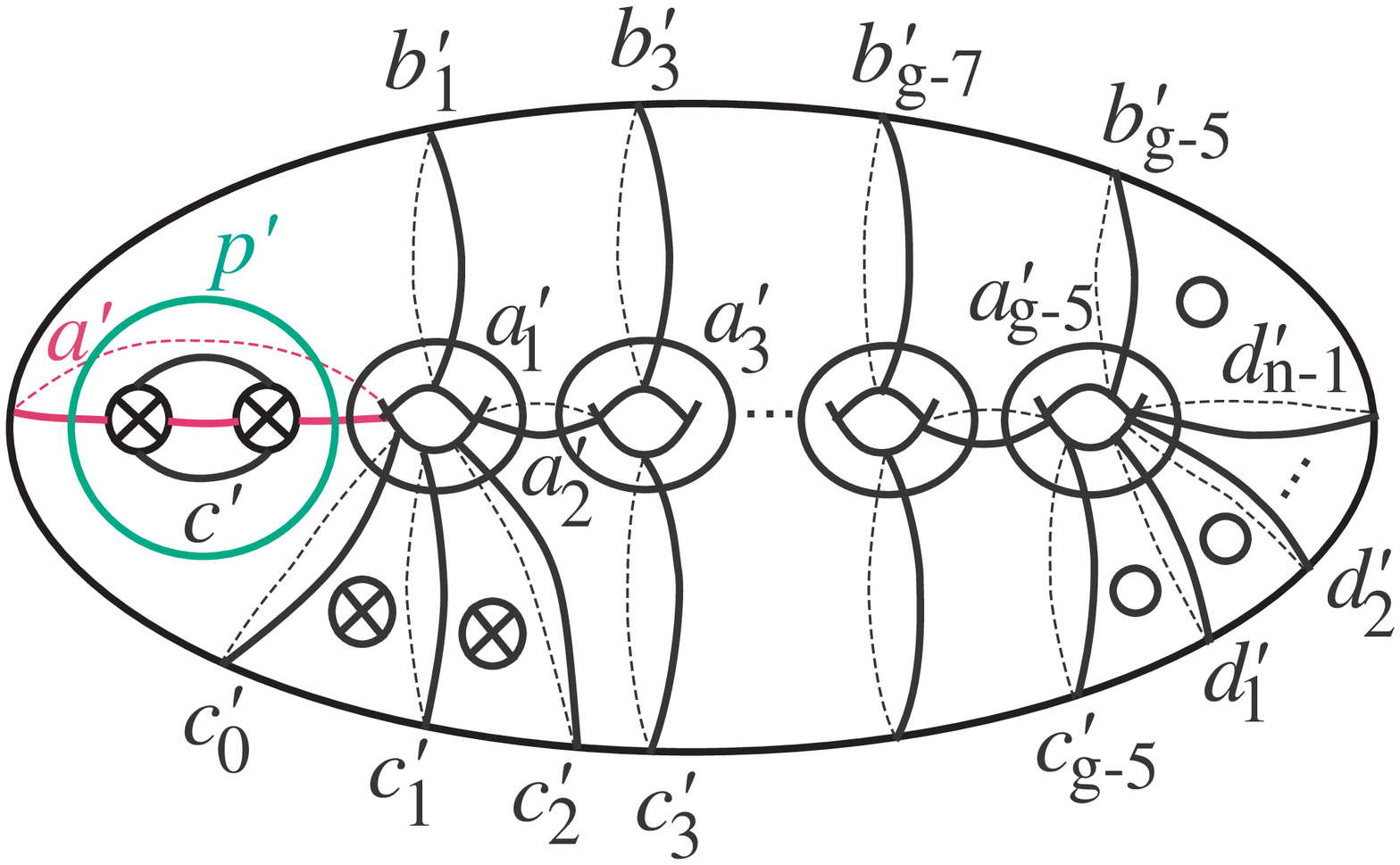}

\hspace{-1cm} (ii) \hspace{6.5cm} (iii)

\caption{Curve configuration III} \label{fig1b-n10}
\end{center}
\end{figure}

\begin{lemma}
\label{int-twi} Suppose that $g \geq 6$ and $g$ is even. Let $a, p, r$ be curves as shown in Figure \ref{fig1b-n10} (i). There
exist $a' \in \lambda([a]), p' \in \lambda([p]), r' \in \lambda([r])$ such that $i([p'], [a'])=2$ and $i([p'], [r'])=2$. \end{lemma}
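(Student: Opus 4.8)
The plan is to embed the triple $a,p,r$ into a top-dimensional $P$-$S$ decomposition in such a way that the intersection numbers $i([p],[a])=2$ and $i([p],[r])=2$ are forced on $\lambda([p]),\lambda([a]),\lambda([r])$ by a combination of the adjacency-preservation (Lemma \ref{adjacent}), the preservation of intersection number one (Lemma \ref{intone}), and the structural results on pairs of pants and projective-plane pieces (Lemma \ref{piece1}, Lemma \ref{piece2-a}, Lemma \ref{piece2-aa}, Corollary \ref{nonsep}). Concretely, following the pattern of the proofs of Lemma \ref{piece2-a} and Lemma \ref{piece2-b}, I would complete $\{a,p,r\}$ to the full curve configuration $\mathcal{C}$ indicated in Figure \ref{fig1b-n10} (i) and extend it to a top-dimensional $P$-$S$ decomposition $P$ on $N$; let $P'$ be a disjoint realization of $\lambda([P])$, which is again top-dimensional since $\lambda$ is injective by Lemma \ref{inj}. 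Choosing minimally intersecting representatives $x'\in\lambda([x])$ for $x\in\mathcal{C}$, the auxiliary curves in $\mathcal{C}$ that meet the rest of the configuration only in intersection-number-one or -zero arcs will reconstruct, by Lemma \ref{intone} and superinjectivity, a regular neighborhood which is an orientable subsurface $R'\subset N$ of the correct genus with the images of those curves lying inside it exactly as in Figures \ref{fig1b-n10} (ii), (iii); the remaining curves of $P'$ are then pinned down (up to isotopy) by the adjacency relations w.r.t. $P'$ together with Corollary \ref{nonsep}, exactly as in the earlier lemmas, so that the boundary curves of $R'$ either bound Möbius bands or are boundary-parallel, and the positions of $a'$, $p'$, $r'$ relative to $R'$ and to that Klein-bottle-with-one-hole piece are determined.

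**Once the configuration is reconstructed**, the point is that $p$ (up to isotopy) is a curve that meets $a$ in two points and $r$ in two points precisely because of the way it wraps around the nonorientable piece: it is the boundary of a regular neighborhood of $a$ together with a one-sided curve, and dually for $r$. Since all the surrounding curves have been shown to be carried to the analogous curves in the reconstructed picture — with $a'$, $r'$ nonseparating and bounding the expected projective-plane-with-two-holes and pair-of-pants pieces by Lemma \ref{piece2-a} and Lemma \ref{piece2-aa} — the curve $p'$, being the unique (up to isotopy) two-sided curve disjoint from the specified elements of $P'$ and adjacent to the specified elements, must sit in the reconstructed surface in the same topological position as $p$ does in the original. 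Reading off $i([p'],[a'])$ and $i([p'],[r'])$ from that position gives $i([p'],[a'])=2$ and $i([p'],[r'])=2$. Where $g$ is even and $n\geq 1$ one uses Lemma \ref{piece2-aa} for the pair-of-pants pieces whose third boundary component is a boundary component of $N$; where $n=0$ one uses Lemma \ref{piece2-a} directly, just as in the two cases of that lemma's proof.

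**The main obstacle** I expect is the same one that makes the proofs of Lemmas \ref{piece2-a} and \ref{piece2-b} long: controlling that the reconstructed $R'$ really is an orientable subsurface of the claimed genus (rather than, say, having one of its boundary curves secretly bound a Möbius band in the wrong place), and that the non-separating curves among the $a'_i, b'_j, c'_k$ assemble into the chain one expects. This requires carefully checking, for every pair of curves in $\mathcal{C}$, that non-adjacency w.r.t. $P$ transfers to non-adjacency w.r.t. $P'$ — which is done by the by-now-standard trick (used repeatedly above) of producing two disjoint auxiliary curves $c,d$ each hitting exactly one element of $P$ and invoking superinjectivity — and then ruling out the degenerate possibilities for the boundary behavior of $R'$ using Corollary \ref{nonsep} and the fact (\cite{Sc}) that a Klein bottle with one hole contains a unique two-sided non-peripheral curve up to isotopy. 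Once $p'$ is identified up to isotopy, extracting the two intersection numbers is a direct computation in the model surface and is not the hard part.
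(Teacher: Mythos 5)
Your overall strategy is the paper's: reconstruct the image of the auxiliary configuration of Figure \ref{fig1b-n10} (i) using Lemma \ref{intone} together with Lemmas \ref{piece1}, \ref{piece2-a}, \ref{piece2-aa} and Corollary \ref{nonsep}, then locate $p'$, $a'$, $r'$ in the reconstructed picture and read off the intersection numbers. But two statements in your write-up are false as written and one step is under-justified. First, you cannot ``embed the triple $a,p,r$ into a top-dimensional $P$-$S$ decomposition'' nor complete the full configuration to one: $i([p],[a])=i([p],[r])=2$, so no simplex of $\mathcal{T}(N)$ contains all three. The paper never does this; it only reconstructs the image of the sub-configuration $\mathcal{B}$ (the chain $a_1,\dots,a_{g-5}$, the $b_j$, $c_k$, $d_m$, and $r$ -- which omits $p$ and $a$), and Lemma \ref{adjacent} is not used in this proof at all; $p'$, $c'$, $a'$ are pinned down afterwards by disjointness and intersection-one data relative to $\mathcal{B}'$. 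Second, $p$ is not ``the boundary of a regular neighborhood of $a$ together with a one-sided curve'' -- that would force $i([p],[a])=0$, contradicting the very equality to be proved; rather $p$ bounds a one-holed Klein bottle containing the unique two-sided nonseparating curve $c$, and $a$ and $r$ each cross $p$ twice.

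Third, ``reading off'' $i([p'],[a'])$ requires controlling $a'$, not just $p'$, and $a'$ is \emph{not} determined up to isotopy by the stated constraints. The actual argument is: since $a'$ meets $a_1'$ once and is disjoint from $b_1'$, $c_0'$ and $c'$, its arc in the pair of pants bounded by $p', b_1', c_0'$ and its arc in the Klein bottle bounded by $p'$ are each unique up to isotopy, but the way they are glued along $p'$ is ambiguous up to a power of the Dehn twist about $p'$; one then observes that this ambiguity does not change $i([p'],[a'])$, which equals $2$. Relatedly, the auxiliary curve $c$ is essential to your sketch and should appear explicitly: it is the curve whose image (nonseparating by Corollary \ref{nonsep}, disjoint from every element of $\mathcal{B}'$ except $r'$) forces $p'$ to bound the one-holed Klein bottle in the reconstructed picture, giving $i([p'],[r'])=2$, and whose disjointness from $a'$ pins the Klein-bottle arc of $a'$. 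With these corrections your outline coincides with the paper's proof; without them the key final step is only asserted.
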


\begin{proof} We will give the proof when $g \geq 8$ and $n \geq 2$. The proof for the remaining cases will be similar.

Let $a, p, r, c$ be as shown in Figure \ref{fig1b-n10} (i). Consider the curve configuration
$\mathcal{B} = \{a_1, a_2, \cdots, a_{g-5}, b_1, b_3, \cdots, b_{g-5}, c_0, c_1, c_2, c_3, c_5, \cdots,$
$ c_{g-5}, d_1, d_2, \cdots, d_{n-1}, r\}$ as shown in Figure \ref{fig1b-n10}. Let $a'_1  \in \lambda([a_1]), a'_2  \in \lambda([a_2]),$
$\cdots, a_{g-5}' \in \lambda([a_{g-5}]), b'_1  \in \lambda([b_1]), b'_3  \in \lambda([b_3]),$
$ \cdots, b_{g-5}' \in \lambda([b_{g-5}]), c'_0  \in \lambda([c_0]), c'_1  \in \lambda([c_1]),   \cdots, c_{g-5}' \in \lambda([c_{g-5}]),$
$ d'_1  \in \lambda([d_1]), d'_2  \in \lambda([d_2]), \cdots, $ $d_{n-1}' \in \lambda([d_{n-1}]), r' \in \lambda([r])$ be minimally
intersecting representatives. By Lemma \ref{intone} geometric intersection one is preserved. So, a regular neighborhood of union of
all the elements in $\mathcal{B'}= \{a'_1, a'_2, \cdots, a_{g-5}', b'_1, b'_3, \cdots,$
$ b'_{g-5}, c'_0, c'_1, \cdots, c'_{g-5}, d'_1, d'_2, \cdots,$ $d'_{n-1}, r'\}$ is an orientable surface of genus $\frac{g-4}{2}$
with several boundary components.

By Lemma \ref{piece2-a}, if two curves in $\mathcal{B}$ separate a twice holed projective plane on $N$, then the corresponding
curves in $\mathcal{B'}$ separate a twice holed projective plane on $N$. By Lemma \ref{piece2-aa}, if two curves in $\mathcal{B}$
are the boundary components of a pair of pants where the third boundary component of the pair of pants is a boundary component of $N$,
then the corresponding curves in $\mathcal{B'}$ are the boundary components of a pair of pants where the third boundary component
of the pair of pants is a boundary component of $N$. By Lemma \ref{piece1}, if three nonseparating curves in $\mathcal{B}$ bound a
pair of pants on $N$, then the corresponding curves in $\mathcal{B'}$ bound a pair of pants on $N$. These imply that the curves
in $\mathcal{B'}$ are as shown in Figure \ref{fig1b-n10} (ii).
 
Let $p' \in \lambda([p])$, $c' \in \lambda([c])$ such that $p'$ and $c'$ have minimal intersection with the elements of $\mathcal{B'}$.
We have that $p'$ and $c'$ are both disjoint from all the curves in $\mathcal{B'}$ except for $r'$. Since we also know that $c'$ is a
nonseparating curve by Corollary \ref{nonsep}, we see that $p'$ and $c'$ are as shown in Figure \ref{fig1b-n10} (ii). Hence, $i([p'], [r'])=2$.
So, we have $\{a'_1, a'_2, \cdots, a'_{g-5}, b'_1, b'_3, \cdots,$ $ b'_{g-5}, c'_0, c'_1, \cdots, c'_{g-5}, d'_1, d'_2, \cdots, d'_{n-1}, p', c'\}$
as shown in Figure \ref{fig1b-n10} (iii).

Let $a' \in \lambda([a])$ such that $a'$ has minimal intersection with the elements of $\mathcal{B'} \cup \{c', p'\}$.
By Lemma \ref{intone} geometric intersection number one is preserved. Since $a$ intersects $a_1$ only once, and $a$ is disjoint
from $b_1$ and $c_0$, $a'$ intersects $a'_1$ only once, and $a'$ is disjoint from $b'_1$ and $c'_0$. So, there exists a unique arc up to
isotopy of $a'$ in the pair of pants, say $Q$, bounded by $p', b_1', c_0'$, starting and ending on $p'$ and intersecting the arc of $a_1'$
in $Q$ only once. Since $a$ is disjoint from $c$, $a'$ is disjoint from $c'$. So, there exists a unique arc up to isotopy of $a'$ in the
Klein bottle with one hole bounded by $p'$ (we are considering isotopy where the end points of the arcs can move on the boundary
during the isotopy). So, the elements in the set $\{a', c', p'\} \cup \mathcal{B'} \setminus \{r'\}$ are as
shown in Figure \ref{fig1b-n10} (iii) up to an action of a power of Dehn twist about $p'$. Hence, $i([p'], [a'])=2$.\end{proof}\\

\begin{lemma}
\label{int-twi-2} Suppose that $g \geq 6$ and $g$ is even. Let $p, x, y$ be curves as shown in Figure \ref{fig1ba} (i).
There exist $p' \in \lambda([p]), x' \in \lambda([x]), y' \in \lambda([y])$ such that $i([p'], [x'])=2$ and $i([p'], [y'])=2$. \end{lemma}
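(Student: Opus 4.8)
The plan is to mimic the proof of Lemma \ref{int-twi} almost verbatim, since Lemma \ref{int-twi-2} is the same kind of statement with the curve configuration of Figure \ref{fig1ba} (i) in place of that of Figure \ref{fig1b-n10} (i). First I would treat the generic case ($g \geq 8$, $n \geq 2$, say) and indicate that the remaining small cases are handled by the same argument with minor cosmetic changes to the pictures. So I would complete $\{p, x, y\}$ to a large curve configuration $\mathcal{B}$ (together with an auxiliary curve $c$ playing the role played by $c$ in Lemma \ref{int-twi}) consisting of the usual chain of curves $a_i$, the $b_j$'s, the $c_k$'s, the boundary-parallel-type curves $d_i$, and the curves $x, y$ themselves, arranged so that: the curves of intersection number one form a chain whose regular neighborhood is an orientable surface of genus $\frac{g-4}{2}$ with several boundary components; certain triples of nonseparating curves bound pairs of pants; certain pairs bound a twice-holed projective plane; and certain pairs together with a boundary component of $N$ bound a pair of pants.

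Next I would pass to minimally intersecting representatives $\mathcal{B}'$ of the images under $\lambda$, and invoke exactly the package of preservation results already proved: Lemma \ref{intone} (geometric intersection one is preserved, so the chain survives), Corollary \ref{nonsep} (nonseparating curves with nonorientable complement stay nonseparating), Lemma \ref{piece1} (nonseparating triples bounding a pair of pants are preserved), Lemma \ref{piece2-a} (pairs bounding a twice-holed projective plane are preserved, $g$ even), and Lemma \ref{piece2-aa} (pairs bounding a pair of pants with a boundary component of $N$ are preserved, $g$ even, $n \geq 1$). Exactly as in Lemma \ref{int-twi} this forces the configuration $\mathcal{B}'$ to be, up to homeomorphism of $N$, the picture drawn in Figure \ref{fig1ba} (ii): the orientable subsurface $R$ is pinned down, all the boundary components of $R$ are identified as either Möbius-band-bounding or $N$-boundary-parallel, and the Klein-bottle-with-one-hole piece bounded by $p'$ is located.

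Then, as in the last paragraph of the proof of Lemma \ref{int-twi}, I would pin down $x'$ and $y'$ inside this picture. Choosing representatives $x' \in \lambda([x])$, $y' \in \lambda([y])$ with minimal intersection with all curves of $\mathcal{B}' \cup \{c', p'\}$, the facts that $x$ (resp. $y$) meets exactly one curve of the chain once, is disjoint from the relevant $b$- and $c$-curves, and is disjoint from $c$, force the arc of $x'$ (resp. $y'$) in the relevant pair of pants bounded by $p'$ and two chain curves to be unique up to isotopy, and likewise the arc of $x'$ (resp. $y'$) in the Klein bottle with one hole bounded by $p'$ to be unique up to isotopy moving endpoints on $p'$. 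Hence $x'$ and $y'$ are determined up to a power of the Dehn twist about $p'$, which does not change geometric intersection number with $p'$, and one reads off $i([p'],[x'])=2$ and $i([p'],[y'])=2$ directly from the picture.

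The main obstacle, as in Lemma \ref{int-twi}, is purely bookkeeping: one has to choose the completing configuration $\mathcal{B}$ carefully enough that the union of the preservation lemmas rigidifies $\mathcal{B}'$ completely — in particular so that $p'$ genuinely bounds a Klein bottle with one hole and the relevant arcs have no room to wiggle — and one has to check the small-genus and small-$n$ cases separately since the figures degenerate there. There is no new conceptual ingredient beyond the lemmas already established; the work is in drawing the right pictures and verifying the intersection pattern, which I would present by reference to the figures exactly as the authors do for Lemma \ref{int-twi}.
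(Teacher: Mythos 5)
Your overall plan (rigidify a large configuration $\mathcal{B}'$ using Lemmas \ref{intone}, \ref{piece1}, \ref{piece2-a}, \ref{piece2-aa} so that the chain, $p'$ and $r'$ are as in Figure \ref{fig1ba} (ii)) is exactly how the paper begins, but the way you propose to pin down $x'$ and $y'$ does not work for these particular curves, and the step the paper actually uses is missing from your argument. In Figure \ref{fig1ba} (i) the curve $y$ is not a curve of the type of $a$ in Lemma \ref{int-twi}: it is the image of $p$ under a homeomorphism of $N$, hence a separating curve bounding a Klein bottle with one hole. In particular every geometric intersection number of $y$ with another curve is even, so $y$ does not ``meet exactly one curve of the chain once,'' Lemma \ref{intone} gives no leverage on it, and the argument you borrow from the last paragraph of Lemma \ref{int-twi} (a unique arc in a pair of pants bounded by $p'$ and two chain curves, forced by an intersection-number-one condition, plus a unique arc in the Klein bottle bounded by $p'$, determined up to twisting about $p'$) simply has no analogue for $y$; the same is true for $x$, for which the paper uses no intersection-one information at all.

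What the paper does instead, and what your proposal lacks, is: (1) deduce that $y'$ bounds a Klein bottle with one hole, because there is a homeomorphism of $N$ carrying $p$ to $y$ and the argument already run for $p$ (in Lemma \ref{int-twi}) shows that the $\lambda$-image of any curve in that position bounds a Klein bottle with one hole; (2) combine this with the facts that $y'$ is disjoint from $r', a'_1, c'_1$ and intersects $c'_0$ essentially to place $y'$ as in Figure \ref{fig1ba} (iii); and (3) use the same disjointness data together with injectivity of $\lambda$ to force $x'$ to lie inside the Klein bottle with one hole bounded by $y'$, from which both equalities $i([p'],[x'])=2$ and $i([p'],[y'])=2$ are read off. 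Without step (1) the disjointness and nonzero-intersection constraints alone do not determine $y'$, and without step (3) you have no control on $x'$ at all, so as written your proof has a genuine gap precisely at the point where the new content of this lemma (beyond Lemma \ref{int-twi}) lies.
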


\begin{proof} The proof is similar to the proof of Lemma \ref{int-twi}. We will give the proof when $g \geq 8$ and $n \geq 2$.
The proof for the remaining cases will be similar.

We consider the curve configuration $\mathcal{B} = \{a_1, a_2, \cdots, a_{g-5}, b_1, b_3, \cdots, b_{g-5}, c_0, c_1, c_2, c_3, $
$ \cdots, c_{g-5}, d_1, d_2, \cdots, d_{n-1}, p, r, x, y\}$ as shown in Figure \ref{fig1ba} (i). Let
$a'_1  \in \lambda([a_1]), a'_2  \in \lambda([a_2]),$
$\cdots, a_{g-5}' \in \lambda([a_{g-5}]), b'_1  \in \lambda([b_1]), b'_3  \in \lambda([b_3]),$
$ \cdots, b_{g-5}' \in \lambda([b_{g-5}]), c'_0  \in \lambda([c_0]), c'_1  \in \lambda([c_1]), \cdots, c_{g-5}' \in \lambda([c_{g-5}]),$
$ d'_1  \in \lambda([d_1]), d'_2  \in \lambda([d_2]), \cdots, $ $d_{n-1}' \in \lambda([d_{n-1}]),$
$ p' \in \lambda([p]), r' \in \lambda([r]), x' \in \lambda([x]), y' \in \lambda([y])$ be minimally intersecting representatives.

\begin{figure}[htb]
\begin{center}

\hspace{1cm}  \epsfxsize=3.17in \epsfbox{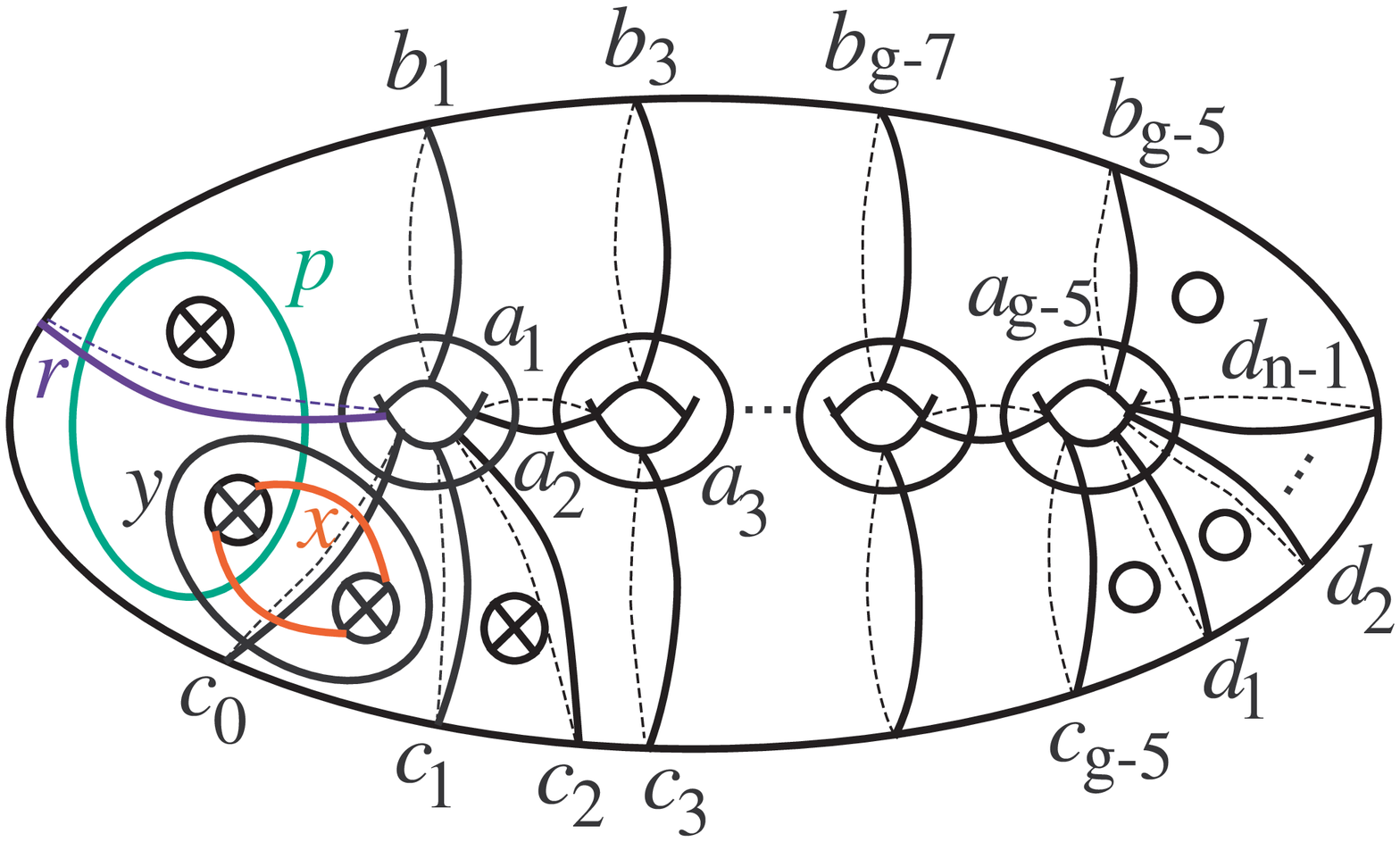}

\hspace{-0.7cm}(i)
 
\hspace{0.5cm} \epsfxsize=3.17in \epsfbox{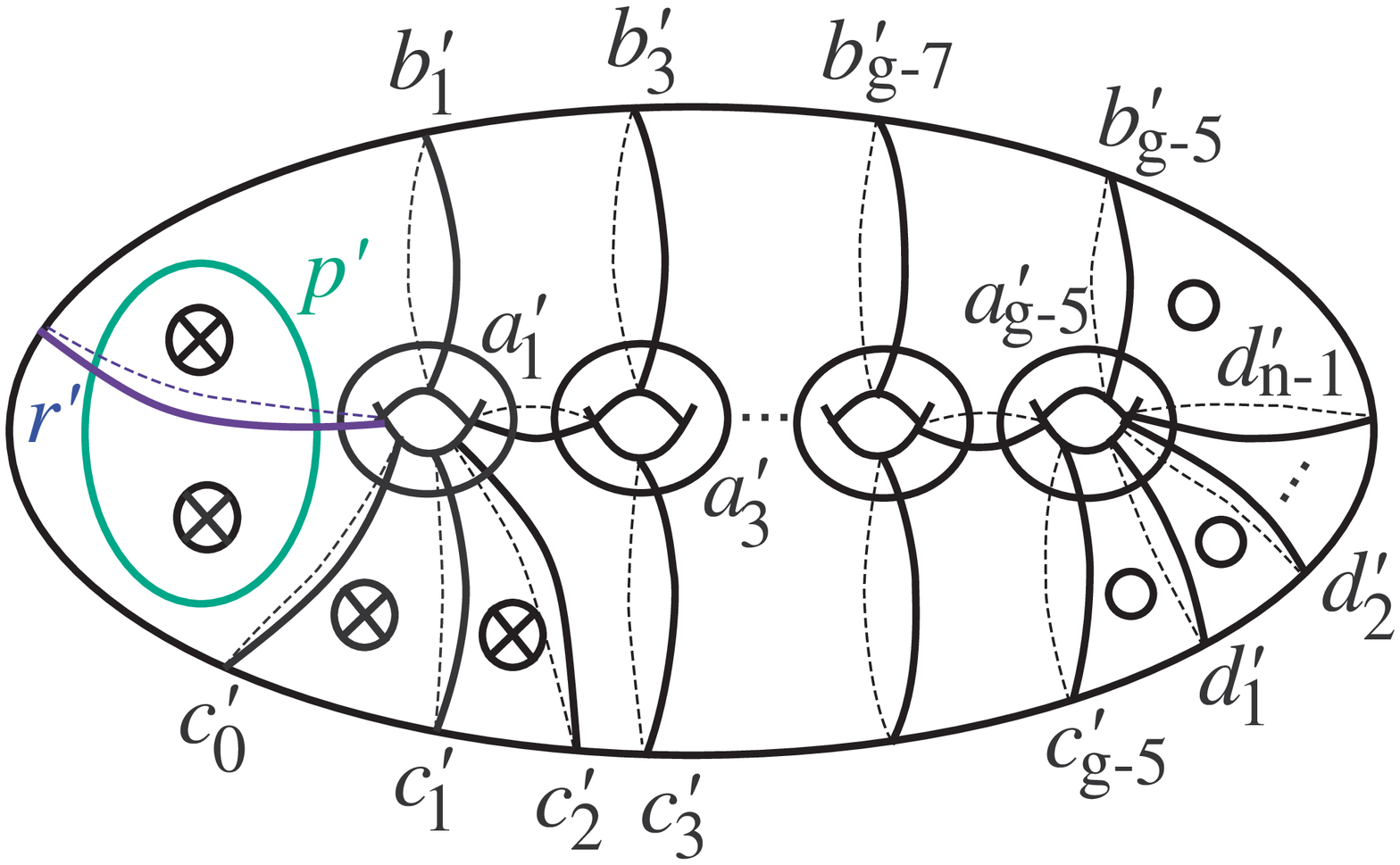} \hspace{-1.5cm} \epsfxsize=3.17in \epsfbox{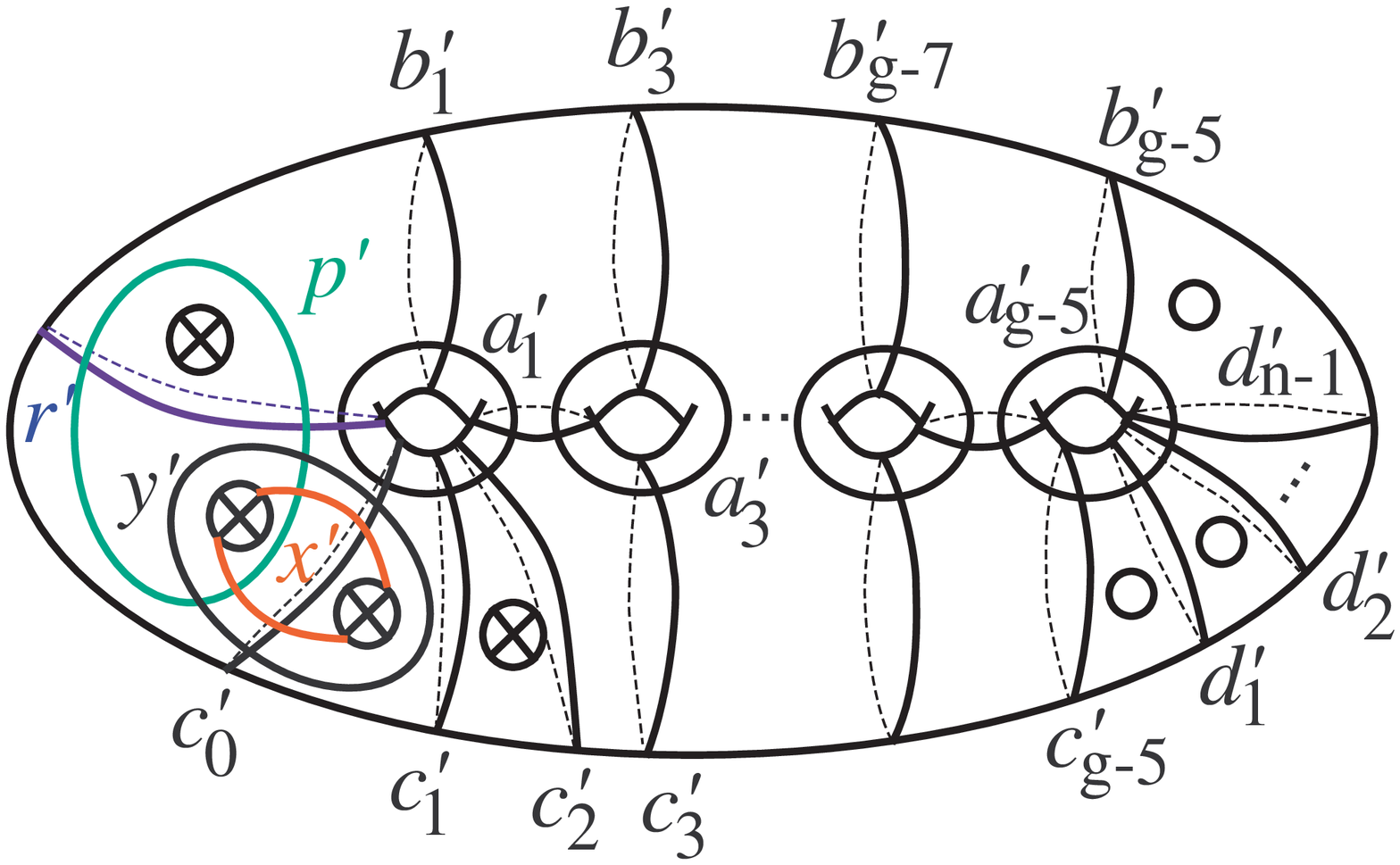}

\hspace{-0.7cm} (ii) \hspace{6.5cm} (iii)

\caption{Curve configuration IV} \label{fig1ba}
\end{center}
\end{figure}

As in the proof of Lemma \ref{int-twi}, the curves in $\{a'_1, a'_2, \cdots, a_{g-5}', b'_1, b'_3, \cdots,$ $ b_{g-5}', c'_0, c'_1,$
$\cdots, c_{g-5}', d'_1, d'_2, \cdots, d'_{n-1}, p', r'\}$ will be as shown in Figure \ref{fig1ba} (ii). Since there exists a
homeomorphism sending $p$ to $y$ and $p'$ bounds a Klein bottle with one hole, we know that $y'$ bounds a Klein bottle with one hole. The curve
$y$ is disjoint from each of $r, a_1, c_1$ and it has nontrivial intersection with $c_0$. So, $y'$ is disjoint from each of $r', a'_1, c'_1$
and it has nontrivial intersection with $c'_0$. All this information about $y'$ implies that $y'$ is as shown in Figure \ref{fig1ba} (iii). 
Since $x$ is disjoint from each of $r, a_1, c_1$ and it has nontrivial intersection with $c_0$ and $\lambda$ is injective, $x'$ has to
be in the Klein bottle bounded by $y'$ and should be as shown in the figure. So, we get $i([p'], [x'])=2$ and $i([p'], [y'])=2$.\end{proof}\\
 
\begin{figure}[htb]
\begin{center}
\hspace{-0.2cm}  \epsfxsize=2.86in \epsfbox{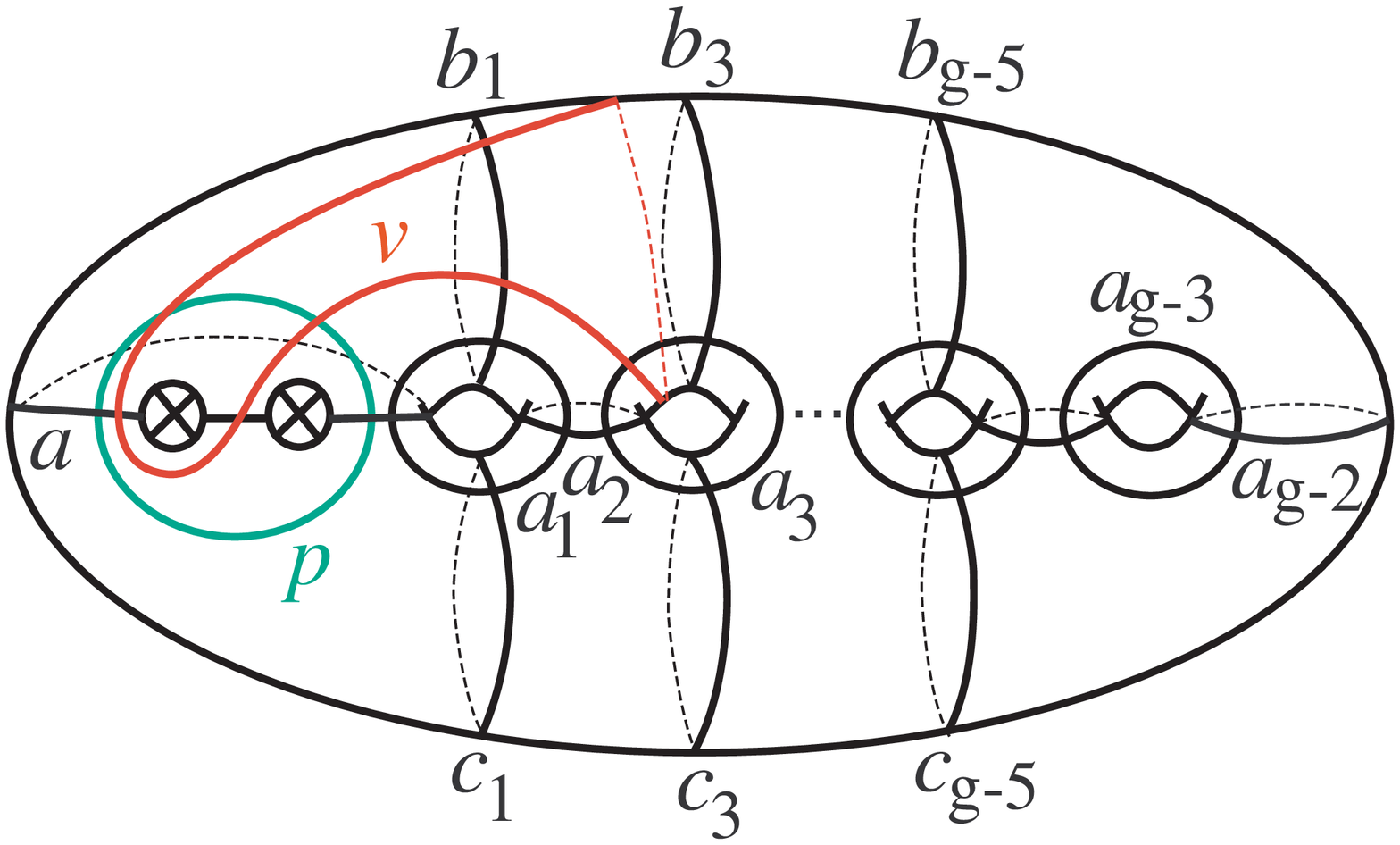} \hspace{-0.5cm}  \epsfxsize=3.17in \epsfbox{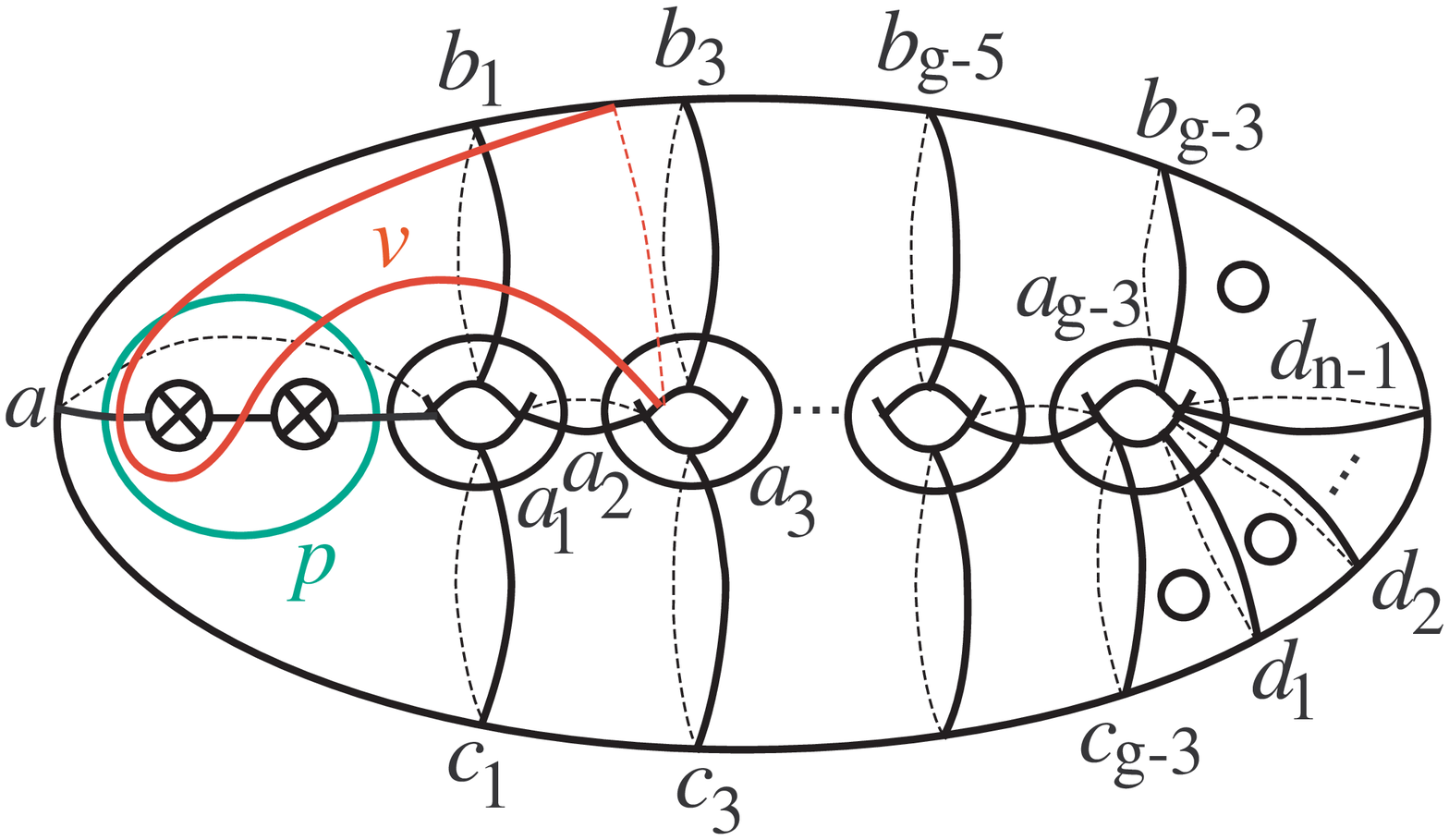}  \\

\hspace{-1cm} (i) \hspace{6.5cm} (ii)

\epsfxsize=2.95in \epsfbox{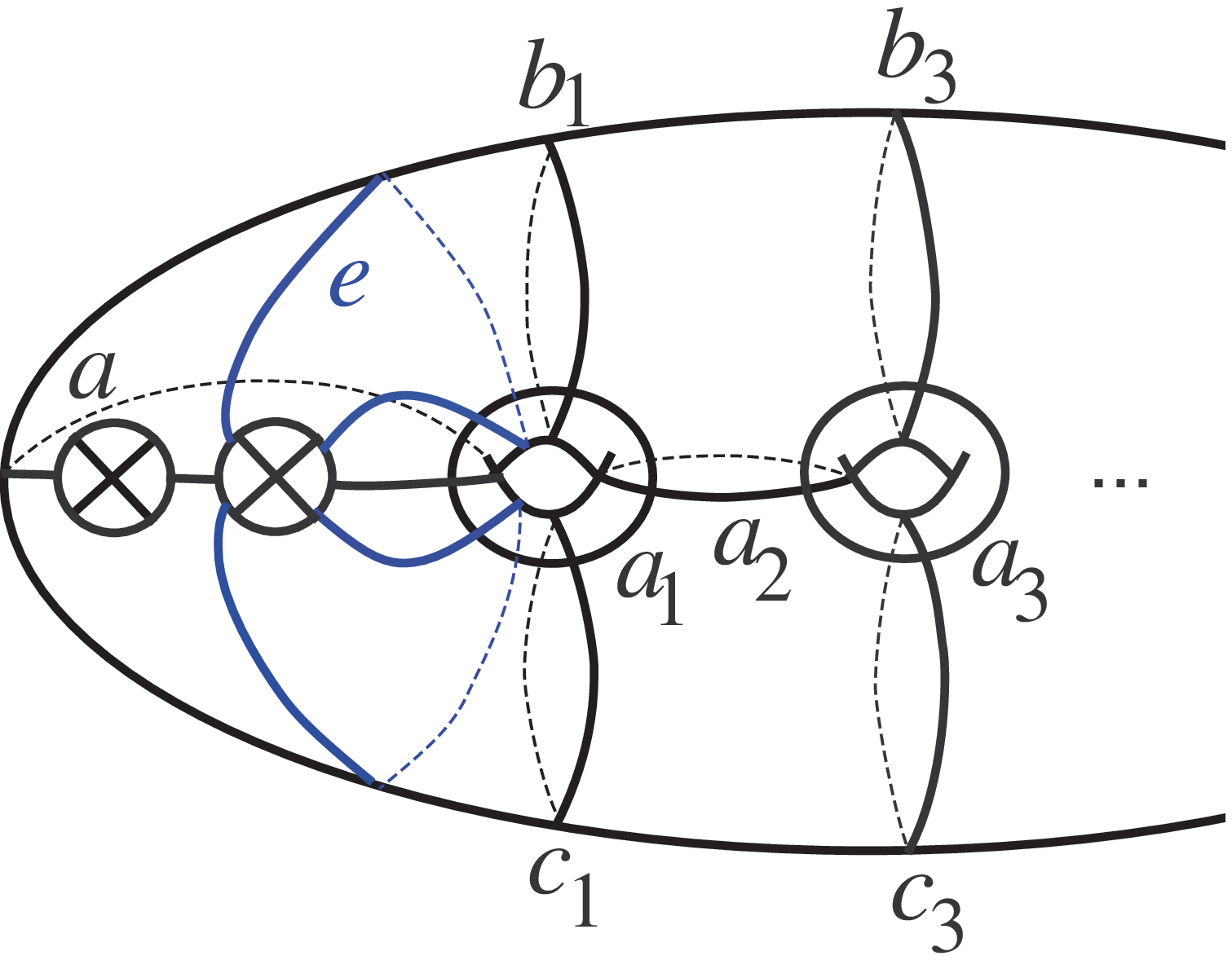}  \hspace{-0.5cm}  \epsfxsize=2.95in \epsfbox{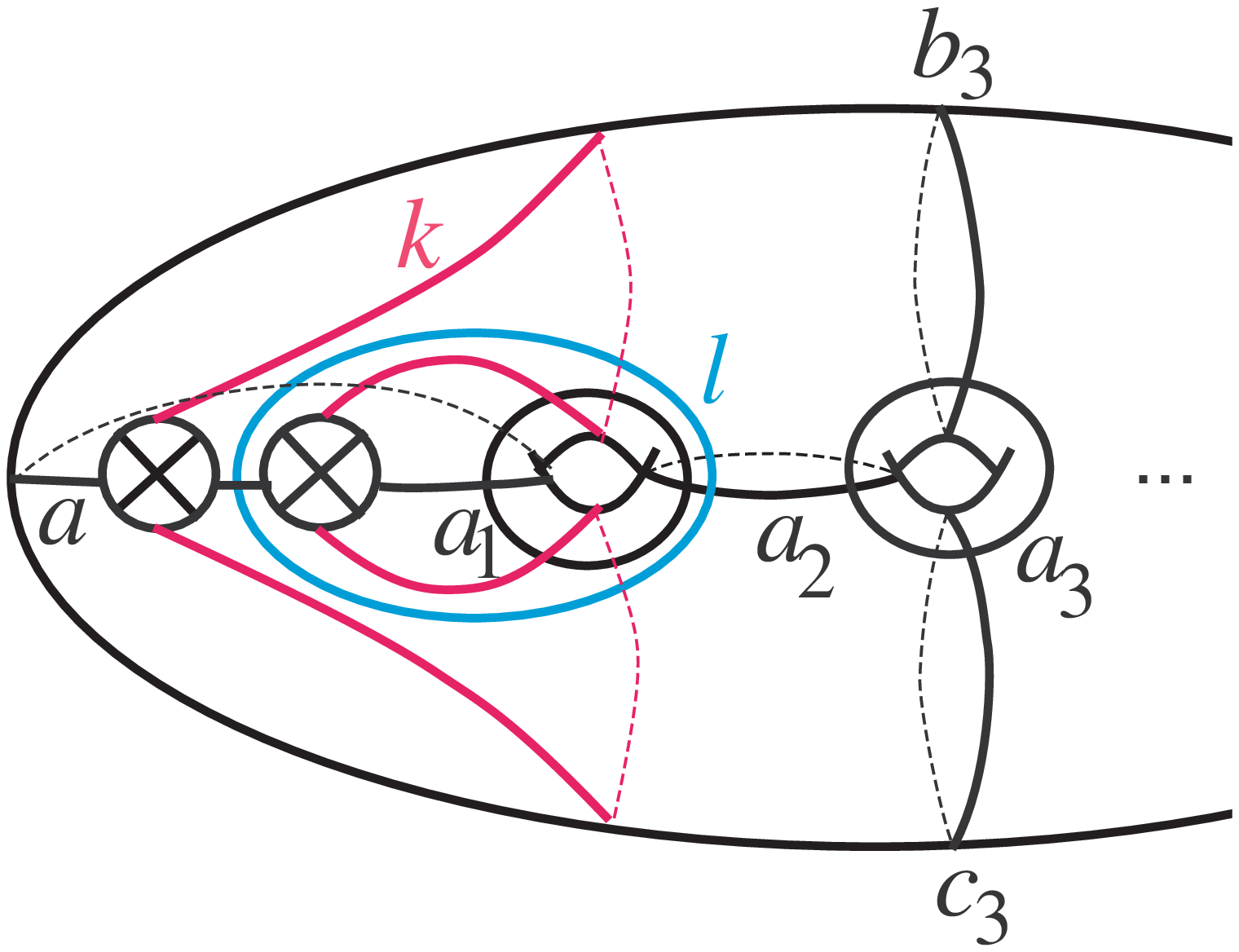}

\hspace{-0.9cm} (iii) \hspace{6.2cm} (iv)
\caption{$\mathcal{C}$ when $g$ is even} \label{fig1b-new}
\end{center}
\end{figure}

If $f: N \rightarrow N$ is a homeomorphism, then we will use the same notation for $f$ and its isotopy class $[f]$ in the rest of the paper.
Let $\mathcal{C} = \{a, a_1, \cdots, a_{g-2}, b_1, b_3, \cdots, b_{g-3}, c_1, c_3,$ $ \cdots, c_{g-3}, d_1, \cdots, d_{n-1}, e,$
$ k, l, p, v \}$ be as shown in Figure \ref{fig1b-new} (i) - (iv) when $g \geq 6$ and $g$ is even.

\begin{lemma}
\label{curves} Suppose $g \geq 6$ and $g$ is even. There exists a homeomorphism $h: N \rightarrow N$ such
that $h([x]) = \lambda([x])$ $\forall \ x \in \mathcal{C}$.
\end{lemma}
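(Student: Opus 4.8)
The plan is to construct the homeomorphism $h$ by realizing $\lambda$ on each vertex of the fixed configuration $\mathcal{C}$ as the image of a genuine homeomorphism, building $h$ from a maximal collection of the curves in $\mathcal{C}$ whose images have been pinned down by the preceding lemmas. The key observation is that the set $\mathcal{C}$ shown in Figure \ref{fig1b-new} is chosen so that it contains a top dimensional $P$-$S$ decomposition $P_0 \subset \mathcal{C}$ together with enough auxiliary curves ($a_1$, the $b_j$'s with even index, extra arcs, etc.) to ``fill'' $N$: cutting along the curves of $\mathcal{C}$ decomposes $N$ into disks, M\"obius bands, and annuli. Once we know $\lambda$ sends each curve of $\mathcal{C}$ to a curve whose geometric configuration matches that of the original (correct intersection numbers, correct adjacency in $P$-$S$ decompositions, correct topological type of complementary pieces), the classical ``change of coordinates'' / Alexander-method argument produces a homeomorphism $h$ with $h([x]) = \lambda([x])$ for all $x \in \mathcal{C}$.

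First I would fix minimally intersecting representatives of all the classes $\lambda([x])$, $x \in \mathcal{C}$, and record what the earlier lemmas tell us about them: by Lemma \ref{inj} the images are pairwise distinct; by Lemma \ref{intone} every geometric intersection number equal to $1$ in $\mathcal{C}$ is preserved, and geometric intersection $0$ is preserved because $\lambda$ is simplicial, while nonzero intersection is preserved by superinjectivity; by Corollary \ref{nonsep} the nonseparating curves with nonorientable complement go to nonseparating curves; by Lemmas \ref{piece1}, \ref{piece2-a}, \ref{piece2-aa} the pair-of-pants and twice-holed-projective-plane sub-configurations are preserved, as is adjacency in top dimensional $P$-$S$ decompositions by Lemma \ref{adjacent}; and by Lemmas \ref{int-twi} and \ref{int-twi-2} the curve $p$ (which bounds a Klein bottle with one hole) and its companions $x$, $y$, $r$ have images with $i([p'],[a'])=i([p'],[r'])=2$, etc. Second, I would use this data to build up the picture of $\{\lambda([x]): x \in \mathcal{C}\}$ piece by piece, exactly as in the proofs of Lemmas \ref{piece2-a} and \ref{int-twi}: take a regular neighborhood of the union of the images of the ``chain-like'' curves (the $a_i'$, $b_j'$, $c_k'$), identify it as an orientable subsurface of the correct genus with the correct number of boundary components, then use adjacency in $P$-$S$ decompositions to locate $e'$, $p'$, $k'$, $l'$, $v'$ and to show each complementary boundary component of that subsurface either bounds a M\"obius band or is isotopic to a boundary component of $N$.

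Third, having shown that the whole system $\{\lambda([x]) : x\in\mathcal{C}\}$ is arranged in $N$ in precisely the same combinatorial-topological pattern as $\mathcal{C}$ itself, I would invoke the Alexander method (the standard fact that a homeomorphism of a surface is determined, up to isotopy, by its action on a filling system of curves, and conversely any pattern-preserving bijection of two such filling systems is realized by a homeomorphism): since cutting $N$ along $\mathcal{C}$ yields only disks, M\"obius bands and annuli, there is a homeomorphism $h : N \to N$ taking each $x \in \mathcal{C}$ to the chosen representative of $\lambda([x])$, hence $h([x]) = \lambda([x])$ for all $x \in \mathcal{C}$. **The main obstacle** I expect is the bookkeeping in the middle step: verifying that the constraints collected from the various lemmas are jointly strong enough to pin down the isotopy class of every image curve uniquely (in particular resolving the finitely many a priori possibilities for where $e'$, $k'$, $l'$, $v'$ can sit, and ruling out the ``torus with one hole'' alternatives in favor of ``Klein bottle / M\"obius band'' exactly where the original curve forces it, using that curves like $e$ meet $p$ nontrivially). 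This is the same kind of case analysis carried out in the proofs of Lemmas \ref{piece2-a}--\ref{int-twi-2}, now assembled over the full configuration $\mathcal{C}$; once it is done, the existence of $h$ is immediate from the Alexander method.
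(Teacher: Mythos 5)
Your proposal follows essentially the same route as the paper: the paper's proof likewise fixes minimally intersecting representatives, uses Lemmas \ref{intone}, \ref{piece1}, \ref{piece2-a}, \ref{piece2-aa}, \ref{adjacent}, \ref{int-twi} and \ref{int-twi-2} (together with two auxiliary curves $r$ and $c$) to reconstruct the image configuration curve by curve, and then concludes the existence of $h$ by a change-of-coordinates argument once the pattern of the images matches that of $\mathcal{C}$. The one caveat is that the intersection data cannot pin down every image uniquely as you expect -- the curve $l'$ is determined only up to a reflection $\phi$ that fixes the rest of the configuration -- and the paper resolves this by replacing $\lambda$ with $\lambda \circ \phi_{*}$, which is harmless for the existence statement but needs to be built into your bookkeeping before the final step.
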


\begin{proof} We will give the proof when $N$ has boundary. The proof for the closed case will be similar. We will consider
all the curves in  $\mathcal{C}$ as defined above together with two new curves $r, c$ as shown in Figure \ref{fig1b-n}.
Let $\mathcal{B} = \{a_1, a_2, \cdots, a_{g-3}, b_1, b_3, \cdots, b_{g-3}, c_1, c_3, \cdots,$ $ c_{g-3}, d_1, d_2, \cdots, d_{n-1}, r\}$.
Let $a'_1  \in \lambda([a_1]), a'_2  \in \lambda([a_2]), \cdots, a_{g-3}' \in \lambda([a_{g-3}]), b'_1  \in \lambda([b_1]),$
$ b'_3  \in \lambda([b_3]),$ $ \cdots, b_{g-3}' \in \lambda([b_{g-3}]), c'_1  \in \lambda([c_1]), c'_3  \in \lambda([c_3]),$
$ \cdots, c_{g-3}' \in \lambda([c_{g-3}]),$ $ d'_1  \in \lambda([d_1]), d'_2  \in \lambda([d_2]), \cdots, $
$d_{n-1}' \in \lambda([d_{n-1}]), r' \in \lambda([r])$ be minimally intersecting representatives. By Lemma \ref{intone}
geometric intersection one is preserved. So, a regular neighborhood of union of all the elements in
$\mathcal{B'}= \{a'_1, a'_2, \cdots, a_{g-3}', b'_1, b'_3, \cdots,$ $ b'_{g-3}, c'_1, c'_3, \cdots, c'_{g-3},$
$ d'_1, d'_2, \cdots, d'_{n-1}, r'\}$ is an orientable surface of genus $\frac{g-2}{2}$ with several boundary components.

\begin{figure}[htb]
\begin{center}

\epsfxsize=3.25in \epsfbox{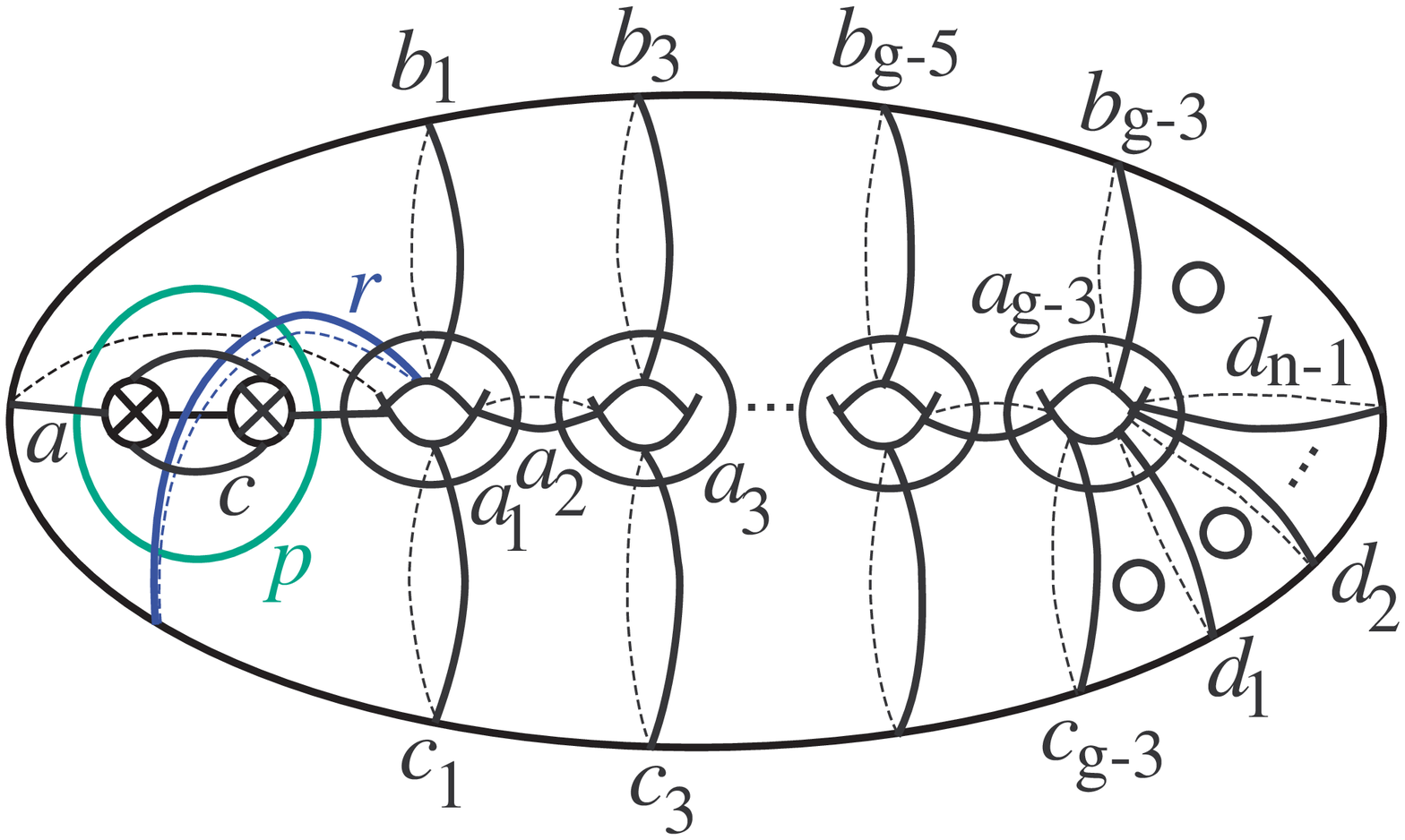} \hspace{-1.cm} \epsfxsize=2.9in \epsfbox{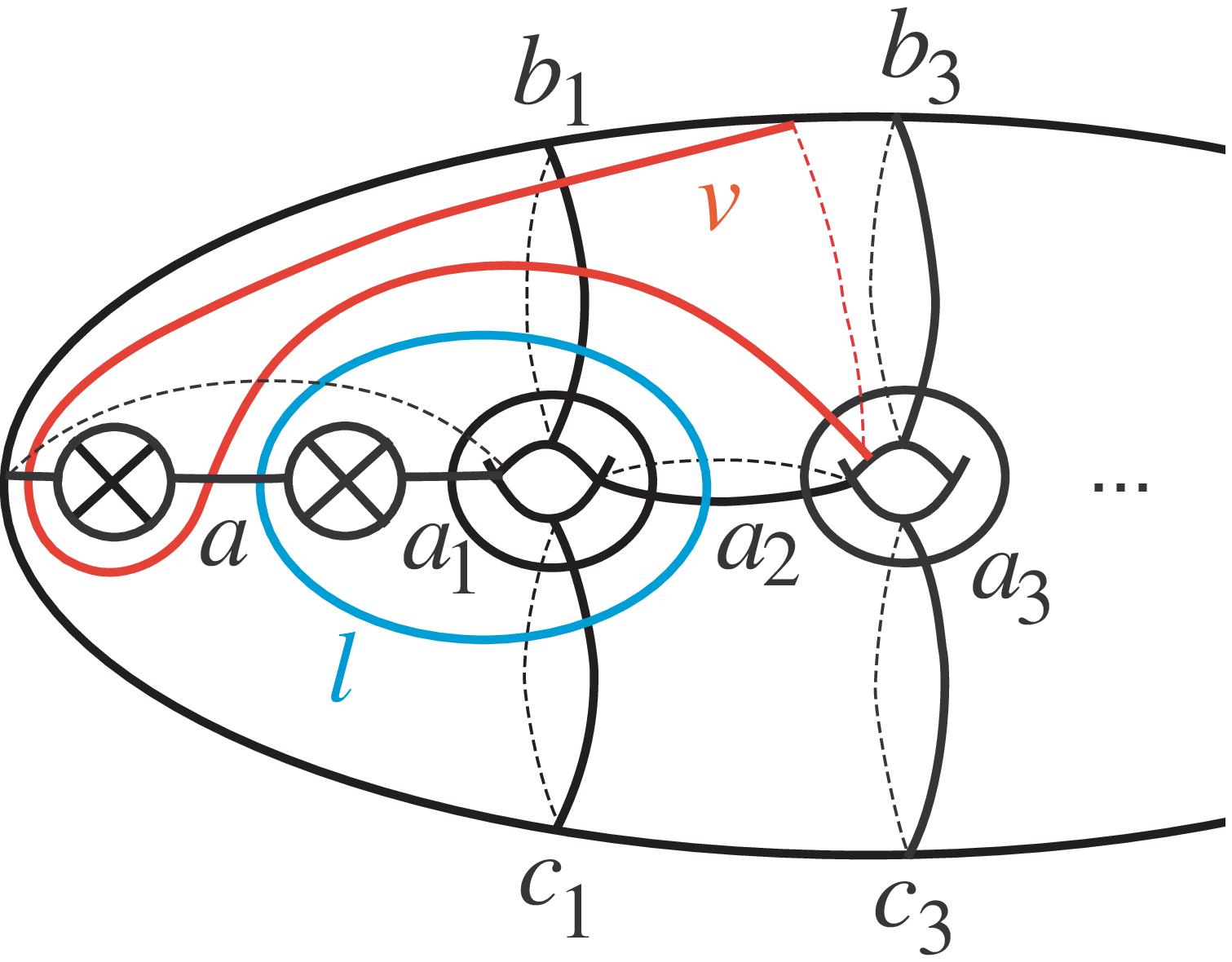}

\hspace{-1cm} (i) \hspace{6.5cm} (ii)
\caption{Curve configuration V} \label{fig1b-n}
\end{center}
\end{figure}

\begin{figure}[htb]
\begin{center}
\hspace{-0.7cm} \epsfxsize=3.1in \epsfbox{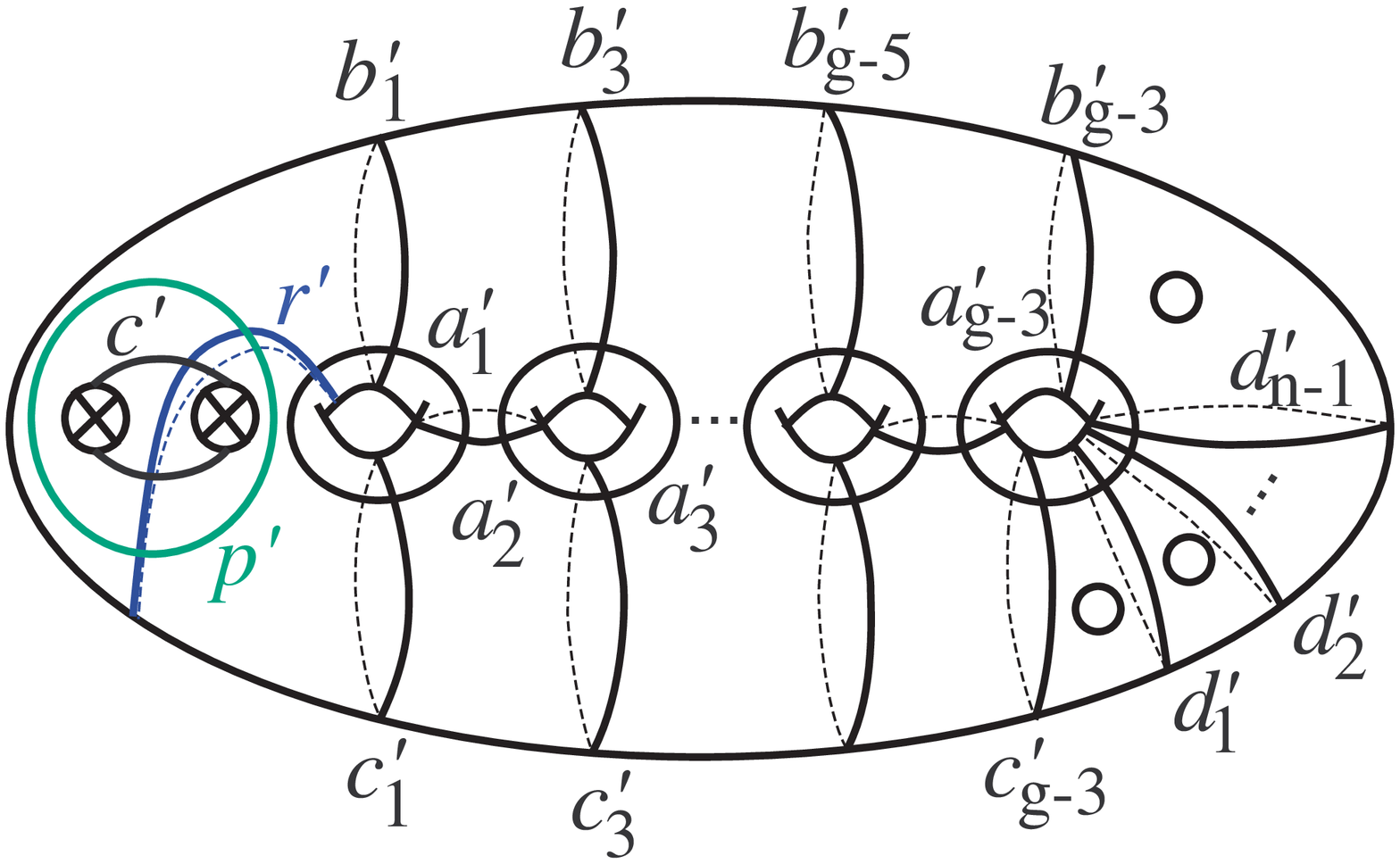}  \hspace{-0.8cm} \epsfxsize=2.9in \epsfbox{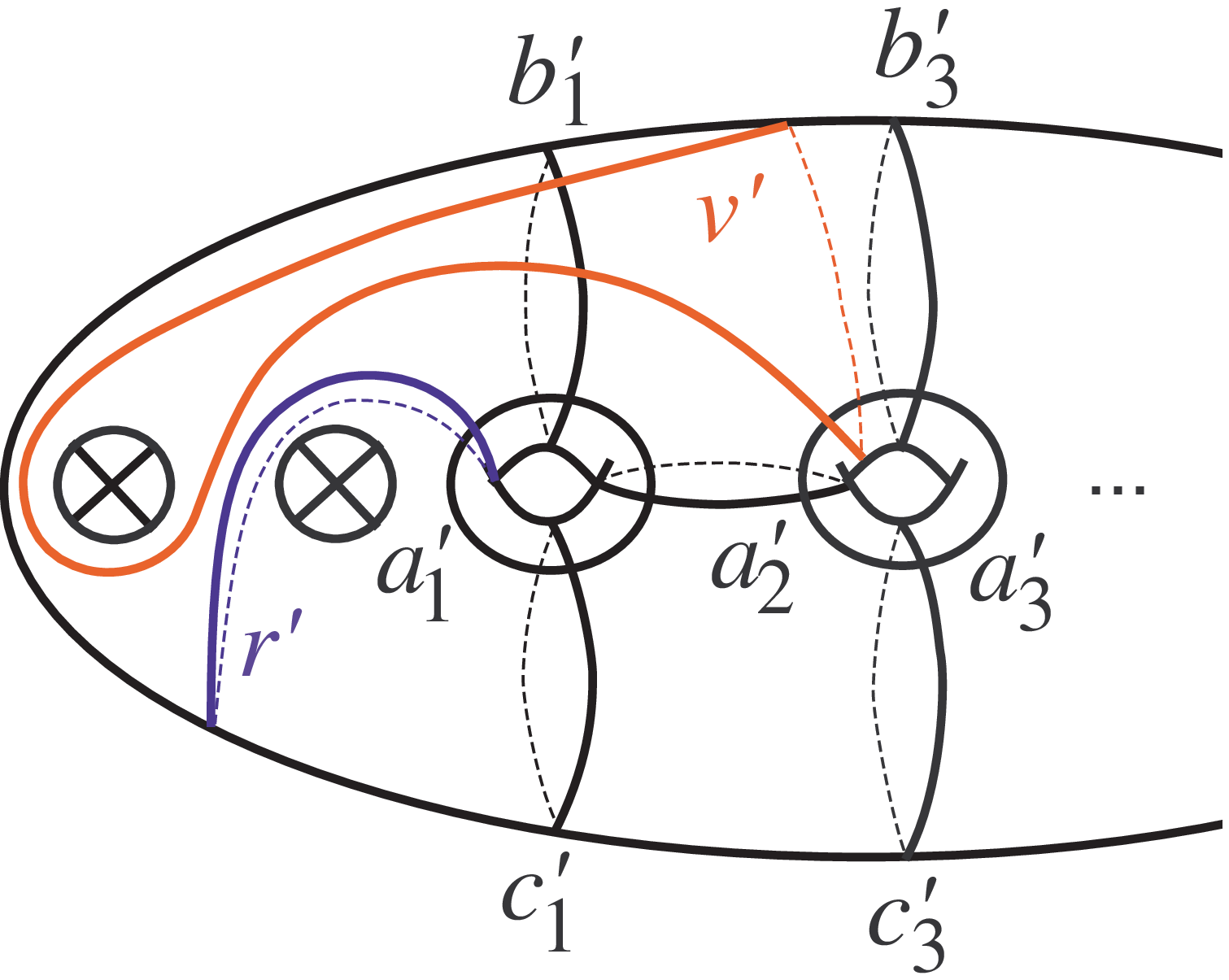}

\hspace{-1.3cm} (i) \hspace{6.7cm} (ii)

\epsfxsize=3.1in \epsfbox{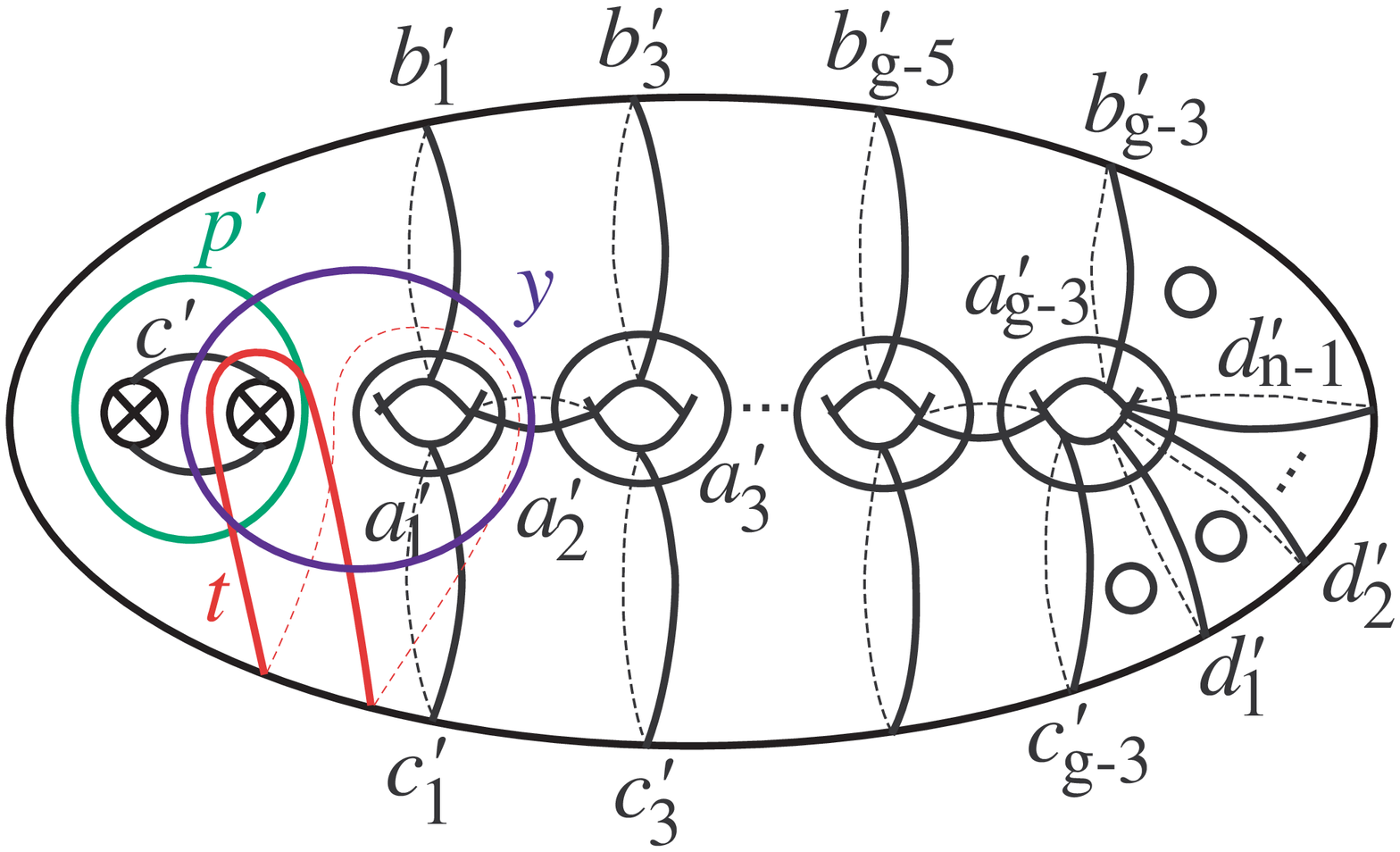}  \hspace{-1cm}  \epsfxsize=3.1in \epsfbox{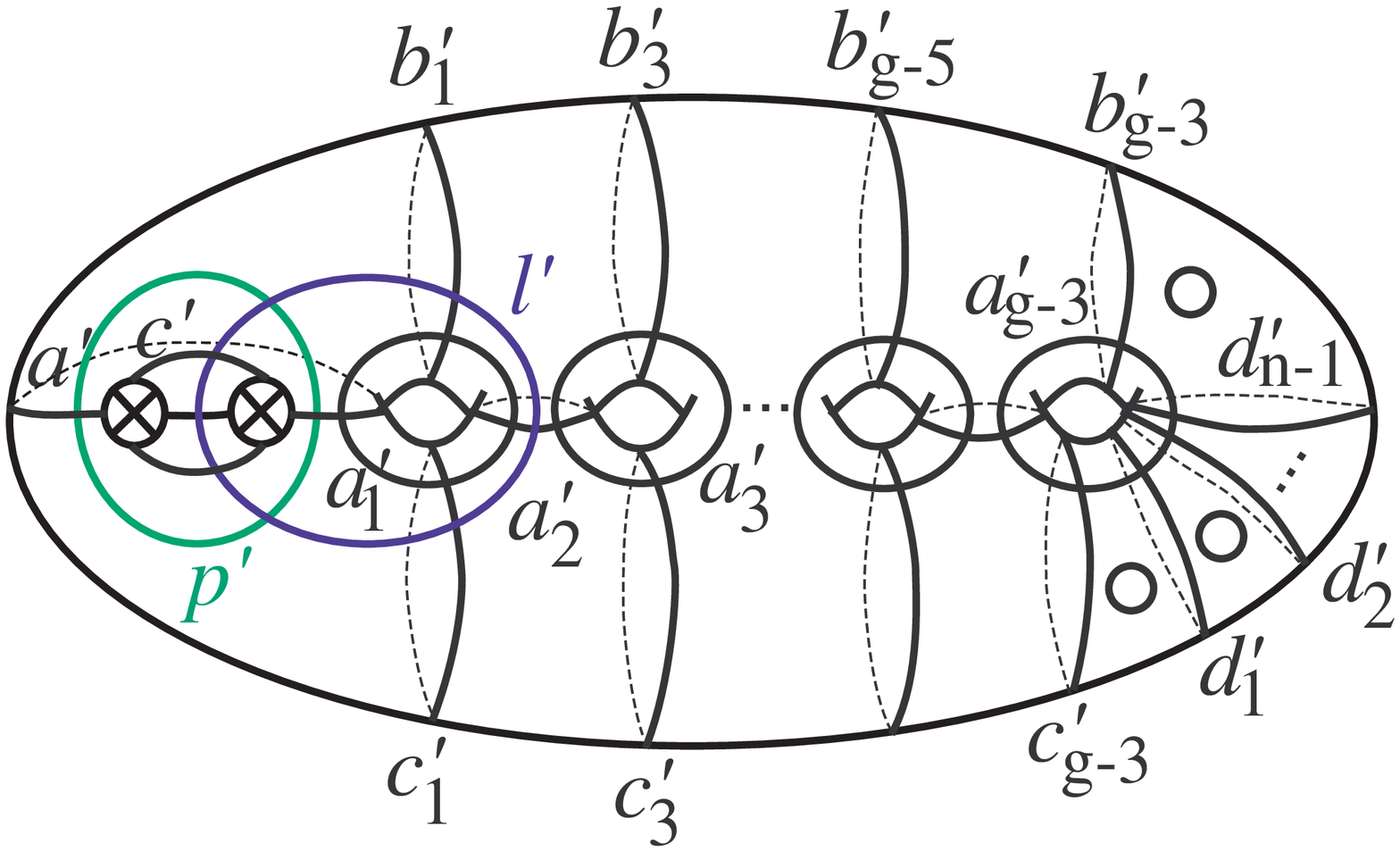}

\hspace{-1.3cm} (iii) \hspace{6.5cm} (iv)

\hspace{-4.3cm} \epsfxsize=2.95in \epsfbox{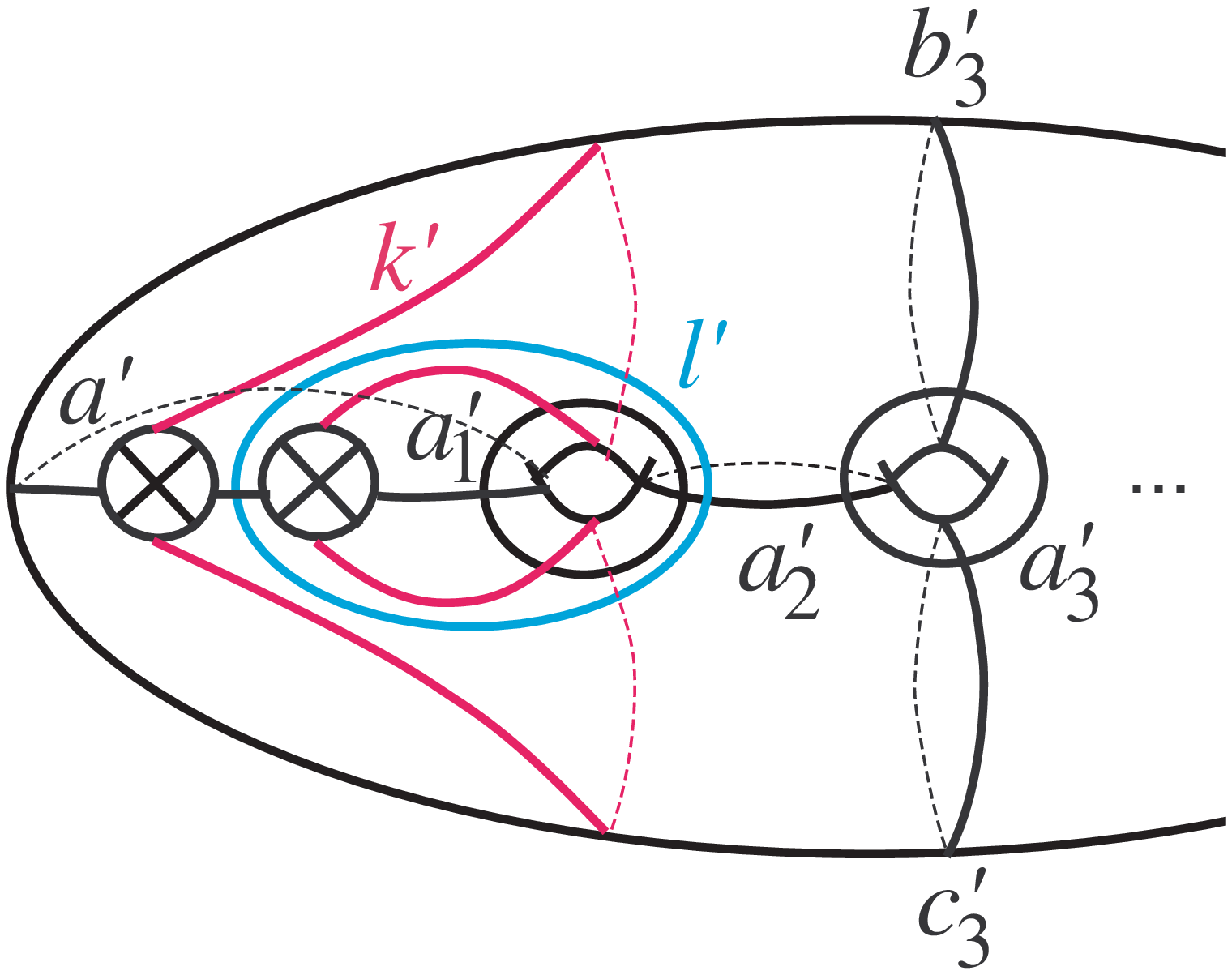}
\hspace{-4.3cm} (v)
\caption{Curve configuration VI} \label{fig1b-V}
\end{center}
\end{figure}

\begin{figure}[htb]
\begin{center}
\epsfxsize=2.1in \epsfbox{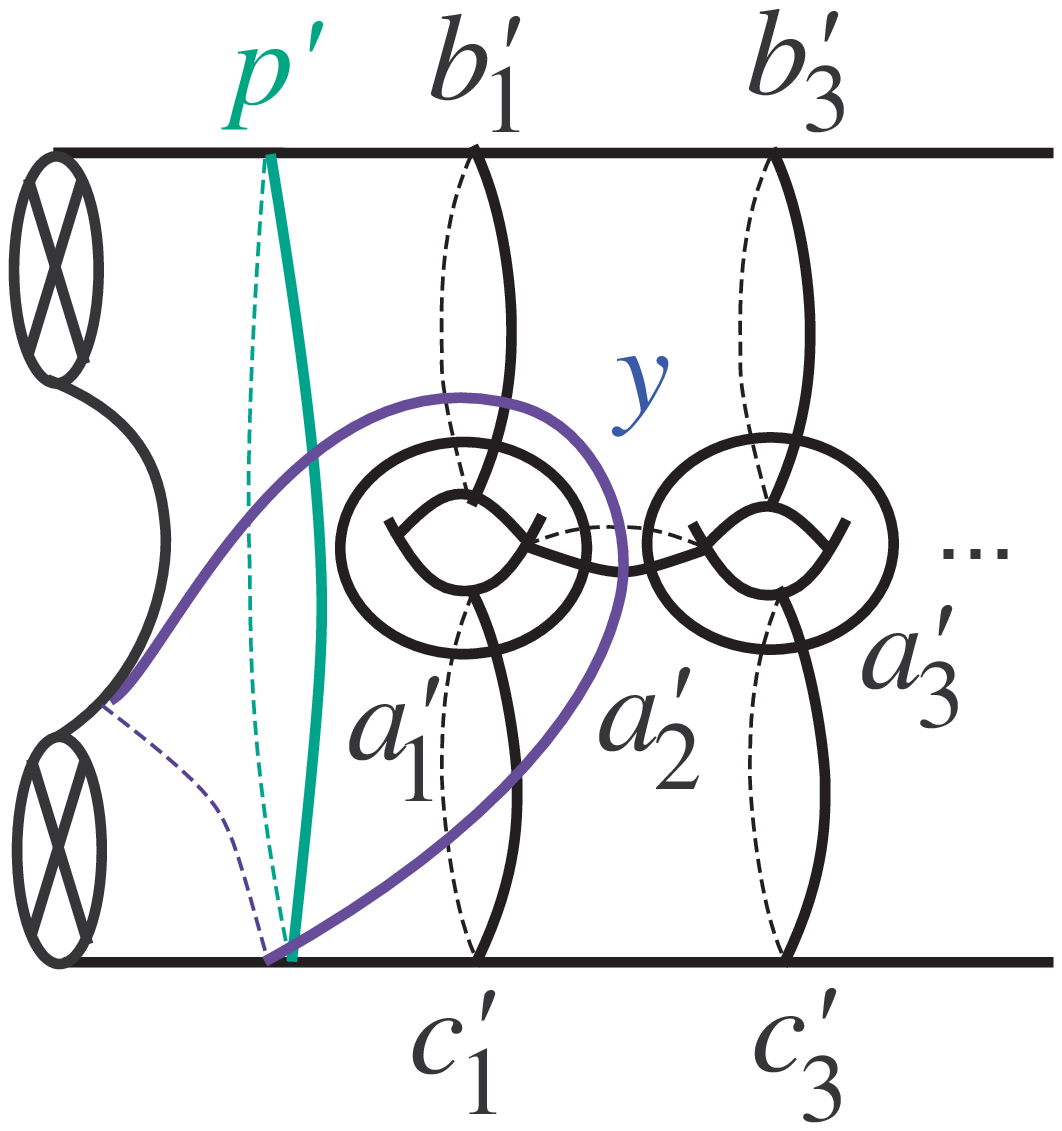}  \hspace{0.5cm} \epsfxsize=2.1in \epsfbox{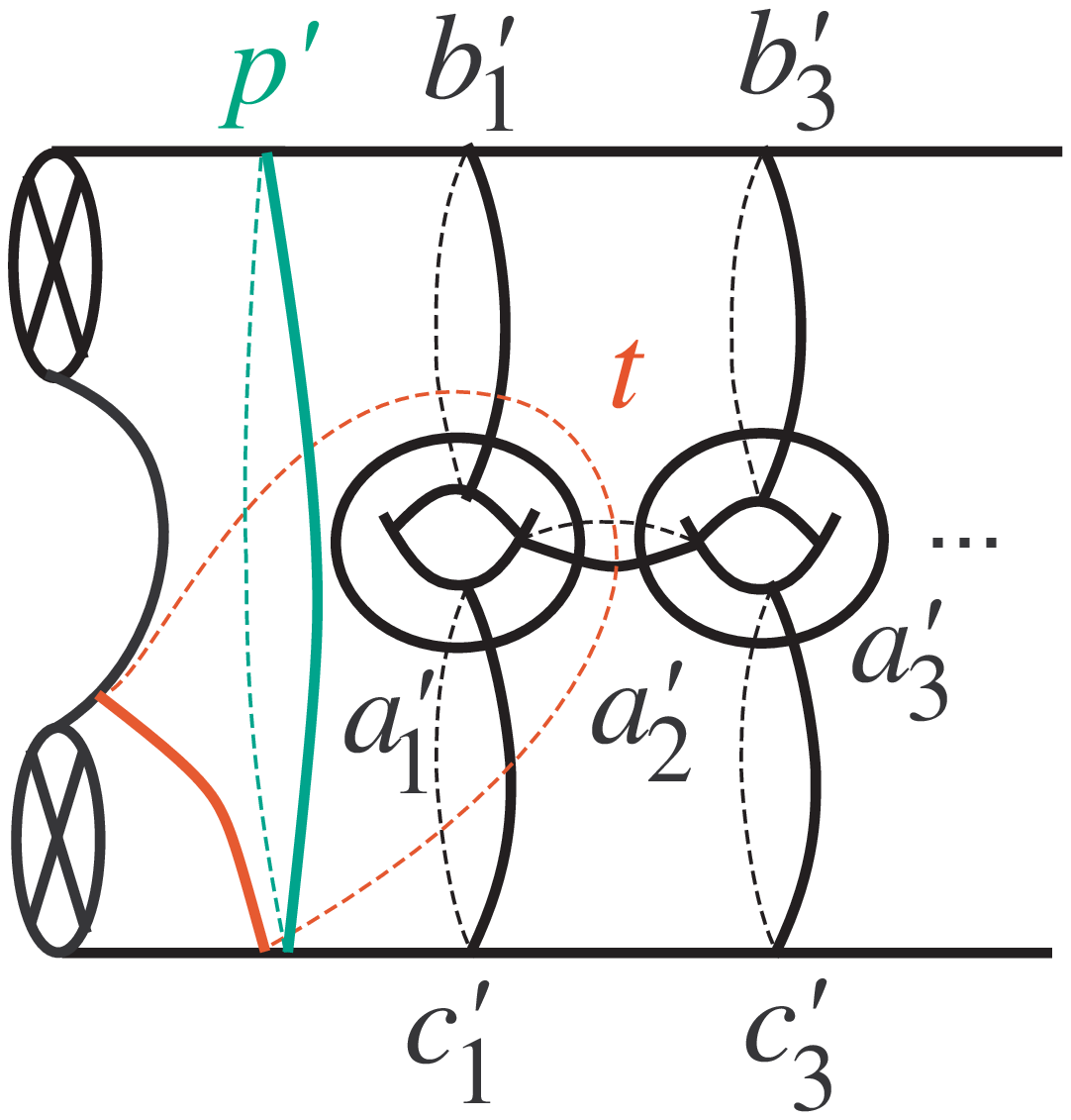}

\hspace{-1.1cm} (i) \hspace{6cm} (ii)

\caption{Curve configuration VII} \label{fig1b-VI}
\end{center}
\end{figure}

By Lemma \ref{piece2-a}, if two curves in $\mathcal{B}$ separate a twice holed projective plane on $N$, then the
corresponding curves in $\mathcal{B'}$ separate a twice holed projective plane on $N$. By Lemma \ref{piece2-aa},
if two curves in $\mathcal{B}$ are the boundary components of a pair of pants where the third boundary component of
the pair of pants is a boundary component of $N$, then the corresponding curves in $\mathcal{B'}$ are the boundary
components of a pair of pants where the third boundary component of the pair of pants is a boundary component of $N$.
By Lemma \ref{piece1}, if three nonseparating curves in $\mathcal{B}$ bound a pair of pants on $N$, then the corresponding
curves in $\mathcal{B'}$ bound a pair of pants on $N$. These imply that the curves in $\mathcal{B'}$ are as shown in
Figure \ref{fig1b-V} (i). Let $P = \{a_2, a_4, a_6, \cdots, a_{g-4}, b_1, b_3, \cdots, b_{g-3}, c_1, c_3, \cdots, c_{g-3},$
$ d_1, d_2, \cdots,$ $ d_{n-1}, c, p\}$. We see that $P$ corresponds to a top dimensional maximal simplex in $\mathcal{T}(N)$.
Let $p', c'$ be disjoint representatives of $\lambda([p]), \lambda([c])$ respectively such that $p'$ and $c'$ have minimal
intersection with the elements of $\mathcal{B'}$. Since $p$ is adjacent to $b_1$ and $c_1$ w.r.t $P$ and $p$ is disjoint
from all the curves in $\mathcal{B} \setminus \{r\}$, it is easy to see that $p', b'_1$ and $c'_1$ bound a pair of pants on
$N$, and $p'$ is as shown in Figure \ref{fig1b-V} (i). Then since $c$ is adjacent to $p$ w.r.t. $P$ and $c$ is disjoint
from all the curves in $\mathcal{B} \setminus \{r\}$, $c'$ has to be the unique 2-sided curve up to isotopy in the subsurface
bounded by $p'$, so $c'$ is also as shown in the figure. We also see that $i([p'], [r'])= 2$.

Now we consider the curve $v$ as shown in Figure \ref{fig1b-n} (ii), and control its image. Let $v' \in \lambda([v])$
such that $v'$ intersects minimally with the elements of $\mathcal{B'}$. Since $v$ is disjoint from each of
$r, a_1, a_2, b_3$, we know $v'$ is disjoint from each of $r', a'_1, a'_2, b'_3$. Since $v$ intersects $b_1$
nontrivially, $v'$ intersects $b'_1$ nontrivially. All this information about $v'$, and injectivity of $\lambda$
implies that $v'$ is as shown in Figure \ref{fig1b-V} (ii).

Let $l' \in \lambda([l])$ such that $l'$ intersects minimally with the elements of $\mathcal{B'} \cup \{v', c', p'\}$. Since $l$ intersects
$b_1, a_2, c_1$ only once, $l'$ intersects  $b'_1, a'_2, c'_1$ only once by Lemma \ref{intone}. Since $l$ is disjoint from
$a_3 \cup b_3 \cup c_3 \cup a_1 \cup v$, $l'$ is disjoint from $a'_3 \cup b'_3 \cup c'_3 \cup a'_1 \cup v'$. There exists a
homeomorphism sending the pair $(p, r)$ to $(p, l)$. Since we showed that $i([p'], [r'])= 2$ as shown in Figure \ref{fig1b-V} (i),
we can see that $p', l'$ intersect twice in the same way, i.e. if $K$ is the nonorientable genus two surface bounded by $p'$, then
the arc of $l'$ in $K$ will separate $K$ into two M\"{o}bius bands. Since $a_1$ and $l$ are separating twice holed projective plane,
we know by Lemma \ref{piece2-a} that $a'_1$ and $l'$ are separating twice holed projective plane. Using all this information
about $l'$, we see that $l'$ is isotopic to either $t$ or $y$ as shown in Figure \ref{fig1b-V} (iii). There exists a
homeomorphism $\phi$ of order two sending each curve in $\mathcal{B'} \cup \{v', c', p'\}$ to itself and switching $t$ and $y$
(the reflection through the plane of the paper, see Figure \ref{fig1b-VI}). Let $\phi_{*}$ be the induced map on $\mathcal{T(N)}$.
By replacing $\lambda$ with $\lambda \circ \phi_{*}$ if necessary, we can assume that $l'$ is isotopic to $y$. We note that to get
the proof of the lemma, it is enough to prove the result for
this $\lambda$.

From now on we assume that the curves in $\mathcal{B'} \cup \{v', c', p', l'\}$ are as shown in Figure \ref{fig1b-V} (i), (ii), (iv).
Let $a' \in \lambda([a])$ such that $a'$ has minimal intersection with the elements of $\mathcal{B'} \cup \{c', p', l'\}$.
By Lemma \ref{intone} geometric intersection number one is preserved. Since $a$ intersects $a_1$ only once, and $a$ is disjoint from $b_1$
and $c_1$, $a'$ intersects $a'_1$ only once, and $a'$ is disjoint from $b'_1$ and $c'_1$. So, there exists a unique arc up to isotopy
of $a'$ in the pair of pants, say $Q$, bounded by $p', b_1', c_1'$, starting and ending on $p'$ and intersecting the arc of $a_1'$ in $Q$
only once. Since $a$ is disjoint from $c$, $a'$ is disjoint from $c'$. So, there exists a unique arc up to isotopy of $a'$ in the
Klein bottle with one hole bounded by $p'$. Since $a$ intersects $l$ only once, $a'$ intersects $l'$ only once. Using all this
information about $a'$, we see that $a'$ is as shown in Figure \ref{fig1b-V} (iv). Hence, $i([p'], [a'])=2$.

Let $k' \in \lambda([k])$ such that $k'$ intersects minimally with the elements of $\mathcal{B'} \cup \{a', c', p', l'\}$. Since $k$
is disjoint from each of $b_1, c_1, l$, we see that $k'$ is disjoint from each of $b'_1, c'_1, l'$. A regular neighborhood of
$b'_1 \cup c'_1 \cup l'$ is an orientable subsurface of genus one with two boundary components such that one of the boundary
components bounds a Klein bottle with one hole, say $K$, on $N$. It is easy to see that $k'$ must be in $K$. Since up to
isotopy there exists a unique nonseparating 2-sided curve in $K$, we see that $k'$ is as shown in Figure \ref{fig1b-V} (v).

\begin{figure}[htb]
\begin{center}

\epsfxsize=2.9in \epsfbox{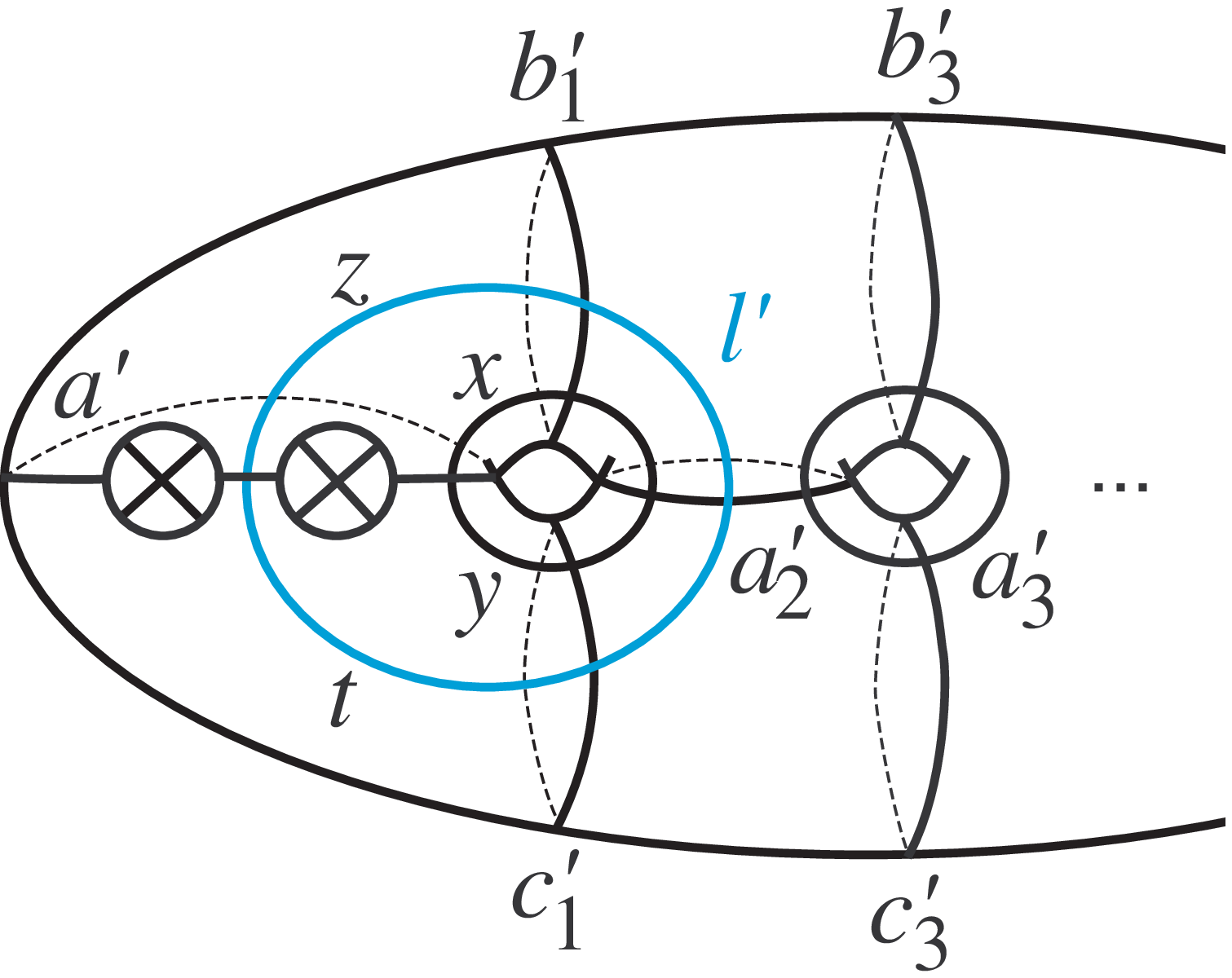} \epsfxsize=2.15in \epsfbox{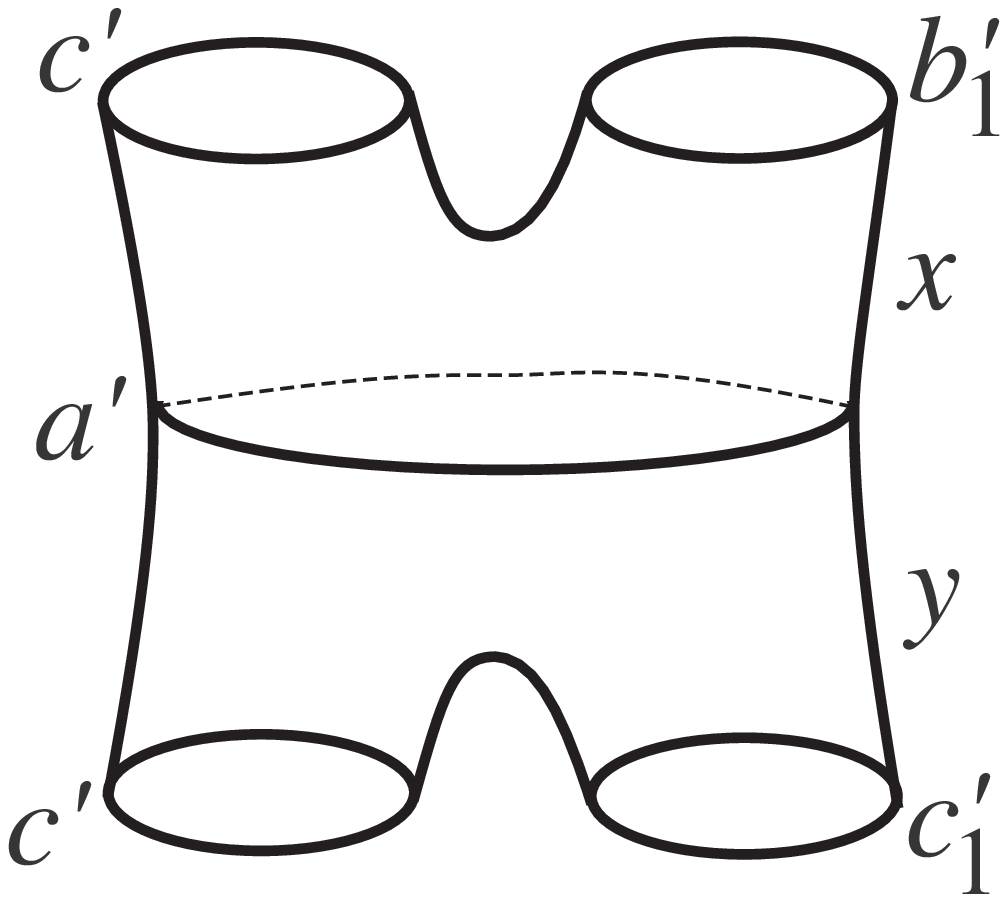}

\hspace{-0.5cm} (i) \hspace{5.65cm}   (ii)

\hspace{1.cm} \epsfxsize=2.95in \epsfbox{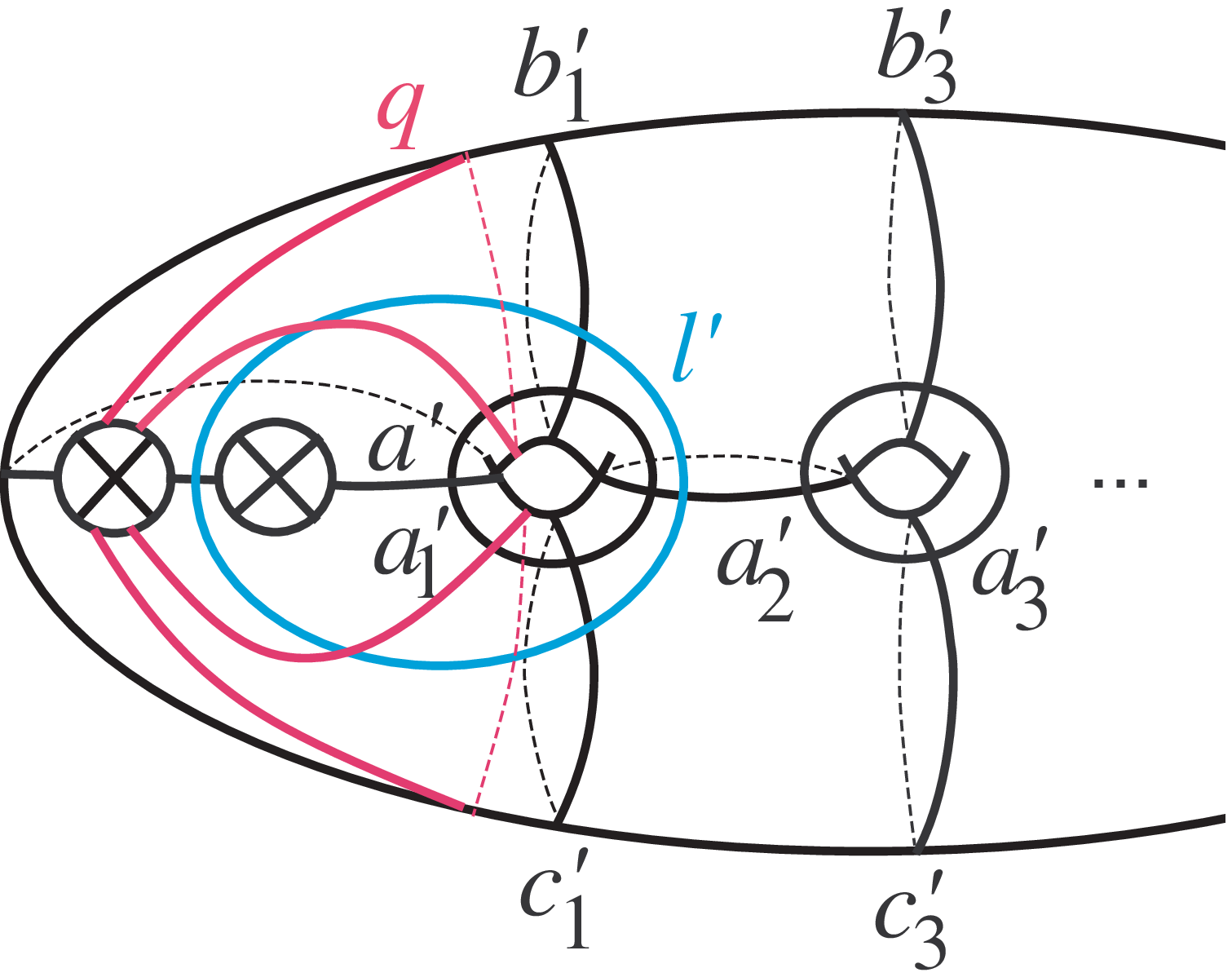}  \hspace{-1.5cm}  \epsfxsize=2.95in \epsfbox{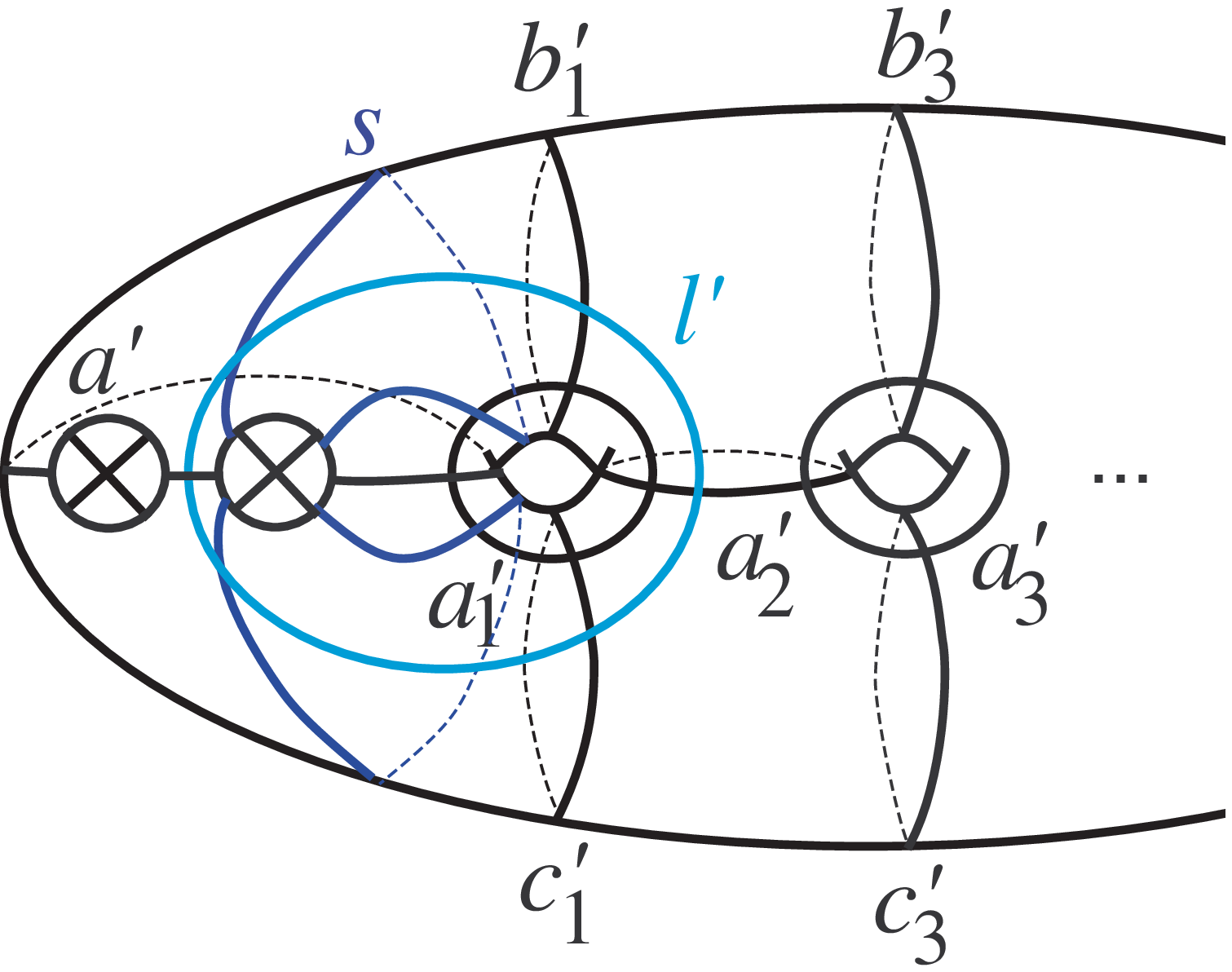}

\hspace{-0.4cm} (iii) \hspace{5.5cm}   (iv)

\caption{Curve configuration VIII}
\label{fig1bbb}
\end{center}
\end{figure}

Let $e' \in \lambda([e])$ such that $e'$ intersects minimally with the elements of $\mathcal{B'} \cup \{a', c', p', l', $ $k', v'\}$.
The curve $e$ bounds a Klein bottle with one hole whose complement is nonorientable on $N$. By using Lemma \ref{int-twi}, we see
that $i([e'], [v'])=2$ as there exists a homeomorphism sending $(e, v)$ to $(p, r)$ where $p$ and $r$ are as given in
Lemma \ref{int-twi}. By using Lemma \ref{int-twi-2}, we see that $i([e'], [a'_1])=2$ as there exists a homeomorphism sending $(e, a_1)$ to $(p, x)$ 
where $p$ and $x$ are as given in Lemma \ref{int-twi-2}. Similarly, by using Lemma \ref{int-twi-2}, we see
that $i([e'], [l'])=2$ as there exists a homeomorphism sending $(e, l)$ to $(p, x)$ where $p$ and $x$ are as given in
Lemma \ref{int-twi-2}. So, we have $i([e'], [a_1'])=2$ and $i([e'], [l'])=2$ in our curve configuration.

Since $e$ is disjoint from $b_1$ and $c_1$, $e'$ is disjoint from $b'_1$ and $c'_1$. There is a nonorientable genus 2 surface $K$ with two boundary 
components $b'_1$ and $c'_1$ as shown in Figure \ref{fig1bbb} (i). The two intersection points of $e'$ and $a'_1$ are on $K$ as $e'$ is on $K$.
Similarly, the two intersection points of $e'$ and $l'$ are on $K$. Let $x$ be the arc of $a'_1$ connecting $b'_1$ to $a'$ in $K$, and
let $y$ be the arc of $a'_1$ connecting $c'_1$ to $a'$ in $K$ as shown in Figure \ref{fig1bbb} (i). Let $z$ be the arc of $l'$ connecting
$b'_1$ to $a'$ in $K$, and let $t$ be the arc of $l'$ connecting $c'_1$ to $a'$ in $K$ as shown in the figure.

Claim: $e'$ intersects each of $x$ and $y$ transversely once.

Proof of the claim: We know that $i([e'], [a_1'])=2$ and $e' \cap a'_1 \subseteq x \cup y$. Suppose both of the intersection points of
$e'$ and $a'_1$ are on $x$. Then $e'$ does not intersect $y$. When we cut $N$ along $c', b'_1$ and $c'_1$ we get a four holed sphere as
shown in Figure \ref{fig1bbb} (ii). Since $e'$ is disjoint from $y, c'_1, a'$ it would have to be disjoint from $c'$, see the figure.
This gives a contradiction, since $e$ and $c$ intersect nontrivially, $e'$ and $c'$ should intersect nontrivially. So, both of the
intersection points of $e'$ and $a'_1$ cannot be on $x$. Similarly, both of the intersection points of $e'$ and $a'_1$ cannot be on $y$. Hence, $e'$ intersects each of $x$ and $y$ transversely once.

With similar arguments by cutting $N$ along $k', b'_1$ and $c'_1$, we see that $e'$ intersects each of $z$ and $t$ transversely once. Since $e$
doesn't intersect any of $a, b_1$ and $c_1$, we know that  $e'$ doesn't intersect any of $a', b'_1$ and $c'_1$. We also know that $e'$
intersects each of $x, y, z, t$ exactly once transversely. All this information implies that $e'$ has to be isotopic to either $q$ or $s$
where $q$ and $s$ are as shown in Figure \ref{fig1bbb} (iii) and (iv). Since $i([e'], [v'])=2$, $e'$ cannot be isotopic to $q$. Hence,
$e'$ is isotopic to $s$. This shows that $e'$ is as we wanted. Hence, there is a homeomorphism $h: N \rightarrow N$ such that
$h([x]) = \lambda([x])$ for all $x \in \mathcal{C}$.\end{proof}

\begin{lemma}
\label{int-twice-2} Suppose that $g \geq 5$ and $g$ is odd. Let $a, p, r$ be curves as shown in Figure \ref{fig1b-n9} (i).
There exist $a' \in \lambda([a]), p' \in \lambda([p]), r' \in \lambda([r])$ such that $i([p'], [a'])=2$ and $i([p'], [r'])=2$. \end{lemma}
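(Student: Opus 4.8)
The plan is to mimic the proof of Lemma \ref{int-twi}, replacing the tools for even genus by their odd-genus counterparts. First I would reduce to a typical case, say $g \geq 7$ and $n \geq 2$, remarking that the remaining cases (in particular the closed case and the case $g = 5$) are handled by the same argument with the curve configurations adapted exactly as in Cases (iii) and (iv) of Lemma \ref{piece2-b}, where the chains degenerate. Given the curves $a, p, r$ of Figure \ref{fig1b-n9} (i), together with the auxiliary curve $c$ drawn there, I would complete them to a large curve configuration $\mathcal{B}$ consisting of a chain $a_1, a_2, \dots$, two families $b_1, b_3, \dots$ and $c_0, c_1, c_2, c_3, \dots$ of curves cutting off twice-holed projective planes and pairs of pants, the curves $d_1, \dots, d_{n-1}$ near the boundary, and $r$, exactly as pictured, so that a suitable subcollection together with one extra curve is a top dimensional $P$-$S$ decomposition of $N$.

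Next I would pass to minimally intersecting representatives $\mathcal{B}'$ of the $\lambda$-images. By Lemma \ref{intone} geometric intersection number one is preserved, so a regular neighborhood of the union of all the elements in $\mathcal{B}'$ is an orientable surface of genus $\frac{g-3}{2}$ with several boundary components (the shift from $\frac{g-2}{2}$ to $\frac{g-3}{2}$ being the only structural change from the even case, forced by the projective-plane piece present for odd genus). Then, just as in Lemma \ref{int-twi}, I would invoke Lemma \ref{piece2-b} (two curves separating a twice-holed projective plane map to two such curves), Lemma \ref{piece2-bb} (two nonseparating curves bounding a pair of pants whose third boundary component is a boundary component of $N$ map to two such curves), and Lemma \ref{piece1} (three nonseparating curves bounding a pair of pants map to three such curves), together with preservation of geometric intersection $0$ and $1$, to pin down $\mathcal{B}'$ up to a homeomorphism of $N$, i.e. to show it is isotopic to the configuration drawn in Figure \ref{fig1b-n9} (ii).

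With $\mathcal{B}'$ rigidified, I would control the images of $p$ and $c$: choosing disjoint representatives $p' \in \lambda([p])$, $c' \in \lambda([c])$ with minimal intersection with $\mathcal{B}'$, the facts that $p$ and $c$ are disjoint from every curve of $\mathcal{B} \setminus \{r\}$, that $c'$ is nonseparating by Corollary \ref{nonsep}, and that $p$ is disjoint from $c$, force $p'$ to bound a one-holed Klein bottle in which $c'$ is the unique two-sided nonseparating curve (see \cite{Sc}); reading off the picture then gives $i([p'],[r']) = 2$. Finally I would pick $a' \in \lambda([a])$ with minimal intersection with $\mathcal{B}' \cup \{c', p'\}$; since $a$ meets $a_1$ once and is disjoint from $b_1$ and $c_0$ (the analogous nearby curves of the odd-genus configuration), there is a unique arc of $a'$ up to isotopy in the pair of pants bounded by $p', b_1', c_0'$ meeting the arc of $a_1'$ once, and since $a$ is disjoint from $c$ there is a unique arc of $a'$ up to isotopy in the one-holed Klein bottle bounded by $p'$; hence $a'$ is determined up to a power of the Dehn twist about $p'$, and $i([p'],[a']) = 2$.

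The main obstacle, as in Lemma \ref{int-twi}, is the bookkeeping: choosing a curve configuration $\mathcal{B}$ rich enough that the three geometric lemmas plus preservation of intersection $0$ and $1$ genuinely determine $\mathcal{B}'$ up to a homeomorphism of $N$, and carrying this out uniformly while accounting for the floating projective-plane piece and for the separate small cases $g = 5$ (closed and with boundary), where one must argue more directly as in Cases (iii) and (iv) of Lemma \ref{piece2-b}. Once the configuration is pinned down, the intersection-number counts for $p', r', a'$ are routine.
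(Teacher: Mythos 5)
Your proposal is correct and follows essentially the same route as the paper: the paper's proof of this lemma is likewise a transcription of Lemma \ref{int-twi} to the odd-genus configuration of Figure \ref{fig1b-n9}, pinning down $\mathcal{B}'$ (an orientable genus $\frac{g-3}{2}$ regular neighborhood) via Lemma \ref{intone}, the pair-of-pants/projective-plane preservation lemmas, and then locating $p'$, $c'$, and finally the arcs of $a'$ in the pair of pants bounded by $p', b_1', c_0'$ and in the genus two nonorientable piece bounded by $p'$. The only cosmetic difference is that the paper fixes $p'$ and $c'$ by putting $p, c$ into a top dimensional $P$-$S$ decomposition and invoking Lemma \ref{adjacent}, whereas you use disjointness from $\mathcal{B}'\setminus\{r'\}$ together with Corollary \ref{nonsep}, exactly as in the even-genus Lemma \ref{int-twi}; both work, and your citation of Lemmas \ref{piece2-b} and \ref{piece2-bb} is the appropriate odd-genus choice.
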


\begin{proof} The proof is similar to the proof of Lemma \ref{int-twi}. We will give the proof when $g \geq 7$ and $n \geq 2$.
The proof for the remaining cases will be similar.

Let $a, p, r, c$ be as shown in Figure \ref{fig1b-n9} (i). We consider the curve configuration
$\mathcal{B} = \{a_1, a_2, \cdots, a_{g-4}, b_1, b_3, \cdots, b_{g-4}, c_0, c_1, c_3, c_5, \cdots,$ $ c_{g-4}, d_1, d_2, \cdots, d_{n-1}, r\}$
as shown in Figure \ref{fig1b-n9} (i). Let $a'_1  \in \lambda([a_1]), a'_2  \in \lambda([a_2]),$
$\cdots, a_{g-4}' \in \lambda([a_{g-4}]), b'_1  \in \lambda([b_1]), b'_3  \in \lambda([b_3]),$
$ \cdots, b_{g-4}' \in \lambda([b_{g-4}]), c'_0  \in \lambda([c_0]), c'_1  \in \lambda([c_1]), \cdots, c_{g-4}' \in \lambda([c_{g-4}]),$
$ d'_1  \in \lambda([d_1]), d'_2  \in \lambda([d_2]), \cdots, $ $d_{n-1}' \in \lambda([d_{n-1}]), r' \in \lambda([r])$ be minimally
intersecting representatives. By Lemma \ref{intone} geometric intersection one is preserved. So, a regular neighborhood of union of
all the elements in $\mathcal{B'}= \{a'_1, a'_2, \cdots, a_{g-4}', b'_1, b'_3, \cdots,$
$ b'_{g-4}, c'_0, c'_1, c'_3, \cdots, c'_{g-4}, d'_1, d'_2, \cdots,$ $d'_{n-1}, r'\}$ is an orientable surface of genus $\frac{g-3}{2}$
with several boundary components.

\begin{figure}[htb]
\begin{center}

\hspace{0.1cm}  \epsfxsize=3.17in \epsfbox{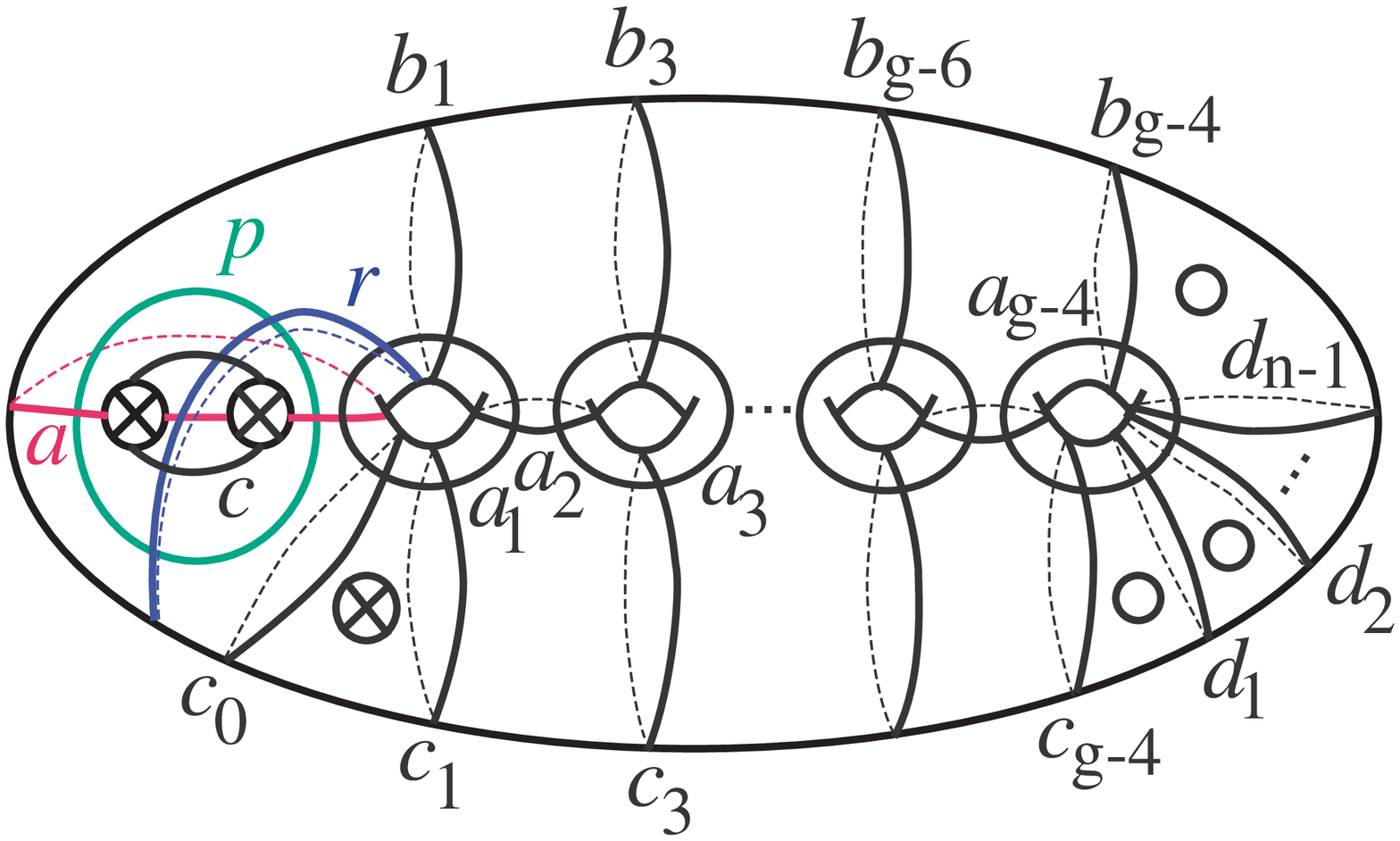}

\hspace{-1.5cm}(i)

\epsfxsize=3.17in \epsfbox{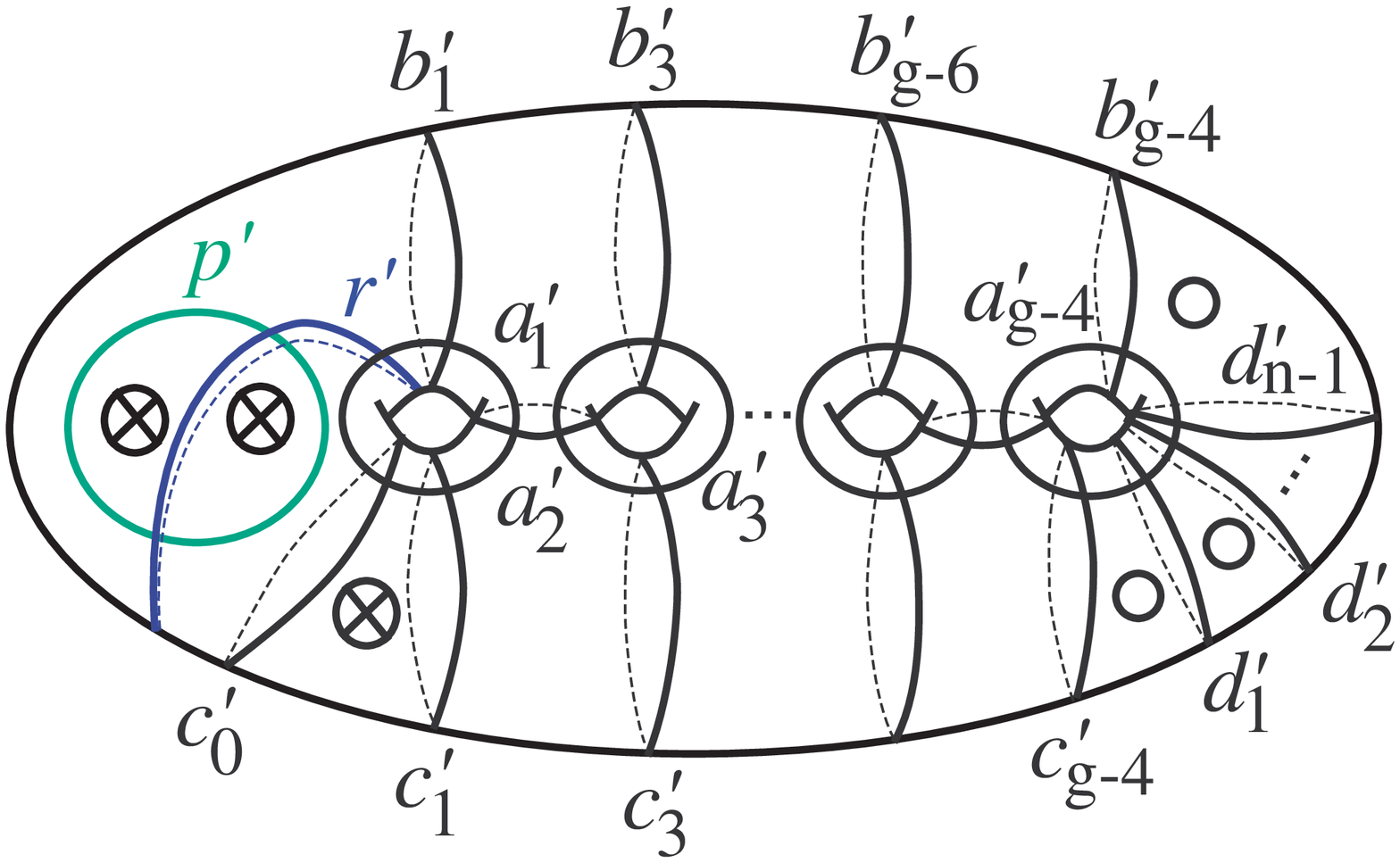} \hspace{-1.5cm} \epsfxsize=3.17in \epsfbox{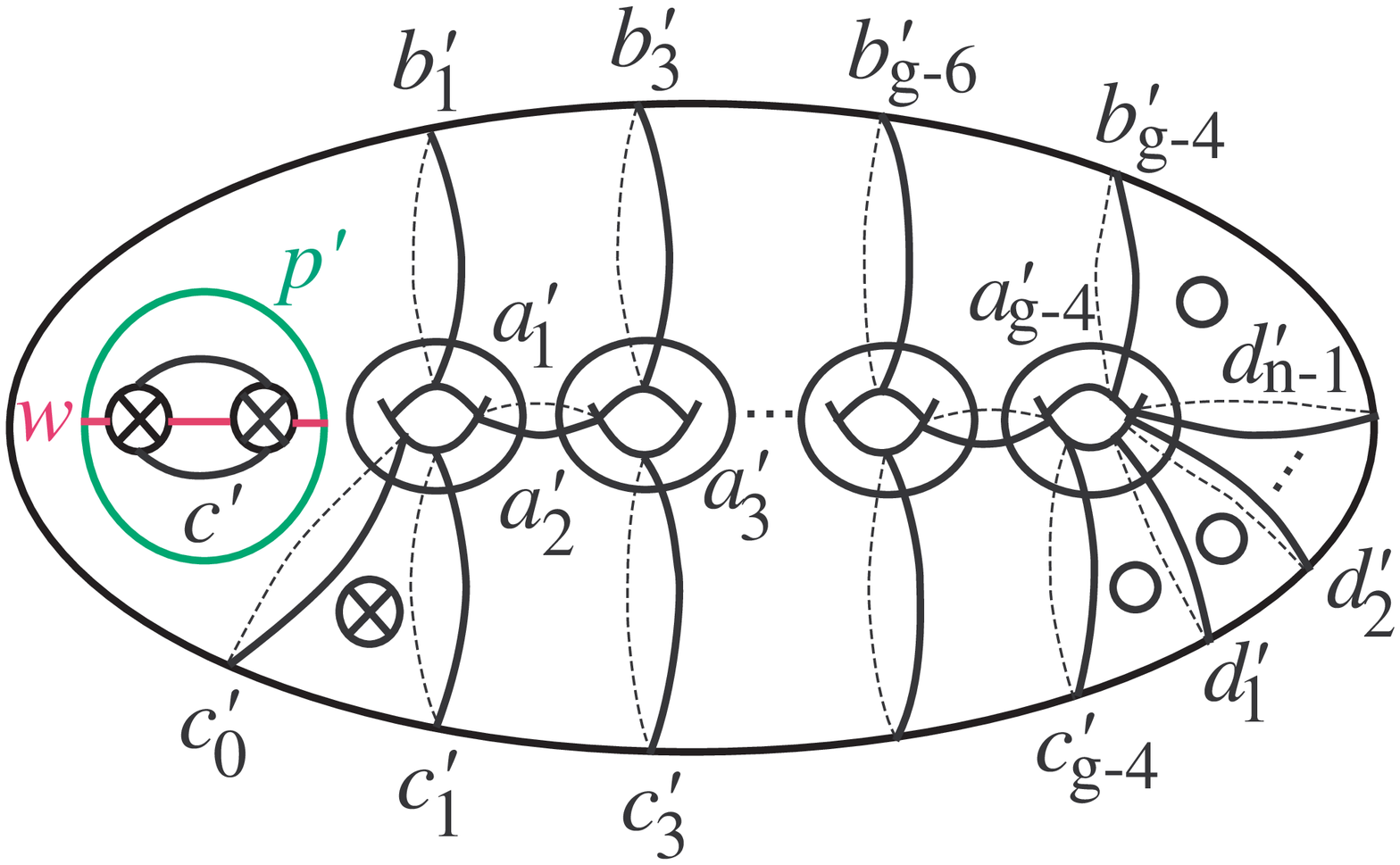}

\hspace{-1cm} (ii) \hspace{6.5cm} (iii)

\caption{Curve configuration IX} \label{fig1b-n9}
\end{center}
\end{figure}

By Lemma \ref{piece2-a}, if two curves in $\mathcal{B}$ separate a twice holed projective plane on $N$, then the corresponding
curves in $\mathcal{B'}$ separate a twice holed projective plane on $N$. By Lemma \ref{piece2-aa}, if two curves in $\mathcal{B}$
are boundary components of a pair of pants where the third boundary component of the pair of pants is a boundary component of $N$,
then the corresponding curves in $\mathcal{B'}$ are boundary components of a pair of pants where the third boundary component of
the pair of pants is a boundary component of $N$. By Lemma \ref{piece1}, if three nonseparating curves in $\mathcal{B}$ bound a pair
of pants on $N$, then the corresponding curves in $\mathcal{B'}$ bound a pair of pants on $N$. These imply that the curves in
$\mathcal{B'}$ are as shown in Figure \ref{fig1b-n9} (ii).

Let $P = \{a_2, a_4, a_6, \cdots, a_{g-5}, b_1, b_3, \cdots, b_{g-4},$ $ c_0, c_1, c_3, \cdots, c_{g-4}, d_1, d_2, \cdots,$
$d_{n-1}, p, c\}$. We see that $P$ corresponds to a top dimensional maximal simplex in $\mathcal{T}(N)$.
Let $p' \in \lambda([p])$, $c' \in \lambda([c])$ such that $p'$ and $c'$ have minimal intersection with the
elements of $\mathcal{B'}$. Let $P' = \{a'_2, a'_4, a'_6, \cdots, a'_{g-5}, b'_1, b'_3, \cdots, b'_{g-4},$
$c'_0, c'_1, c'_3, \cdots, c'_{g-4}, d'_1, d'_2, \cdots,$ $d'_{n-1}, p', c'\}$. By using Lemma \ref{adjacent},
we see that since $p$ is adjacent to $b_1$ and $c_0$ w.r.t $P$ and $p$ is disjoint from all the curves in
$\mathcal{B} \setminus \{r\}$, $p'$ is adjacent to $b'_1$ and $c'_0$ w.r.t $P'$ and $p'$ is disjoint from all
the curves in $\mathcal{B'} \setminus \{r'\}$. This implies that $p', b'_1$ and $c'_0$ bound a pair of pants on $N$,
and $p'$ is as shown in Figure \ref{fig1b-n9} (ii). Hence, $i([p'], [r'])=2$.

Since $c$ is adjacent to $p$ w.r.t. $P$ and $c$ is disjoint from all the curves in $\mathcal{B} \setminus \{r\}$, $c'$
is adjacent to $p'$ w.r.t. $P'$ and $c'$ is disjoint from all the curves in $\mathcal{B'} \setminus \{r'\}$. So, $c'$ has
to be the unique 2-sided curve up to isotopy in the subsurface bounded by $p'$. Hence, $c'$ is also as shown in Figure
\ref{fig1b-n9} (iii). We have $\{a'_1, a'_2, \cdots, a_{g-4}', b'_1, b'_3, \cdots,$
$ b_{g-4}', c'_0, c'_1, c'_3, \cdots, c_{g-4}', d'_1, d'_2, \cdots, d'_{n-1}, p', c'\}$ as shown in Figure \ref{fig1b-n9} (iii).
Let $a' \in \lambda([a])$ such that $a'$ has minimal intersection with the elements of $\mathcal{B'} \cup \{c', p'\}$.
By Lemma \ref{intone} geometric intersection number one is preserved. Since $a$ intersects $a_1$ only once, and $a$ is disjoint
from $b_1$ and $c_0$, $a'$ intersects $a'_1$ only once, and $a'$ is disjoint from $b'_1$ and $c'_0$. So, there exists a unique
arc up to isotopy of $a'$ in the pair of pants, say $Q$, bounded by $p', b_1', c_0'$, starting and ending on $p'$ and intersecting
the arc of $a_1'$ in $Q$ only once. Since $a$ is disjoint from $c$, $a'$ is disjoint from $c'$. These imply that there exists a
unique arc $w$ up to isotopy of $a'$ in the nonorientable genus 2 surface bounded by $p'$. 
Hence, $i([p'], [a'])=2$.\end{proof}

\begin{lemma}
\label{int-twice-2-b} Suppose that $g \geq 5$ and $g$ is odd. Let $p, x, y$ be curves as shown in Figure \ref{fig1b-n9-c} (i).
There exist $p' \in \lambda([p]), x' \in \lambda([x]), y' \in \lambda([y])$ such that $i([p'], [x'])=2$ and $i([p'], [y'])=2$. \end{lemma}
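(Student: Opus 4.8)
The plan is to run the same argument as in the proof of Lemma~\ref{int-twi-2}, with the even-genus input Lemma~\ref{int-twi} replaced by its odd-genus counterpart Lemma~\ref{int-twice-2}. As in the earlier lemmas I would prove the statement in the generic case, say $g \geq 7$ and $n \geq 2$, the remaining small cases being treated by the same method on correspondingly smaller configurations.

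First I would complete $\{p, x, y\}$ to a curve configuration $\mathcal{B} = \{a_1, \ldots, a_{g-4}, b_1, b_3, \ldots, b_{g-4}, c_0, c_1, c_3, \ldots, c_{g-4}, d_1, \ldots, d_{n-1}, p, r, x, y\}$ as in Figure~\ref{fig1b-n9-c}~(i), i.e. the ``grid'' of curves already used in Lemma~\ref{int-twice-2} together with $p$, $r$ and the two curves $x$, $y$. After fixing minimally intersecting representatives of the images of all these curves, I would reproduce the analysis in the proof of Lemma~\ref{int-twice-2}: using Lemma~\ref{intone} (geometric intersection number one is preserved), Lemma~\ref{piece1} (triples of nonseparating curves bounding a pair of pants map to such triples), and Lemmas~\ref{piece2-b} and~\ref{piece2-bb} (pairs of nonseparating curves cobounding a twice-holed projective plane, respectively a pair of pants whose third boundary component is a boundary component of $N$, map to such pairs), together with preservation of geometric intersection zero and nonzero, one concludes that the images $a_i'$, $b_j'$, $c_k'$, $d_m'$, $r'$, $p'$ of the curves in $\mathcal{B}\setminus\{x,y\}$ are arranged in $N$ exactly as in Figure~\ref{fig1b-n9-c}~(ii). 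In particular Lemma~\ref{int-twice-2} applies and gives that $p'$ bounds a nonorientable surface of genus two with one boundary component and that $i([p'],[r'])=2$.

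Next I would locate $y'$ and then $x'$. Since $N$ admits a homeomorphism carrying $p$ to $y$ and $p'$ bounds a nonorientable surface of genus two with one boundary component, the same is true of $y'$. The curve $y$ is disjoint from each of $r$, $a_1$, $c_1$ and has nonzero geometric intersection with $c_0$, so by superinjectivity $y'$ is disjoint from $r'$, $a_1'$, $c_1'$ and meets $c_0'$ nontrivially; together with injectivity of $\lambda$ (Lemma~\ref{inj}) this determines $y'$ up to isotopy as the curve shown in Figure~\ref{fig1b-n9-c}~(iii). Finally $x$ is disjoint from each of $r$, $a_1$, $c_1$, meets $c_0$ nontrivially, and is not isotopic to $y$; hence $x'$ is disjoint from $r'$, $a_1'$, $c_1'$, meets $c_0'$, and (by injectivity of $\lambda$) is not isotopic to $y'$, which forces $x'$ to lie inside the nonorientable genus-two piece bounded by $y'$ and to be the curve shown in the figure. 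Reading off intersection numbers from Figure~\ref{fig1b-n9-c} then yields $i([p'],[x'])=2$ and $i([p'],[y'])=2$.

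I expect the main obstacle to be the second paragraph, namely verifying that the images of the grid curves in $\mathcal{B}$ really assemble into the picture of Figure~\ref{fig1b-n9-c}~(ii): because $g$ is odd, the ambient top-dimensional $P$-$S$ decomposition carries one projective-plane-with-two-boundary-components piece that must be tracked correctly through Lemmas~\ref{piece2-b} and~\ref{piece2-bb}, and one has to check that no other configuration is compatible with all the preserved intersection and adjacency data. Once that picture is in place, pinning down $y'$ from the homeomorphism $p \mapsto y$ together with its disjointness data, and then $x'$ from its disjointness and non-isotopy data, is routine and formally identical to the corresponding steps in the proof of Lemma~\ref{int-twi-2}.
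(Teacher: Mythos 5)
Your proposal is correct and follows essentially the same route as the paper: the paper's proof also works with the configuration of Figure \ref{fig1b-n9-c} in the case $g \geq 7$, $n \geq 2$, imports the placement of the grid curves, $p'$ and $r'$ from Lemma \ref{int-twice-2}, uses the homeomorphism $p \mapsto y$ to see that $y'$ bounds a Klein bottle with one hole, and pins down $y'$ and then $x'$ from their disjointness/intersection data exactly as you describe. The only (harmless) difference is bookkeeping: you invoke the odd-genus Lemmas \ref{piece2-b} and \ref{piece2-bb} where the paper, following Lemma \ref{int-twice-2}, cites the analogous statements, and the conclusion is unaffected.
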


\begin{proof} The proof is similar to the proof of Lemma \ref{int-twice-2}. We will give the proof when $g \geq 7$ and $n \geq 2$.
The proof for the remaining cases will be similar.

We consider the curve configuration $\mathcal{B} = \{a_1, a_2, \cdots, a_{g-4}, b_1, b_3, \cdots, b_{g-4}, c_0, c_1, c_3, $
$ \cdots, c_{g-4}, d_1, d_2, \cdots, d_{n-1}, p, r, x, y\}$ as shown in Figure \ref{fig1b-n9-c} (i). Let
$a'_1  \in \lambda([a_1]), a'_2  \in \lambda([a_2]),$
$\cdots, a_{g-4}' \in \lambda([a_{g-4}]), b'_1  \in \lambda([b_1]), b'_3  \in \lambda([b_3]),$
$ \cdots, b_{g-4}' \in \lambda([b_{g-4}]), c'_0  \in \lambda([c_0]), c'_1  \in \lambda([c_1]), \cdots, c_{g-4}' \in \lambda([c_{g-4}]),$
$ d'_1  \in \lambda([d_1]), d'_2  \in \lambda([d_2]), \cdots, $ $d_{n-1}' \in \lambda([d_{n-1}]),$
$ p' \in \lambda([p]), r' \in \lambda([r]), x' \in \lambda([x]), y' \in \lambda([y])$ be minimally intersecting representatives.

\begin{figure}[htb]
\begin{center}

\hspace{0.1cm}  \epsfxsize=3.17in \epsfbox{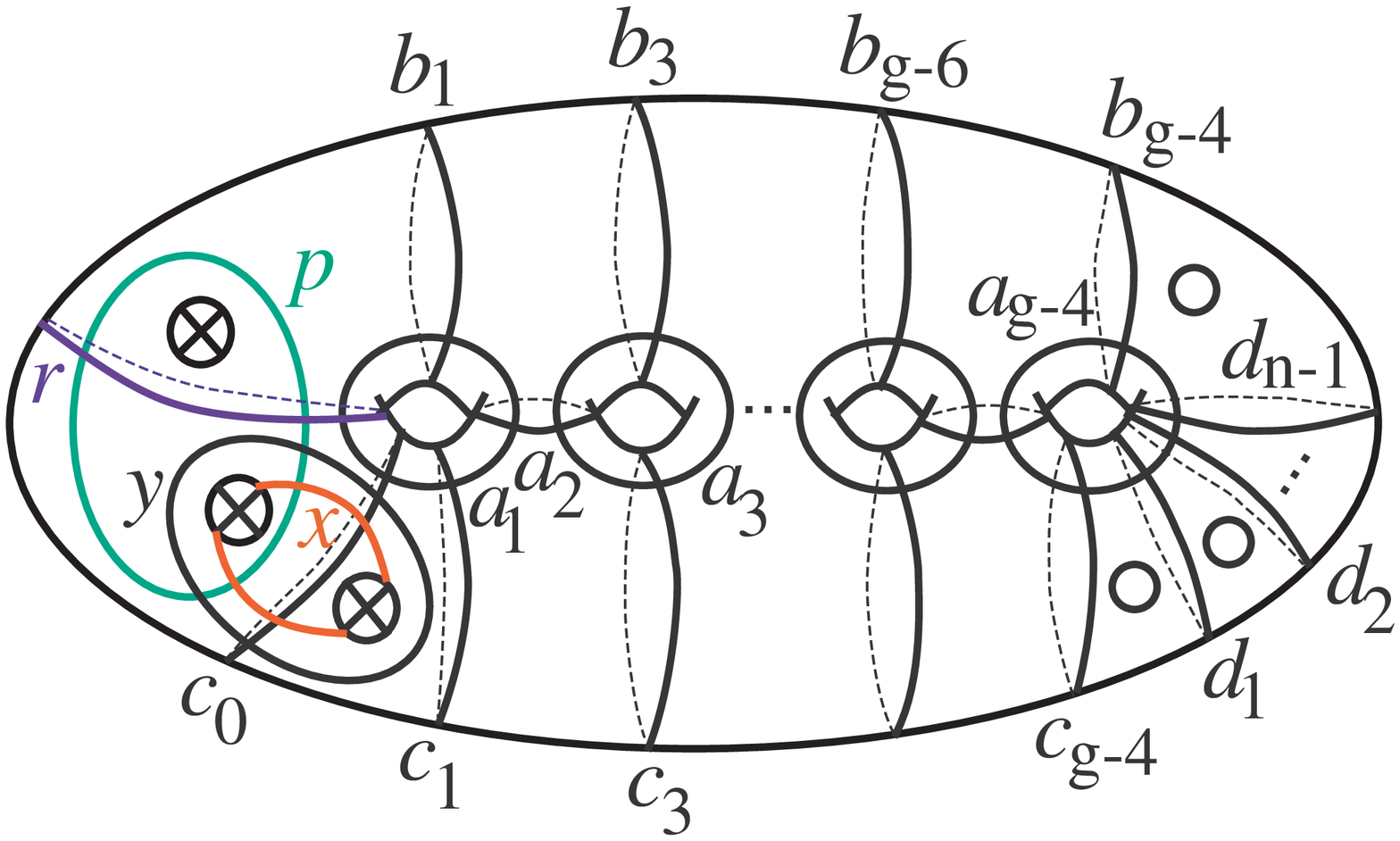}

\hspace{-1.5cm} (i)

\epsfxsize=3.17in \epsfbox{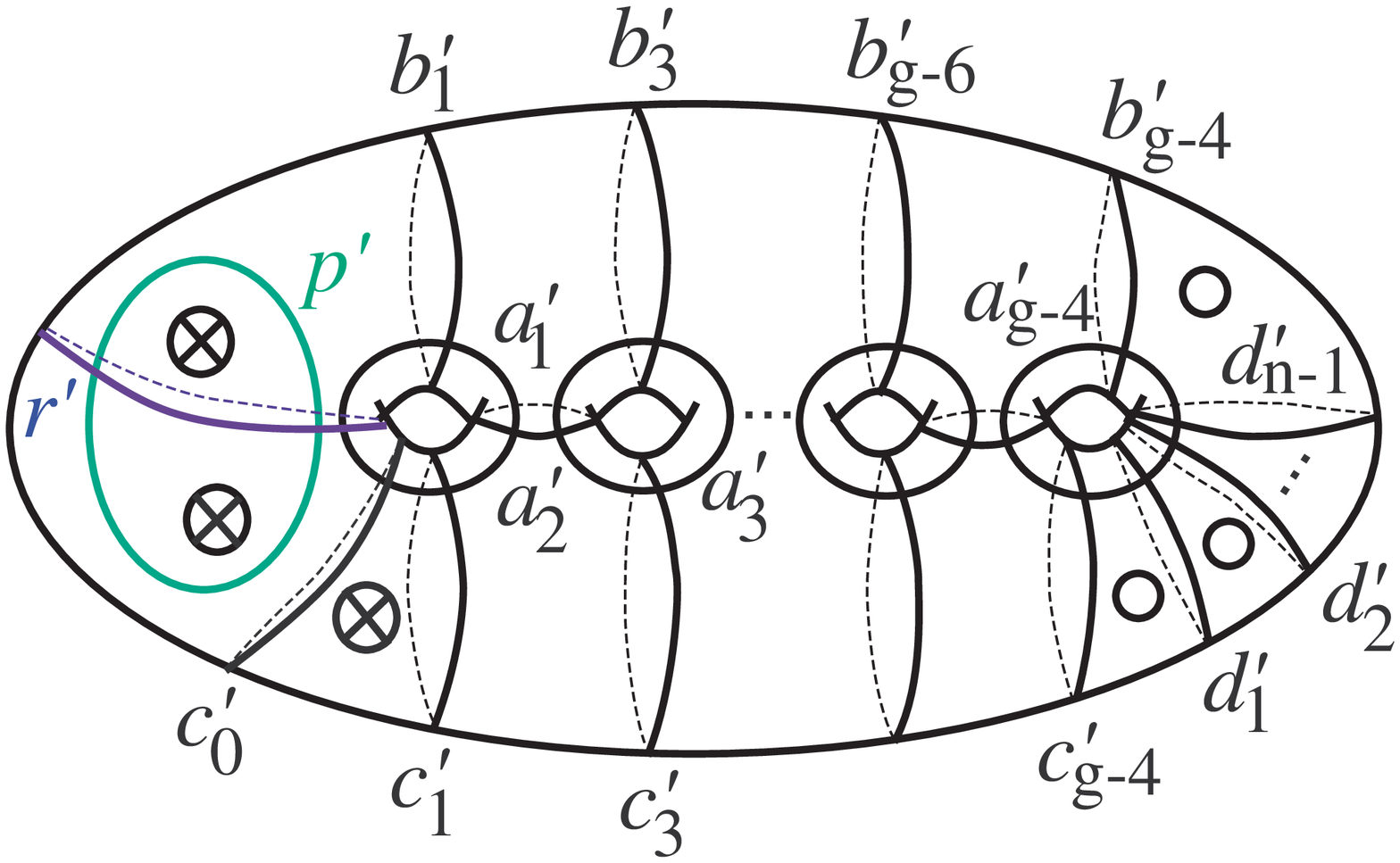} \hspace{-1.5cm} \epsfxsize=3.17in \epsfbox{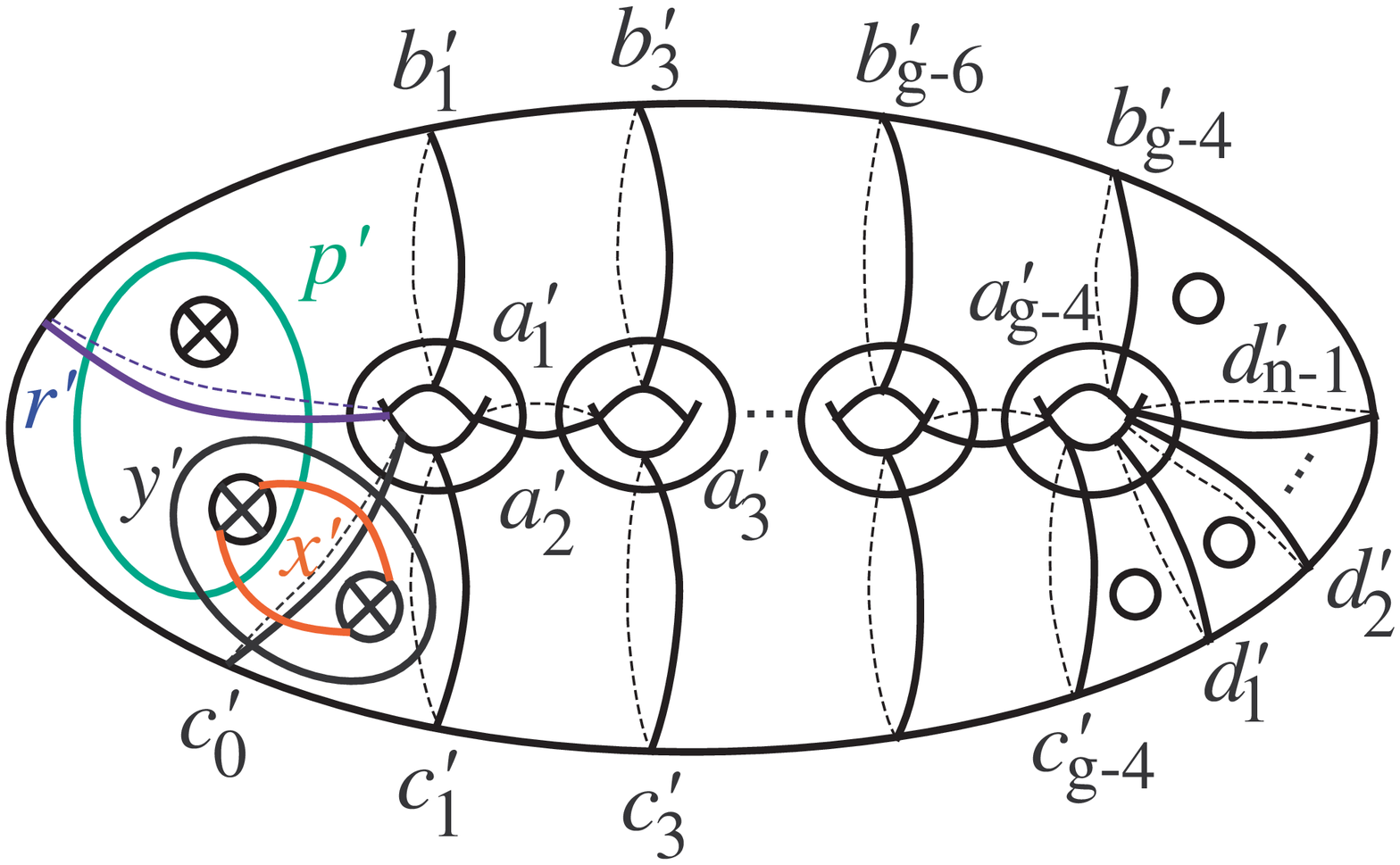}

\hspace{-1cm} (ii) \hspace{6.5cm} (iii)

\caption{Curve configuration X} \label{fig1b-n9-c}
\end{center}
\end{figure}

As in the proof of Lemma \ref{int-twice-2}, the curves in $\{a'_1, a'_2, \cdots, a_{g-4}', b'_1, b'_3, \cdots,$ $ b_{g-4}', c'_0, c'_1,$
$\cdots, c_{g-4}', d'_1, d'_2, \cdots, d'_{n-1}, p', r'\}$ will be as shown in Figure \ref{fig1b-n9-c} (ii). Since there exists a
homeomorphism sending $p$ to $y$ and $p'$ bounds a Klein bottle with one hole, we know that $y'$ bounds a Klein bottle with one hole. The curve
$y$ is disjoint from each of $r, a_1, c_1$ and it has nontrivial intersection with $c_0$. So, $y'$ is disjoint from each of $r', a'_1, c'_1$
and it has nontrivial intersection with $c'_0$. All this information about $y'$ implies that $y'$ is as shown in Figure \ref{fig1b-n9-c}
(iii). Since $x$ is disjoint from each of $r, a_1, c_1$ and it has nontrivial intersection with $c_0$ and $\lambda$ is injective, $x'$ has to
be in the Klein bottle bounded by $y'$ and should be as shown in the figure. So, we get $i([p'], [x'])=2$ and $i([p'], [y'])=2$.\end{proof}\\

\begin{figure}[htb]
\begin{center}
\hspace{-0.5cm}  \epsfxsize=3.0in \epsfbox{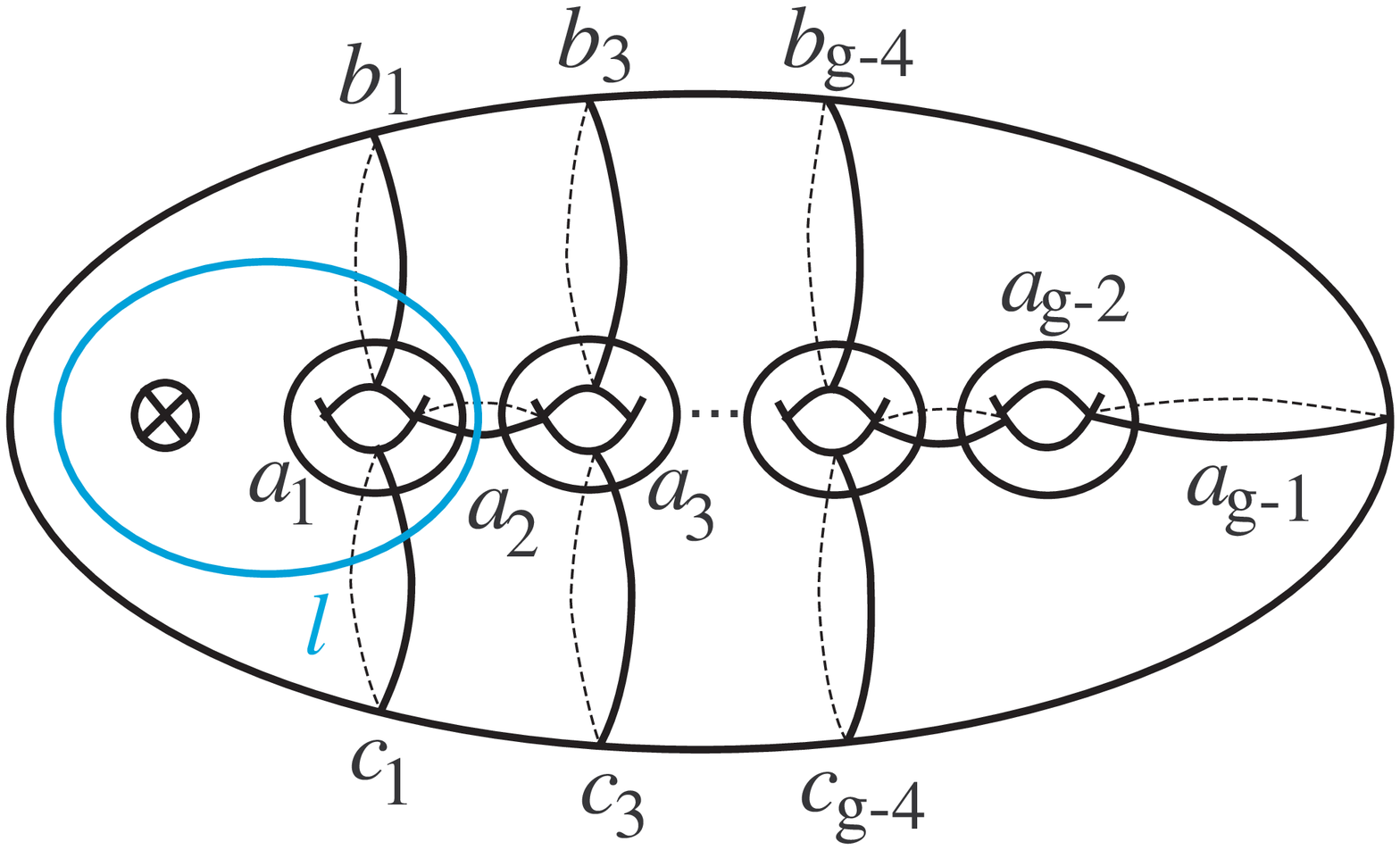}  \hspace{-0.5cm}  \epsfxsize=3.2in \epsfbox{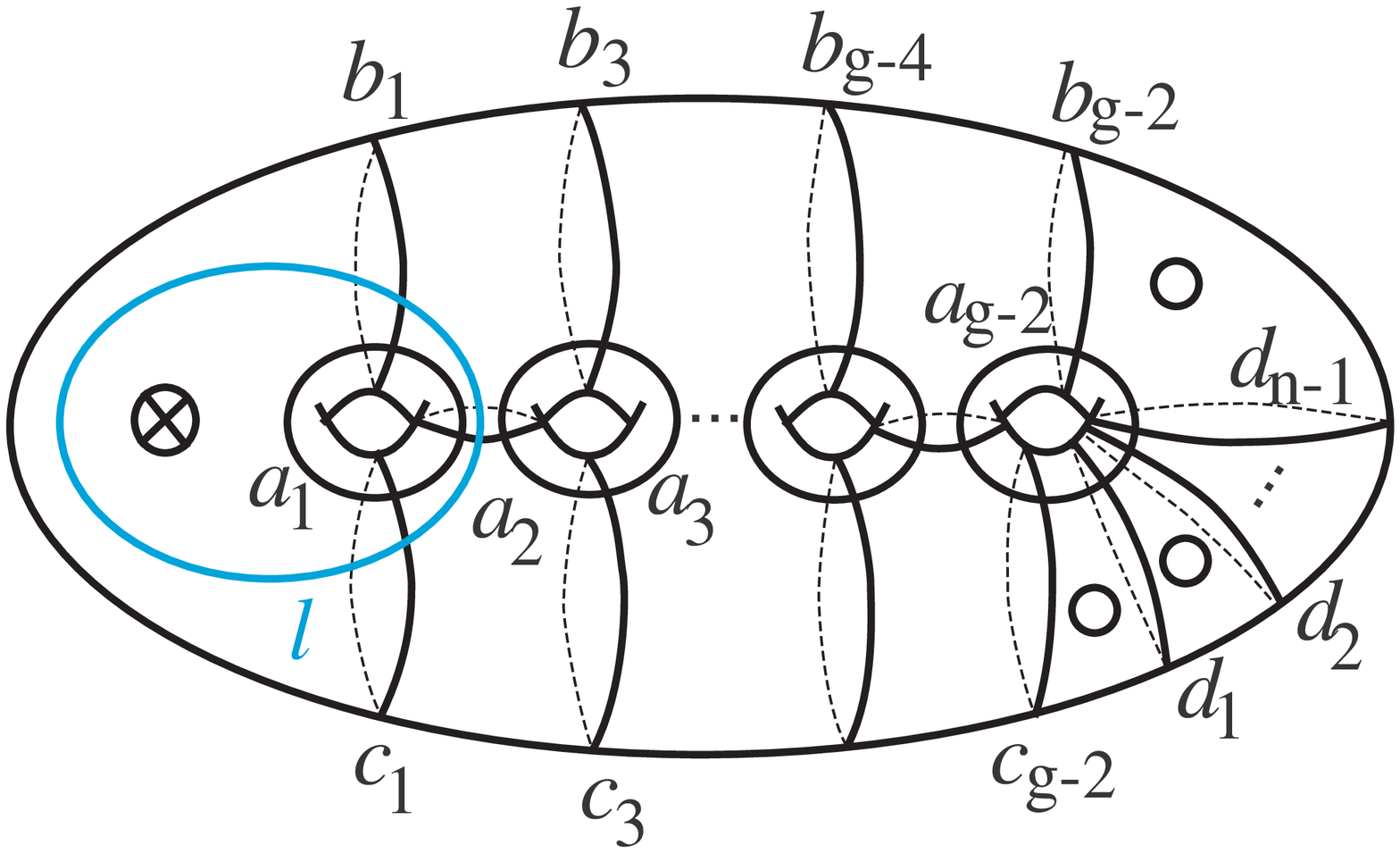}

\hspace{-1.3cm} (i) \hspace{5.9cm} (ii)

\hspace{-1.5cm} \epsfxsize=2.95in \epsfbox{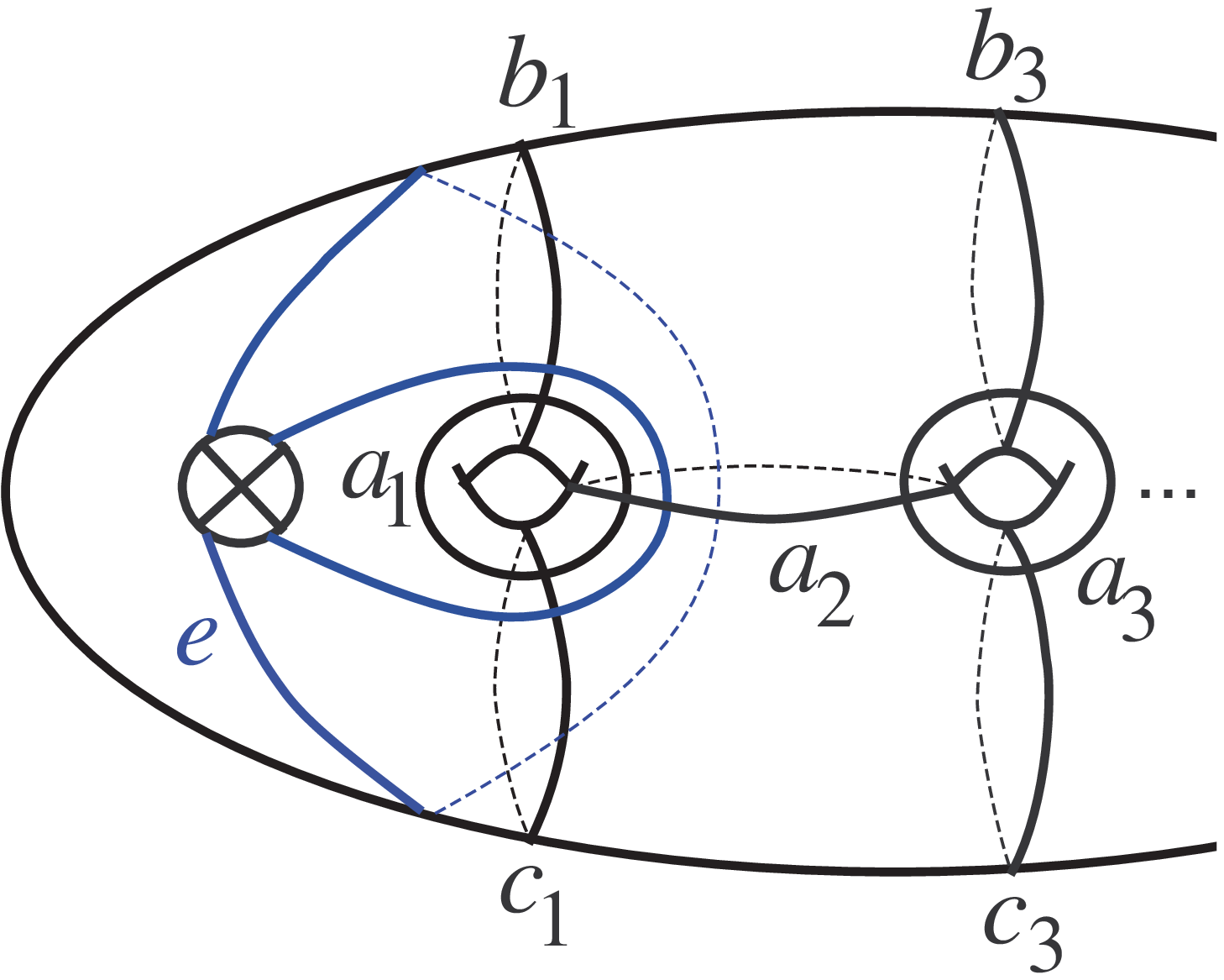} \hspace{-1cm}  \epsfxsize=2.95in \epsfbox{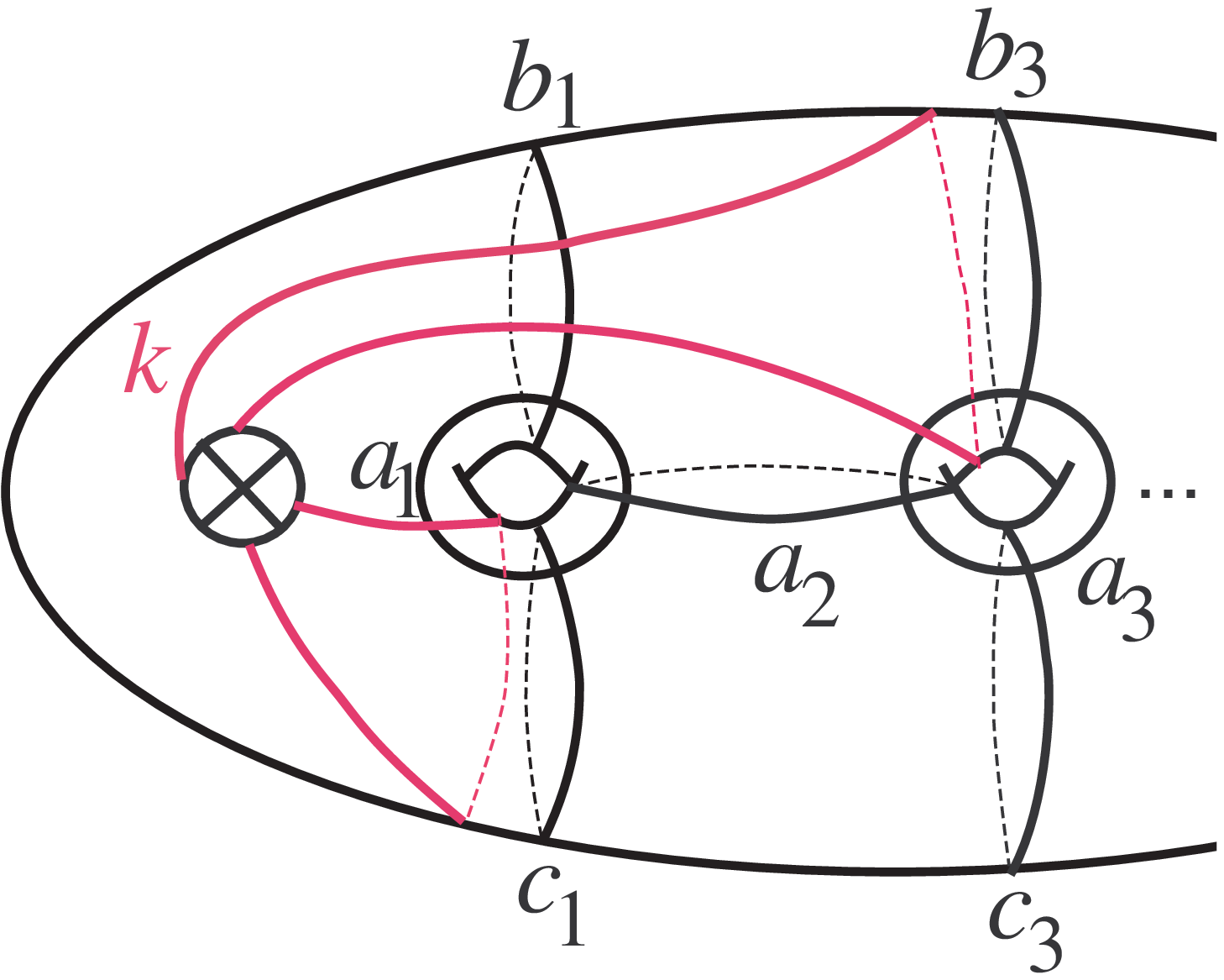}  \hspace{-1.2cm}

\hspace{-1.3cm} (iii) \hspace{5.5cm} (iv)
\caption{$\mathcal{C}$ when $g$ is odd}
\label{fig7-new}
\end{center}
\end{figure}

Let $\mathcal{C} = \{a_1, \cdots, a_{g-1}, b_1, b_3, \cdots,$ $b_{g-2}, c_1,$ $ c_3, \cdots, c_{g-2}, d_1, \cdots, d_{n-1}, e, k, l \}$
be as shown in Figure \ref{fig7-new} (i) - (iv) when $g \geq 5$ and $g$ is odd.

\begin{figure}[htb]
\begin{center}
\epsfxsize=3.2in \epsfbox{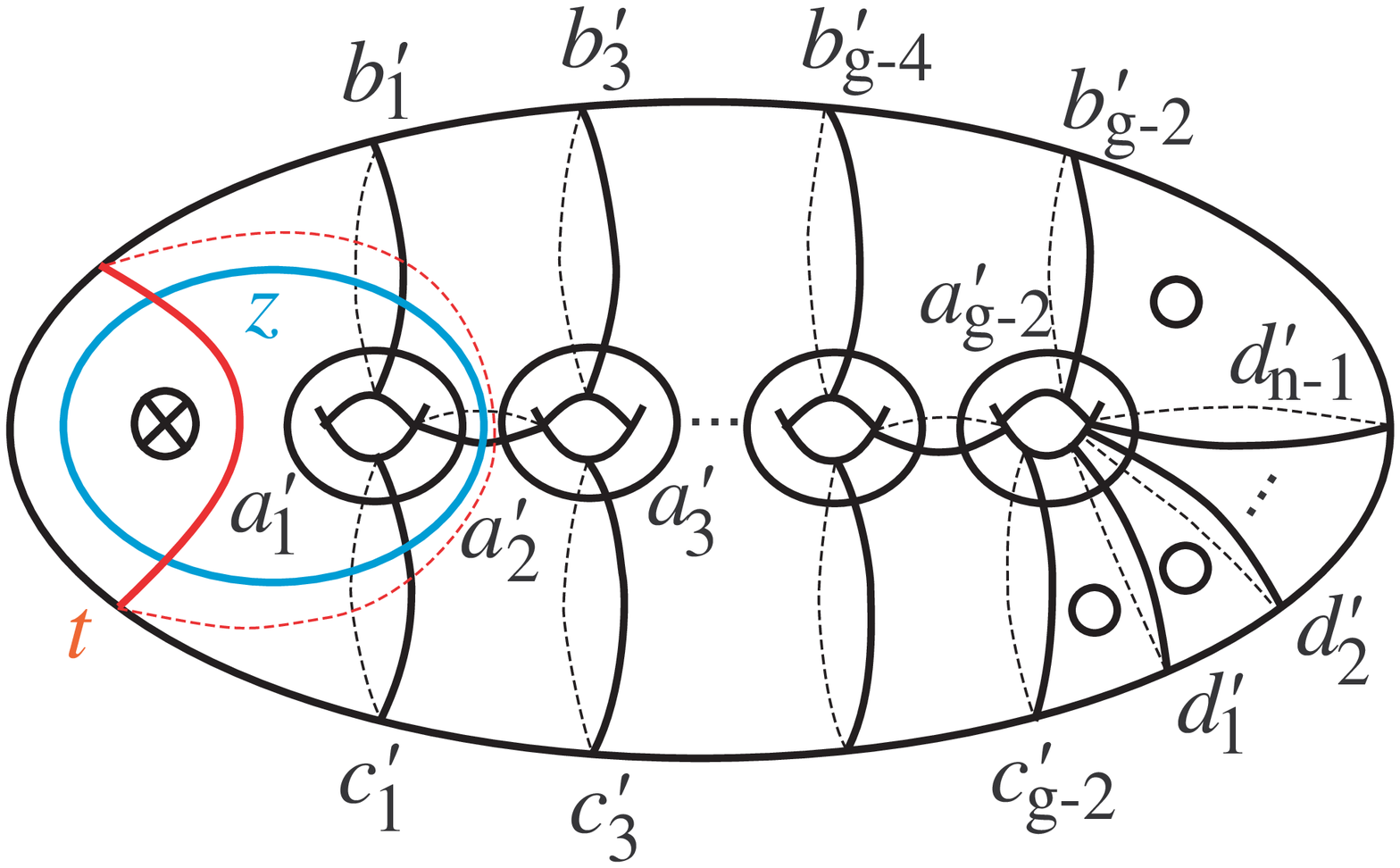} \hspace{-1cm} \epsfxsize=2.95in \epsfbox{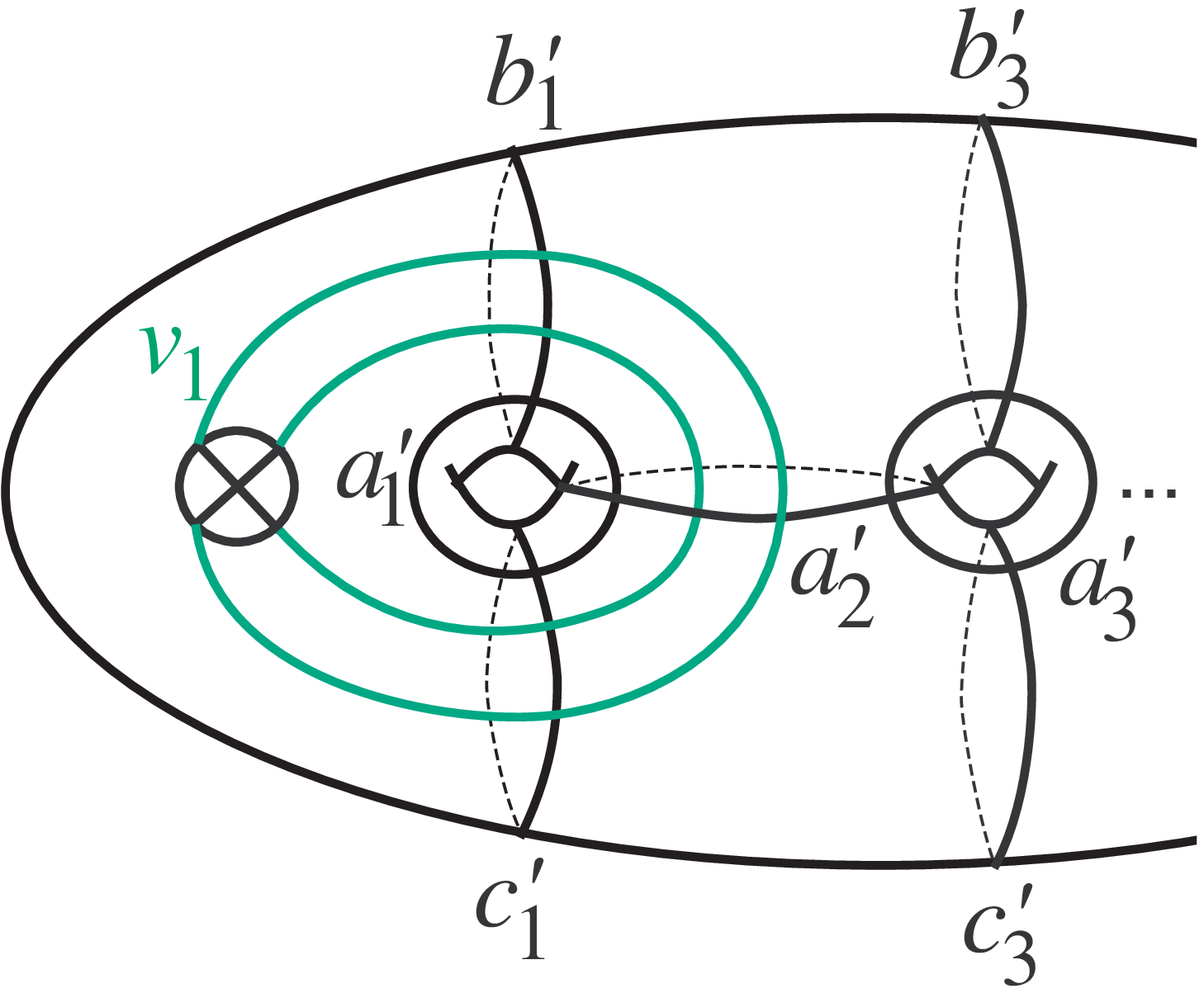}

\hspace{-1.3cm} (i) \hspace{5.9cm} (ii)

\hspace{0.1cm} \epsfxsize=2.95in \epsfbox{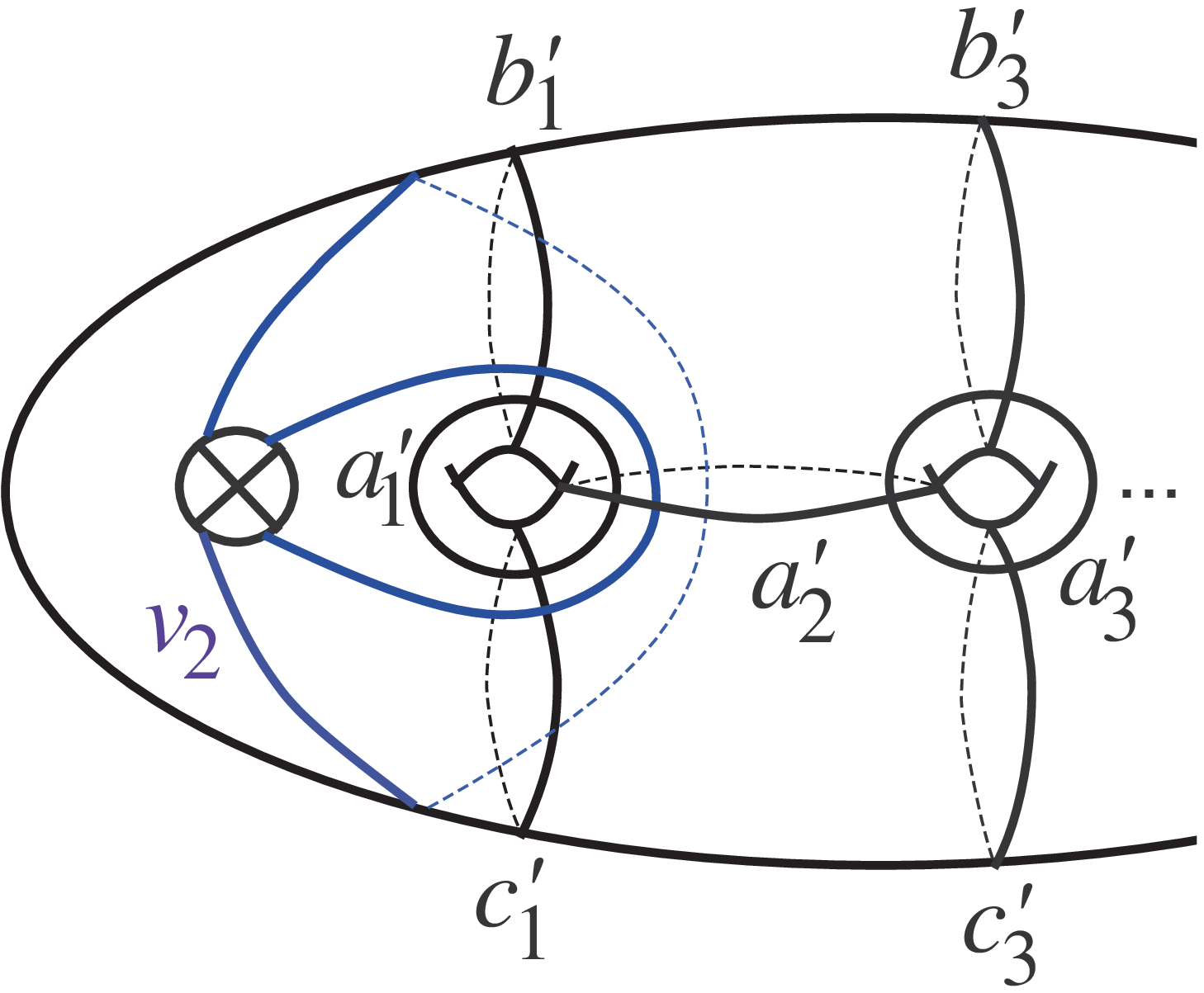} \hspace{-0.6cm} \epsfxsize=2.95in \epsfbox{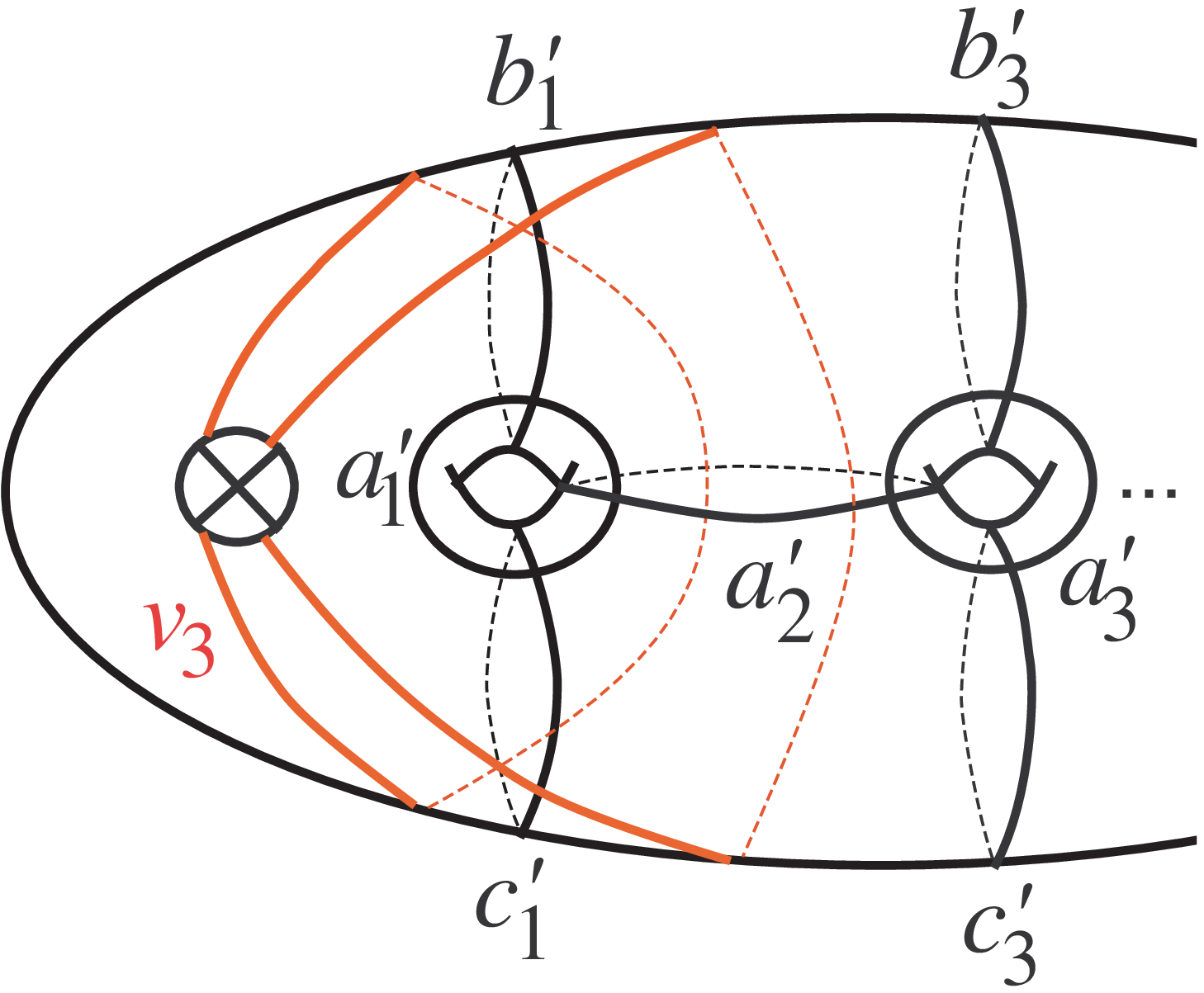}

\hspace{-1.3cm} (iii) \hspace{5.7cm} (iv)

\caption{Curve configuration XI}
\label{fig7-new-a}
\end{center}
\end{figure}

\begin{lemma}
\label{curves-2} Suppose $g \geq 5$ and $g$ is odd. There exists a homeomorphism $h: N \rightarrow N$ 
such that $h([x]) = \lambda([x])$ $\forall \ x \in \mathcal{C}$.
\end{lemma}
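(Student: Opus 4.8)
The plan is to mirror, essentially verbatim, the proof of Lemma~\ref{curves} for the even case, replacing each structural tool by its odd-genus analogue: Lemma~\ref{piece2-b} in place of Lemma~\ref{piece2-a}, Lemma~\ref{piece2-bb} in place of Lemma~\ref{piece2-aa}, Lemma~\ref{int-twice-2} in place of Lemma~\ref{int-twi}, and Lemma~\ref{int-twice-2-b} in place of Lemma~\ref{int-twi-2}. As in those proofs I would give full details for $N$ with nonempty boundary and $g\ge 7$; the closed case and the base case $g=5$ are handled identically using the shorter configurations, exactly as in Cases (iii) and (iv) of Lemma~\ref{piece2-b}. The only genuinely new feature relative to the even case is that a top-dimensional $P$-$S$ decomposition of $N$ now contains one projective plane with two boundary components, so the set of curves ``adjacent'' to a given one changes slightly; this affects only which adjacency relations Lemma~\ref{adjacent} supplies, not the overall logic.

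First I would enlarge $\mathcal{C}$ by the auxiliary curves shown in Figures~\ref{fig7-new-a} (in particular curves $r$ and $c$, together with the curves playing the roles that $p$ and $v$ played in the even case), and isolate the sub-configuration $\mathcal{B}$ consisting of the $a_i$'s, $b_i$'s, $c_i$'s, $d_i$'s and $r$. Choosing minimally intersecting representatives $\mathcal{B}'$ of the $\lambda$-images and using that geometric intersection number one is preserved (Lemma~\ref{intone}), a regular neighborhood of $\bigcup\mathcal{B}'$ is an orientable surface of genus $\tfrac{g-3}{2}$ with several boundary components. Feeding in Lemma~\ref{piece1} (triples of nonseparating curves bounding a pair of pants are preserved), Lemma~\ref{piece2-b} (pairs bounding a twice-holed projective plane are preserved), Lemma~\ref{piece2-bb} (pairs bounding a pair of pants whose third boundary is a boundary component of $N$ are preserved), and the preservation of intersection numbers $0$ and $1$, I pin down the isotopy classes of the curves of $\mathcal{B}'$ up to a homeomorphism of $N$, exactly as in the first part of the proof of Lemma~\ref{curves}.

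Next I would control the remaining curves one at a time. Using a top-dimensional $P$-$S$ decomposition $P$ containing $p$ and $c$ and Lemma~\ref{adjacent}, the adjacency relations of $p$ and $c$ are preserved, so $p'$ bounds a nonorientable genus-two subsurface with $c'$ the unique two-sided curve inside it, and $i([p'],[r'])=2$; this is already packaged in Lemma~\ref{int-twice-2}. Then $v'$ is determined by its disjointness and intersection pattern with $\mathcal{B}'$; and $l'$ is forced by its intersection-one relations with the appropriate curves of $\mathcal{B}'$ (Lemma~\ref{intone}), its disjointness from the relevant curves and from $v'$, the fact that it meets $p'$ twice in the ``standard'' way (there is a homeomorphism carrying the pair in Lemma~\ref{int-twice-2} to the pair $(p,l)$, and $i([p'],[r'])=2$), and the fact that it bounds a twice-holed projective plane together with $a_1'$ (Lemma~\ref{piece2-b}). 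This leaves two candidates for $l'$, interchanged by an order-two ``reflection'' homeomorphism $\phi$ fixing every curve already placed; replacing $\lambda$ by $\lambda\circ\phi_{*}$ (it suffices to prove the lemma for this map) I may assume $l'$ is the chosen candidate. After this $a'$ is pinned down from its intersection-one relations, its disjointness from the relevant curves and from $c'$, and a uniqueness-of-arc argument inside the pants and the nonorientable genus-two piece bounded by $p'$ (yielding $i([p'],[a'])=2$), and $k'$ is the unique nonseparating two-sided curve in the one-holed Klein bottle cut off by the regular neighborhood of $b_1'\cup c_1'\cup l'$. Finally, for $e'$: since $e$ bounds a one-holed Klein bottle with nonorientable complement and there are homeomorphisms carrying $(e,v)$, $(e,a_1)$, $(e,l)$ to the pairs appearing in Lemmas~\ref{int-twice-2} and~\ref{int-twice-2-b}, I get $i([e'],[v'])=i([e'],[a_1'])=i([e'],[l'])=2$; cutting $N$ along suitable triples of the already-placed curves and running the arc-counting argument of the proof of Lemma~\ref{curves} shows that $e'$ meets each of the four relevant arcs of $a_1'$ and $l'$ exactly once, which narrows $e'$ to two candidates, and then $i([e'],[v'])=2$ eliminates the wrong one. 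Assembling all of these positions gives a homeomorphism $h$ of $N$ with $h([x])=\lambda([x])$ for every $x\in\mathcal{C}$.

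I expect the main obstacle to be the same pair of delicate points as in the even case: first, the reflection ambiguity in placing $l'$, where one must check that the involution $\phi$ genuinely fixes every previously placed curve and that replacing $\lambda$ by $\lambda\circ\phi_{*}$ is legitimate; and second, the final arc-by-arc determination of $e'$, where one must be sure that the chosen cutting surfaces together with the three intersection-number-two constraints leave exactly two candidates and that $i([e'],[v'])=2$ (via Lemma~\ref{int-twice-2}) excludes the unwanted one. The additional odd-genus wrinkle --- tracking the single projective-plane-with-two-boundary-components piece when applying Lemma~\ref{adjacent} across the subcases ($g=5$ versus $g\ge 7$, closed versus with boundary) --- is routine bookkeeping, but it is the place where a misread figure could most easily derail the argument.
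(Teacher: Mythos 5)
Your overall skeleton is the right one and is the paper's: pin down $\mathcal{B}'$ using Lemmas \ref{intone}, \ref{piece1} and \ref{piece2-bb}, narrow $l'$ to two candidates and remove the ambiguity by composing $\lambda$ with the map induced by an order-two reflection $\phi$, then locate $e'$ using the intersection-number-two lemmas, and finally $k'$. But the transfer from Lemma \ref{curves} is not verbatim, because the odd-genus configuration $\mathcal{C}$ of Figure \ref{fig7-new} is not the even one with relabeled curves, and at least one step of your plan fails for it. In the odd configuration the curve $e$ is \emph{disjoint} from $a_1$ (and from $a_3,b_3,c_3$) and intersects $b_1$ and $c_1$; so your plan to get $i([e'],[a_1'])=2$ and $i([e'],[l'])=2$ by transporting the pairs $(e,a_1)$ and $(e,l)$ to the model pair of Lemma \ref{int-twice-2-b} has nothing to transport --- that intersection pattern belongs to the even-genus figure. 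The paper instead applies Lemma \ref{int-twice-2-b} to the pairs $(e,b_1)$ and $(e,c_1)$, obtaining $i([e'],[b_1'])=i([e'],[c_1'])=2$; together with the disjointness of $e'$ from $a_1',a_3',b_3',c_3'$ this leaves only the three candidates $v_1,v_2,v_3$ of Figure \ref{fig7-new-a}, and $v_1,v_3$ are excluded simply because they bound M\"obius bands while $e'$ represents a vertex of $\mathcal{T}(N)$; no arc-counting argument as in the even case is needed.

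Two further mismatches. The odd-genus $\mathcal{C}$ contains no curves $a,p,v,c$, so the sizable part of your plan devoted to placing $a',p',v',c'$ (and to putting $r$ into $\mathcal{B}$) is superfluous; likewise, narrowing $l'$ does not need the claim that $a_1$ and $l$ cobound a twice-holed projective plane (it is doubtful in the odd figure and, in any case, the paper only uses that $l$ meets $b_1,c_1,a_2$ once, is disjoint from $a_1,a_3,b_3,c_3$, and is not isotopic to $a_1$, which already leaves exactly the two candidates $t,z$ switched by $\phi$). Finally, your determination of $k'$ as the unique nonseparating two-sided curve in a one-holed Klein bottle cut off by a neighborhood of $b_1'\cup c_1'\cup l'$ is again the even-genus picture; in the odd case the paper pins down $k'$ through an auxiliary curve $m$ (Figure \ref{fig1c-pr}), characterizing first $m$ and then $k$ purely by disjointness and nontrivial-intersection properties with already-placed curves. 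So while your strategy is correct in outline, the $e'$ step (and likely the $k'$ step) as you describe them would fail for the configuration the lemma is actually about and must be replaced by the simpler arguments above.
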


\begin{proof} We will give the proof when $N$ has boundary. The proof for the closed case will be similar. We will consider all the
curves in  $\mathcal{C}$ as shown in Figure \ref{fig7-new} (ii). Let
$\mathcal{B} = \{a_1, a_2, \cdots, a_{g-2}, b_1, b_3, \cdots, b_{g-2}, c_1, c_3, \cdots,$ $ c_{g-2}, d_1, d_2, \cdots, d_{n-1}\}$.
Let $a'_1  \in \lambda([a_1]), a'_2  \in \lambda([a_2]), \cdots,$ $ a_{g-2}' \in \lambda([a_{g-2}]), b'_1  \in \lambda([b_1]),$
$b'_3  \in \lambda([b_3]),$
$ \cdots, b_{g-2}' \in \lambda([b_{g-2}]), c'_1  \in \lambda([c_1]), c'_3 \in \lambda([c_3]), \cdots, c_{g-2}' \in \lambda([c_{g-2}]),$
$d'_1 \in \lambda([d_1]), d'_2  \in \lambda([d_2]), \cdots, $ $d_{n-1}' \in \lambda([d_{n-1}])$ be minimally intersecting representatives.
By Lemma \ref{intone} geometric intersection one is preserved. So, a regular neighborhood of union of all the elements in
$\mathcal{B'}= \{a'_1, a'_2, \cdots,$ $ a_{g-2}', b'_1, b'_3, \cdots,$ $ b_{g-2}', c'_1, c'_3, \cdots, c_{g-2}', d'_1, d'_2, \cdots,$
$d'_{n-1}\}$ is an orientable surface of genus $\frac{g-1}{2}$ with several boundary components.

By Lemma \ref{piece1}, if three nonseparating curves in $\mathcal{B}$ bound a pair of pants on $N$, then the corresponding curves in
$\mathcal{B'}$ bound a pair of pants on $N$. By Lemma \ref{piece2-bb}, if two curves in $\mathcal{B}$ are boundary components of
a pair of pants where the third boundary component of the pair of pants is a boundary component of $N$, then the corresponding curves
in $\mathcal{B'}$ are boundary components of a pair of pants where the third boundary component of the pair of pants is a boundary
component of $N$. These imply that the curves in $\mathcal{B'}$ are as shown in Figure \ref{fig7-new-a} (i).

Since $l$ intersects each of $b_1, c_1, a_2$ only once nontrivially and $l$ is disjoint from each of $a_1, b_3, c_3, a_3$, we know
that $l'$ intersects each of $b'_1, c'_1, a'_2$ only once nontrivially and $l'$ is disjoint from each of $a'_1, b'_3, c'_3, a'_3$.
Since $a_1$ and $l$ are nonisotopic and $\lambda$ is injective, we know that $a'_1$ and $l'$ are nonisotopic. 
Using all this information about $l'$, it is easy to see that $l'$ is isotopic to $t$ or $z$ where the
curves $t, z$ are as shown in Figure \ref{fig7-new-a} (i). There exists a homeomorphism $\phi$ of order two sending each curve in $\mathcal{B'}$
to itself and switching $t$ and $z$. Let $\phi_{*}$ be the induced map on $\mathcal{T(N)}$. By replacing $\lambda$ with
$\lambda \circ \phi_{*}$ if necessary, we can assume that $l'$ is isotopic to $z$. We note that to get the proof of the lemma, it is
enough to prove the result for this $\lambda$.

Let $e' \in \lambda([e])$ such that $e'$ intersects minimally with the elements of $\mathcal{B'}$. The curve $e$ bounds a Klein bottle with
one hole whose complement is nonorientable on $N$. There exists a homeomorphism sending the pair $(e, b_1)$ to $(p, x)$ where $p$ and $x$ are as
shown in Figure \ref{fig1b-n9-c}. Since the geometric intersection number of $[p']$ and $[x']$ is two by Lemma \ref{int-twice-2-b}, we see
that the geometric intersection number of $[e']$ and $[b'_1]$ is two. There exists also a homeomorphism sending the pair $(e, c_1)$ to $(p, x)$ 
where $p$ and $x$ are as shown in Figure \ref{fig1b-n9-c}. With similar reasoning we can see that the geometric intersection number of $[e']$ 
with $[c'_1]$ is two. The curve $e$ is disjoint from each of $a_1, b_3, a_3, c_3$, so $e'$ is disjoint from each of $a'_1, b'_3, a'_3, c'_3$. Since $e'$
intersects each of $b'_1$ and $c'_1$ twice essentially and $e'$ is disjoint from each of $a'_1, b'_3, a'_3, c'_3$, we see that $e'$
is isotopic to $v_1, v_2$ or $v_3$ as shown in Figure \ref{fig7-new-a} (ii), (iii) and (iv). Since $v_1$ and $v_3$ both bound M\"{o}bius bands,
$e'$ is not isotopic to either of them. So, $e'$ is isotopic to $v_2$.

\begin{figure}[htb]
\begin{center}\hspace{-.3cm}
\epsfxsize=2.95in \epsfbox{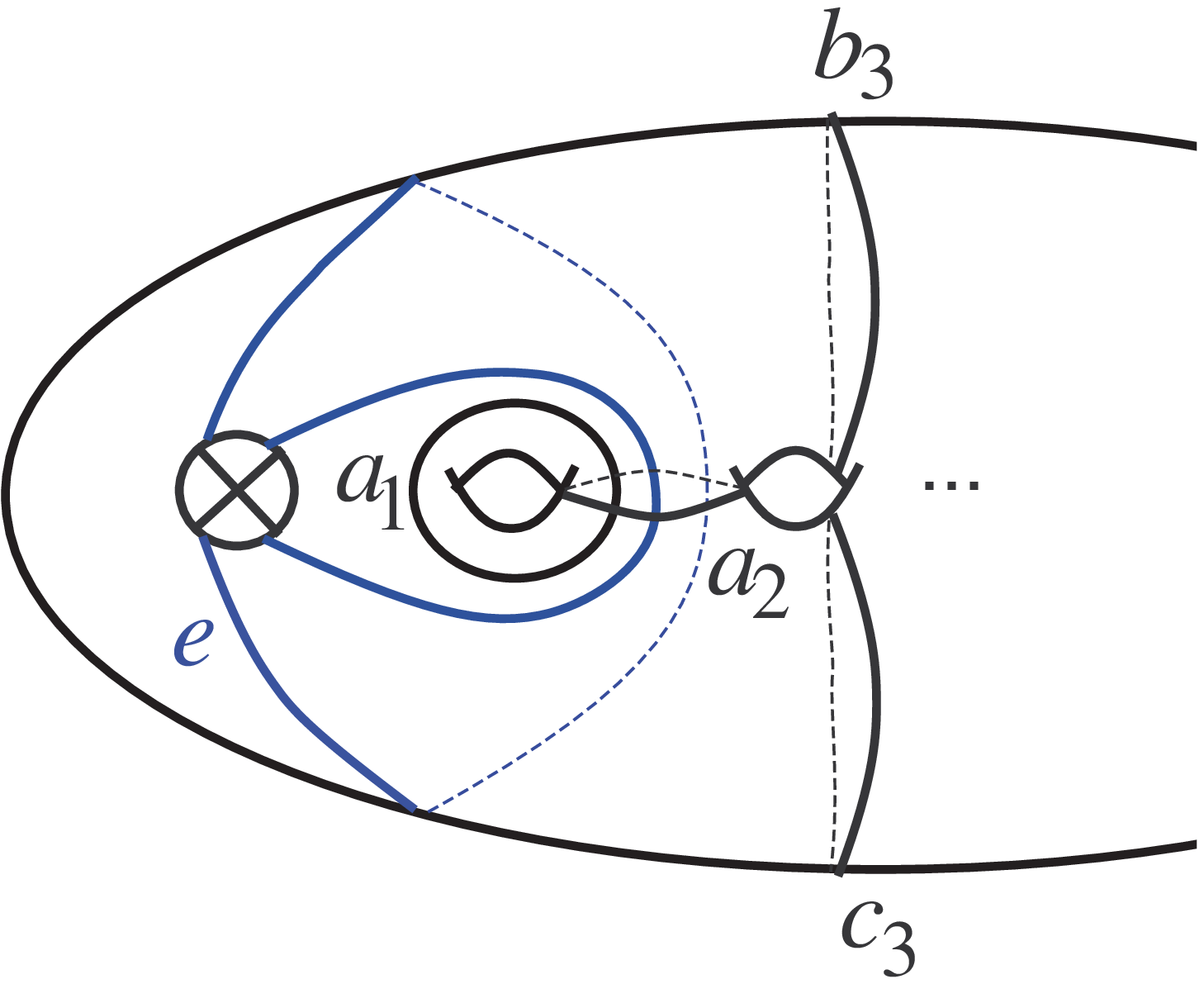}  \hspace{-1cm} \epsfxsize=2.95in \epsfbox{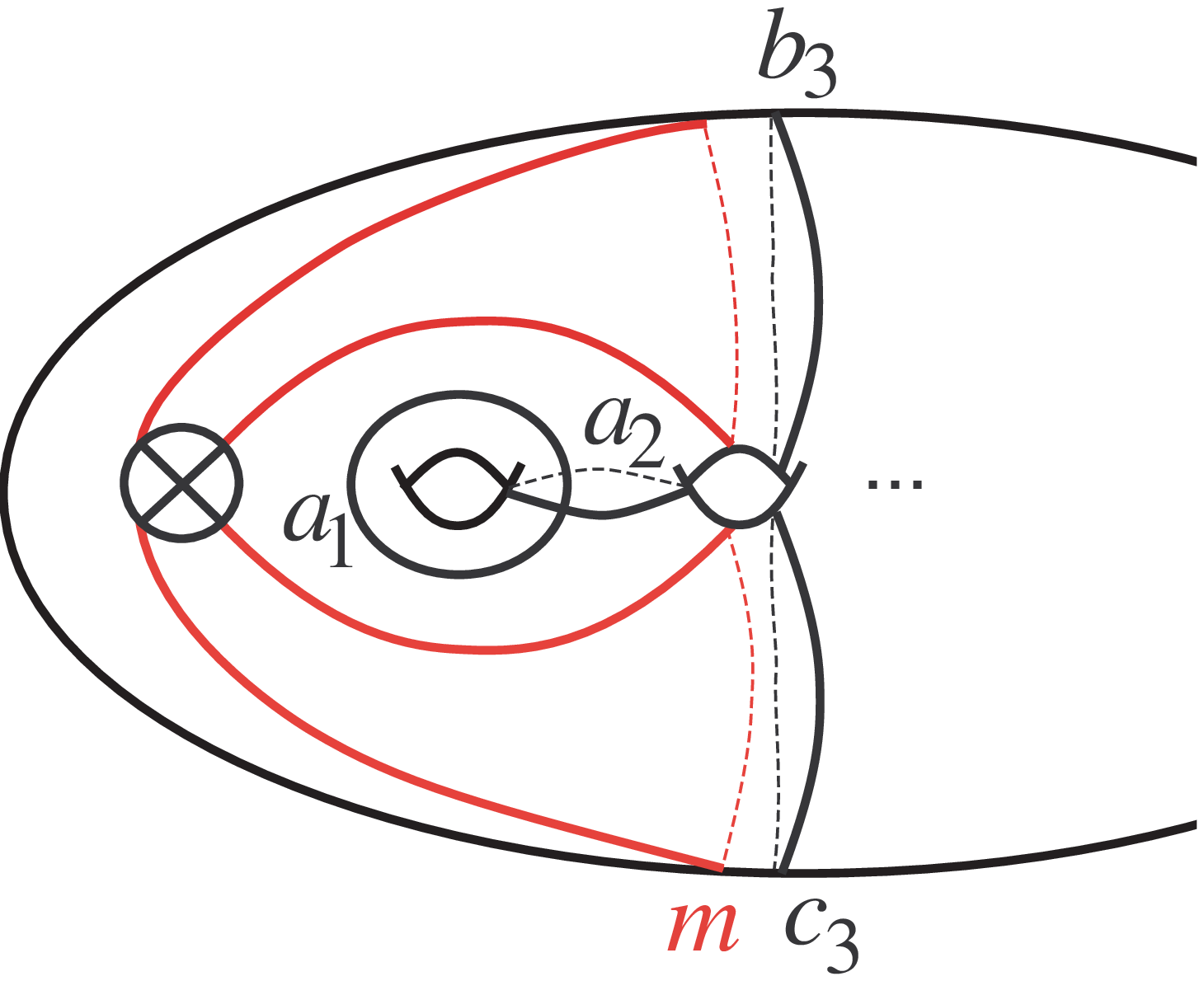}

\hspace{-1.3cm} (i) \hspace{6.5cm} (ii)

\epsfxsize=2.95in \epsfbox{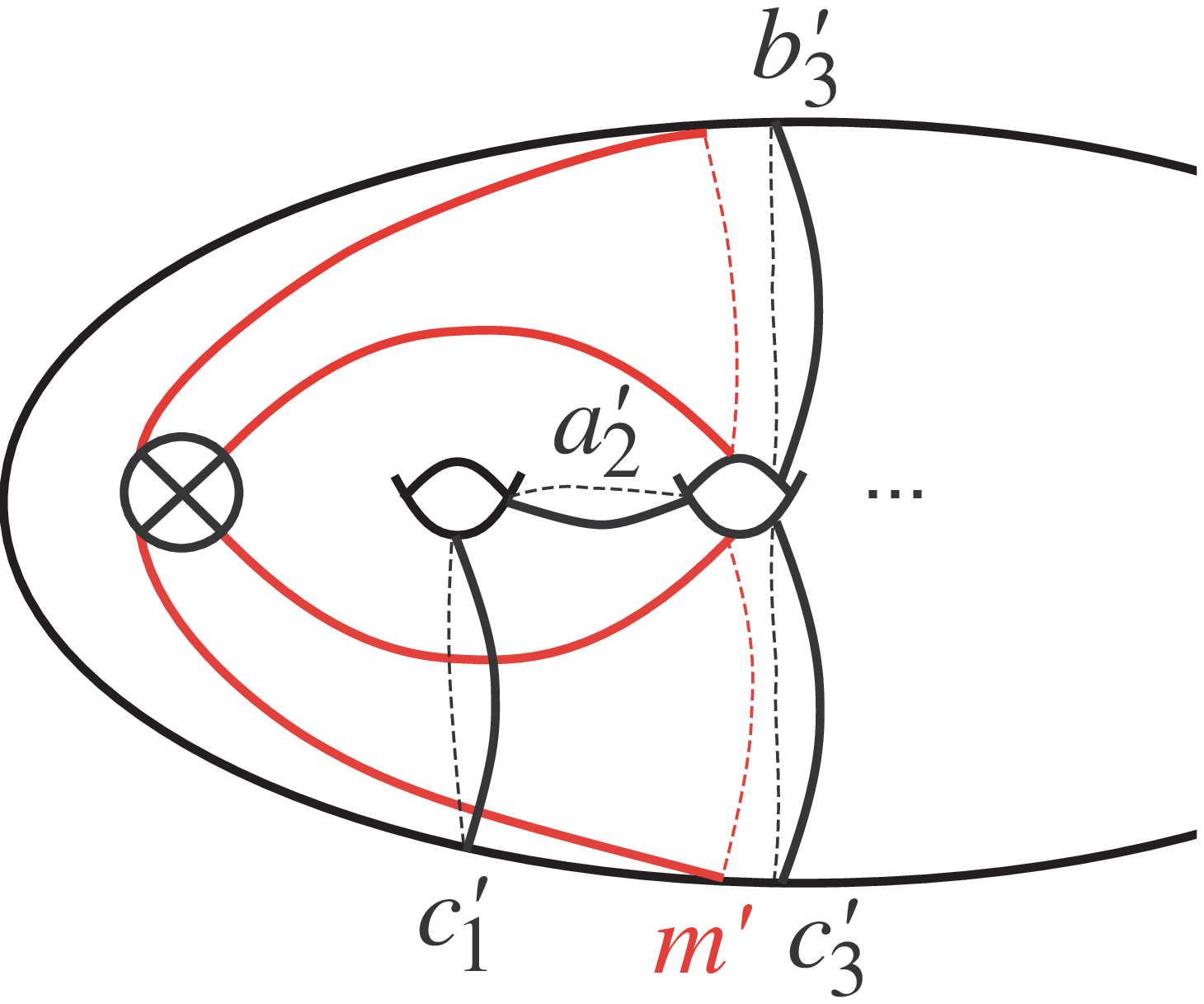}  \hspace{-1cm} \epsfxsize=2.95in \epsfbox{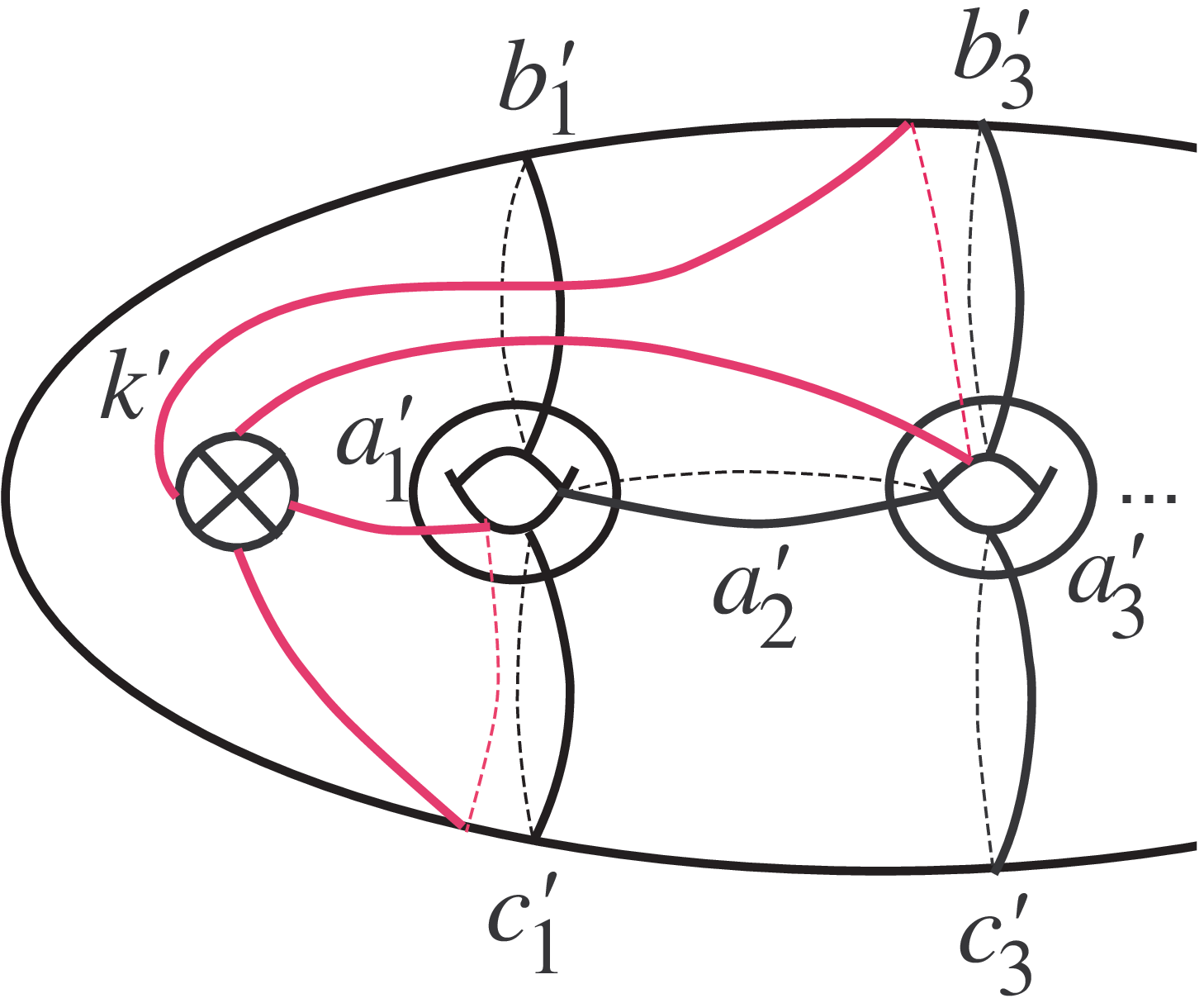}

\hspace{-1.2cm} (iii) \hspace{6.2cm} (iv)
\caption{Curve configuration XII} \label{fig1c-pr}
\end{center}
\end{figure}

Let $k' \in \lambda([k])$ such that $k'$ intersects minimally with the elements of $\mathcal{B'} \cup \{e'\}$. To see
that $k'$ is as shown in Figure \ref{fig1c-pr} (iv), we will first consider the curve $m$ given in Figure \ref{fig1c-pr} (ii).
The curve $m$ is the unique nontrivial curve up to isotopy that is disjoint from all the curves in $\{a_1, a_2, b_3, c_3, e\}$
and intersects $b_1$ nontrivially. Let $m' \in \lambda([m])$ such that $m'$ intersects minimally with the elements of $\mathcal{B'} \cup \{e', k'\}$.
Since $m'$ is disjoint from all the curves in $\{a'_1, a'_2, b'_3, c'_3, e'\}$ and intersects $b'_1$ nontrivially, and there is a unique such
curve up to isotopy we see that $m'$ is as shown in Figure \ref{fig1c-pr} (iii). The curve $k$ is the unique nontrivial curve up to isotopy that is
disjoint from all the curves in $\{c_1, a_2, b_3, c_3, m\}$ and intersects $b_1$ nontrivially. Since $k'$ is
disjoint from all the curves in $\{c'_1, a'_2, b'_3, c'_3, m'\}$ and intersects $b'_1$ nontrivially, we see that
$k'$ is as shown in Figure \ref{fig1c-pr} (iv). Hence, there is a homeomorphism $h: N \rightarrow N$ such that $h([x]) = \lambda([x])$
for all $x \in \mathcal{C}$.\end{proof}

{\section{Enlarging $\mathcal{C}$}

From now on, we let $h$ be a homeomorphism $h: N \rightarrow N$ which comes from both Lemma \ref{curves} and Lemma \ref{curves-2} such that $h([x]) = \lambda([x])$ for each $x \in \mathcal{C}$. Our aim is to show that $\lambda$ is induced by
$h$. We will enlarge the set $\mathcal{C}$ to other sets to get our main result. The idea is similar to what
is given by Aramayona and Leininger in \cite{AL2}.

\begin{figure}[ht]
\begin{center}
\hspace{-0.3cm} \epsfxsize=3.2in \epsfbox{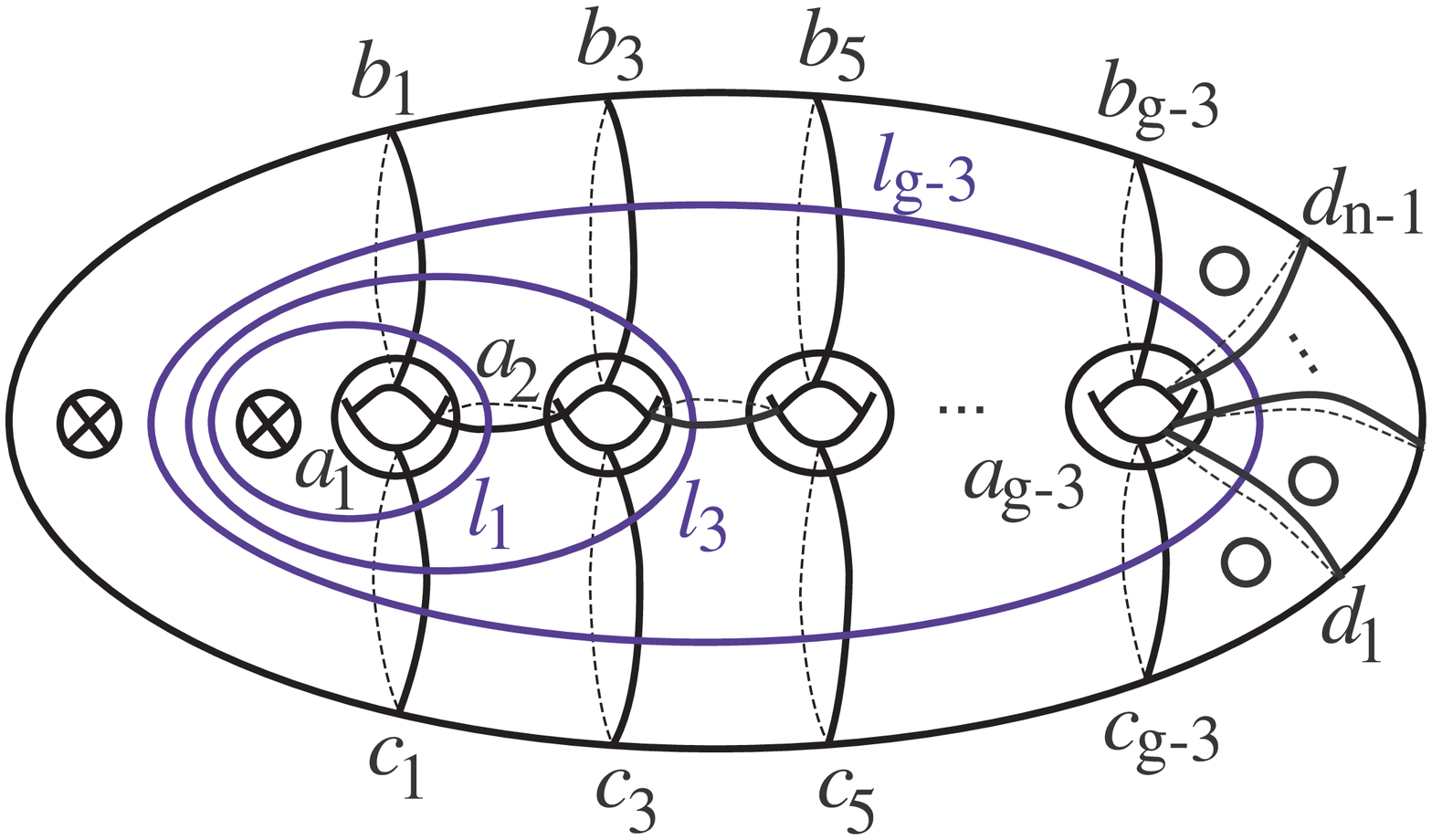}  \hspace{-1cm} \epsfxsize=3.2in \epsfbox{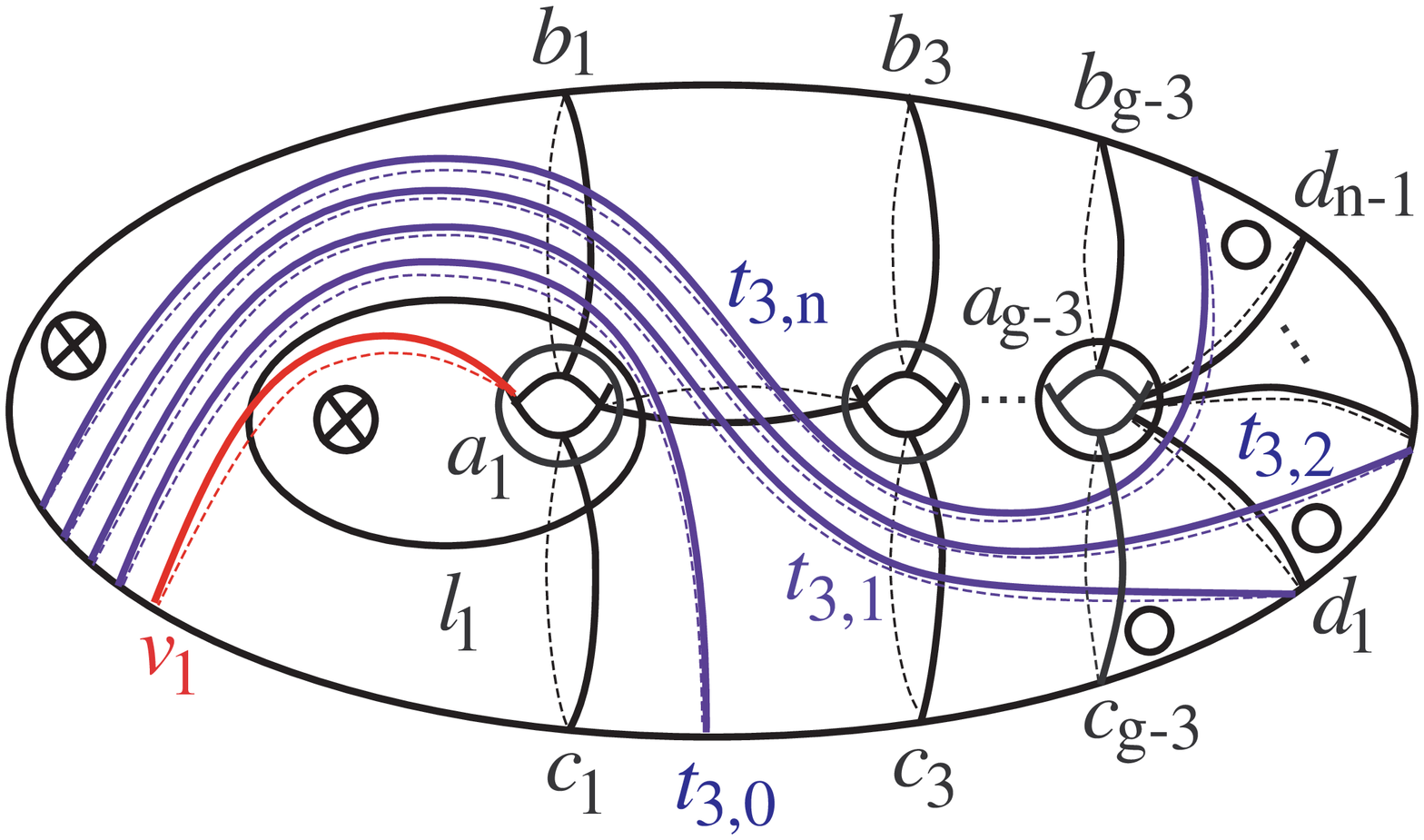}

\hspace{-1.3cm} (i) \hspace{6.5cm} (ii)

\hspace{-0.3cm} \epsfxsize=3.2in \epsfbox{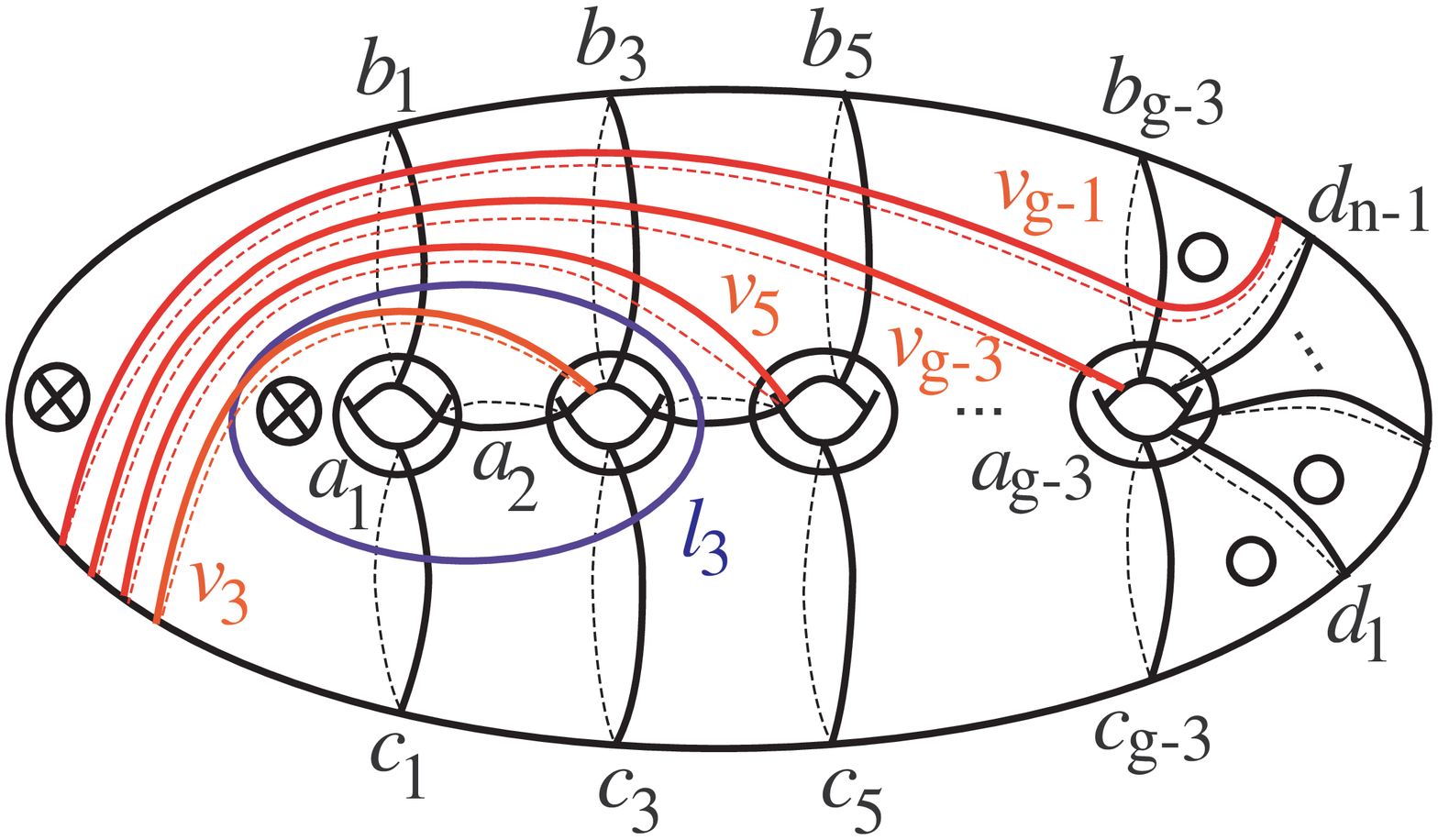}  \hspace{-1cm} \epsfxsize=3.2in \epsfbox{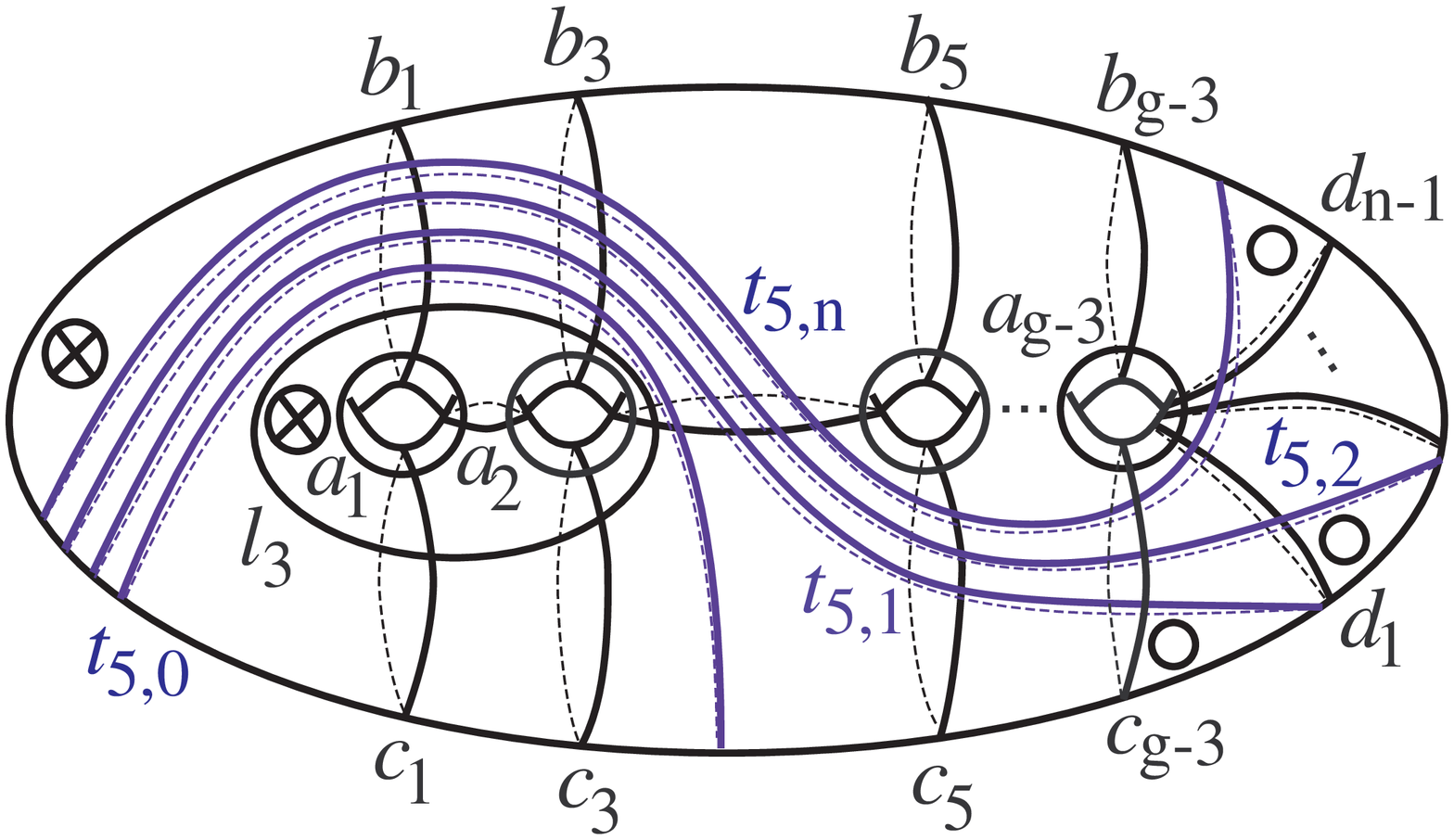}

\hspace{-1.2cm} (iii) \hspace{6.2cm} (iv)

\caption{$\mathcal{B}_0$, (even genus) has separating curves that have odd genus nonorientable surfaces on both sides.}
\label{fig25}
\end{center}
\end{figure}

For $g \geq 6$ and $g$ even, we let $\mathcal{B}_0 = \{l_1, l_3, \cdots, l_{g-3}, t_{3,0}, t_{3,1}, \cdots, t_{3,n}, t_{5,0}, t_{5,1}, \cdots,$
$t_{5,n}, \cdots, $ $ t_{g-1,0}, t_{g-1,1},$ $ \cdots, t_{g-1,n-1}\}$ where the curves are as shown in Figure \ref{fig25}.
We note that each $t_{i,j}$ is a separating curve that has odd genus nonorientable surfaces on both sides, and $\mathcal{B}_0$ has curves
of every topological type that satisfy this condition.

\begin{lemma}
\label{B_0-even} If $g \geq 6$ and $g$ is even, then $h([x]) = \lambda([x])$ $\forall \ x \in \mathcal{C} \cup \mathcal{B}_0$.\end{lemma}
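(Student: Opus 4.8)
The plan is to exploit the same ``rigidity by intersection pattern'' strategy used in Lemmas \ref{curves} and \ref{curves-2}, now bootstrapped by the fact that $h$ already realizes $\lambda$ on $\mathcal{C}$. Since $h$ is a homeomorphism, after replacing $\lambda$ by $h^{-1}\circ\lambda$ we may assume $\lambda$ fixes every vertex of $\mathcal{C}$; it then suffices to prove that $\lambda$ fixes every vertex of $\mathcal{B}_0$. The key observation is that each $t_{i,j}$ (and each $l_i$) is, up to isotopy, the \emph{unique} nontrivial two-sided curve on $N$ with a prescribed intersection/disjointness profile against a finite subset of $\mathcal{C}$ — in the pictures of Figure \ref{fig25} one can read off for each such curve a collection of curves in $\mathcal{C}$ that it is disjoint from, together with one or two curves in $\mathcal{C}$ that it meets once (or, where forced, exactly twice, invoking Lemma \ref{intone} or Lemmas \ref{int-twi}, \ref{int-twi-2} to transport the value $2$). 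First I would fix a curve $x\in\mathcal{B}_0$, choose a representative $x'\in\lambda([x])$ meeting the $\mathcal{C}$-curves minimally, and record: (a) the set $D_x\subset\mathcal{C}$ of curves disjoint from $x$, so $x'$ is disjoint from each of them by superinjectivity; (b) the set $I_x\subset\mathcal{C}$ of curves meeting $x$ with $i=1$, so $i(x',\cdot)=1$ there by Lemma \ref{intone}; and, when needed, (c) one further curve with $i(x,\cdot)=2$ whose image-value $2$ is supplied by the relevant ``intersect-twice'' lemma. Cutting $N$ along the (already-fixed) curves of $D_x$ leaves an explicit, often quite small, subsurface, and the remaining intersection-one (and intersection-two) constraints pin down $x'$ uniquely in it; one also checks, using that $t_{i,j}$ is separating of a particular topological type while $l_i$ is nonseparating (Corollary \ref{nonsep}), that no ``wrong'' candidate survives.

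The natural order is: first handle the nonseparating curves $l_1,l_3,\dots,l_{g-3}$, since a regular neighborhood of an appropriate chain of already-fixed $a_i,b_j,c_k$ is an orientable subsurface in which each $l_i$ is the unique two-sided nonseparating curve disjoint from the stipulated sub-chain and meeting one designated curve once; here the argument is essentially the one used for $a'$ and $k'$ inside Lemma \ref{curves}. Next handle the separating curves $t_{i,j}$ level by level (increasing genus $i$, then increasing $j$): having fixed the lower-genus $t$'s and all of $\mathcal{C}\cup\{l_i\}$, the curve $t_{i,j}$ bounds on one side a prescribed odd-genus nonorientable subsurface whose boundary is forced, because $t_{i,j}$ is disjoint from a large block of already-fixed curves filling that side and meets a specific one or two curves transversely the prescribed number of times; as in Lemma \ref{sep1}, the only two a priori possibilities for the topological type of the piece $t_{i,j}'$ cuts off are ruled out by exhibiting a curve in $\mathcal{C}$ (or among the already-placed curves) that would otherwise have the wrong intersection behavior — typically a curve that must meet $t_{i,j}'$ but would be forced disjoint, or vice versa — exactly the style of contradiction used repeatedly above. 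Finally one notes $\mathcal{B}_0$ contains a representative of every topological type of such separating curve, but for the lemma we only need the finitely many explicitly drawn ones, so no extra transitivity argument is required.

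The main obstacle I expect is the \emph{uniqueness} step for the separating curves $t_{i,j}$: verifying that the combinatorial data ``disjoint from this explicit list, meets that curve once (or twice)'' really does determine $t_{i,j}'$ up to isotopy, rather than leaving, say, a Dehn-twist ambiguity about one of the fixed curves or an ambiguity between two pants-decomposition pieces of the same Euler characteristic. This is where one must be careful to include \emph{enough} already-fixed curves in $D_x$ and $I_x$ to fill (in the sense that their complement is a union of disks and once-punctured disks/M\"obius bands with at most a pair of pants or twice-holed projective plane left over); the $\phi$-symmetry trick from Lemmas \ref{curves} and \ref{curves-2} may again be needed to normalize away a reflection. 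I would also flag, as a secondary point needing care, that for $n\geq 1$ the indices $t_{i,n}$ versus $t_{i,n-1}$ in $\mathcal{B}_0$ and their relation to boundary components must be tracked so that one uses Lemma \ref{piece2-aa} (third boundary component a boundary of $N$) rather than Lemma \ref{piece1} in the right places; the small-genus boundary cases ($g=6$, $n$ small) should be checked separately, exactly as the ``remaining cases will be similar'' remarks in the preceding lemmas anticipate.
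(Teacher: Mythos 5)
Your proposal is correct and follows essentially the same route as the paper: starting from the curves of $\mathcal{C}$ fixed by Lemma \ref{curves}, each $l_i$ and then each $t_{i,j}$ (in increasing order) is identified as the unique nontrivial curve with a prescribed disjointness/intersection pattern against already-controlled curves, and superinjectivity, injectivity, Lemma \ref{intone} and Lemma \ref{piece1} transport that pattern to the image. The only cosmetic difference is that the paper also feeds in the auxiliary curves $v_i$ (with $v_1=r$, $v_3=v$ from Figure \ref{fig1b-n}, already pinned down in the proof of Lemma \ref{curves}) as part of the rigid reference set, which is exactly the kind of ``enough already-fixed curves'' you anticipate needing.
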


\begin{proof} We will give the proof when there is boundary. The closed case is similar. By Lemma \ref{curves} we know that
$h([x]) = \lambda([x])$ $\forall \ x \in \mathcal{C}$. Consider the curves in Figure \ref{fig25}. We know the result for $l_1$
since $l_1=l$ and $l \in \mathcal{C}$. The curve $l_3$ is the unique nontrivial curve up to isotopy which intersects each curve in
$\{a_4, b_1, b_3, c_1, c_3\}$ only once, disjoint from $a_5, b_5, c_5$  and bounds a pair of pants with $a_3$ and $l_1$.
Since all these properties are preserved by $\lambda$ by Lemma \ref{intone} and Lemma \ref{piece1}, and $h([x]) = \lambda([x])$
for all these curves, we have $h([l_3]) = \lambda([l_3])$. With similar arguments we see that $h([l_i]) = \lambda([l_i])$ for all
$i= 1, 3, 5, \cdots, g-3$.

By following the proof of Lemma \ref{curves} it is easy to see that we have the result for $v_1$ and $v_3$ as $v_1 =r$ and $v_3 =v$
where $r, v$ are as shown in Figure \ref{fig1b-n}. Showing the result for each $v_i$, $i = 1, 3, 5, \cdots g-1$ is similar.
The curve $t_{3,0}$ is the unique nontrivial curve up to isotopy that is disjoint from all the curves in $\{a_1, c_1, l_1, v_1, v_3, a_3, c_3\}$ and
intersects $a_2$ nontrivially. Since we know that $h([x]) = \lambda([x])$ for all these curves, by using that $\lambda$ is
superinjective and injective, we get $h([t_{3,0}]) = \lambda([t_{3,0}])$. When we cut $N$ by all the curves in $\{t_{3,0}, v_3, a_3, a_4, a_5, 
\cdots, a_{g-3}, d_1\}$, we get a pair of pants $P$ and the curve 
$t_{3,1}$ is the unique nontrivial curve nonisotopic to $t_{3,0}$ in $P$. We also note that $t_{3,1}$ intersects $c_{g-3}$ nontrivially. Since we know that $h([x]) = \lambda([x])$ for all these curves, by using that $\lambda$ is superinjective and injective, we get $h([t_{3,1}]) = \lambda([t_{3,1}])$.
Similar arguments show $h([t_{3,i}]) = \lambda([t_{3,i}])$ for all $i= 2, 3, \cdots, n$.

The curve $t_{5,0}$ is the unique nontrivial curve up to isotopy that is disjoint from all the curves in $\{a_1, a_2, a_3, c_1, c_3, l_3, v_5, a_5, c_5\}$
and intersects $a_4$ nontrivially. Since we know that $h([x]) = \lambda([x])$ for all these curves, by using that $\lambda$ is
superinjective and injective, we get $h([t_{5,0}]) = \lambda([t_{5,0}])$.

When we cut $N$ by all the curves in $\{t_{5,0}, v_5, a_5, a_6, \cdots, a_{g-3}, d_1\}$, 
we get a pair of pants $Q$ and the curve 
$t_{5,1}$ is the unique nontrivial curve nonisotopic to $t_{5,0}$ in $Q$. We also see that $t_{5,1}$ intersects $c_{g-3}$ nontrivially. Since we know that $h([x]) = \lambda([x])$ for all these curves, by using that $\lambda$ is superinjective and injective, we get $h([t_{5,1}]) = \lambda([t_{5,1}])$. Similar arguments show $h([t_{5,i}]) = \lambda([t_{5,i}])$ for all $i= 2, 3, \cdots, n$.
We can see that $h([t_{i,j}]) = \lambda([t_{i,j}])$  for each $t_{i,j} \in \mathcal{B}_0$ with similar arguments.\end{proof}\\

\begin{figure}[ht]
\begin{center}
\hspace{-1.6cm} \epsfxsize=3.2in \epsfbox{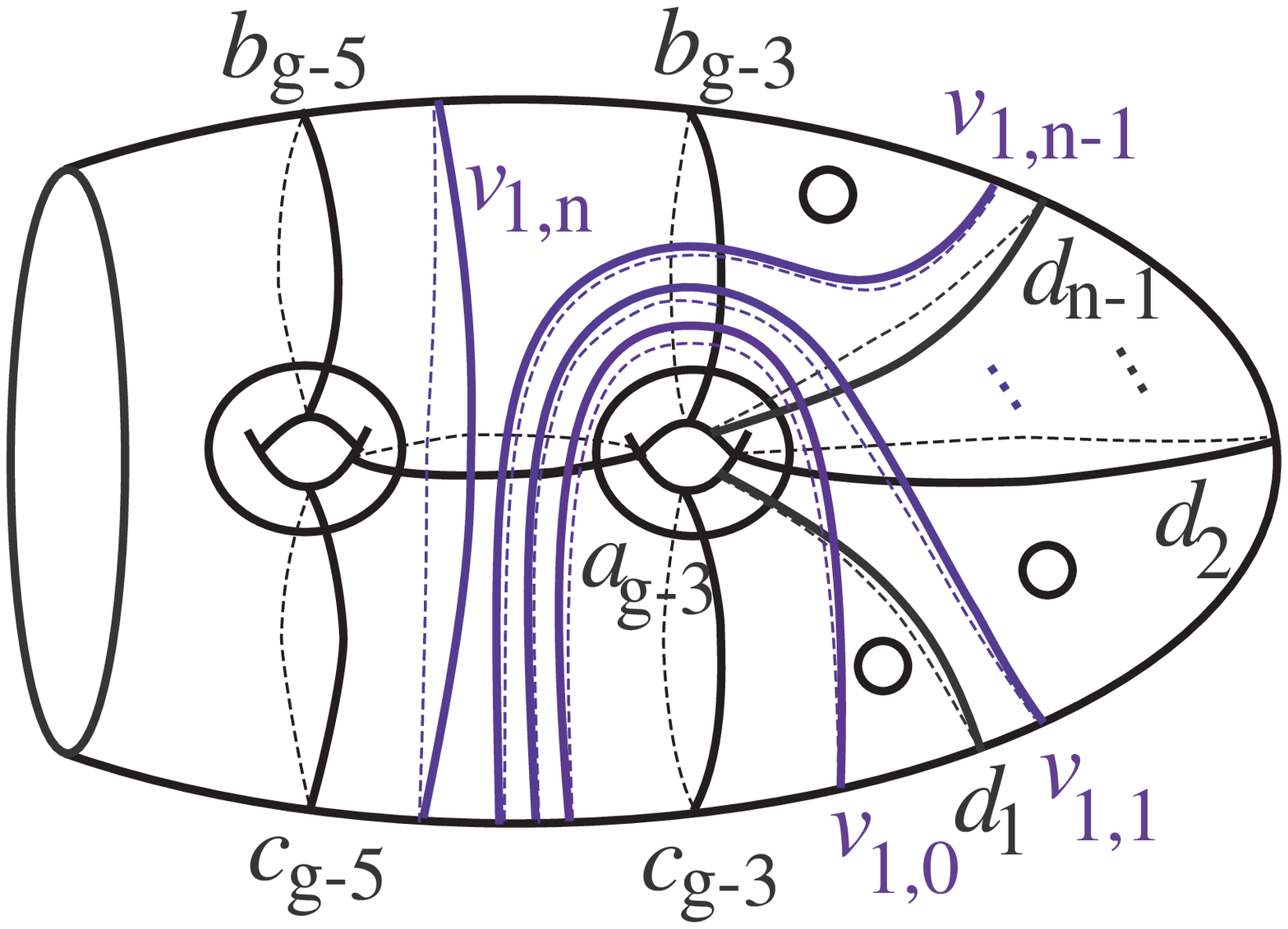}\hspace{-1cm} \epsfxsize=3.2in \epsfbox{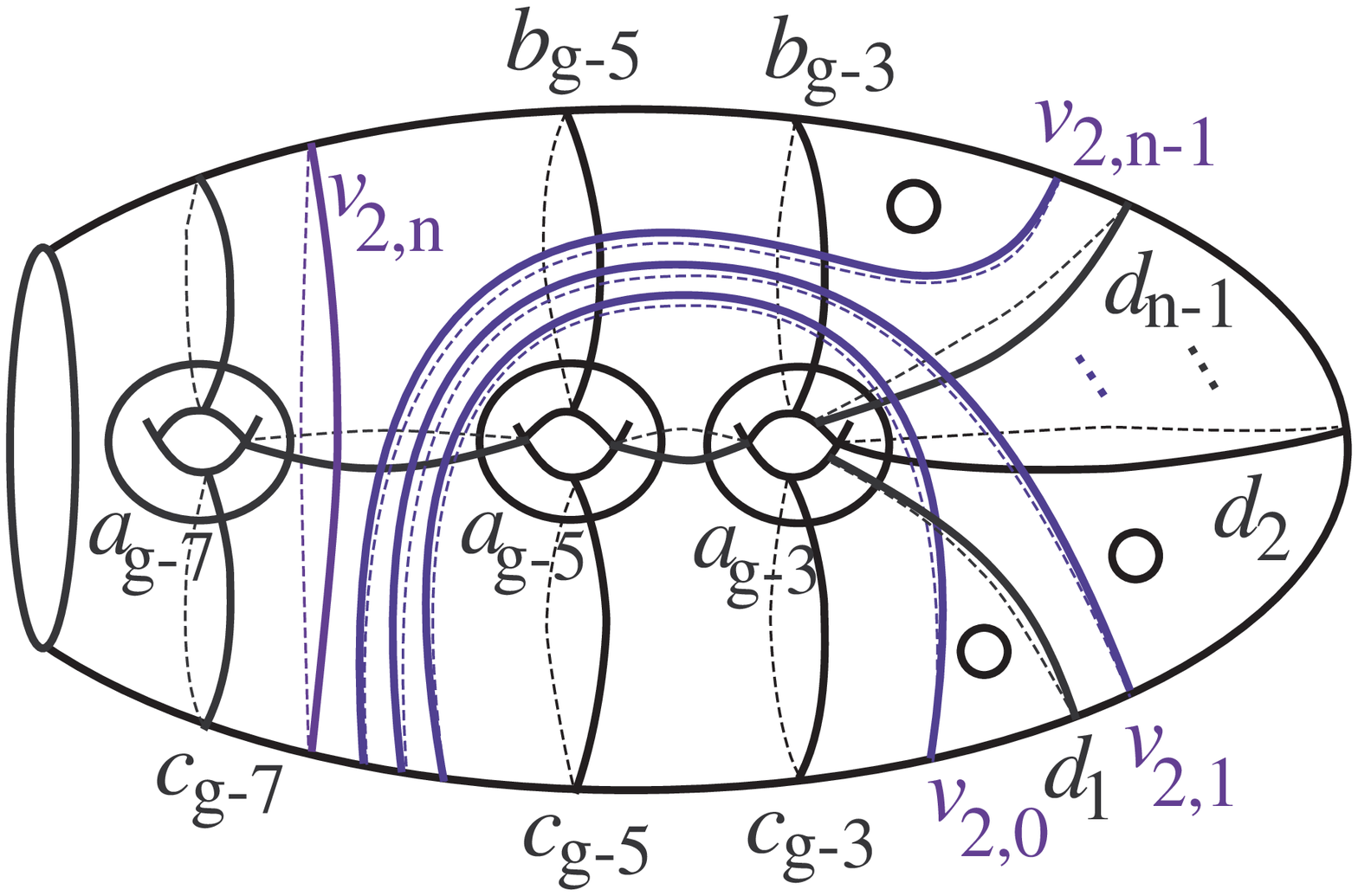}

\hspace{-1cm} (i) \hspace{6.7cm} (ii)

\epsfxsize=3.1in \epsfbox{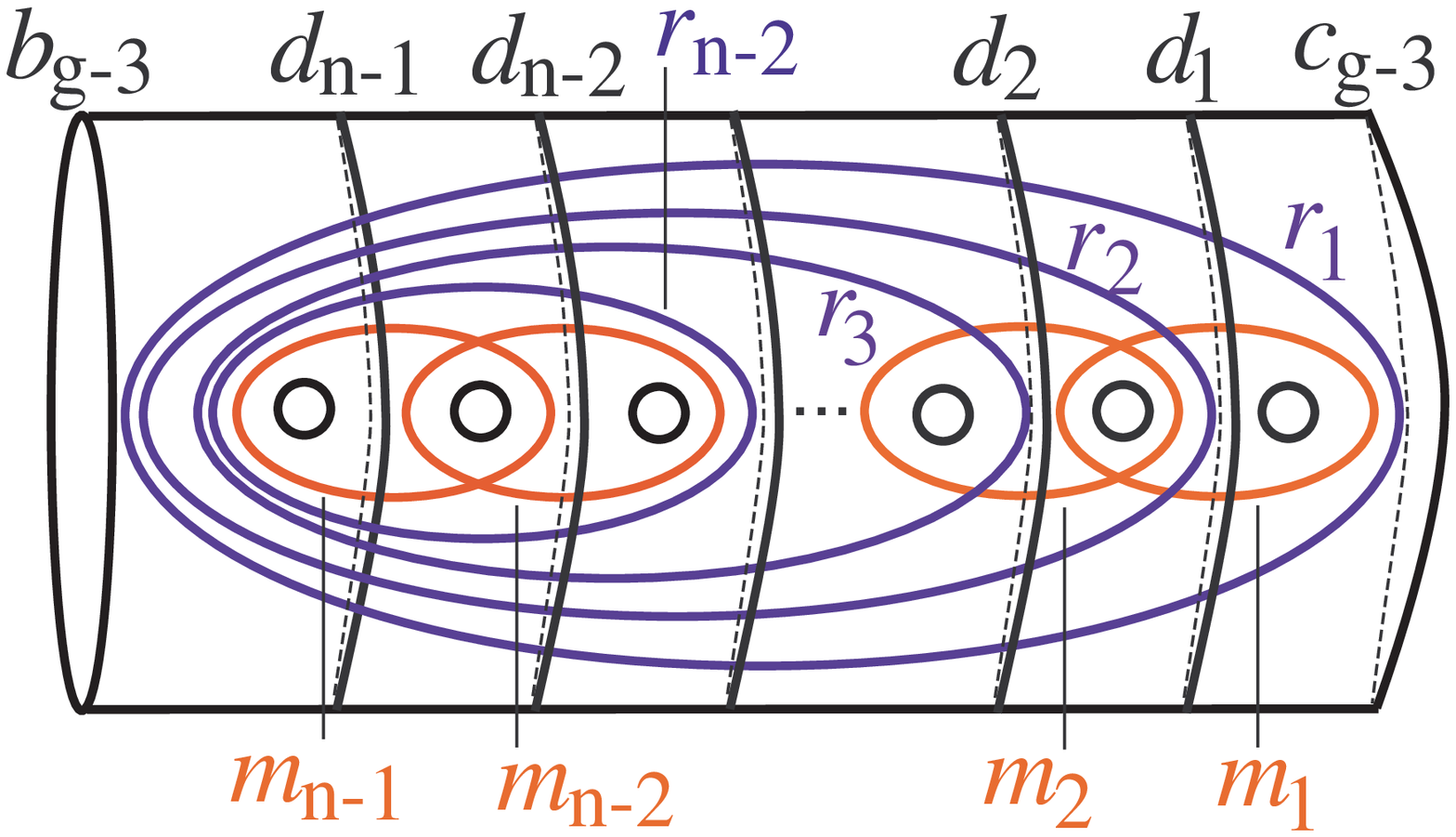} \hspace{-1cm} \epsfxsize=3.2in \epsfbox{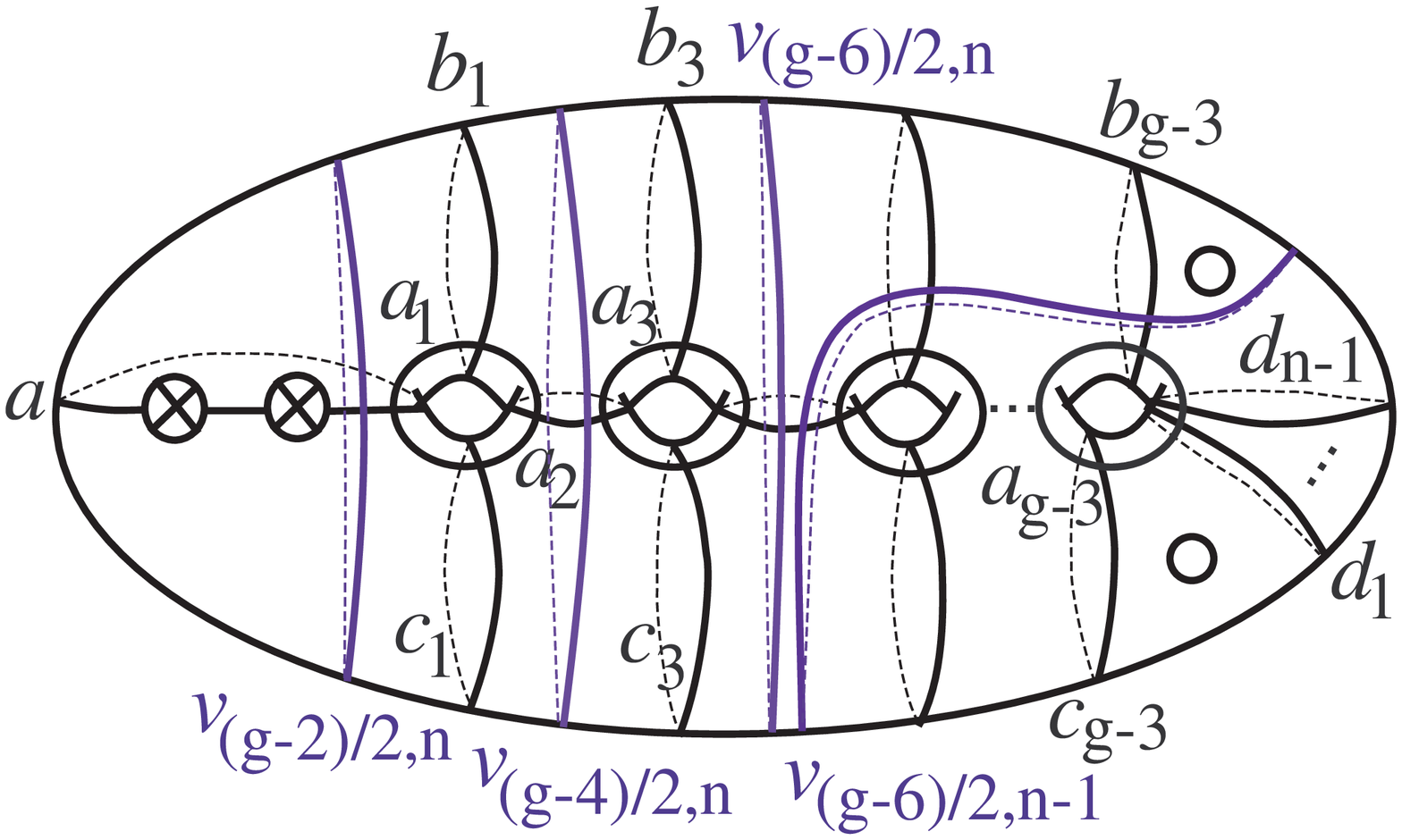}

\hspace{-1cm} (iii) \hspace{6.5cm} (iv)

\caption{$\mathcal{B}_1$ (even genus) } \label{fig23}
\end{center}
\end{figure}

\begin{figure}[ht]
\begin{center}
\hspace{-1cm} \epsfxsize=3.2in \epsfbox{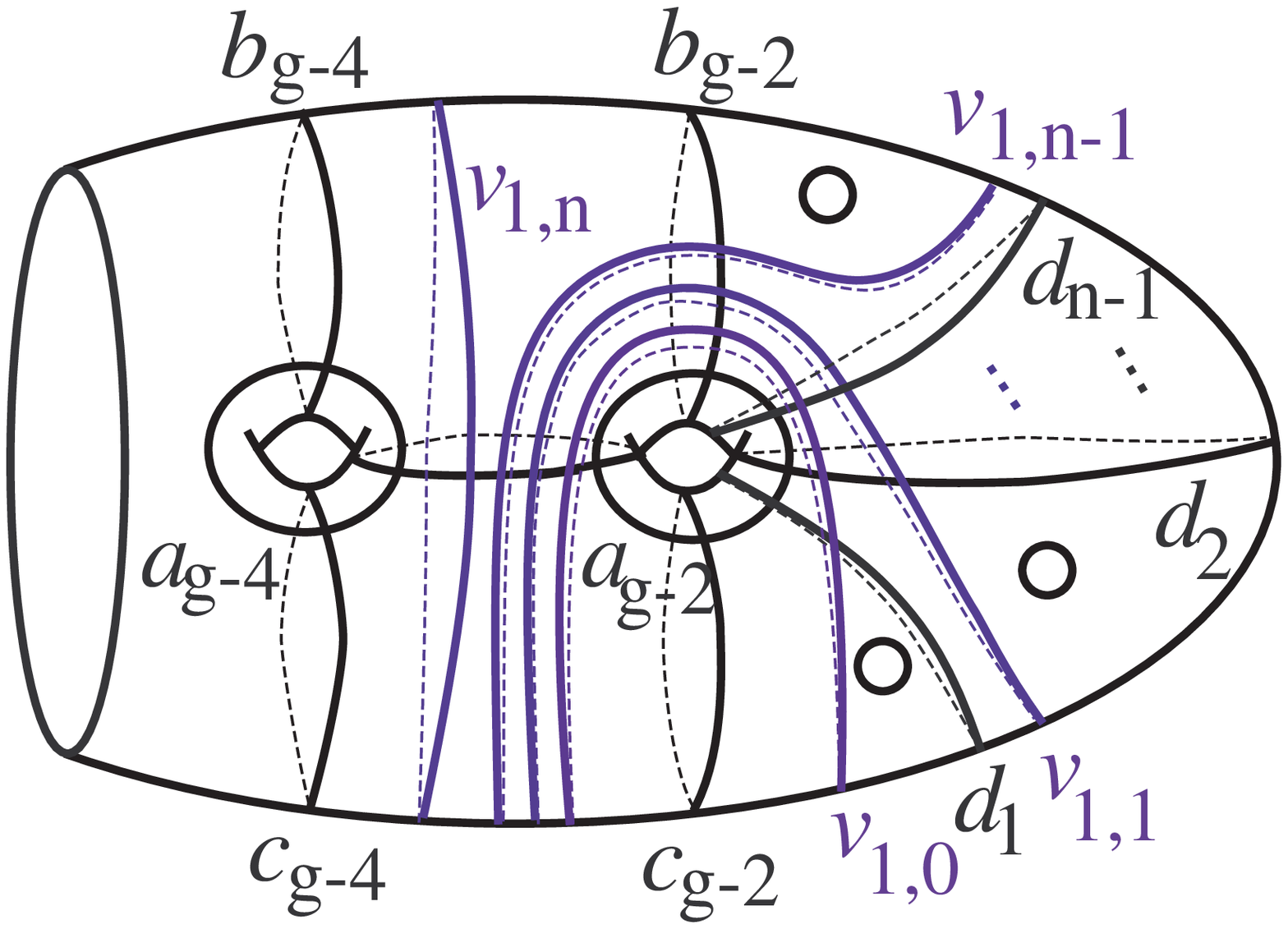} \hspace{-1cm} \epsfxsize=3.2in  \epsfbox{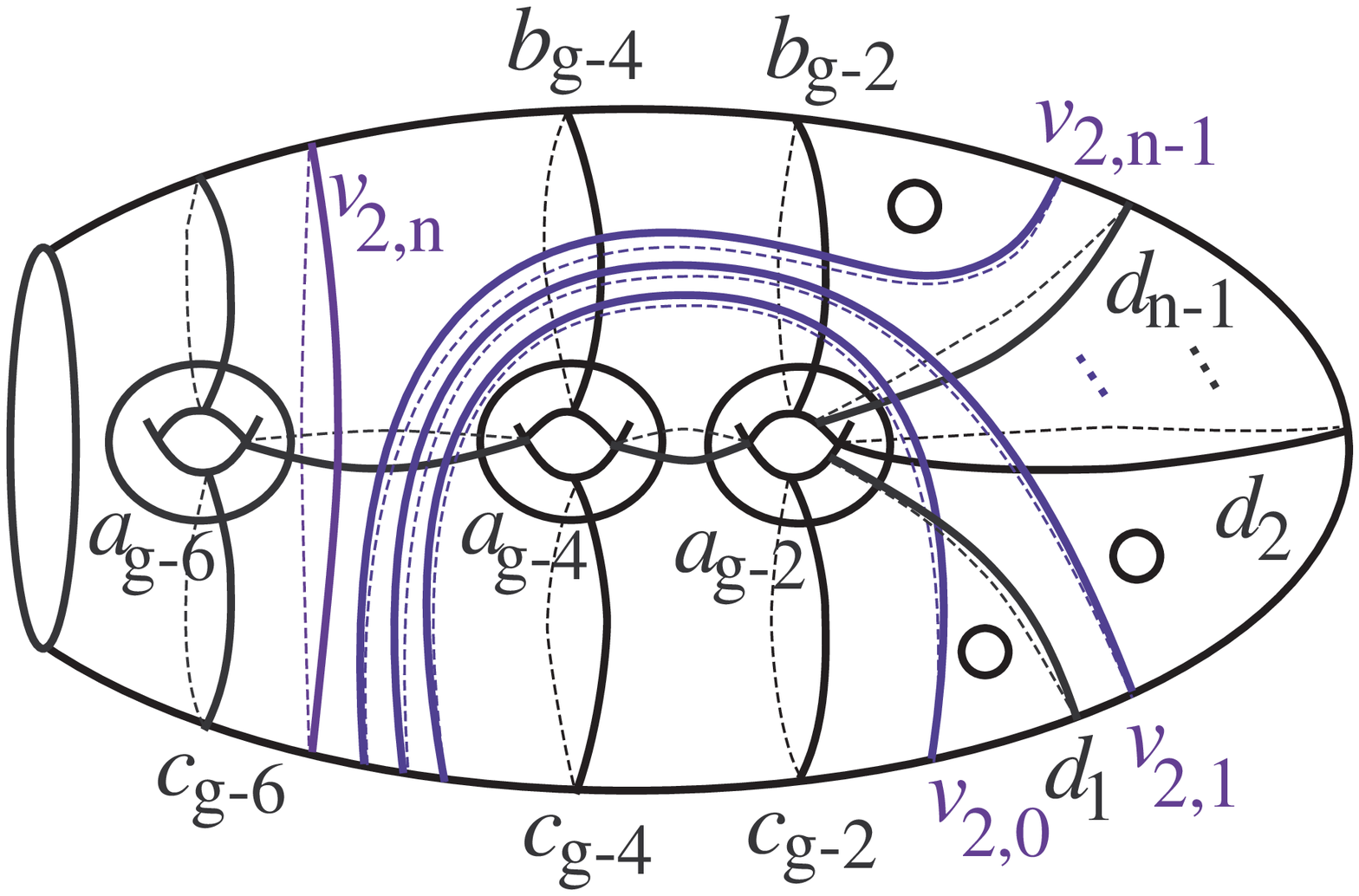}

\hspace{-1cm} (i) \hspace{6.7cm} (ii)

\epsfxsize=3.1in \epsfbox{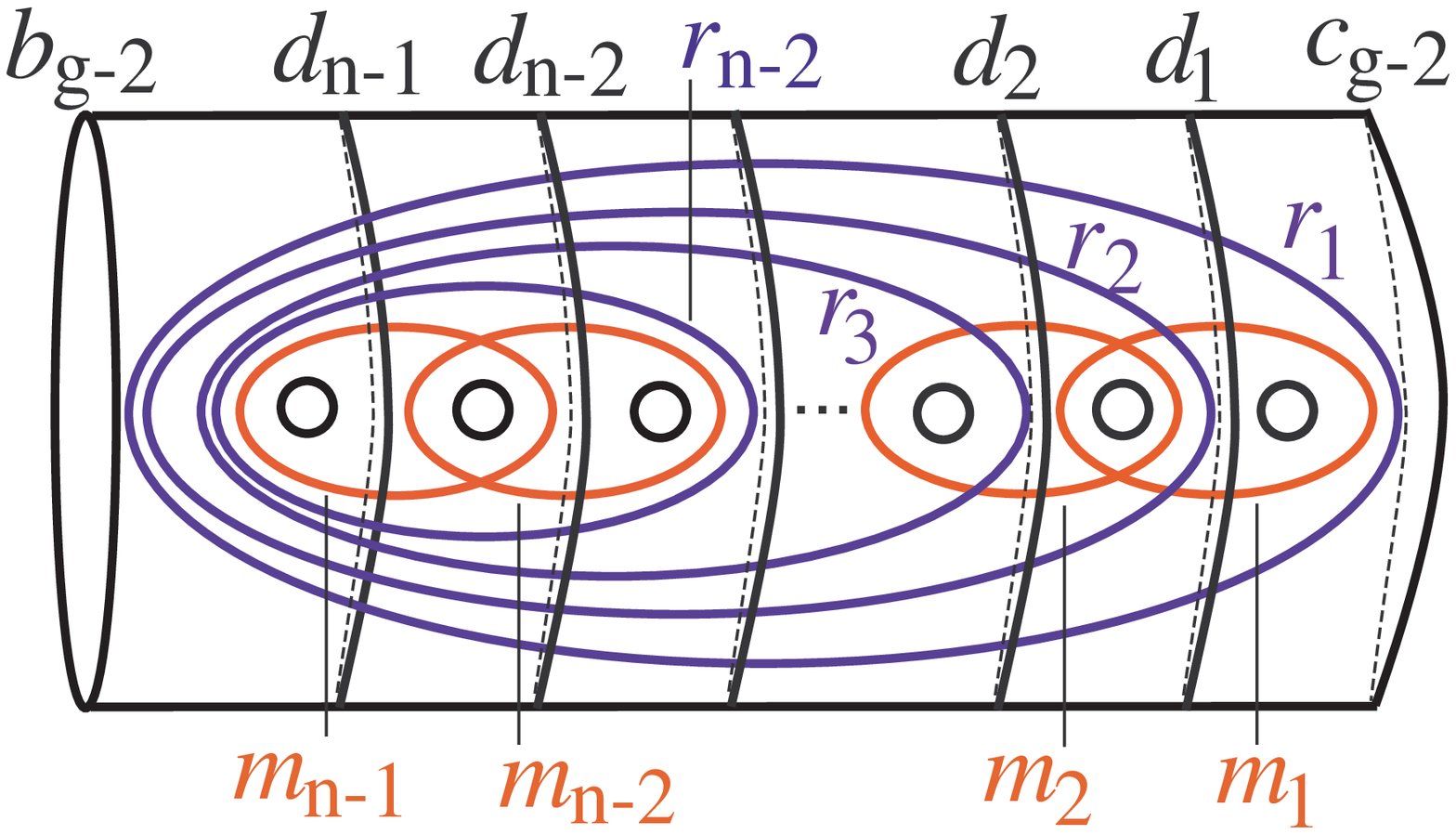} \hspace{-1cm} \epsfxsize=3.2in \epsfbox{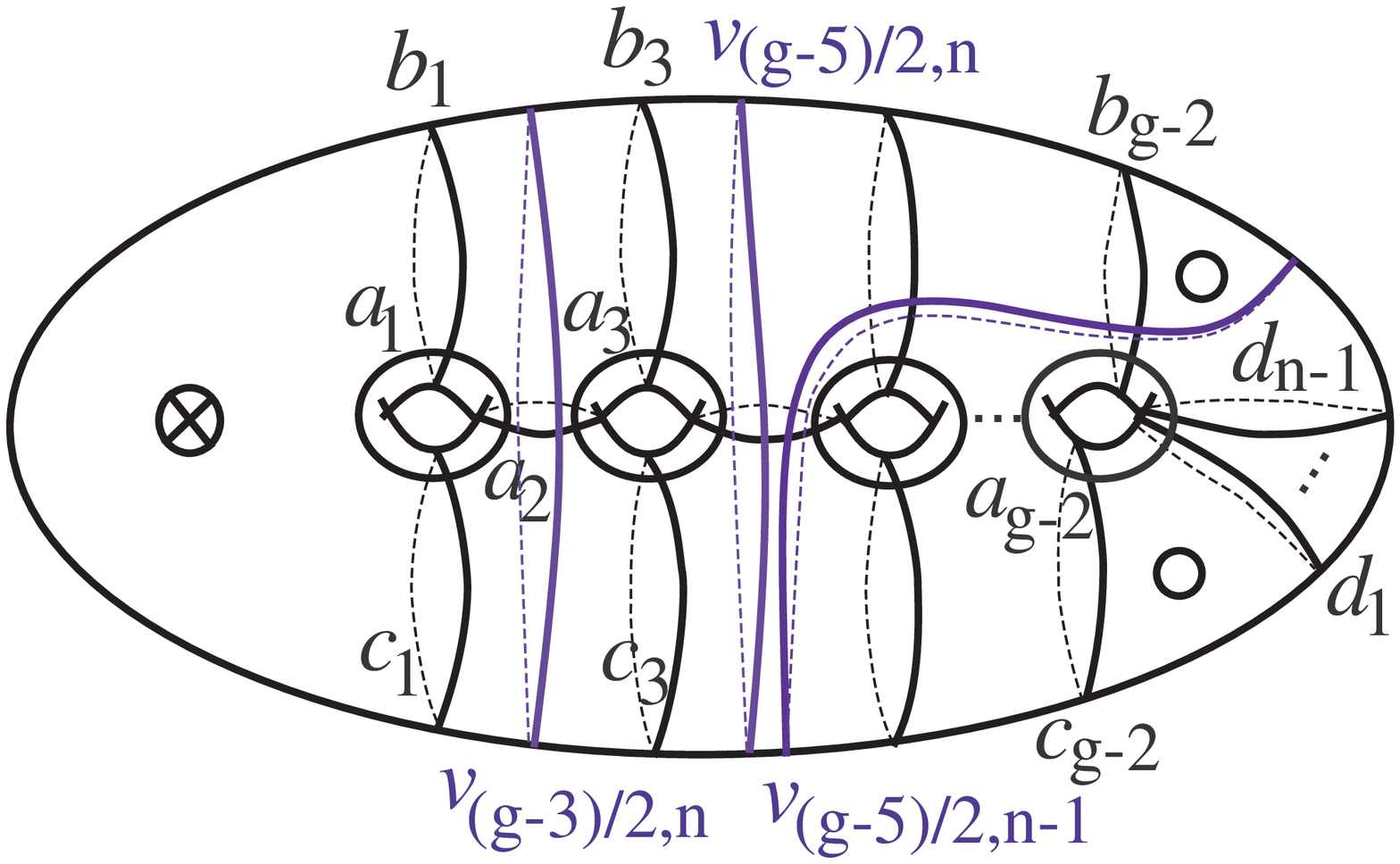}

\hspace{-1cm} (iii) \hspace{6.5cm} (iv)

\caption{ $\mathcal{B}_1$ (odd genus) } \label{fig24}
\end{center}
\end{figure}

For $g \geq 6$ and $g$ even, we let $\mathcal{B}_1 = \{r_1, r_2, \cdots, r_{n-2}, m_1, m_2, \cdots, m_{n-1}, v_{1,0}, v_{1,1}, \cdots, $
$ v_{1,n}, v_{2,0}, v_{2,1}, \cdots, v_{2,n}, \cdots, $ $ v_{(g-2)/2,0}, v_{(g-2)/2,1}, \cdots, v_{(g-2)/2,n}\}$ where the curves are as
shown in Figure \ref{fig23}. All the curves in $\mathcal{B}_1$ are separating curves such that one of the connected components is orientable,
and $\mathcal{B}_1$ has curves of every topological type that satisfies this condition.

\begin{lemma}
\label{B_1-even} If $g \geq 6$ and $g$ is even, then
$h([x]) = \lambda([x])$ $\forall \ x \in \mathcal{C} \cup \mathcal{B}_0 \cup \mathcal{B}_1$.\end{lemma}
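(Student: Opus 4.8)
The plan is to continue exactly in the style of the proof of Lemma \ref{B_0-even}, enlarging one vertex at a time the set on which $h$ and $\lambda$ are already known to agree. By Lemma \ref{B_0-even} we may start from the fact that $h([x]) = \lambda([x])$ for every $x \in \mathcal{C} \cup \mathcal{B}_0$, so it remains only to treat the three families $\{r_1,\dots,r_{n-2}\}$, $\{m_1,\dots,m_{n-1}\}$ and $\{v_{i,j}\}$ that make up $\mathcal{B}_1$.

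The mechanism is the one used repeatedly above: if a vertex $x$ of $\mathcal{T}(N)$ is characterized up to isotopy, among all vertices of $\mathcal{T}(N)$, by a finite list of conditions of the form $i(x,y)\in\{0,1,2\}$ for curves $y$ in the already-controlled set together with finitely many ``piece'' conditions (three nonseparating curves bounding a pair of pants, two nonseparating curves and a boundary component bounding a pair of pants, or two nonseparating curves bounding a twice-holed projective plane), then $h([x])=\lambda([x])$. Indeed every such condition is preserved by $\lambda$: intersection zero and nonzero by superinjectivity together with injectivity (Lemma \ref{inj}), intersection one by Lemma \ref{intone}, intersection two via Lemma \ref{int-twi} and Lemma \ref{int-twi-2} as in the proof of Lemma \ref{curves}, the two pair-of-pants conditions by Lemma \ref{piece1} and Lemma \ref{piece2-aa}, and the twice-holed projective plane condition by Lemma \ref{piece2-a}. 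Since $h$ is a homeomorphism it preserves all of them too, and since $h([y])=\lambda([y])$ for the reference curves $y$, the unique vertex $\lambda([x])$ satisfying the image conditions must coincide with $h([x])$.

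Concretely, I would first observe that $v_{1,0}$ is (after the relabelling in the proof of Lemma \ref{curves}) one of the curves $r,v\in\mathcal{C}$, hence already controlled. The remaining $v_{i,j}$, $r_i$ and $m_i$ are then produced inductively, each being cut off from $N$ together with an explicit subcollection of $\mathcal{C}\cup\mathcal{B}_0$ and of the previously treated curves of $\mathcal{B}_1$, and being pinned down as the unique nontrivial curve disjoint from that subcollection and intersecting one further curve of it nontrivially (for the $r_i$, $m_i$, exactly as $t_{3,0},t_{3,1},\dots$ were handled in Lemma \ref{B_0-even}), or by its intersection-one relations with a few of the $a_k,b_k,c_k$ together with the requirement that it bound an orientable subsurface of the prescribed genus (for the $v_{i,j}$). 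The relevant ``piece'' relations among the reference curves are exactly those covered by Lemmas \ref{piece1}, \ref{piece2-a} and \ref{piece2-aa}, so the images still cut $N$ the same way.

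The main obstacle is to verify, for each curve of $\mathcal{B}_1$, that the chosen list of relations with already-controlled curves really does determine the isotopy class uniquely: unlike in Lemmas \ref{curves} and \ref{curves-2}, we are no longer free to post-compose $\lambda$ with an involution to kill a residual reflection-type ambiguity without destroying the agreement of $h$ and $\lambda$ already established on $\mathcal{C}\cup\mathcal{B}_0$. The point is that $\mathcal{C}\cup\mathcal{B}_0$ is rich enough — it contains the curves $a_k,b_k,c_k$ spanning an orientable subsurface of genus $\tfrac{g-2}{2}$, the boundary-adjacent curves $d_j$, and the odd-genus separating curves $t_{i,j}$ of every topological type — that any homeomorphism fixing the isotopy class of each of these curves is forced (by reading off the figures) to fix the isotopy class of each $x\in\mathcal{B}_1$ as well; otherwise it would have to move a curve already controlled, a contradiction. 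Once this uniqueness is in place, the mechanism of the second paragraph yields $h([x])=\lambda([x])$ for every $x\in\mathcal{C}\cup\mathcal{B}_0\cup\mathcal{B}_1$.
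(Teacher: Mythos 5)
Your general mechanism is the right one and is in fact the paper's: starting from Lemma \ref{B_0-even}, each curve of $\mathcal{B}_1$ is to be characterized, up to isotopy, by an explicit pattern of intersections with curves on which $h$ and $\lambda$ already agree, and then superinjectivity plus injectivity force $h([x])=\lambda([x])$. But two concrete points in your execution do not work as stated. First, $v_{1,0}$ is not an already-controlled curve: the identifications $v_1=r$, $v_3=v$ used in the proof of Lemma \ref{B_0-even} refer to the auxiliary curves $v_i$ of Figure \ref{fig25} (the $\mathcal{B}_0$ configuration), whereas $v_{1,0}\in\mathcal{B}_1$ is a separating curve meeting $a_{g-4}$, $b_{g-3}$ and every $d_i$ nontrivially, so it lies in neither $\mathcal{C}$ nor $\mathcal{B}_0$. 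Second, and more seriously, your recipe for the $v_{i,j}$ invokes the condition that the curve ``bound an orientable subsurface of the prescribed genus''. Nothing established about $\lambda$ at this stage shows that this topological type is preserved (the available preservation results are Corollary \ref{nonsep}, Lemma \ref{sep1}, Lemma \ref{intone}, Lemmas \ref{piece1}, \ref{piece2-a}, \ref{piece2-aa} and the intersection-two lemmas), so this condition cannot be transferred to $\lambda([v_{i,j}])$ and cannot be part of the characterization. Relatedly, your closing uniqueness argument --- that any homeomorphism fixing the classes in $\mathcal{C}\cup\mathcal{B}_0$ must fix each class of $\mathcal{B}_1$ --- is not the statement the mechanism needs: one needs each $x\in\mathcal{B}_1$ to be the \emph{unique vertex of $\mathcal{T}(N)$} realizing a given zero/nonzero intersection pattern with the controlled curves, and a stabilizer-type statement about $Mod_N$ does not imply this, since two distinct classes with the same pattern need not be exchanged by any homeomorphism fixing the reference curves.

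The paper closes both issues by writing down, for every curve of $\mathcal{B}_1$, a purely zero/nonzero characterization: $m_1$ is the unique nontrivial class meeting $d_1$ nontrivially and disjoint from all other curves of $\mathcal{C}$; $r_1$ is the unique class meeting every $d_i$ and disjoint from everything else in $\mathcal{C}\cup\{m_1,\dots,m_{n-1}\}$; $v_{1,0}$ is the unique class meeting $a_{g-4}$, $b_{g-3}$ and all $d_i$ nontrivially and disjoint from all other curves of $\mathcal{C}\cup\{r_1\}$; $v_{1,1}$ is handled the same way using $d_1$, $r_2$, $v_{1,0}$; and so on inductively. Since these conditions involve only intersection zero versus nonzero, they are preserved by superinjectivity together with injectivity, and no appeal to intersection numbers one or two, to the pair-of-pants or projective-plane lemmas, or to topological types of separating curves is required. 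If you replace your treatment of the $v_{i,j}$ (and of $v_{1,0}$ in particular) by such explicit intersection-pattern characterizations, your argument becomes the paper's proof.
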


\begin{proof} We will give the proof when there is boundary. The closed case is similar. By Lemma \ref{B_0-even} we know that
$h([x]) = \lambda([x])$ $\forall \ x \in \mathcal{C} \cup \mathcal{B}_0$. Consider the curves in Figure \ref{fig23}. The curve $m_1$ is
the unique nontrivial curve up to isotopy that intersects $d_1$ nontrivially and is disjoint from all the other curves in $\mathcal{C}$ given in
Figure \ref{fig1b-new}. This gives us  $h([m_1]) = \lambda([m_1])$ since $\lambda$ is superinjective.
Similarly $h([m_i]) = \lambda([m_i])$ for all $i= 1, 2, \cdots, n-1$. The curve $r_1$ is the unique nontrivial curve up to isotopy that intersects
each $d_i$ for $i= 1, 2, \cdots, n-1$ and it is disjoint from all the other curves in $\mathcal{C} \cup \{m_1, m_2, \cdots, m_{n-1}\}$.
Since $h([x]) = \lambda([x])$ for all these curves and $\lambda$ is superinjective, we get $h([r_1]) = \lambda([r_1])$.
Similar arguments show $h([r_i]) = \lambda([r_i])$
for all $i= 1, 2, \cdots, n-2$. The curve $v_{1,0}$ is the unique nontrivial curve up to isotopy that intersects $a_{g-4}$, $b_{g-3}$ and all the
$d_i$, $i= 1, 2, \cdots, n-1$ nontrivially and it is disjoint from all the other curves in $\mathcal{C} \cup \{r_1\}$. Since
$h([x]) = \lambda([x])$ for all these curves, by using that $\lambda$ is superinjective we see that
$h([v_{1,0}]) = \lambda([v_{1,0}])$. The curve $v_{1,1}$ is the unique nontrivial curve up to isotopy that intersects each of $a_{g-4}$, $b_{g-3}$,
$d_i$, $i= 2, 3, \cdots, n-1$ nontrivially and it is disjoint from all the other curves in $\mathcal{C} \cup \{d_1, r_2, v_{1,0}\}$.
By using that $h([x]) = \lambda([x])$ for all these curves and $\lambda$ is superinjective, we see that
$h([v_{1,1}]) = \lambda([v_{1,1}])$. With similar arguments we can see that $h([v_{i,j}]) = \lambda([v_{i,j}])$.\end{proof}\\

For $g \geq 5$ and $g$ odd, we let $\mathcal{B}_1 = \{r_1, r_2, \cdots, r_{n-2}, m_1, m_2, \cdots, $ $m_{n-1}, v_{1,0}, v_{1,1}, \cdots,$
$ v_{1,n}, v_{2,0}, v_{2,1}, \cdots, v_{2,n}, \cdots, $ $ v_{(g-3)/2,0}, v_{(g-3)/2,1}, \cdots, v_{(g-3)/2,n}\}$ where the curves are as
shown in Figure \ref{fig24}. All the curves in $\mathcal{B}_1$ are separating curves such that one of the connected 
components is orientable,
and $\mathcal{B}_1$ has curves of every topological type that satisfies this condition.

\begin{lemma}
\label{B_1-odd} If $g \geq 5$ and $g$ is odd, then $h([x]) = \lambda([x])$ $\forall \ x \in \mathcal{C} \cup \mathcal{B}_1$.\end{lemma}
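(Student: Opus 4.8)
The plan is to follow the proof of Lemma~\ref{B_1-even} verbatim, with the even-genus configuration replaced by the odd-genus configuration of Figure~\ref{fig7-new} and, where a pair-of-pants statement is needed, Lemma~\ref{piece2-aa} replaced by its odd-genus counterpart Lemma~\ref{piece2-bb}. By Lemma~\ref{curves-2} we already have $h([x])=\lambda([x])$ for all $x\in\mathcal{C}$, so it suffices to enlarge the set of curves on which $h$ and $\lambda$ are known to agree, one curve at a time, in the order: first $m_1,\dots,m_{n-1}$, then $r_1,\dots,r_{n-2}$, then the $v_{i,j}$ of Figure~\ref{fig24}. Each time a new curve is treated it is added to the pool, so that the characterization of the next curve may refer to it.

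First I would handle the $m_i$. Each $m_i$ is, up to isotopy, the unique nontrivial two-sided curve that intersects $d_i$ nontrivially and is disjoint from every other curve of $\mathcal{C}$; this follows because cutting $N$ along $\mathcal{C}\setminus\{d_i\}$ isolates a small piece carrying $d_i$. Since $h$ is a homeomorphism, $h([m_i])$ satisfies exactly the same conditions with respect to the curves $h([x])=\lambda([x])$, $x\in\mathcal{C}$. On the other hand $\lambda([m_i])$ intersects $\lambda([d_i])$ nontrivially by superinjectivity, is disjoint (simplicially) from $\lambda([x])$ for the remaining $x\in\mathcal{C}$, and is distinct from all of them by injectivity; hence $\lambda([m_i])$ satisfies the defining conditions too, and uniqueness forces $\lambda([m_i])=h([m_i])$. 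The curves $r_i$ are disposed of identically: $r_i$ is the unique nontrivial curve intersecting every $d_j$ nontrivially and disjoint from all curves of $\mathcal{C}\cup\{m_1,\dots,m_{n-1}\}$, a set on which $h$ and $\lambda$ now agree, so the same transport-of-conditions argument applies.

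Next comes the induction on the $v_{i,j}$, ordered as in the statement. Each $v_{i,j}$ is singled out among nontrivial two-sided curves by a finite list of conditions: prescribed nonzero intersection with certain $a$'s, $b$'s and $d$'s and disjointness from all remaining curves of $\mathcal{C}$ together with the previously handled $r$'s, $m$'s and $v$'s (and, if a given picture requires it, intersection number one with some curves, preserved by Lemma~\ref{intone}, or a pair-of-pants relation, preserved by Lemma~\ref{piece1} and Lemma~\ref{piece2-bb}). All of these conditions are preserved by $\lambda$, and $h$ carries each of them to the $h$-images verbatim; since the list determines the curve uniquely up to isotopy, $h([v_{i,j}])=\lambda([v_{i,j}])$, and we enlarge the pool and continue. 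This exhausts $\mathcal{C}\cup\mathcal{B}_1$.

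The main obstacle is the purely topological bookkeeping: for every curve of $\mathcal{B}_1$ one must produce an explicit finite list of intersection and disjointness conditions relative to $\mathcal{C}$ and the already-treated curves that pins the curve down uniquely up to isotopy. This is exactly where the geometry of Figures~\ref{fig7-new} and \ref{fig24} enters, and where care is needed that the subsurface left after cutting along the relevant curves is simple enough (a union of pairs of pants, annuli, Möbius bands and one-holed Klein bottles) for the ``unique nontrivial curve'' claims to be valid; once that is checked, the transport-of-conditions argument above is automatic, exactly as in the even-genus case.
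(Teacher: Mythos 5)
Your proposal follows the same route as the paper: the paper's proof of this lemma is literally "similar to the even genus case (use Lemma \ref{curves-2} and Figure \ref{fig24})," and your plan — start from Lemma \ref{curves-2}, then pin down $m_1,\dots,m_{n-1}$, $r_1,\dots,r_{n-2}$, and the $v_{i,j}$ one at a time as the unique curves satisfying intersection/disjointness conditions relative to the already-treated pool, transported by superinjectivity, injectivity, and Lemmas \ref{intone}, \ref{piece1}, \ref{piece2-bb} — is exactly the even-genus argument of Lemma \ref{B_1-even} adapted to the odd-genus configuration. The level of detail (including the deferred topological verification of the uniqueness claims) matches the paper's, so this is correct and essentially identical in approach.
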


\begin{proof} The proof is similar to the even genus case given in Lemma \ref{B_1-even} (use Lemma \ref{curves-2} and see
Figure \ref{fig24}).\end{proof}\\

\begin{figure}
\begin{center}
\hspace{-1cm} \epsfxsize=3.in \epsfbox{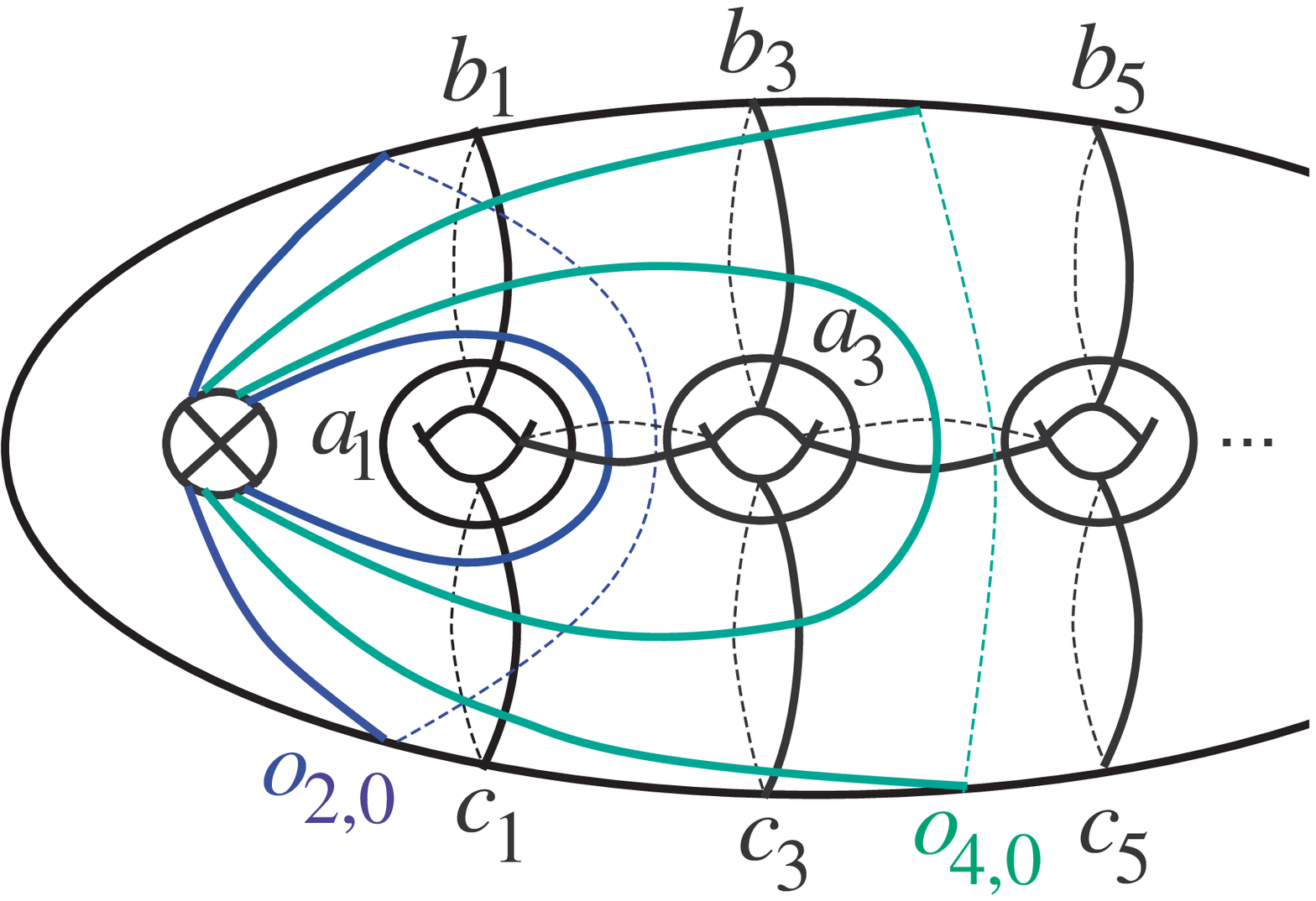}  \hspace{-0.4cm} \epsfxsize=3.in \epsfbox{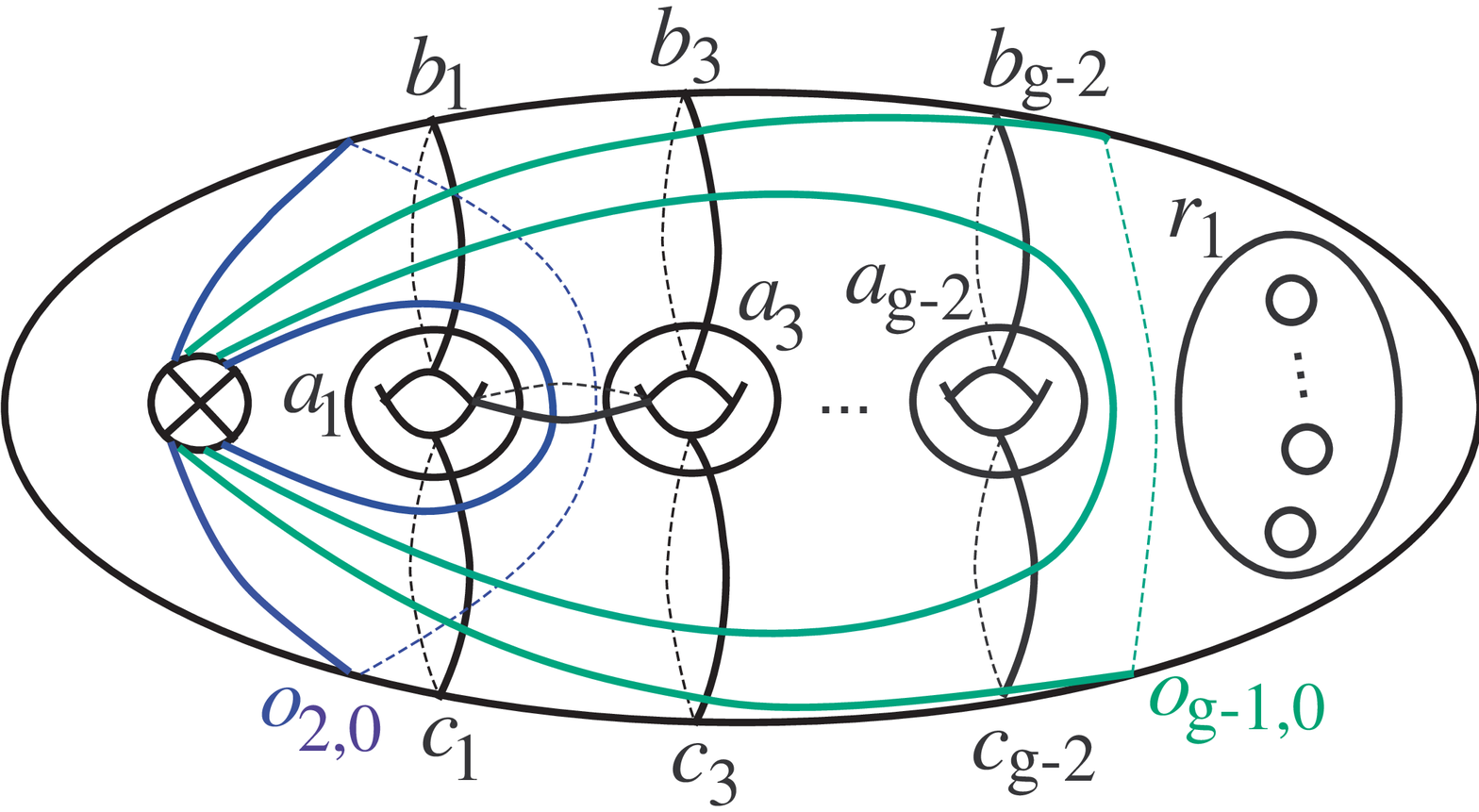}

\hspace{-1cm} (i) \hspace{6.5cm} (ii)

\hspace{-0.5cm} \epsfxsize=3.2in \epsfbox{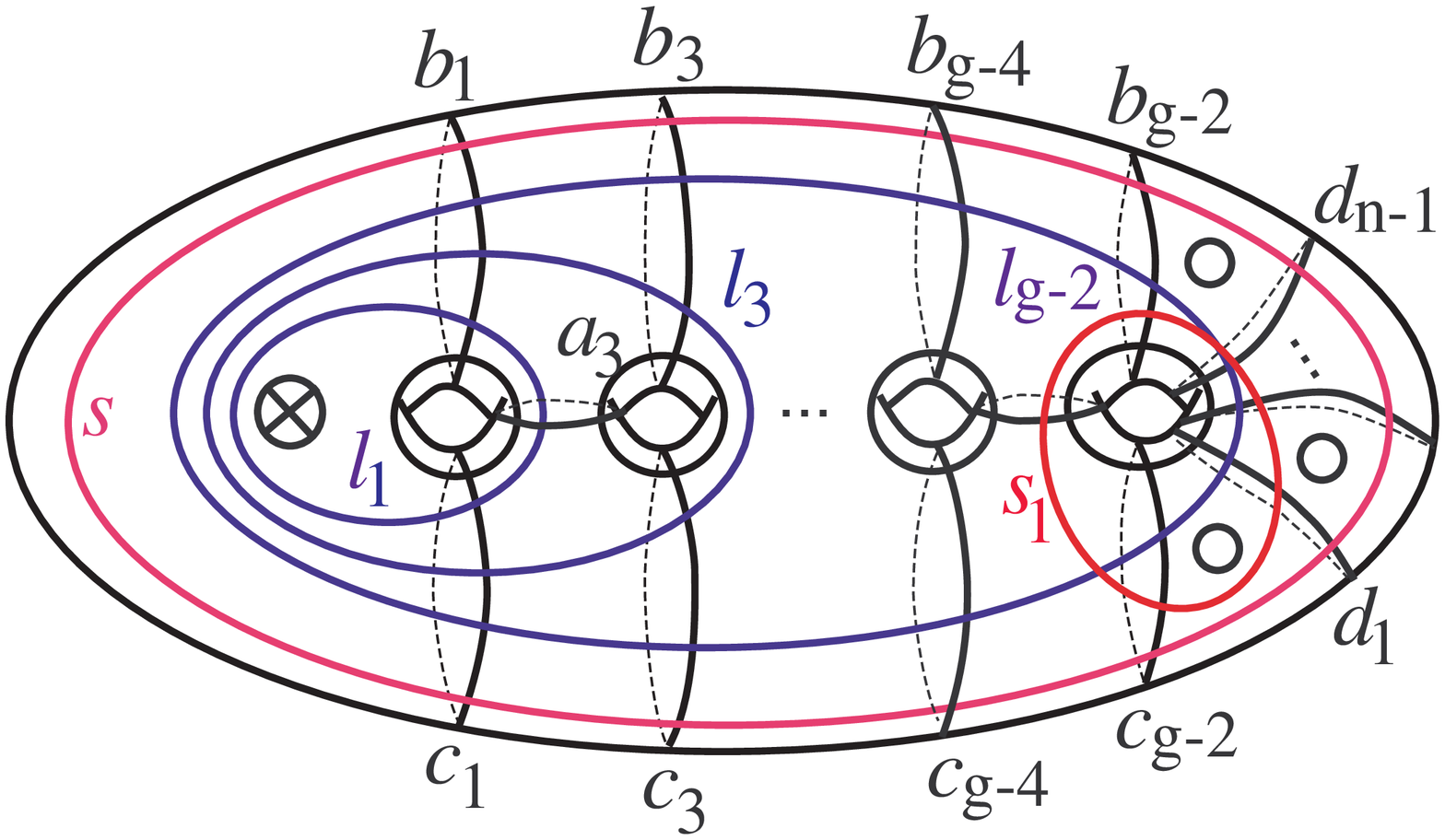}  \hspace{-0.7cm} \epsfxsize=3.2in \epsfbox{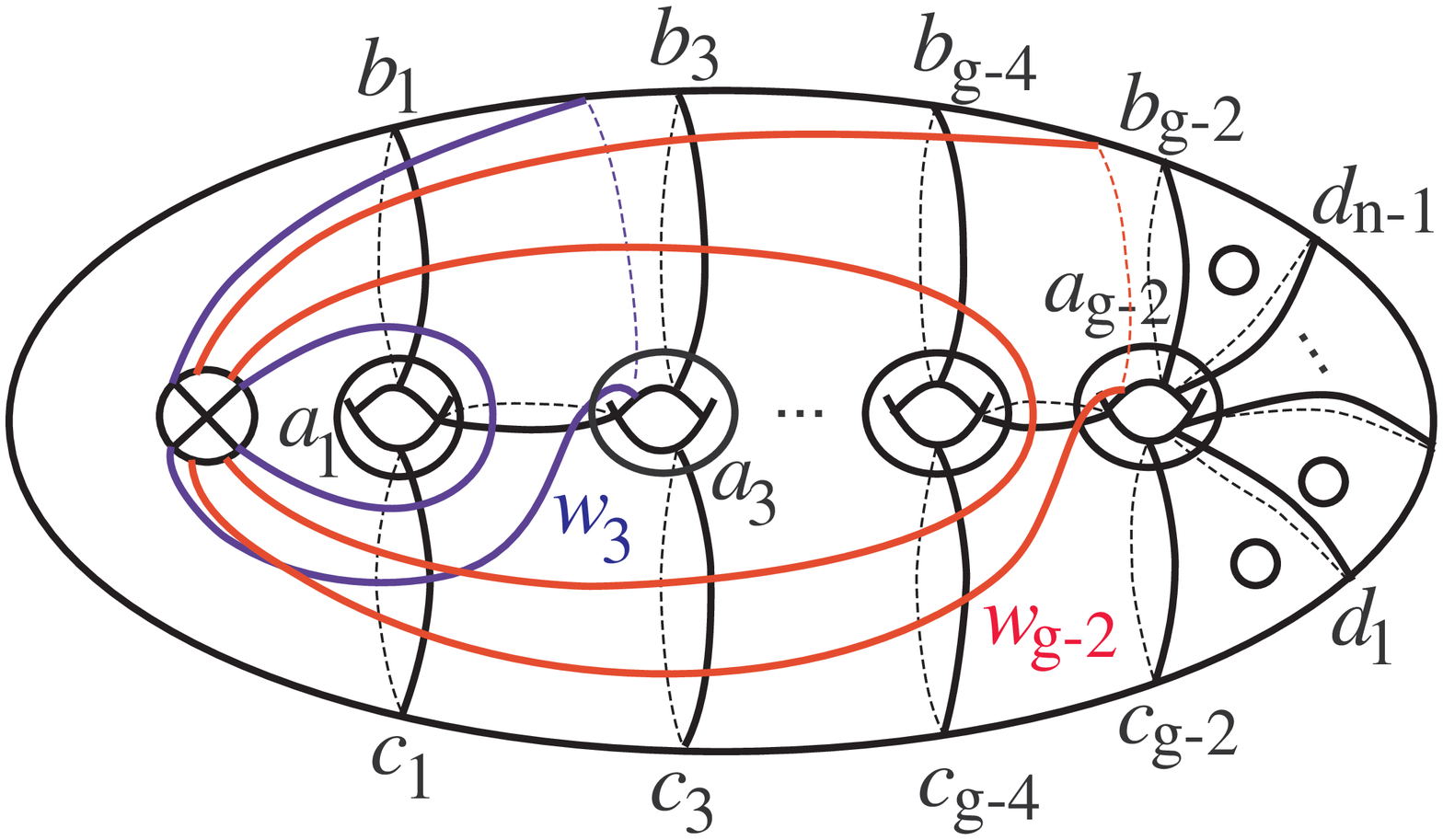}

\hspace{-1cm} (iii) \hspace{6.5cm} (iv)

\hspace{-0.5cm} \epsfxsize=3.2in \epsfbox{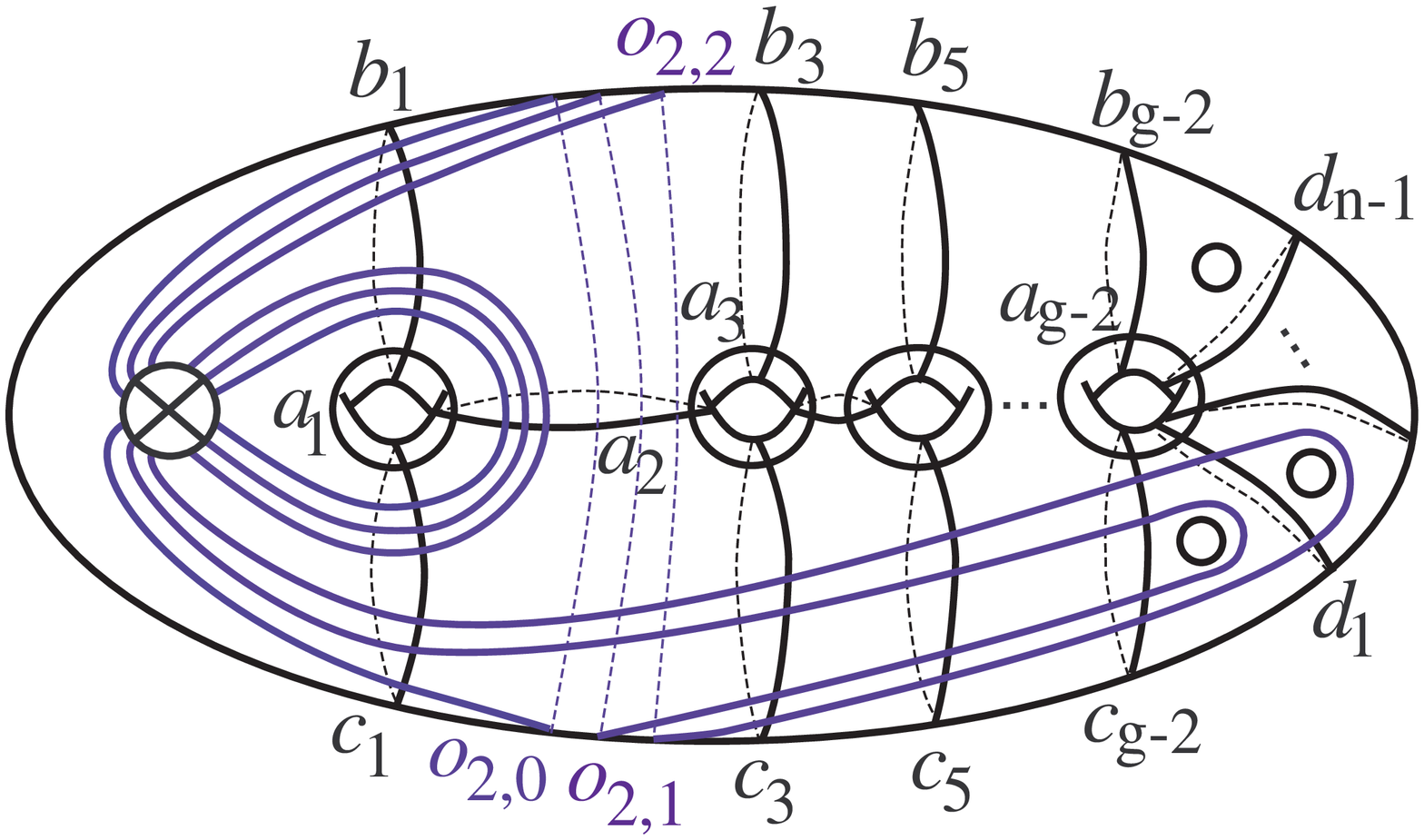}  \hspace{-0.7cm} \epsfxsize=3.2in \epsfbox{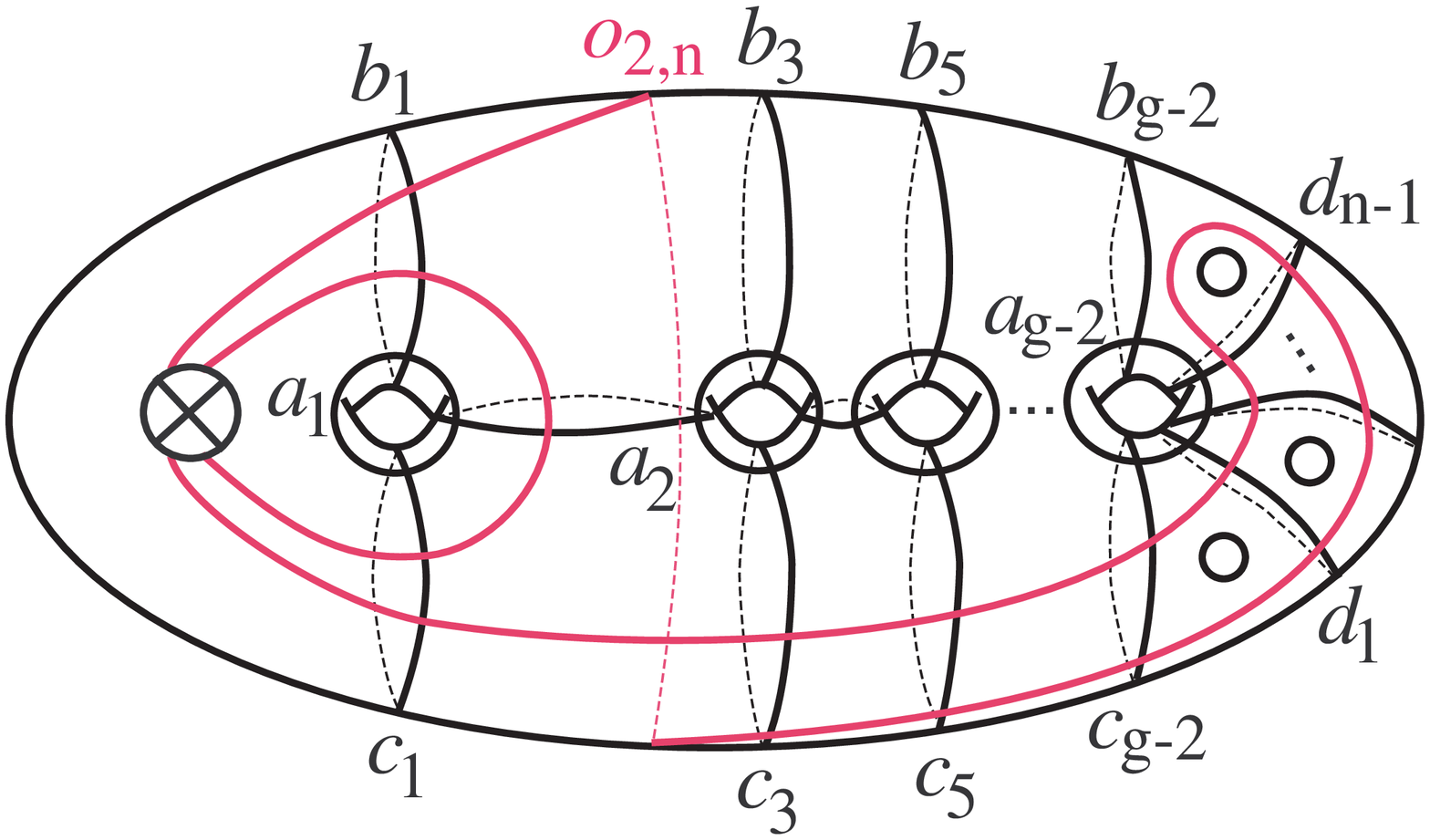}

\hspace{-1cm} (v) \hspace{6.5cm} (vi)

\hspace{-0.5cm} \epsfxsize=3.2in \epsfbox{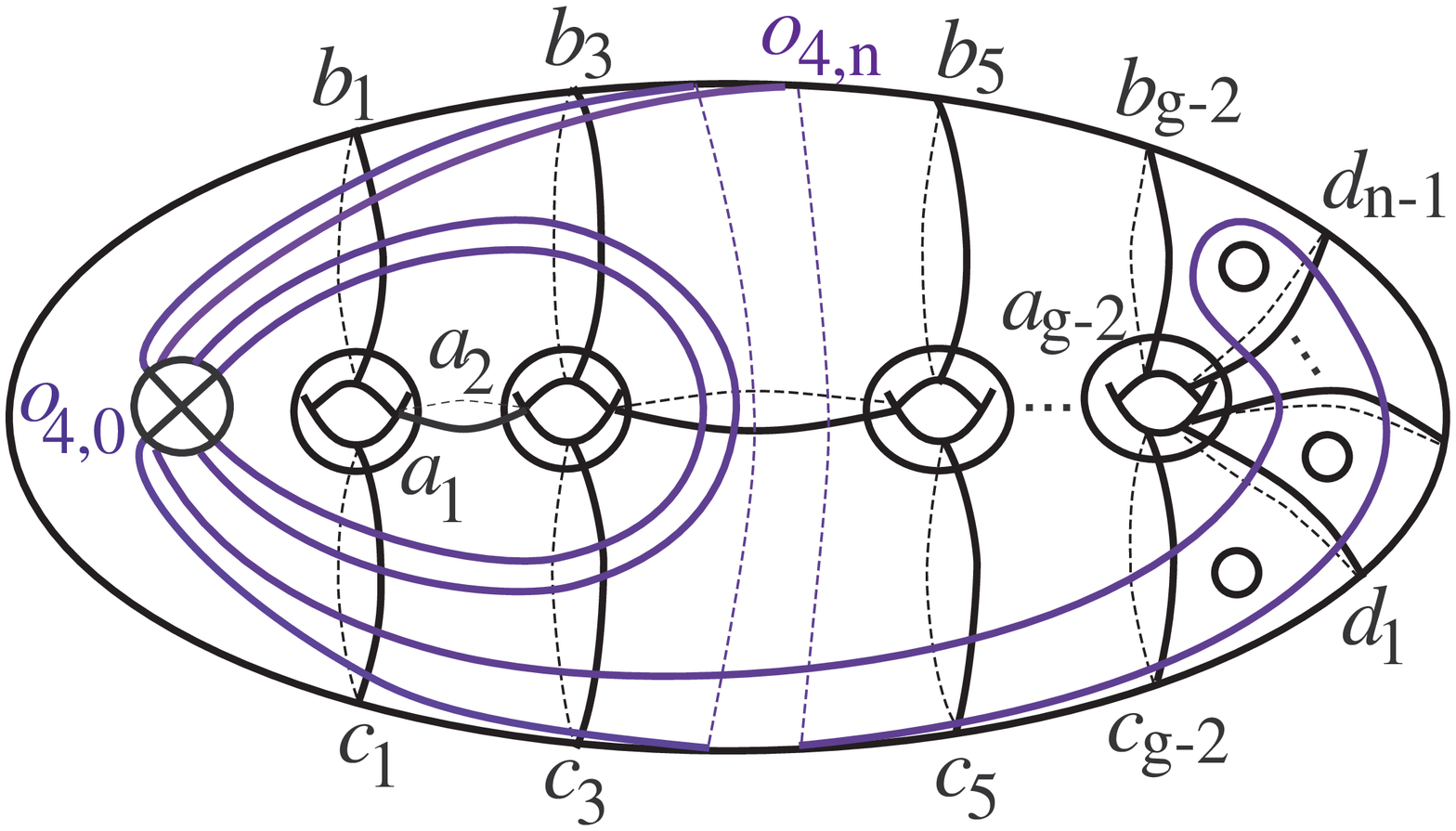} \hspace{-0.7cm} \epsfxsize=3.2in \epsfbox{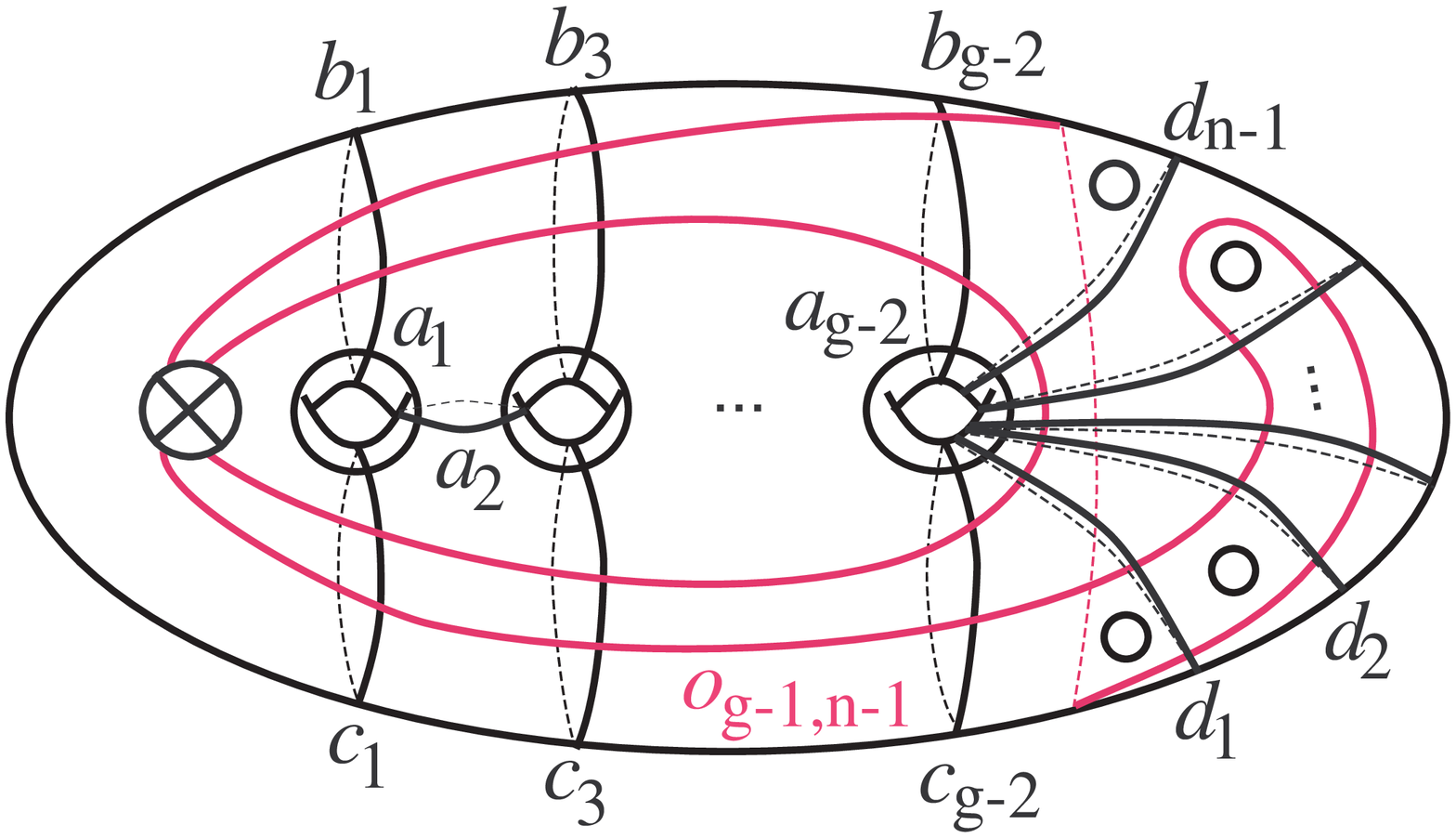}

\hspace{-1cm} (vii) \hspace{6.5cm} (viii)

\caption{$\mathcal{B}_2$, (odd genus) separating curves that have nonorientable surfaces on both sides} \label{fig26}
\end{center}
\end{figure}

\begin{figure}
\begin{center}
\hspace{-1.7cm} \epsfxsize=2.9in \epsfbox{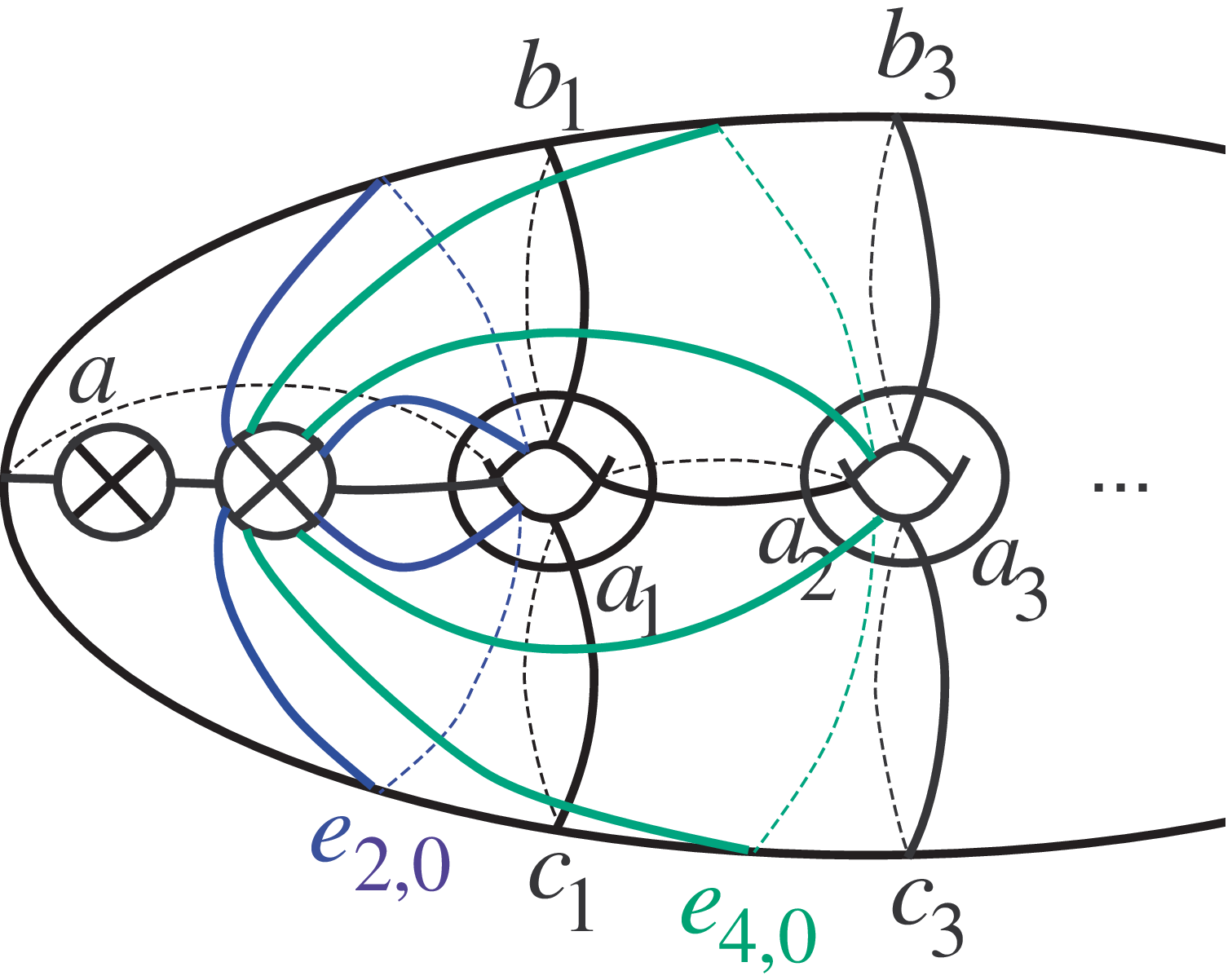}  \hspace{-0.4cm} \epsfxsize=2.8in \epsfbox{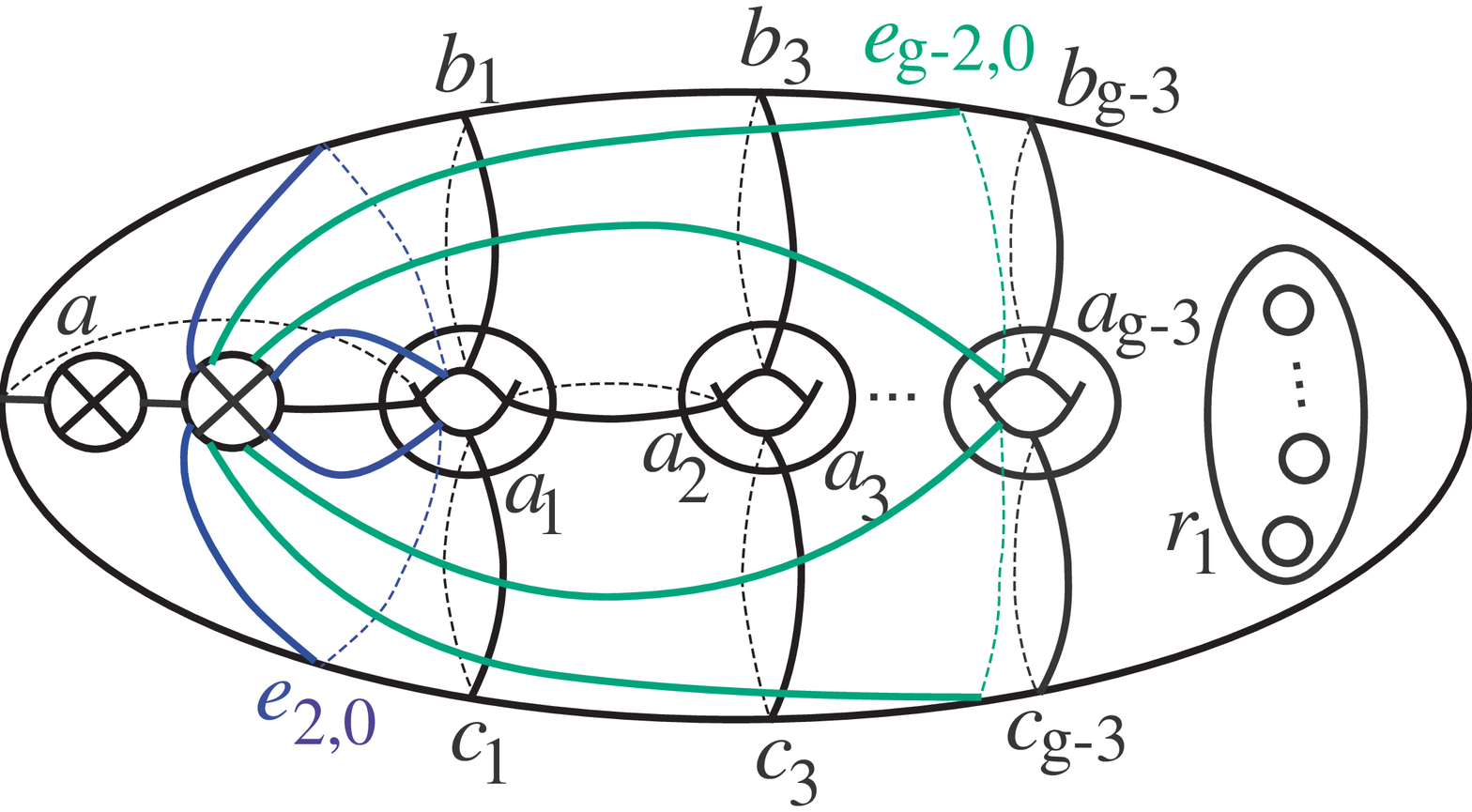}

\hspace{-1cm} (i) \hspace{6.5cm} (ii)

\hspace{-1cm} \epsfxsize=3.2in \epsfbox{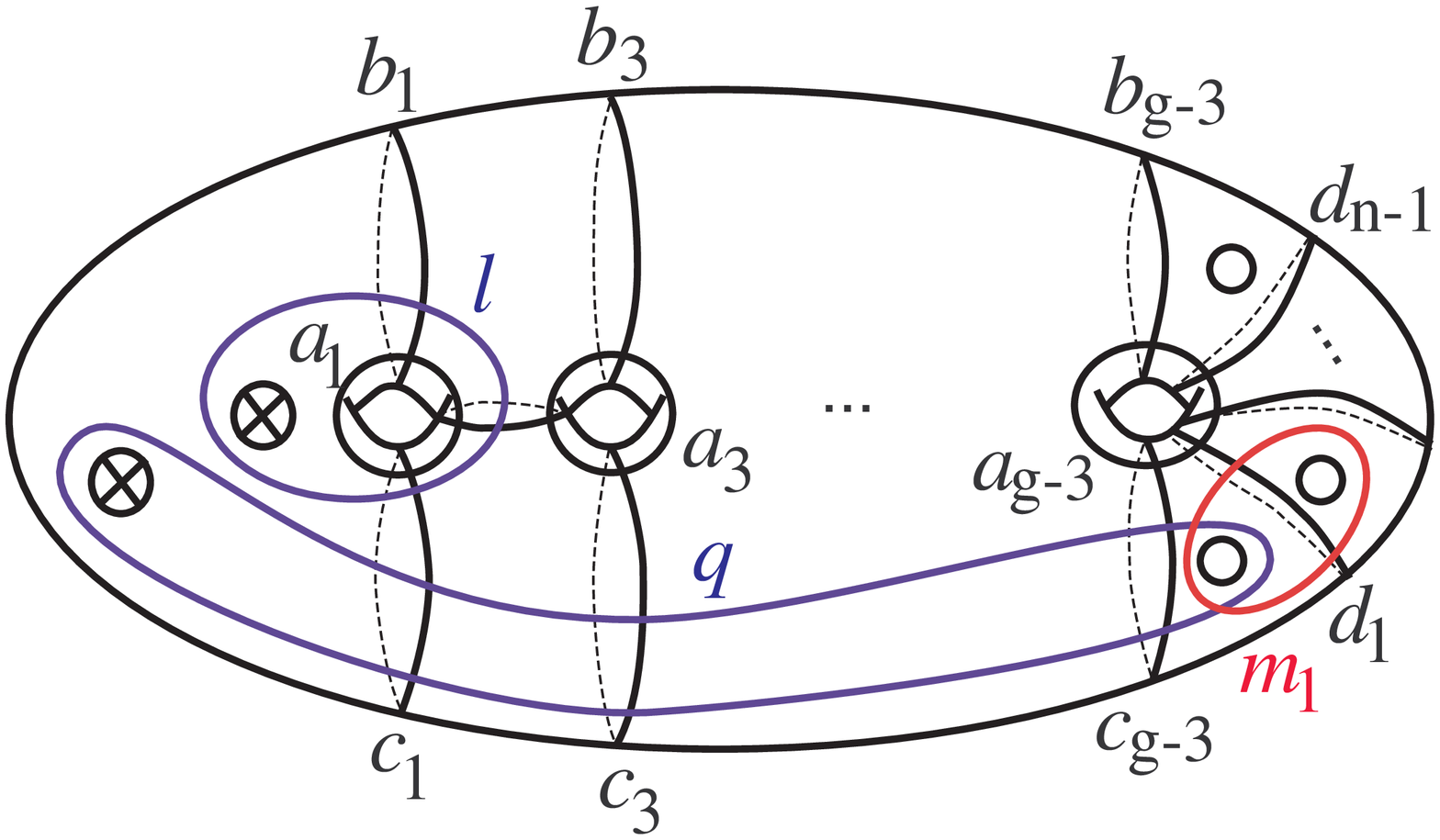}  \hspace{-0.4cm} \epsfxsize=3.2in \epsfbox{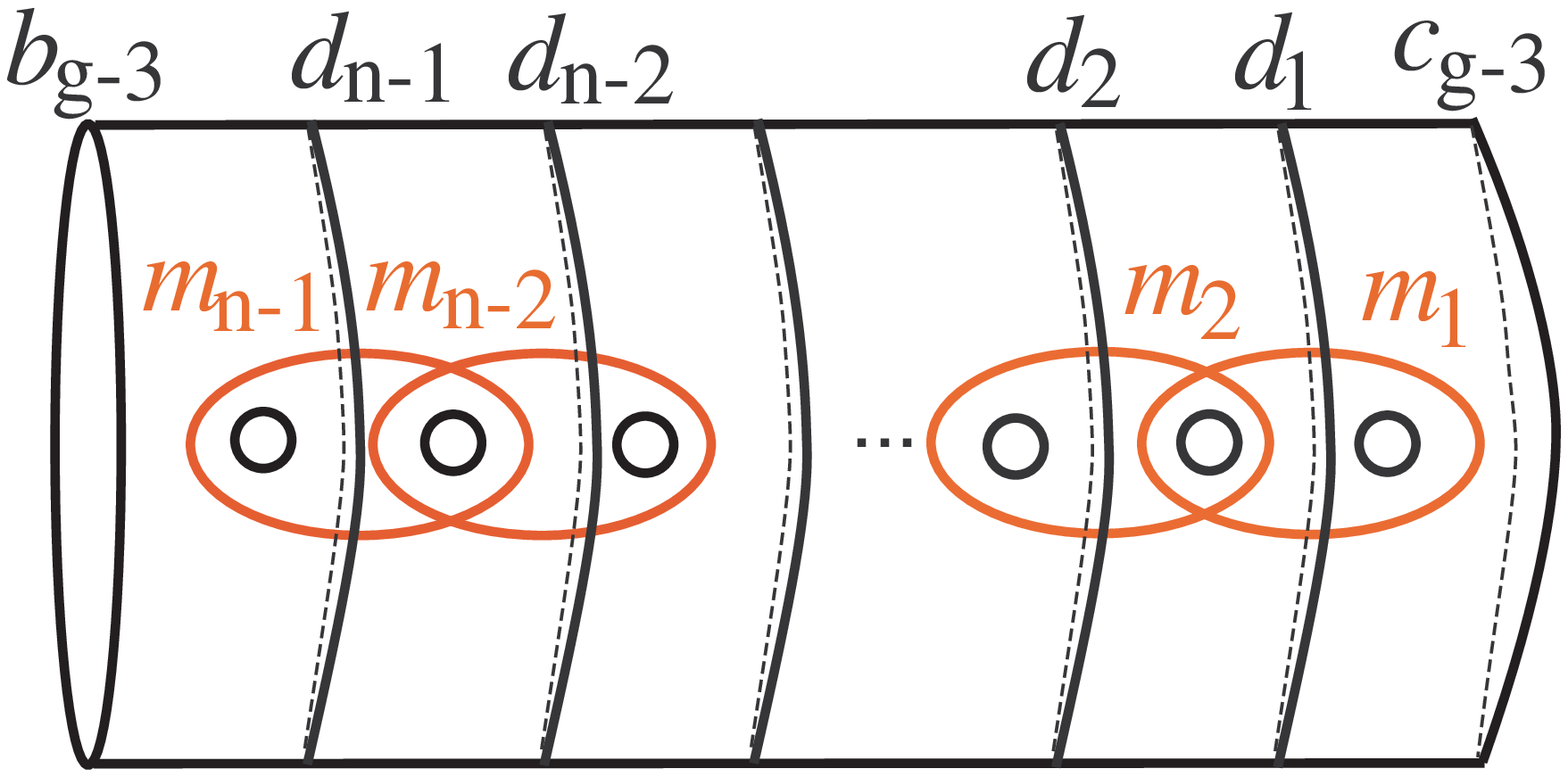}

\hspace{-1cm} (iii) \hspace{6.5cm} (iv)

\hspace{-0.5cm} \epsfxsize=3.2in \epsfbox{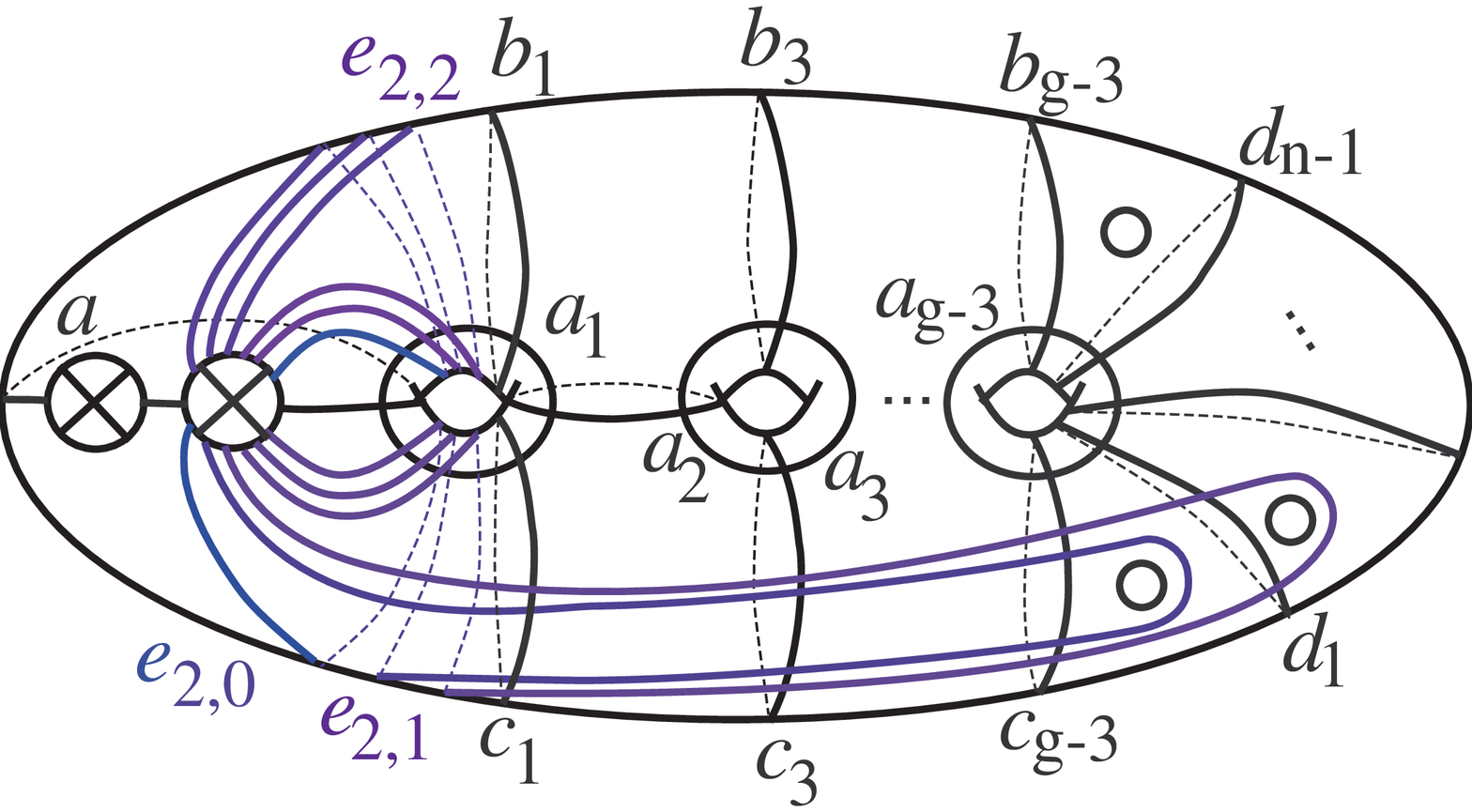}  \hspace{-0.7cm} \epsfxsize=3.2in \epsfbox{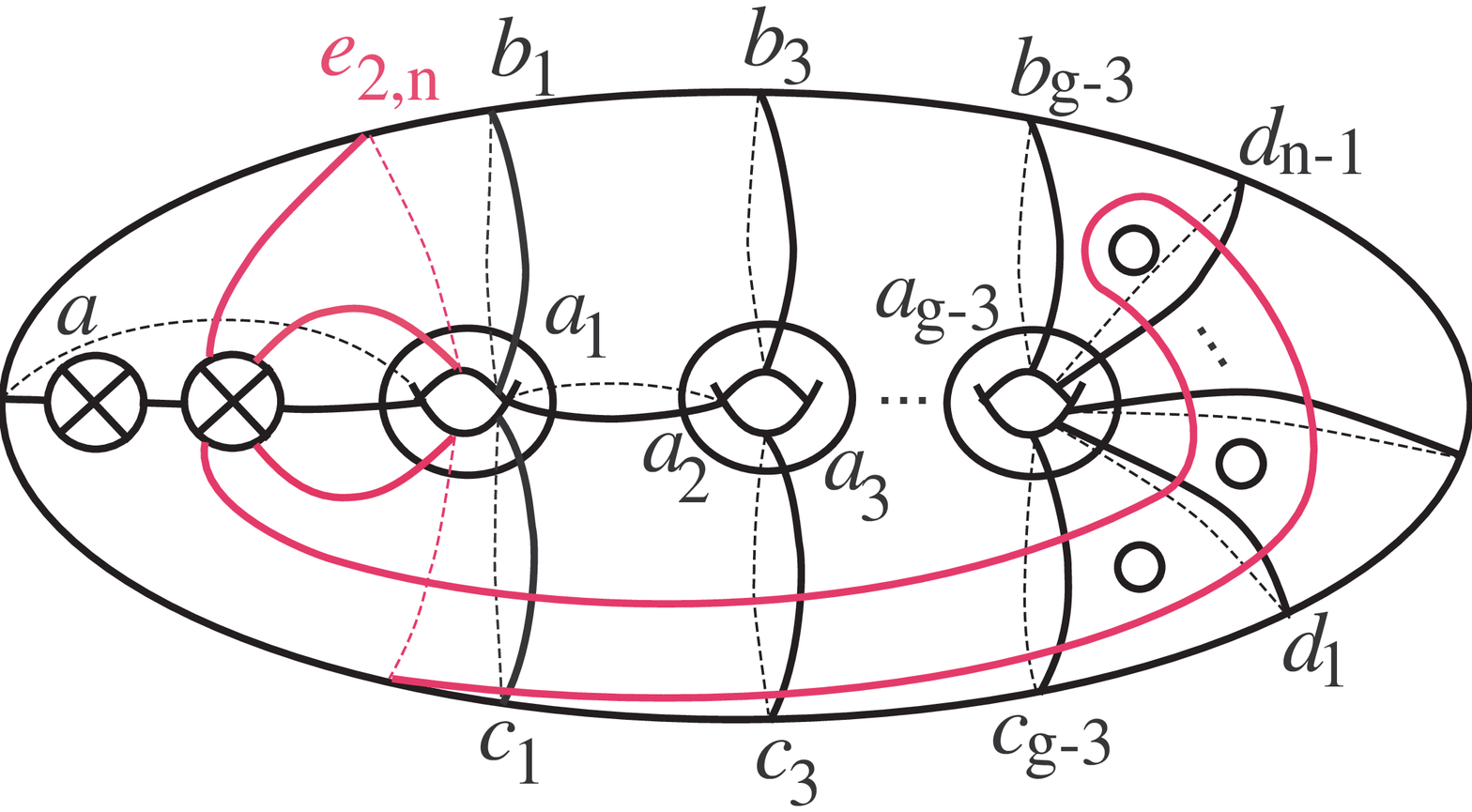}

\hspace{-1cm} (v) \hspace{6.5cm} (vi)

\hspace{-0.5cm} \epsfxsize=3.2in \epsfbox{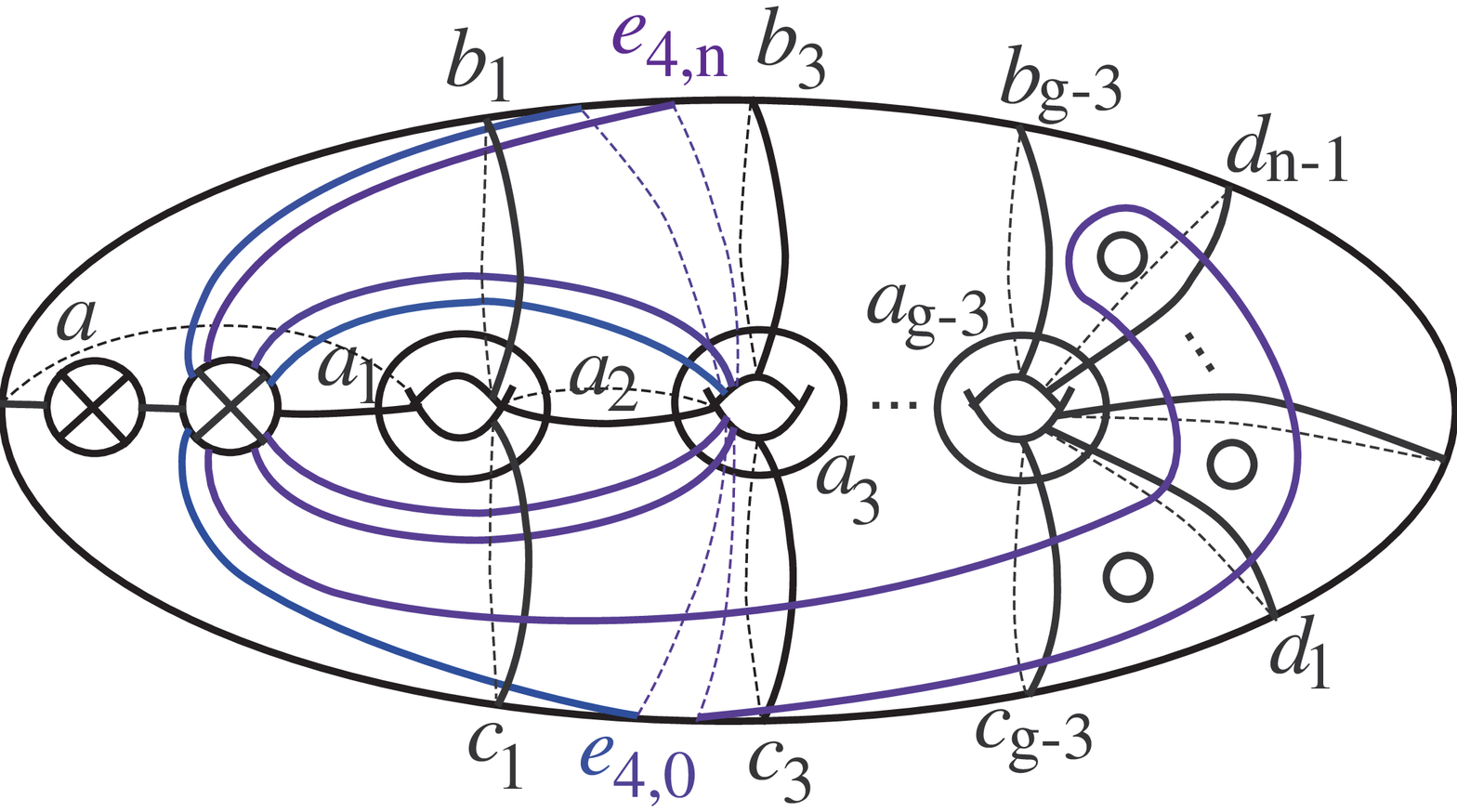} \hspace{-0.7cm} \epsfxsize=3.2in \epsfbox{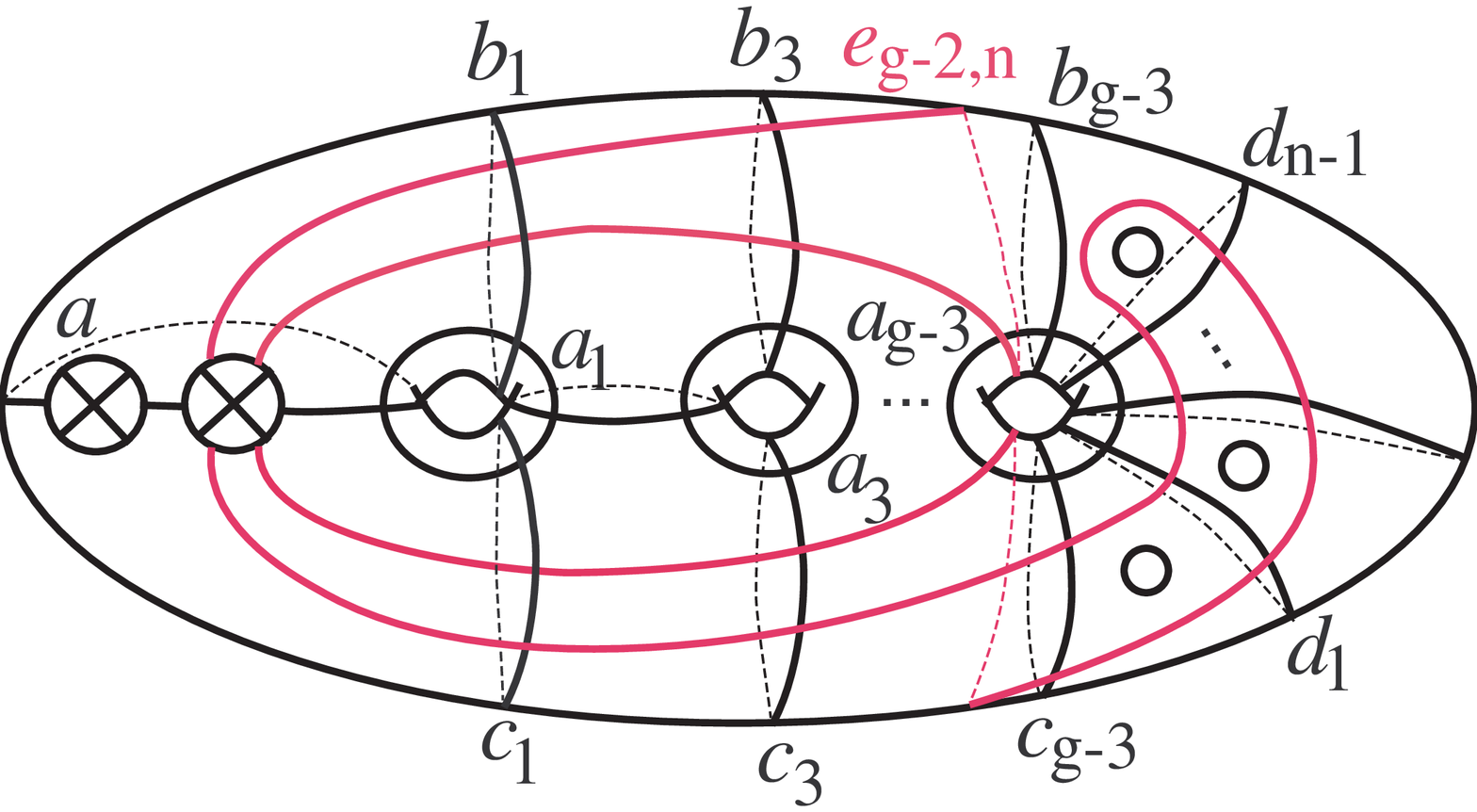}

\hspace{-0.7cm} (vii) \hspace{6.4cm} (viii)
\caption{$\mathcal{B}_2$, (even genus) separating curves that have even genus nonorientable surfaces on both sides} \label{fig27}
\end{center}
\end{figure}

For $g \geq 5$ and $g$ odd, let $\mathcal{B}_2 = \{o_{2,0}, o_{2,1}, \cdots, o_{2,n}, o_{4,0}, o_{4,1}, \cdots, o_{4,n},$
$ \cdots, o_{g-1,0}, o_{g-1,1},$ $ \cdots, o_{g-1,n-1}\}$ where the curves are as shown in Figure \ref{fig26}. All the curves
in $\mathcal{B}_2$ are separating curves such that both of the connected components are nonorientable and one side has even genus.
$\mathcal{B}_2$ has curves of every topological type that satisfies this condition.

\begin{lemma}
\label{A-odd} If $g \geq 5$ and $g$ is odd, then $h([x]) = \lambda([x])$ $\forall \ x \in \mathcal{C} \cup \mathcal{B}_1 \cup \mathcal{B}_2$.\end{lemma}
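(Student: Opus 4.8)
The plan is to bootstrap from Lemma~\ref{B_1-odd}, which already gives $h([x]) = \lambda([x])$ for all $x \in \mathcal{C} \cup \mathcal{B}_1$, and to adjoin the curves of $\mathcal{B}_2$ one family at a time, exactly in the style of Lemmas~\ref{B_0-even}, \ref{B_1-even}, \ref{B_1-odd}. As there, I would treat the case with boundary and remark that the closed case is analogous. The key principle, used repeatedly, is the following: if a $2$-sided nontrivial isotopy class $[o]$ is the \emph{unique} one that is disjoint from a prescribed finite set $X$ of classes, has nonzero geometric intersection with a prescribed finite set $Y$ of classes, and (when needed) bounds a pair of pants or separates a twice-holed projective plane together with some prescribed classes, and if $h([z]) = \lambda([z])$ for every $z \in X \cup Y$, then $h([o]) = \lambda([o])$. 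Indeed, by superinjectivity, injectivity of $\lambda$ (Lemma~\ref{inj}), and the preservation results of Section~2 (Lemmas~\ref{intone}, \ref{piece1}, \ref{piece2-bb}), $\lambda([o])$ satisfies the same list of conditions relative to $\{\lambda([z]) : z \in X \cup Y\} = \{h([z]) : z \in X \cup Y\}$; applying $h^{-1}$ and using that these defining conditions are topological, $h^{-1}(\lambda([o]))$ is a class with the same characterization as $[o]$, hence equals $[o]$.

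With this principle in hand, I would first handle the ``base'' curve of each family, $o_{2,0}, o_{4,0}, \dots, o_{g-1,0}$, by exhibiting each $o_{i,0}$ as the unique nontrivial curve disjoint from an explicit subcollection of $\mathcal{C} \cup \mathcal{B}_1$ (the curves $a_j, b_j, c_j, l_j, v_{k,l}$ lying on the ``outside'' of $o_{i,0}$) and intersecting a single designated curve $a_j$ nontrivially, reading the configuration off Figure~\ref{fig26}; the uniqueness of such a curve is the standard fact that a curve in a surface is determined by which curves of a suitable filling configuration it meets and which it misses. Then, for $j \geq 1$, I would proceed inductively within each family: cutting $N$ along a set $S_{i,j}$ consisting of $o_{i,j-1}$ together with suitably chosen already-controlled curves yields a pair of pants (or, near the base, a small piece whose nontrivial isotopy classes are explicitly enumerable), and $o_{i,j}$ is the unique nontrivial curve in that piece not isotopic to $o_{i,j-1}$ that meets a designated curve nontrivially. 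Since $\lambda$ restricted to $[S_{i,j}]$ equals $h$, cutting along $\lambda(S_{i,j}) = h(S_{i,j})$ gives the homeomorphic piece, and the same uniqueness pins down $\lambda([o_{i,j}]) = h([o_{i,j}])$. Running through all $i$ and $j$ yields $h = \lambda$ on all of $\mathcal{C} \cup \mathcal{B}_1 \cup \mathcal{B}_2$.

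I expect the only real work to be combinatorial and figure-dependent: for each $o_{i,j}$ one must write down an explicit finite witness list of curves drawn from $\mathcal{C} \cup \mathcal{B}_1 \cup \{o_{i',j'} : \text{already treated}\}$ and verify that it genuinely forces uniqueness of the isotopy class (it is easy to record a list that is satisfied by more than one class). The main obstacle is thus organizational rather than conceptual: one must order the families, and the curves within each family, so that at every step all reference curves used have already been shown to satisfy $h = \lambda$, and one must confirm that the complements of the chosen cutting systems are simple enough for the uniqueness statements to hold. Once the ordering and witness lists are fixed, each individual verification is immediate from the single principle above, so the proof is a finite iteration entirely analogous to the proof of Lemma~\ref{B_0-even}.
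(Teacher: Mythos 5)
Your proposal follows essentially the same route as the paper's proof: starting from Lemma \ref{B_1-odd}, each curve of $\mathcal{B}_2$ is identified as the unique isotopy class satisfying an explicit list of disjointness, intersection, pair-of-pants and twice-holed-projective-plane conditions relative to already-controlled curves, and these conditions are preserved by $\lambda$ via superinjectivity, injectivity and the Section 2 lemmas, so $\lambda$ and $h$ must agree on it. The only difference is bookkeeping: the paper must also adjoin and control first some auxiliary curves not lying in $\mathcal{C}\cup\mathcal{B}_1\cup\mathcal{B}_2$ (the $l_i$ for $i\geq 3$, the $w_i$, and $s_1$, $s$ of Figure \ref{fig26}), which your remark about ordering the curves so that every witness is already controlled in effect anticipates.
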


\begin{proof} We will give the proof when there is boundary by using the properties of $\lambda$ that we proved in Section 2.
The closed case is similar. By Lemma \ref{B_1-odd} we have
$h([x]) = \lambda([x])$ $\forall \ x \in \mathcal{C} \cup \mathcal{B}_1$. Consider the curves in Figure \ref{fig26}. We know the
result for $o_{2,0}$ since $o_{2,0}=e$ and $e \in \mathcal{C}$. The curve $o_{4,0}$ is the unique nontrivial curve up to isotopy that is disjoint
from all the curves in $\{a_1, a_2, a_3, a_5, b_5, c_5, o_{2,0}\}$ and intersects $b_3$ nontrivially. Since we know that $h([x]) = \lambda([x])$
for all these curves, by using that $\lambda$ is superinjective we get $h([o_{4,0}]) = \lambda([o_{4,0}])$.
Similar arguments show  $h([o_{i,0}]) = \lambda([o_{i,0}])$ for all $i = 2, 4, \cdots, g-3$. The curve $o_{g-1,0}$ appears in the configuration
when there is at least one boundary component. We consider two cases to control its image. If there is only one boundary component, then $o_{g-1,0}$
is the unique nontrivial curve up to isotopy that is disjoint from all the curves in $\{a_1, a_2, a_3, \cdots,
a_{g-2}, o_{2,0}\}$ and intersects $b_3$ nontrivially. Since we know that $h([x]) = \lambda([x])$ for all these curves, by using that $\lambda$ is
superinjective we get $h([o_{g-1,0}]) = \lambda([o_{g-1,0}])$. If there is more than one boundary
component, then $o_{g-1,0}$ is the unique nontrivial curve up to isotopy that is disjoint from all the curves in $\{a_1, a_2, a_3, \cdots,
a_{g-2}, o_{2,0}, r_1\}$ and intersects $b_3$ nontrivially. The curve $r_1 \in \mathcal{B}_1$. Since we know that $h([x]) = \lambda([x])$ for
all these curves, by using superinjectivity again we get $h([o_{g-1,0}]) = \lambda([o_{g-1,0}])$.

Since $l_1=l \in \mathcal{C}$ we have $h([l_1]) = \lambda([l_1])$. The curve $l_3$ is the unique nontrivial curve up to isotopy that is disjoint
from all the curves in $\{a_1, a_2, a_3, l_1, a_5, b_5, c_5\}$ and intersects $a_4$ once nontrivially and it bounds a pair of pants together
with $a_3$ and $l_1$. Since we know that $h([x]) = \lambda([x])$ for all these curves, by using that $\lambda$ preserves all these properties
listed as shown in Section 2, we get $h([l_3]) = \lambda([l_3])$. Similar arguments show  $h([l_i]) = \lambda([l_i])$ for all $i = 1, 3, 5, \cdots, g-2$.
The curve $w_3$ is the unique nontrivial curve up to isotopy that is disjoint from all the curves in $\{b_3, c_3, o_{2,0}\}$ and intersects $l_3$ and
$a_3$ only once and it bounds a projective plane with two boundary components together with
$b_3$. Since we know that $h([x]) = \lambda([x])$ for all these curves, and $\lambda$ preserves these properties, we get $h([w_3]) = \lambda([w_3])$.
With similar ideas it is easy to get $h([w_i]) = \lambda([w_i])$ for all $i = 3, 5, 7, \cdots, g-2$.

The curve $o_{2,1}$ is the unique nontrivial curve up to isotopy that is disjoint from all the curves in $\{a_3, a_4, a_5, \cdots, a_{g-2}, d_1, w_3, o_{2,0}\}$
and intersects $c_3$ nontrivially. Since $h([x]) = \lambda([x])$ for all these curves, and $\lambda$ preserves these properties,
we get $h([o_{2,1}]) = \lambda([o_{2,1}])$. With similar ideas and using that $o_{2,2}$ is also disjoint from $m_1$ (see Figure \ref{fig24}),
we get $h([o_{2,2}]) = \lambda([o_{2,2}])$. Similarly, by using also the curves $m_i \in \mathcal{B}_1$ in
Figure \ref{fig24}, we get $h([o_{2,i}]) = \lambda([o_{2,i}])$ for all $i = 0, 1, 2, \cdots, n$. Getting $h([o_{i,j}]) = \lambda([o_{i,j}])$
for all $o_{i,j} \in \mathcal{B}_2$ with $i \leq g-3$ is similar. The curve $s_1$ is the unique nontrivial curve up to isotopy that is disjoint from each of $a_{g-2}$ and $v_{1,1}$ and intersects each of $b_{g-2}, c_{g-2}, a_{g-3}, d_i$
for all $i$ once, bounds a pair of pants together with $a_{g-2}$ and a boundary component of $N$, and intersects $l_{g-2}$ nontrivially (see Figure \ref{fig24} for $v_{1,1}$). Since we
know that $h([x]) = \lambda([x])$ for all these curves, and $\lambda$ preserves these properties we have $h([s_1]) = \lambda([s_1])$.
The curve $s$ is the unique nontrivial curve up to isotopy that is disjoint from all the curves in $\{a_i, l_i, m_i, s_1, r_1\}$ and intersects
each $b_i, c_i, d_i$ once. Since we know that $h([x]) = \lambda([x])$ for all these curves and $\lambda$ preserves these properties,
we have $h([s]) = \lambda([s])$. The curve $o_{g-1,n-1}$ is the unique nontrivial curve up to isotopy that is disjoint from all the curves in
$\{a_1, a_2, \cdots, a_{g-2}, s, o_{g-1,0}, o_{2,1}, m_1, m_2, \cdots, m_{n-2}\}$ and intersects $b_1$ and $m_{n-1}$ nontrivially. Since we know
that $h([x]) = \lambda([x])$ for all these curves and $\lambda$ is superinjective, we have
$h([o_{g-1,n-1}]) = \lambda([o_{g-1,n-1}])$. With similar arguments, we get $h([o_{g-1,j}]) = \lambda([o_{g-1,j}])$ for all $j$.\end{proof}\\

For $g \geq 6$ and $g$ is even, let $\mathcal{B}_2 = \{e_{2,0}, e_{2,1}, \cdots, e_{2,n}, e_{4,0}, e_{4,1}, \cdots, e_{4,n}, \cdots, e_{g-2,0},$
$e_{g-2,1},$ $ \cdots, e_{g-2,n-1}\}$ where the curves are as shown in Figure \ref{fig27}. All the curves in $\mathcal{B}_2$ are separating
curves such that both of the connected components are nonorientable and one side has even genus. $\mathcal{B}_2$ has curves of every topological type that satisfies this condition.

\begin{lemma}
\label{A-even} If $g \geq 6$ and $g$ is even, then $h([x]) = \lambda([x])$
$\forall \ x \in \mathcal{C} \cup \mathcal{B}_0 \cup \mathcal{B}_1 \cup \mathcal{B}_2$.\end{lemma}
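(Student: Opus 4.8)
The plan is to follow, almost verbatim, the scheme of Lemma \ref{A-odd}, now using the even--genus pictures of Figure \ref{fig27}. By Lemma \ref{B_1-even} we already know that $h([x]) = \lambda([x])$ for every $x \in \mathcal{C} \cup \mathcal{B}_0 \cup \mathcal{B}_1$, so only the curves of $\mathcal{B}_2 = \{e_{2,0}, e_{2,1}, \dots, e_{g-2,n-1}\}$ remain, and these will be handled one family at a time in the order $e_{2,j}, e_{4,j}, \dots, e_{g-2,j}$, and within a family in the order $j = 0, 1, 2, \dots$. The base case is immediate, since (up to the labelling of Figure \ref{fig27}) $e_{2,0}$ is the curve $e \in \mathcal{C}$, whose image is already controlled by Lemma \ref{curves}. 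For every other $e_{i,j}$ I would write down a finite set $F$ of curves, all lying in $\mathcal{C} \cup \mathcal{B}_0 \cup \mathcal{B}_1$ together with the curves of $\mathcal{B}_2$ already treated (and, when convenient, the same $w$-- and $s$--type auxiliary curves used in Lemma \ref{A-odd}, whose images are pinned down first by exactly this method), together with a list of conditions --- disjointness, nonzero geometric intersection, geometric intersection exactly one, bounding a pair of pants with two prescribed curves of $F$, being a boundary component of a twice--holed projective plane together with a prescribed curve, bounding a pair of pants whose third boundary component is a boundary component of $N$ --- that single out $e_{i,j}$ as the unique isotopy class satisfying all of them. For instance $e_{4,0}$ is the unique nontrivial class disjoint from $a_1, a_2, a_3, a_5, b_5, c_5, e_{2,0}$ and meeting $b_3$ nontrivially, and the remaining $e_{i,0}$ are obtained inductively the same way; the ``boundary'' members $e_{i,1}, e_{i,2}, \dots$ are pinned down using in addition the curves $d_1, \dots, d_{n-1}$ and the curves $m_j, r_j \in \mathcal{B}_1$; and the family $e_{g-2,j}$, which occurs only when $n \ge 1$, needs the $s$--type curves together with the $r_j, m_j$.

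The reason the argument closes is that each condition in the list above is preserved by $\lambda$: disjointness and nonzero intersection by injectivity (Lemma \ref{inj}) and superinjectivity, geometric intersection one by Lemma \ref{intone}, the pair--of--pants configurations of nonseparating curves by Lemma \ref{piece1} and Lemma \ref{piece2-aa}, and the twice--holed--projective--plane configurations by Lemma \ref{piece2-a} (together with Corollary \ref{nonsep} whenever one needs a curve to remain nonseparating). Hence, once $h([x]) = \lambda([x])$ is known for every $x \in F$, the class $h([e_{i,j}])$ satisfies the very same list of conditions relative to $\{h([x]) : x \in F\} = \{\lambda([x]) : x \in F\}$; by the uniqueness built into the list, $h([e_{i,j}])$ must equal $\lambda([e_{i,j}])$. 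Iterating over all of $\mathcal{B}_2$ gives the lemma, and the closed case $n = 0$ follows by deleting the $d_j$, $m_j$ and $r_j$ from every list.

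The step I expect to be the main obstacle is the purely combinatorial verification that, for the separating curves both of whose complementary pieces are nonorientable of even genus, the list of $\lambda$--invariant conditions I propose really does determine the curve up to isotopy --- i.e. that enough pairs of pants and twice--holed projective planes can be arranged around each $e_{i,j}$ so that its topological type, and not merely its disjointness pattern, is forced. This is exactly the bookkeeping carried out in the proof of Lemma \ref{A-odd}, and the configurations drawn in Figure \ref{fig27} are chosen precisely so that the same bookkeeping goes through in the even--genus case; no idea beyond careful inspection of the figures is needed.
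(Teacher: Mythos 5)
Your plan is essentially the paper's own proof: the paper handles $\mathcal{B}_2$ exactly by this induction, starting from $e_{2,0}=e\in\mathcal{C}$, characterizing each subsequent $e_{i,j}$ as the unique nontrivial isotopy class satisfying a short list of $\lambda$-preserved conditions relative to curves already controlled (using an auxiliary curve $q$ and the $m_i\in\mathcal{B}_1$ for the $j\ge 1$ members), and invoking Lemma \ref{B_1-even} for the base set. The only difference is cosmetic: the paper's lists are even leaner than yours, using just disjointness and nonzero intersection (superinjectivity plus injectivity) rather than pair-of-pants or twice-holed-projective-plane conditions, and its defining sets for the $e_{i,0}$ are adapted to Figure \ref{fig27} rather than transplanted from the odd-genus configuration, but the uniqueness bookkeeping you defer to the figures is exactly what the paper carries out.
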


\begin{proof} We will give the proof when there is boundary by using the properties of $\lambda$ that we proved in Section 2.
The closed case is similar. By Lemma \ref{B_1-even} we have
$h([x]) = \lambda([x])$ $\forall \ x \in \mathcal{C} \cup \mathcal{B}_0 \cup \mathcal{B}_1$. Consider the curves in Figure \ref{fig27}. We know the result
for $e_{2,0}$ since $e_{2,0}=e$ and $e \in \mathcal{C}$. The curve $e_{4,0}$ is the unique nontrivial curve up to isotopy that is disjoint from all
the curves in $\{a_1, a_2, b_3, c_3, e_{2,0}\}$ and intersects $b_1$ nontrivially. Since we know that $h([x]) = \lambda([x])$ for all
these curves, by using that $\lambda$ is superinjective we get $h([e_{4,0}]) = \lambda([e_{4,0}])$. The curve $o_{6,0}$
is the unique nontrivial curve up to isotopy that is disjoint from all the curves in $\{a_3, a_4, b_5, c_5, e_{4,0}\}$ and intersects $b_3$ nontrivially.
Since we know that $h([x]) = \lambda([x])$ for all these curves, by using that $\lambda$ is superinjective we get
$h([e_{6,0}]) = \lambda([e_{6,0}])$. Similar arguments show  $h([e_{i,0}]) = \lambda([e_{i,0}])$ for all $i = 2, 4, 6, \cdots, g-2$.

The curve $q$ is the unique nontrivial curve up to isotopy that is disjoint from all the curves in $\{l, b_1, a_1, a_2, \cdots, a_{g-3}, d_1\}$
and intersects $c_1$ nontrivially. Since we know that $h([x]) = \lambda([x])$ for all these curves, by using that $\lambda$ is superinjective
we get $h([q]) = \lambda([q])$. The curve $e_{2,1}$ is the unique nontrivial curve up to isotopy that is disjoint from all
the curves in $\{q, e_{2,0}, b_1, a_2, a_3, \cdots, a_{g-3}, d_1\}$ and intersects $c_1$ nontrivially. Since we know that $h([x]) = \lambda([x])$
for all these curves, by using that $\lambda$ is superinjective we get $h([e_{2,1}]) = \lambda([e_{2,1}])$. The curve
$e_{2,2}$ is the unique nontrivial curve up to isotopy that is disjoint from all the curves in $\{q, m_1, e_{2,1}, b_1, a_2, a_3, \cdots, a_{g-3}, d_2\}$
and intersects $c_1$ nontrivially. Since we know that $h([x]) = \lambda([x])$ for all these curves, by using that $\lambda$ is superinjective
we get $h([e_{2,2}]) = \lambda([e_{2,2}])$. We note that $m_1 \in \mathcal{B}_1$. By using similar arguments and the other
curves $m_i \in \mathcal{B}_1$ we get $h([e_{2,i}]) = \lambda([e_{2,i}])$ for all $i= 1, 2, \cdots, n$.
The curve $e_{4,1}$ is the unique nontrivial curve up to isotopy that is disjoint from all the curves in
$\{q, e_{4,0}, b_3, a_1, a_2, a_4, a_5, \cdots, a_{g-3}, d_1\}$ and intersects $c_3$ nontrivially. Since we know that $h([x]) = \lambda([x])$
for all these curves, by using that $\lambda$ is superinjective we get $h([e_{4,1}]) = \lambda([e_{4,1}])$. The curve
$e_{4,2}$ is the unique nontrivial curve up to isotopy that is disjoint from all the curves in $\{q, m_1, e_{4,1}, b_3, a_4, a_5, \cdots, a_{g-3}, d_2\}$
and intersects $c_3$ nontrivially. Since we know that $h([x]) = \lambda([x])$ for all these curves, by using that $\lambda$ is superinjective
we get $h([e_{4,2}]) = \lambda([e_{4,2}])$. By using curves $m_i \in \mathcal{B}_1$ we get $h([e_{4,i}]) = \lambda([e_{4,i}])$
for all $i= 1, 2, \cdots, n$. Similarly, we get $h([e_{i,j}]) = \lambda([e_{i,j}])$ for all $e_{i,j} \in \mathcal{B}_2$.\end{proof}\\

\begin{figure}
\begin{center}
\hspace{.3cm} \epsfxsize=3.1in \epsfbox{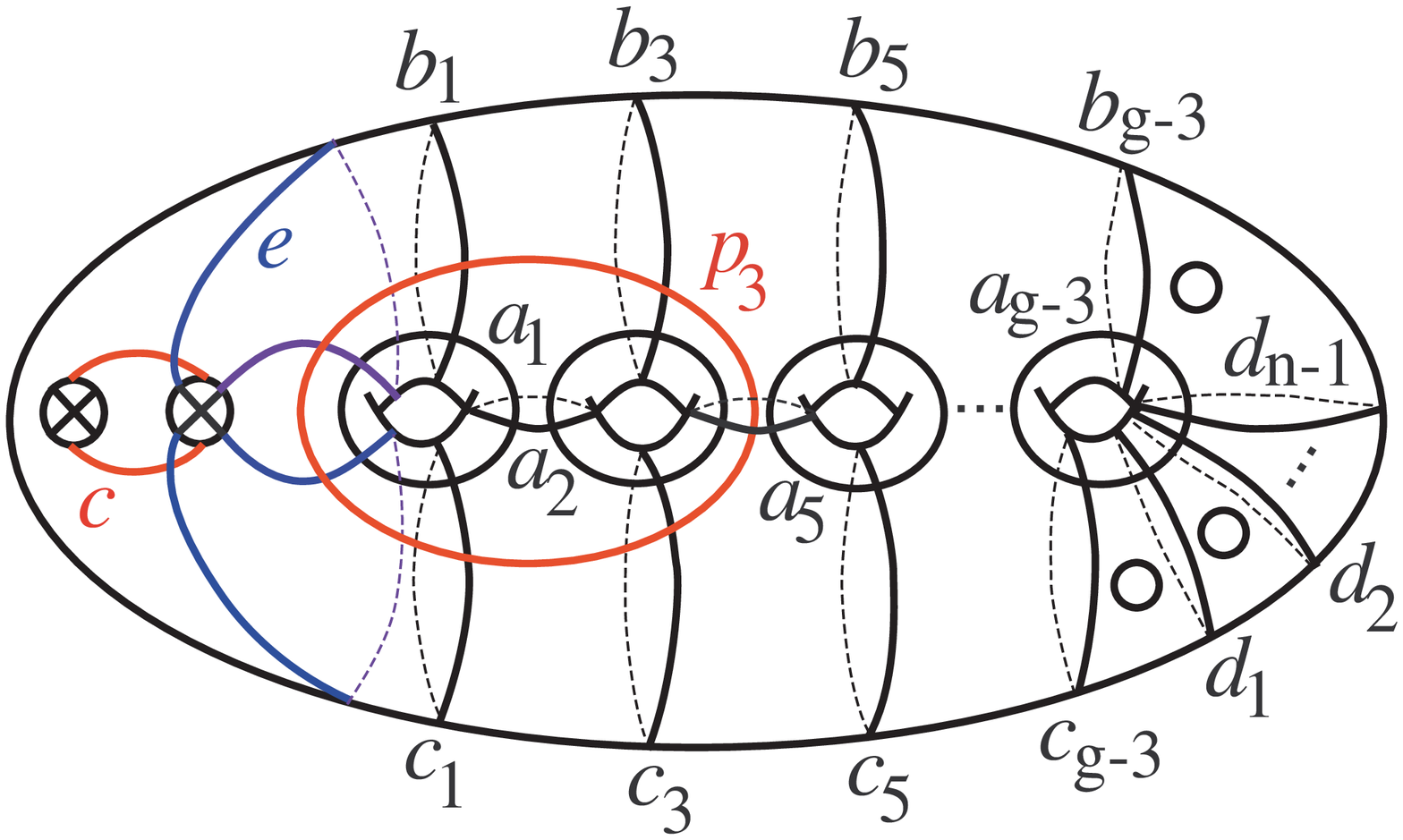}  \hspace{-1cm} \epsfxsize=3.1in \epsfbox{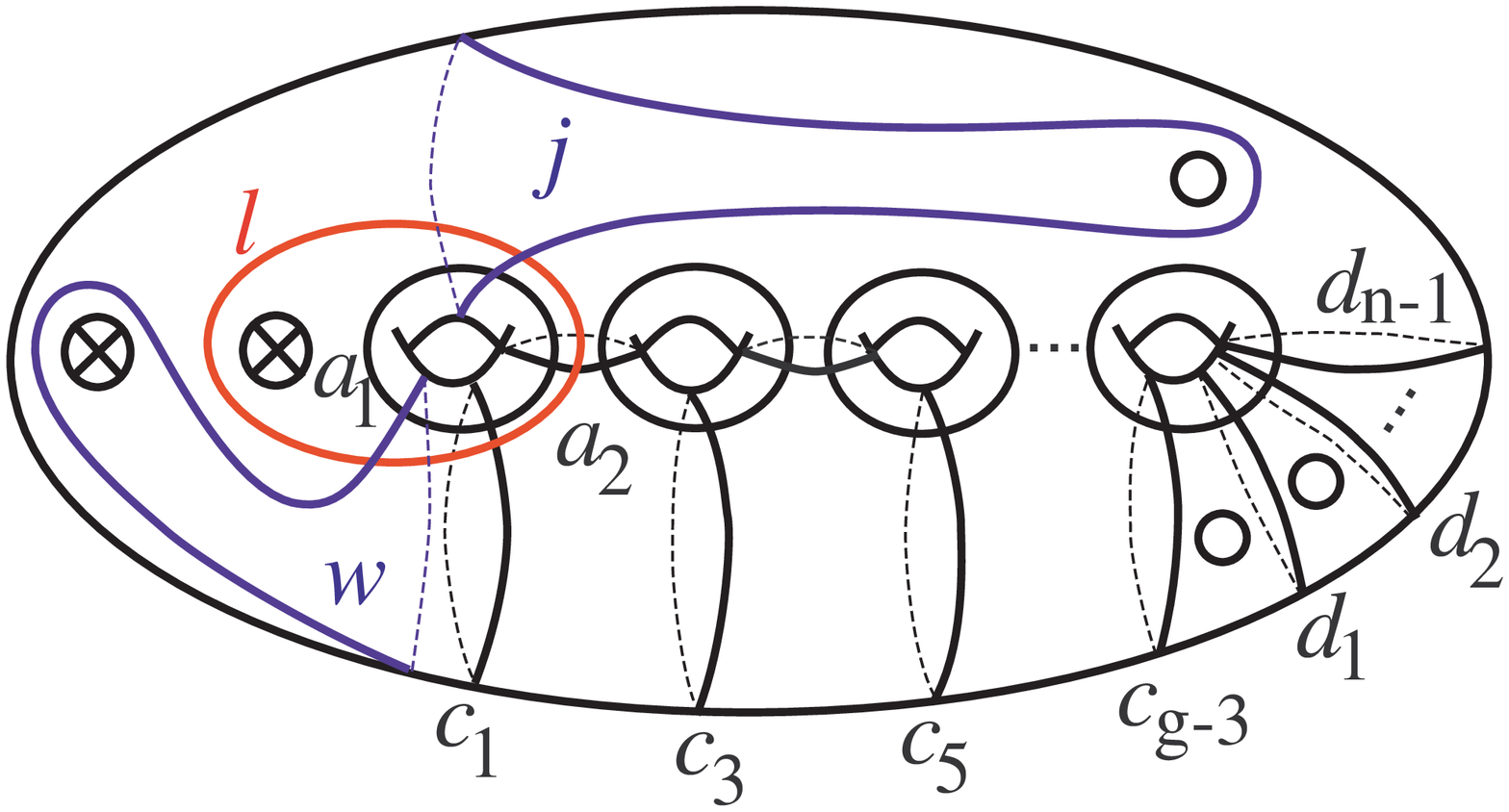}

\hspace{-1cm} (i) \hspace{6.5cm} (ii)
 
\caption{Curves } \label{fig-l1a}
\end{center}
\end{figure}

We will say that a subset $A \subset \mathcal{T}(N)$ has trivial stabilizer if it satisfies the following condition:
If $f \in Mod_N$ is such that $f([x]) = [x]$ for every vertex $x \in A$, then $f$ is identity.
When $g$ is even, $g \ge 6$, and $n \ge 0$ we consider the curves represented in Figure \ref{fig-l1a} and we set

\begin{gather*}
\CC_1 = \{ a_1, a_2, \cdots, a_{g-3}, b_1, b_3, \cdots, b_{g-3}, c_1, c_3, \cdots, c_{g-3}, d_1, d_2, \cdots, d_{n-1}, p_3, e\}\,,\\
\CC_2 = \{ a_1, a_2, \cdots, a_{g-3}, b_1, b_3 ,\cdots, b_{g-3}, c_1, c_3, \cdots, c_{g-3}, d_1, d_2, \cdots, d_{n-1}, p_3, l\}\,,\\
\CC_3 = \{ a_1, a_2, \cdots, a_{g-3}, c_1, c_3, \cdots, c_{g-3}, d_1, d_2, \cdots, d_{n-1}, p_3, j, l, c\}\,,\\
\CC_4 = \{ a_1, a_2, \cdots, a_{g-3}, c_1, c_3, \cdots, c_{g-3}, d_1, d_2, \cdots, d_{n-1}, p_3, j, w, c\}\,.
\end{gather*}

\begin{lemma} \label{abc} Suppose that $g$ is even, $g \ge 6$ and $n \ge 0$. The configurations  $\CC_1, \CC_2, \CC_3, \CC_4$ defined above have trivial stabilizers.
\end{lemma}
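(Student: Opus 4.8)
The plan is to use the Alexander method. For each $i\in\{1,2,3,4\}$ I will show that $\CC_i$ fills $N$: if its curves are drawn pairwise in minimal position and in general position (no triple intersection points), then every component of the complement in $N$ of their union $\Gamma_i$ is an open disk, or an annulus one of whose boundary circles is a boundary component of $N$. Granting this, the conclusion is the standard one: if $f\in Mod_N$ satisfies $f([x])=[x]$ for every $x\in\CC_i$, then up to isotopy $f$ preserves $\Gamma_i$ and each of its components, hence permutes the components of $N\sm\Gamma_i$; since those pieces are disks and boundary annuli glued along a rigid pattern of subarcs of the curves, $f$ must fix each piece and restrict on it to a homeomorphism fixing every boundary arc, and therefore $f$ is isotopic to the identity. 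This version of the Alexander method is valid on nonorientable surfaces and underlies the rigidity results for nonorientable curve complexes in \cite{AK} and \cite{Ir4}.

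So the real content is the filling statement, which I would read off from Figure \ref{fig-l1a}. In each $\CC_i$ the curves $a_1,a_2,\dots,a_{g-3}$ form a chain, with $a_j$ disjoint from $a_k$ for $|j-k|\ge2$ and meeting each of $a_{j-1},a_{j+1}$ in a single point; cutting $N$ along this chain together with the remaining non-separating curves of $\CC_i$ yields a planar surface, and then the curves $d_1,\dots,d_{n-1}$ (absent when $n\le 1$) together with the last curve $p_3$ and $e$ (resp. $l$, $l$, $w$) cut that planar surface into disks and annuli around the boundary components of $N$. The points needing care are that no complementary region is a M\"obius band, a one-holed Klein bottle, or any other non-planar piece; here one uses that every cross-cap of $N$ and every one-sided curve on $N$ is met by a curve of $\CC_i$ --- in $\CC_1,\CC_2$ by one of the $b_j$ or $c_j$, and in $\CC_3,\CC_4$ by a $c_j$, by $j$, or by $c$ (which by \cite{Sc} is the unique two-sided curve up to isotopy in the one-holed Klein bottle bounded by $p_3$) --- so these curves are present precisely to kill the M\"obius-band and Klein-bottle regions. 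Since $\CC_1$ and $\CC_2$ differ only in $e$ versus $l$, and $\CC_3$ and $\CC_4$ only in $l$ versus $w$, the four pictures are essentially the same.

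In the Alexander-method step one must still check that no complementary disk carries a nontrivial rotational symmetry permuting its boundary arcs, and that $f$ does not reverse any curve of $\CC_i$. I would handle this using the rigidity of the chain $a_1,\dots,a_{g-3}$: fixing each $a_j$ together with its unique intersection point with $a_{j\pm1}$ pins down a direction along the chain, forcing $f$ to act without rotation on the disks adjacent to it, and this propagates to all complementary pieces because they are glued in a connected pattern determined by $\CC_i$. I expect this bookkeeping --- confirming that $\Gamma_i$ really cuts $N$ into disks and boundary annuli with no room for a symmetry --- to be the only delicate part of the argument.
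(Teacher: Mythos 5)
Your outline is in the same cut-and-analyze spirit as the paper's argument, but its two load-bearing claims fail. First, the filling claim is false for $\CC_1$ and $\CC_2$: every curve in these configurations is two-sided, and the one-holed Klein bottle cut off by the boundary curve $p$ of a regular neighborhood of the grid $a_1\cup\cdots\cup a_{g-3}\cup b_1\cup\cdots\cup c_{g-3}\cup d_1\cup\cdots\cup d_{n-1}$ meets the configuration only in the two arcs of $e$ (resp.\ of $l$); cutting that Klein bottle along these arcs leaves a disk \emph{and a M\"obius band}. So the complement of $\bigcup\CC_1$ has a M\"obius-band component, not only disks and boundary annuli, and the $b_j$'s and $c_j$'s do not ``kill'' the crosscaps as you assert. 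This is not fatal in principle --- the mapping class group of a M\"obius band relative to its boundary is trivial (Epstein), which is exactly the fact the paper invokes --- but the version of the Alexander method you state does not apply as written, and the check you single out as the delicate point is not the actual issue.

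Second, and more seriously, your mechanism for excluding a nontrivial stabilizer does not exclude the one candidate that actually exists. The reflection $\sigma$ through the plane of the paper (Figure \ref{fig1-l-b}) fixes every curve $a_i,b_i,c_i,d_i$ (and $j$) setwise, pointwise fixes $a_1,a_3,\dots,a_{g-3}$, hence fixes every intersection point of the chain and every complementary piece adjacent to it, yet it is a nontrivial, orientation-reversing-on-each-piece mapping class. ``Pinning down a direction along the chain'' therefore forces nothing: the danger is not a rotation but this reflection, and it can only be ruled out because the remaining curves are not $\sigma$-symmetric --- the paper uses precisely $[\sigma(p_3)]\neq[p_3]$ (and, for $\CC_3,\CC_4$, that $\sigma$ does not preserve $l$, resp.\ $w$) after establishing the dichotomy that on the cut pieces $h$ is either the identity or $\sigma$. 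The symmetry-breaking role of $p_3$, $e$, $l$, $c$, $w$, together with the Epstein-type arguments rigidifying the Klein-bottle piece via the arcs of $e$, of $l\cup c$, or of $k$, is the real content of the paper's proof and is absent from your proposal; without it the argument only shows that the stabilizer is contained in $\{1,[\sigma]\}$, not that it is trivial.
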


\begin{proof}
{\it Proof for $\CC_1$:}
The following assertion is known to experts for orientable surfaces (see Castel \cite[Proposition 2.1.3]{Caste1}, for instance).
It can be easily proved using Epstein \cite{Epste1} for non-orientable surfaces in the same way as for orientable surfaces.

\bigskip\noindent
{\it Assertion 1.}
Let $\{ x_1, \cdots, x_l \}$, $\{ y_1, \cdots, y_l \}$ be two collections of curves such that

(i) $x_1, \cdots, x_l$ (resp. $y_1, \cdots, y_l$) are pairwise nonisotopic;

(ii) $x_i$ is isotopic to $y_i$ for all $i \in \{1, \cdots, l \}$;

(iii) there are no three distinct indices $i, j, k \in \{ 1, \cdots, l \}$ such that $i ([x_i], [x_j]) \neq 0$, $i ([x_i], [x_k]) \neq 0$ and $i ([x_j], [x_k]) \neq 0$.
 
Then there exists a homeomorphism $h : N \to N$ isotopic to the identity such that $h(x_i) = y_i$ for all $i \in \{1, \cdots, l \}$.

\begin{figure}[htb]
\begin{center}
\hspace{1.5cm} \epsfxsize=3.3in \epsfbox{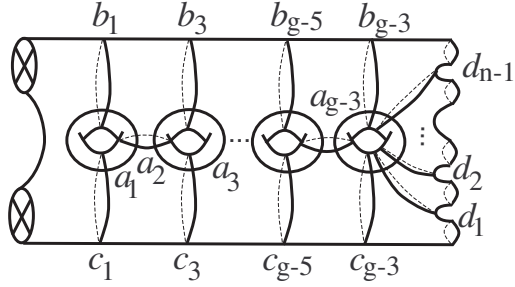}
\caption{The homeomorphism $\sigma$.} \label{fig1-l-b}
\end{center}
\end{figure} 
 
We denote by $\sigma$ the homeomorphism of $N$ of order $2$ defined by the symmetry with respect to the plane of the paper in the representation of $N$ given in Figure \ref{fig1-l-b}. 
We have $\sigma(x) = x$ for all $x \in \{ a_1, a_2, \cdots, a_{g-3}, b_1, b_3, \cdots, b_{g-3}, c_1, c_3, \cdots, c_{g-3}, d_1, d_2, \cdots, d_{n-1}, j \}$, and $\sigma$ pointwise fixes the curves $a_1, a_3, \cdots, a_{g-3}$. 
 
Let $h$ be a homeomorphism of $N$ such that $[h(x)] = [x]$ for every $x \in \CC_1$.
By Assertion 1, we can assume that $h(x) = x$ for all $x \in \{a_1, a_2, \cdots, a_{g-3}, b_1, b_3, \cdots,$ $ b_{g-3}, c_1, c_3,
\allowbreak
 \cdots, c_{g-3}, d_1, d_2, \cdots,$ $ d_{n-1} \}$.
Set $\Gamma = a_1 \cup \cdots \cup a_{g-3} \cup b_1 \cup \cdots \cup b_{g-3} \cup c_1 \cup \cdots \cup c_{g-3} \cup d_1 \cup \cdots \cup d_{n-1}$.
Then $h(\Gamma) = \Gamma$.
Cutting $N$ along $\Gamma$ we get a one holed Klein bottle, $K$, $g-4$ disks $D_1, \cdots, D_{g-4}$, and $n$ annuli $A_1, \cdots, A_n$.
For every $i \in \{1, \cdots, n\}$ one of the boundary components of $A_i$, denoted by $z_i$, is a boundary component of $N$.
The homeomorphism $h$ should send each piece $X \in \{ D_1, \cdots, D_{g-4}, A_1, \cdots, A_n \}$ onto itself. 
Moreover, either the restriction of $h$ to $X$ preserves the orientation for all $X \in \{D_1, \cdots, D_{g-4}, A_1, \cdots, A_n\}$, or the restriction of $h$ to $X$ reverses the orientation for all $X \in \{D_1, \cdots, D_{g-4}, A_1, \cdots, A_n \}$.
Suppose that the restriction of $h$ to $X$ preserves the orientation for all $X$.
Then $h$ should also preserve the orientation of each $x \in \{a_1, \cdots, a_{g-3}, b_1, \cdots, b_{g-3}, c_1, \cdots, c_{g-3}, d_1, \cdots, d_{n-1}\}$, hence we may assume that the restriction of $h$ to $\Gamma$ is the identity. 
Then the restriction of $h$ to $X$ is isotopic to the identity with an isotopy which pointwise fixes the boundary, if $X=D_i$ is a disk, and pointwise fixes the boundary component of $X$ different from $z_i$, if $X=A_i$ is an annulus.
So, in this case, we can assume that $h$ is the identity on $X$ for all $X \in \{D_1, \cdots, D_{g-4}, A_1, \cdots, A_n \}$.
Suppose that $h$ reverses the orientation of $X$ for all $X \in \{D_1, \cdots, D_{g-4}, A_1, \cdots, A_n\}$.
Then $h \sigma$ preserves the orientation of each $X$, and therefore, as above, we can assume that the restriction of $h \sigma$ to $X$ is the identity for all $X \in \{ D_1, \cdots, D_{g-4}, A_1, \cdots, A_n \}$.

\begin{figure}
\begin{center}
\hspace{-.3cm} \epsfxsize=2.7in \epsfbox{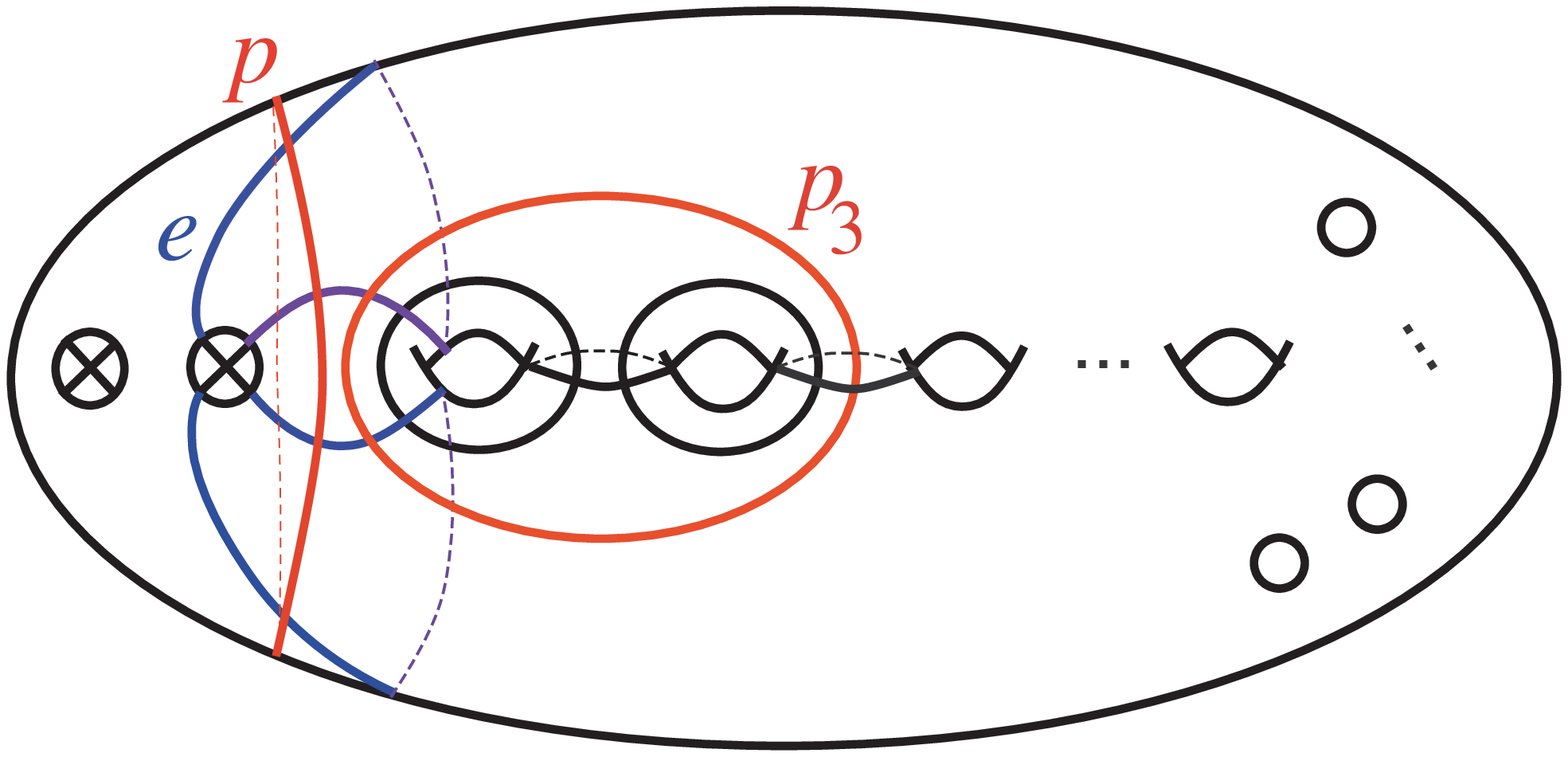}  \hspace{0cm}  \epsfxsize=2.7in \epsfbox{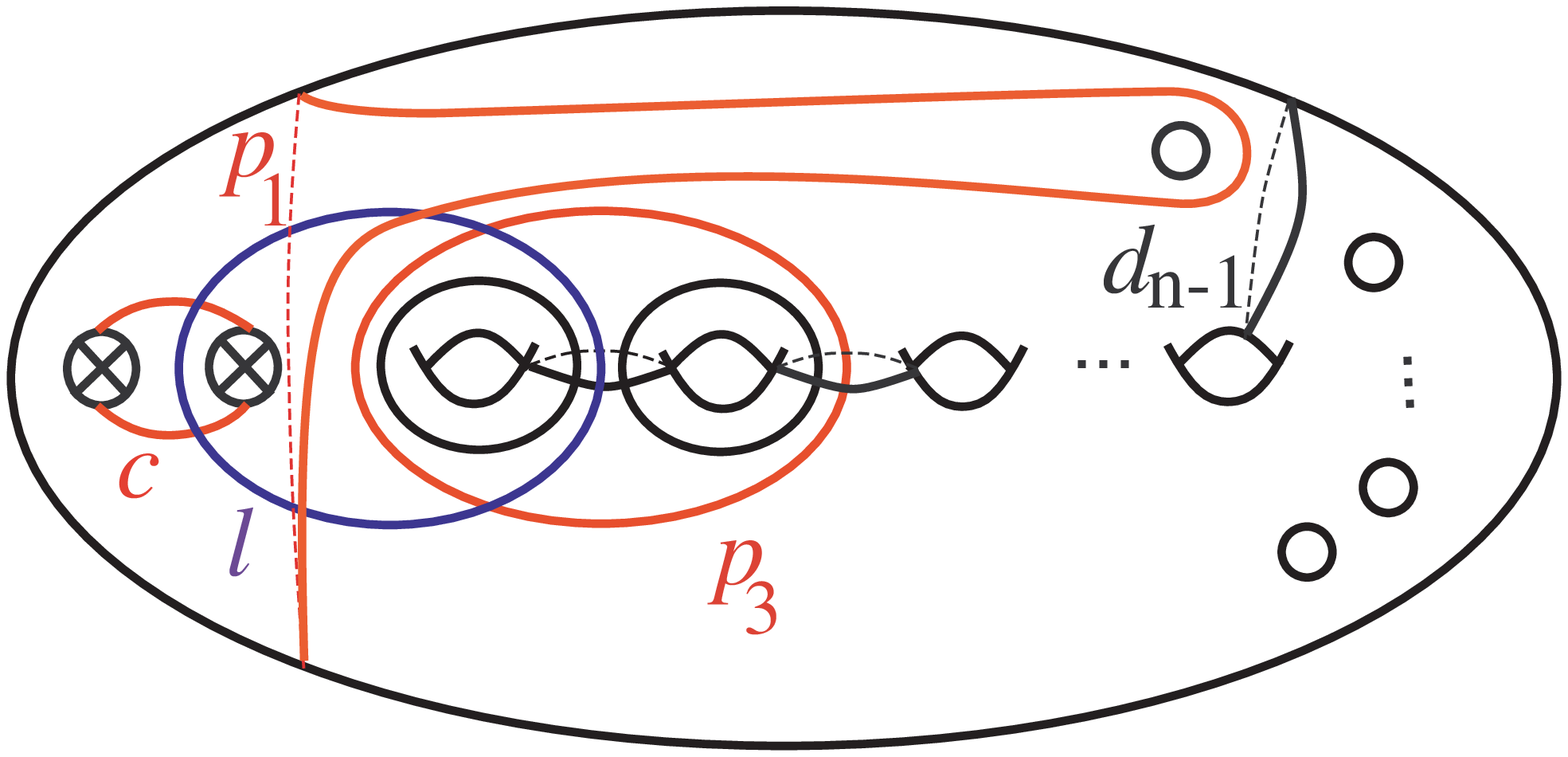}  

\hspace{-1cm} (i) \hspace{6.5cm} (ii)
\caption{Curves } \label{fig-l1c}
\end{center}
\end{figure}
 
Consider the curve $p$ drawn in Figure \ref{fig-l1c}. 
The curve $p$ is the only boundary component of a regular neighborhood of $\Gamma$ that bounds a one holed Klein bottle, hence we can assume that $h(p) = p$.
Cutting $N$ along $p$ we get a one holed Klein bottle, $K_0$, and an orientable surface of genus $\frac{g-2}{2}$ and $n+1$ boundary components, $M$.
We have $h(K_0) = K_0$ and $h(M)=M$.
Moreover, by the above, we can assume that either $h|_M = \Id_M$, or $h|_M = \sigma|_M$.
But $[\sigma (p_3)] \neq [p_3]$, hence $h|_M \neq \sigma_M$, and therefore $h|_M = \Id_M$.
Then we can also assume that $h$ is the identity on $p$.

Now, by Epstein \cite{Epste1}, we can assume that $h(e) = e$. The intersection $p \cap e$ has $4$ points that are fixed by $h$ since they lie in $p$. The curve $e$ has two arcs contained in $K_0$.
The extremities of each of these arcs are fixed and distinct, and the extremities of one arc are different from the extremities of the other one. 
Hence, $h$ preserves each arc and the orientation of each arc, so we can assume that $h$ is the identity on $e$. 
Cutting $K_0$ along $e$ we obtain a disk and a M\"obius band. The homeomorphism 
$h$ sends each of these pieces onto itself, and the restriction of $h$ to each piece is isotopic to the identity with an isotopy that pointwise fixes the boundary. 
So, we can assume that $h$ is the identity on $K_0$, and therefore that $h$ is the identity on the whole $N$.
 
{\it Proof for $\CC_2$:}
The proof for $\CC_2$ is similar to the proof for $\CC_1$.
 
{\it Proof for $\CC_3$:}  Let $h$ be a homeomorphism of $N$ such that $[h(x)] = [x]$ for every $x \in \CC_3$.
By Assertion 1 we may assume that $h(x) = x$ for all $x \in \{a_1, a_2, \cdots,$ $ a_{g-3},$ $ c_1, c_3, \cdots, \allowbreak c_{g-3}, d_1, d_2, \cdots, d_{n-1},j \}$.
Set $\Gamma = a_1 \cup \cdots \cup a_{g-3} \cup c_1 \cup \cdots \cup c_{g-3} \cup d_1 \cup \cdots \cup d_{n-1} \cup j$.
Then $h(\Gamma) = \Gamma$.
Cutting $N$ along $\Gamma$ we obtain a two holed Klein bottle, $K$, $\frac{g-2}{2}$ disks $D_1, \cdots, D_{\frac{g-2}{2}}$, and $n-1$ annuli, $A_1, \cdots, A_{n-1}$.
For every $i \in \{1, \cdots, n-1\}$ one of the boundary components of $A_i$, denoted by $z_i$, is a boundary component of $N$.
It is easily seen that the homeomorphism $h$ should send each piece $X \in \{ D_1, \cdots, D_{\frac{g-2}{2}}, A_1, \cdots, A_{n-1} \}$ onto itself.
Moreover, either the restriction of $h$ to $X$ preserves the orientation for all $X \in \{D_1, \cdots, D_{\frac{g-2}{2}}, A_1, \cdots, A_{n-1} \}$, or the restriction of $h$ to $X$ reverses the orientation for all $X \in \{D_1, \cdots, D_{\frac{g-2}{2}}, A_1, \cdots, A_{n-1} \}$.
Suppose that the restriction of $h$ to $X$ preserves the orientation for all $X$.
Then $h$ also preserves the orientation of each $x \in \{a_1, \cdots, a_{g-3}, c_1, \cdots, c_{g-3}, d_1, \cdots, d_{n-1}, j\}$, hence we can suppose that the restriction of $h$ to $\Gamma$ is the identity. 
Then the restriction of $h$ to $X$ is isotopic to the identity with an isotopy that pointwise fixes the boundary, if $X=D_i$ is a disk, and pointwise fixes the boundary component of $X$ different from $z_i$, if $X=A_i$ is an annulus.
So, in this case, we may assume that $h$ is the identity on $X$ for all $X \in \{D_1, \cdots, D_{\frac{g-2}{2}}, A_1, \cdots, A_{n-1} \}$.
Suppose that the restriction of $h$ to $X$ reverses the orientation for all $X \in \{D_1, \cdots, D_{\frac{g-2}{2}}, A_1, \cdots, A_{n-1}\}$.
Then the restriction of $h \sigma$ to $X$ preserves the orientation for all $X$, hence, again, we can assume that $h \sigma$ is the identity on $X$ for all $X \in \{ D_1, \cdots, D_{\frac{g-2}{2}}, A_1, \cdots, A_{n-1} \}$.
 
Consider the curve $p_1$ drawn in Figure \ref{fig-l1c}.
The curve $p_1$ is the only boundary component of a regular neighborhood of $\Gamma$ that is a boundary curve of a two holed Klein bottle whose second boundary curve is a boundary component of $N$.
Hence, we may assume that $h(p_1) = p_1$.
Cutting $N$ along $p_1$ we obtain a two holed Klein bottle, $K_0$, and an orientable surface of genus $\frac{g-2}{2}$ and $n$ boundary components, $M$.
We have $h(K_0) = K_0$ and $h(M)=M$.
Moreover, by the above, we can assume that either $h|_M = \Id_M$, or $h|_M = \sigma|_M$.
Since $[\sigma (l)] \neq [l]$, we have $h|_M \neq \sigma|_M$, hence $h|_M = \Id_M$.
Then we can also suppose that $h$ is the identity on $p_1$.
 
The intersection $p_1 \cap l$ has two points. The homeomorphism 
$h$ fixes these two points since they are included in $p_1$. The curve $l$ has a single arc included in $K_0$ whose extremities are distinct and fixed under $h$, hence $h$ preserves the arc and the orientation of this arc, and therefore we can suppose that $h$ is the identity on $l$. The intersection 
$l \cap c$ has two points, fixed by $h$ since they are included in $l$.
Cutting $c$ along $l$ we get two arcs, $x_1,x_2$.
Cutting $K_0$ along $l$ we get two pieces whose topologies are different, hence $h$ sends each of these pieces into itself.
Since each arc $x_i$ is contained in a different piece, we can suppose that $h$ sends $x_1$ onto $x_1$ and $x_2$ onto $x_2$.
In this situation $h$ should also preserve the orientation of each $x_i$, hence we can assume that $h$ is the identity on $c$. 
 
Cutting $K_0$ along $l \cup c$ we obtain a disk, $K_0$, and an annulus, $A_0$.
One of the boundary components of $A_0$, denoted by $z_0$, is a boundary component of $N$. The homeomorphism $h$ sends each of these pieces onto itself, the restriction of $h$ to $D_0$ is isotopic to the identity with an isotopy that pointwise fixes the boundary, and the restriction of $h$ to $A_0$ is isotopic to the identity with an isotopy that pointwise fixes the boundary component different from $z_0$.
So, we can suppose that $h$ is the identity on $K_0$, that is, $h$ is the identity on the whole $N$.
 
{\it Proof for $\CC_4$:}
The proof for $\CC_4$ is similar to the proof for $\CC_3$.
\end{proof}

\begin{figure}[htb]
\begin{center}
\hspace{1.1cm} \epsfxsize=3.2in \epsfbox{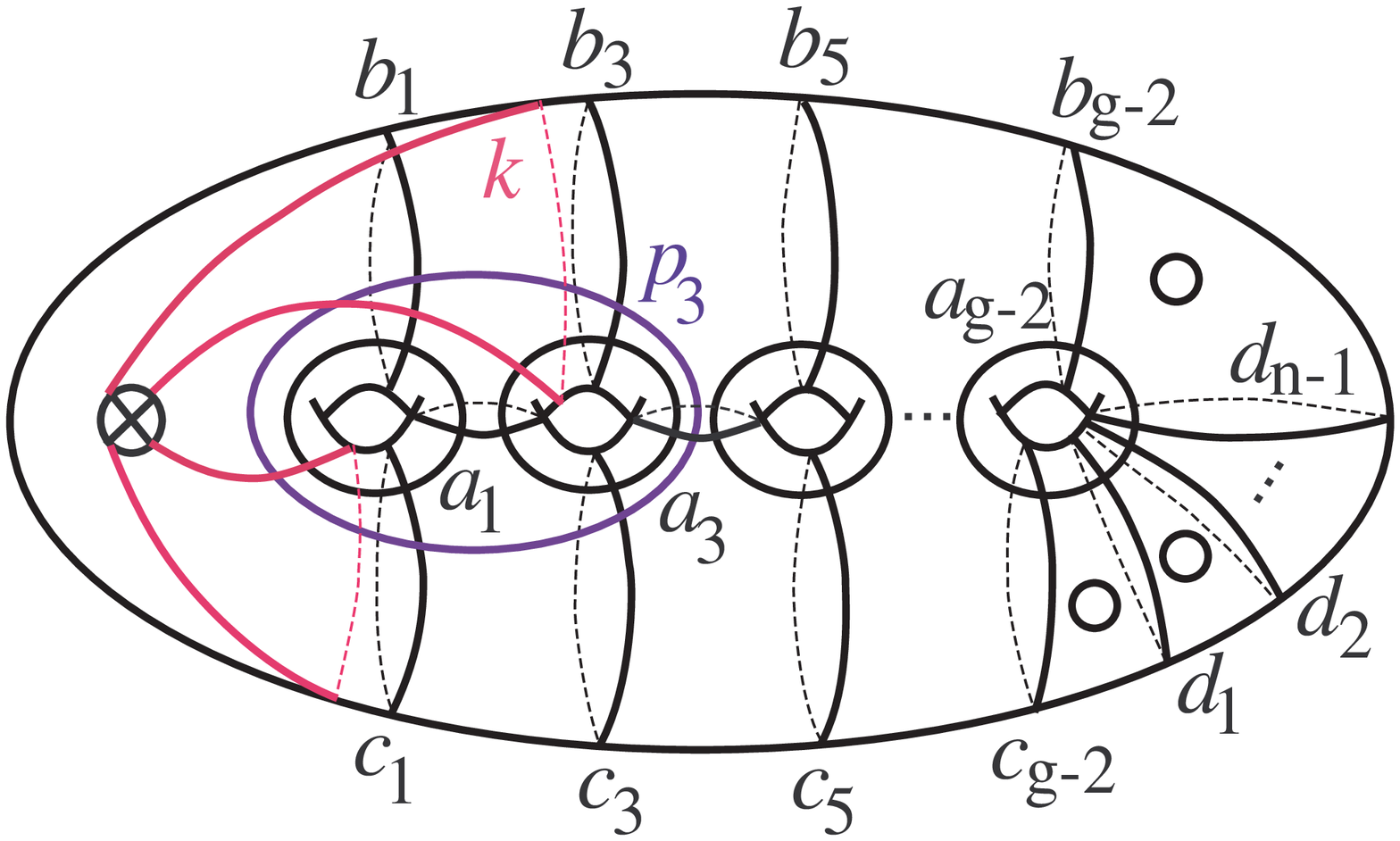}

\hspace{-0.2cm} (i) 

\hspace{.2cm} \epsfxsize=2.7in \epsfbox{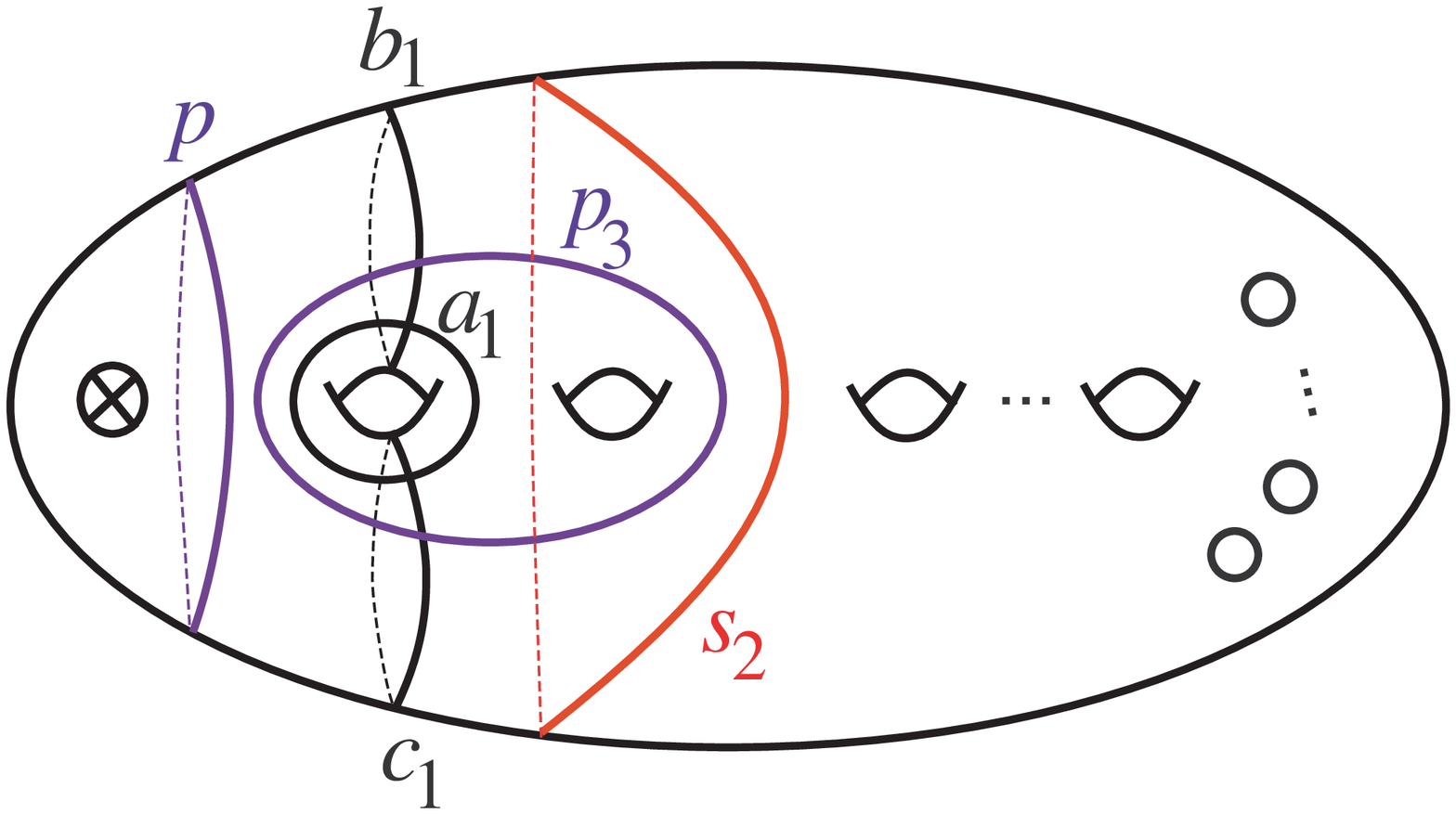}   \epsfxsize=2.8in \epsfbox{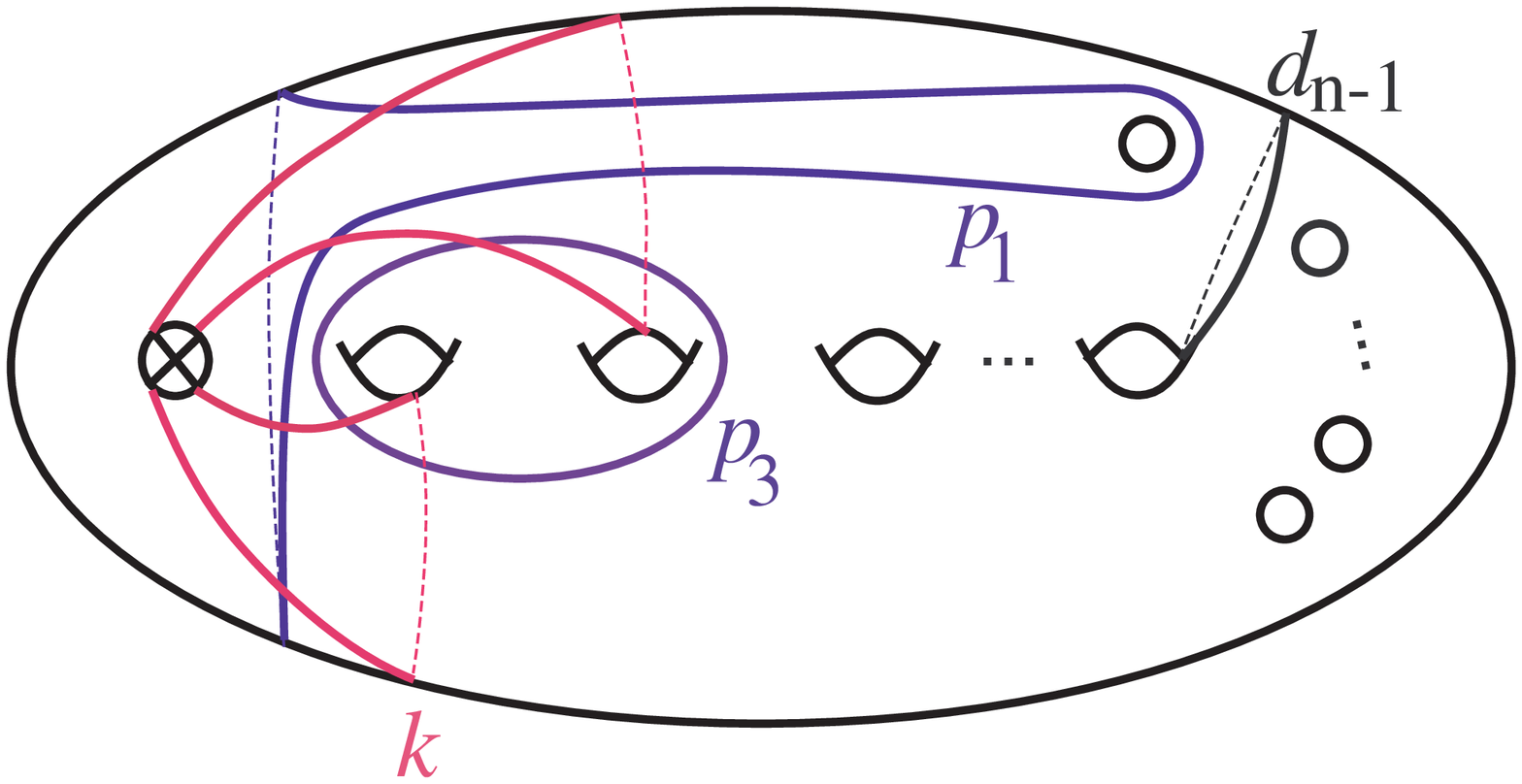}

\hspace{0.2cm} (ii) \hspace{6.cm} (iii)
\caption{Curves on $N$} \label{fig-l5}
\end{center}
\end{figure} 

Now, we assume that $g$ is odd, $g \ge 5$ and $n \ge 0$.
We consider the curves drawn in Figure \ref{fig-l5} (i) and we set
\begin{gather*}
\CC_1 = \{ a_1, a_2, \cdots, a_{g-2}, b_1, b_3, \cdots, b_{g-2}, c_1, c_3, \cdots, c_{g-2}, d_1, d_2, \cdots, d_{n-1}, p_3\}\,,\\
\CC_2 = \{ a_1, a_2, \cdots, a_{g-2}, b_1, b_3, \cdots, b_{g-2}, c_1, c_3, \cdots, c_{g-2}, d_1, d_2, \cdots, d_{n-1}, s_2\}\,,\\
\CC_3 = \{ a_1, a_2, \cdots, a_{g-2}, c_1, c_3, \cdots, c_{g-2}, d_1, d_2, \cdots, d_{n-1}, p_3, k\}\,.
\end{gather*}

\begin{figure}[htb]
\begin{center}
\hspace{1.5cm} \epsfxsize=3.3in \epsfbox{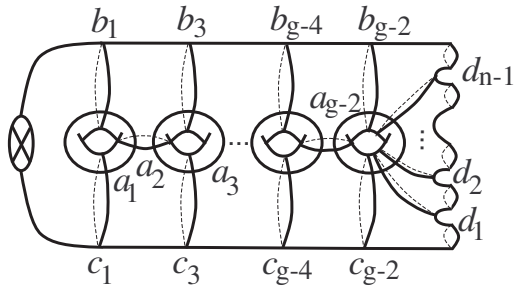}
\caption{The homeomorphism $\sigma$.} \label{fig1-l-c}
\end{center}
\end{figure} 

\begin{lemma} \label{abcd} Suppose that $g$ is odd, $g \geq 5$ and $n \geq 0$. The above defined collections $\CC_1, \CC_2, \CC_3$ have trivial stabilizers.
\end{lemma}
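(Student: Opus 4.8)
The plan is to follow the proof of Lemma \ref{abc} essentially line by line, using now the order-two reflection $\sigma$ of $N$ drawn in Figure \ref{fig1-l-c} in place of the one of Figure \ref{fig1-l-b}. The features of this $\sigma$ that will be used are that it fixes each of the curves $a_1, \dots, a_{g-2}, b_1, b_3, \dots, b_{g-2}, c_1, c_3, \dots, c_{g-2}, d_1, \dots, d_{n-1}$, that it pointwise fixes the curves $a_1, a_3, \dots, a_{g-2}$, and that $[\sigma(p_3)] \neq [p_3]$, $[\sigma(s_2)] \neq [s_2]$ and $[\sigma(k)] \neq [k]$; the last three inequalities are exactly what will rule out the orientation-reversing alternative in the three cases.

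I would treat $\CC_1$ first. Let $h$ be a homeomorphism of $N$ with $[h(x)] = [x]$ for every $x \in \CC_1$. Applying Assertion 1 from the proof of Lemma \ref{abc} to the subfamily consisting of all the $a_i$, $b_j$, $c_k$ and $d_m$ --- a pairwise nonisotopic family containing no three curves that pairwise intersect essentially --- I may assume that $h$ fixes each of those curves. Let $\Gamma$ be their union, so that $h(\Gamma) = \Gamma$. Cutting $N$ along $\Gamma$ produces some disks, $n$ annuli each carrying a single boundary component $z_i$ of $\partial N$, and one non-orientable piece $K$ (it is here that the hypothesis that $g$ is odd enters, to pin down the topological type of $K$). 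As in Lemma \ref{abc}, $h$ sends each complementary piece onto itself, and it either preserves the orientation of every orientable piece or reverses the orientation of every one of them; replacing $h$ by $h\sigma$ in the second alternative, we reduce to the first. We then straighten $h$ to the identity on $\Gamma$ and, using that a self-homeomorphism of a disk fixing its boundary (resp. of an annulus fixing the boundary component different from $z_i$) is isotopic rel that boundary to the identity, to the identity on each disk and each annulus. It remains to deal with $K$: following Lemma \ref{abc}, I would introduce the boundary curve $p$ of a regular neighborhood of $\Gamma$ that bounds a copy of $K$, observe $h(p) = p$, cut along $p$ to separate $K$ from an orientable surface $M$, invoke $[\sigma(p_3)] \neq [p_3]$ to force $h|_M = \Id_M$ rather than $\sigma|_M$, and finally use the intersection points of $p$ with $p_3$, the arcs of $p_3$ lying in $K$, Epstein \cite{Epste1}, and the decomposition of $K$ into elementary pieces obtained by cutting along $p_3$ to conclude that $h$ is the identity on $K$, hence on all of $N$.

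The proof for $\CC_2$ is word for word the same with $s_2$ in the role of $p_3$; the proof for $\CC_3$ is again the same in outline, except that now $\Gamma$ is the union of the $a_i$, the $c_k$ and the $d_m$ (no $b$-curves), so the non-orientable complementary piece is larger, and one finishes by using the two remaining curves $p_3$ and $k$, whose isotopy classes are both moved by $\sigma$, to pin $h$ down on that piece; as before, the orientation-reversing subcase on the orientable pieces is disposed of by composing with $\sigma$.

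I expect the only real difficulty to be topological bookkeeping: identifying, for each of $\CC_1, \CC_2, \CC_3$ and for every $n \geq 0$ (including $n = 0$, where $\partial N = \emptyset$ and there are no annuli), the homeomorphism type of each component of $N \setminus \Gamma$; checking that $h$ cannot permute those components; and verifying that, after the reduction by $\sigma$, the leftover curve(s) $p_3$ (resp. $s_2$, resp. $p_3$ and $k$) genuinely destroy the residual mapping class group symmetry of the non-orientable piece. This is precisely where the hypotheses $g$ odd, $g \geq 5$, and the exact shapes of the curves of Figure \ref{fig-l5} are used.
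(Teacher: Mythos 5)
Your proposal follows the paper's own proof essentially step by step (Assertion 1, cutting along $\Gamma$, composing with the reflection $\sigma$ of Figure \ref{fig1-l-c} to dispose of the orientation-reversing alternative, the canonical boundary curve $p$ resp. $p_1$ of a regular neighborhood of $\Gamma$, and the fact $[\sigma(p_3)]\neq[p_3]$ to force $h|_M=\Id_M$), so it is the same approach and correct in outline. One small correction: for $\CC_1$ and $\CC_2$ the non-orientable piece $K_0$ is just a M\"obius band and $p_3$ (resp. $s_2$) lies on the orientable side $M$, so there is no cutting of $K_0$ along $p_3$ — the restriction of $h$ to $K_0$ is already isotopic to the identity rel boundary; only for $\CC_3$ does one need a curve meeting the non-orientable piece, namely $k$ with its two arcs in $K_0$, while $p_3$ is what rules out $\sigma|_M$ there.
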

 
\begin{proof} {\it Proof for $\CC_1$.}
We denote by $\sigma$ the homeomorphism of $N$ of order two defined by the symmetry with respect to the plane of the paper in the representation of $N$ given in Figure \ref{fig1-l-c}.
We have $\sigma(x) = x$ for all $x \in \{ a_1, a_2, \cdots, a_{g-2}, b_1, b_3, \cdots, b_{g-2}, c_1, c_3, \cdots,$ $ c_{g-2}, d_1, d_2, \cdots, d_{n-1} \}$, and $\sigma$ pointwise fixes the curves $a_1, a_3, \cdots, a_{g-2}$.
 
Let $h$ be a homeomorphism of $N$ such that $[h(x)] = [x]$ for every $x \in \CC_1$.
By Assertion 1 in the proof of Lemma \ref{abc}, we can assume that $h(x) = x$ for all $x \in \{a_1, a_2, \cdots, a_{g-2}, b_1, b_3, \cdots, b_{g-2}, c_1,$ $ c_3, \cdots, c_{g-2}, d_1, d_2, \cdots, d_{n-1} \}$.
We set $\Gamma = a_1 \cup \cdots \cup a_{g-2} \cup b_1 \cup \cdots \cup b_{g-2} \cup c_1 \cup \cdots \cup c_{g-2} \cup d_1 \cup \cdots \cup d_{n-1}$.
Then $h(\Gamma) = \Gamma$.
Cutting $N$ along $\Gamma$ we get a M\"obius band, $K$, $g-3$ disks, $D_1, \cdots, D_{g-3}$, and $n$ annuli, $A_1, \cdots, A_n$.
For every $i \in \{1, \cdots, n\}$ one of the boundary components of $A_i$, denoted by $z_i$, is a boundary component of $N$.
The homeomorphism $h$ should send each piece $X \in \{ D_1, \cdots, D_{g-3}, A_1, \cdots, A_n \}$ onto itself.
Moreover, either the restriction of $h$ to $X$ preserves the orientation for all $X \in \{D_1, \cdots, D_{g-3}, A_1, \cdots, A_n\}$, or the restriction of $h$ to $X$ reverses the orientation for all $X \in \{D_1, \cdots, D_{g-3}, A_1, \cdots, A_n \}$.
Suppose that the restriction of $h$ to $X$ preserves the orientation for all $X$.
Then $h$ should also preserve the orientation of each $x \in \{a_1, \cdots, a_{g-2}, b_1, \cdots, b_{g-2}, c_1, \cdots, c_{g-2}, d_1, \cdots, d_{n-1}\}$, hence we may assume that the restriction of $h$ to $\Gamma$ is the identity.
Then the restriction of $h$ to $X$ is isotopic to the identity with an isotopy that pointwise fixes the boundary, if $X=D_i$ is a disk, and pointwise fixes the boundary of $X$ different from $z_i$, if $X=A_i$ is an annulus.
So, in this case, we can assume that $h$ is the identity on $X$ for all $X \in \{D_1, \cdots, D_{g-3}, A_1, \cdots, A_n \}$.
Suppose that $h$ reverses the orientation of $X$ for all $X \in \{D_1, \cdots, D_{g-3}, A_1, \cdots, A_n\}$.
Then $h \sigma$ preserves the orientation of each $X$, and therefore, as above, we can assume that the restriction of $h \sigma$ to  $X$ is the identity for all $X \in \{ D_1, \cdots, D_{g-3}, A_1, \cdots, A_n \}$.
 
Consider the curve $p$ drawn in Figure \ref{fig-l5} (ii).
The curve $p$ is the only boundary component of a regular neighborhood of $\Gamma$ which bounds a M\"obius band, hence we can assume that $h(p) = p$.
Cutting $N$ along $p$, we get a M\"obius band, $K_0$, and an orientable surface of genus $\frac{g-1}{2}$ and $n+1$ boundary components, $M$.
We have $h(K_0) = K_0$ and $h(M)=M$.
Moreover, by the above, we can suppose that either $h|_M = \Id_M$, or $h|_M = \sigma|_M$.
But $[\sigma (p_3)] \neq [p_3]$, hence $h|_M \neq \sigma|_M$, thus $h|_M = \Id_M$.
Then we can also assume that $h$ is the identity on $p$.
Now, the restriction of $h$ to $K_0$ is isotopic to the identity with an isotopy that pointwise fixes the boundary, hence we can assume that $h|_{K_0} = \Id_{K_0}$, that is, $h$ is the identity. 

{\it Proof for $\CC_2$.} The proof for $\CC_2$ is similar to the proof for $\CC_1$.

{\it Proof for $\CC_3$.} Let $h$ be a homeomorphism of $N$ such that $[h(x)] = [x]$ for every $x \in \CC_3$. By Assertion 1 in the proof of Lemma \ref{abc}, we can assume that $h(x) = x$ for all $x \in \{a_1, a_2, \cdots, a_{g-2}, c_1, c_3, \cdots, c_{g-2}, d_1, d_2, \cdots, d_{n-1} \}$.
We set $\Gamma = a_1 \cup \cdots \cup a_{g-2} \cup c_1 \cup \cdots \cup c_{g-2} \cup d_1 \cup \cdots \cup d_{n-1}$.
Then $h(\Gamma) = \Gamma$.
Cutting $N$ along $\Gamma$ we obtain a non-orientable surface of genus $1$ with two boundary components, $K$, $\frac{g-3}{2}$ disks $D_1, \cdots, D_{\frac{g-3}{2}}$, and $n-1$ annuli $A_1, \cdots, A_{n-1}$.
For all $i \in \{1, \cdots, n-1\}$ one of the boundary components of $A_i$, denoted by $z_i$, is a boundary component of $N$.
It is easily seen that $h$ should send each piece $X \in \{ D_1, \cdots, D_{\frac{g-3}{2}}, A_1, \cdots, A_{n-1} \}$ into itself.
Moreover, either the restriction of $h$ to $X$ preserves the orientation for all $X \in \{D_1, \cdots, D_{\frac{g-3}{2}}, A_1, \cdots, A_{n-1} \}$, or the restriction of $h$ to $X$ reverses the orientation for all $X \in \{D_1, \cdots, D_{\frac{g-3}{2}}, A_1, \cdots, A_{n-1} \}$.
Suppose that the restriction of $h$ to $X$ preserves the orientation for all $X$.
Then $h$ preserves the orientation of each $x \in \{a_1, \cdots, a_{g-2}, c_1, \cdots, c_{g-2}, d_1, \cdots, d_{n-1}\}$, and therefore we can assume that $h$ is the identity on $\Gamma$. 
Then the restriction of $h$ to $X$ is isotopic to the identity with an isotopy that pointwise fixes the boundary, if $X=D_i$ is a disk, and pointwise fixes the boundary component of $X$ different from $z_i$, if $X=A_i$ is an annulus.
So, in this case, we can assume that $h$ is the identity on $X$ for all $X \in \{D_1, \cdots, D_{\frac{g-3}{2}}, A_1, \cdots, A_{n-1} \}$.
Suppose that $h$ reverses the orientation of $X$ for all $X$.
Then the restriction of $h \sigma$ to $X$ preserves the orientation for all $X \in \{ D_1, \cdots, D_{\frac{g-3}{2}}, A_1, \cdots, A_{n-1} \}$, hence, as above, we can assume that $h \sigma$ is the identity on $X$ for all $X$.

Consider the curve $p_1$ drawn in Figure \ref{fig-l5} (iii).
The curve $p_1$ is the only boundary component of a regular neighborhood of $\Gamma$ which is a boundary component of a genus $1$ two holed non-orientable subsurface whose second boundary component is a boundary component of $N$, hence we may assume that $h(p_1) = p_1$.
Cutting $N$ along $p_1$ we obtain a two holed non-orientable surface of genus $1$, $K_0$, and an orientable surface of genus $\frac{g-1}{2}$ and $n$ boundary components, $M$.
We have $h(K_0) = K_0$ and $h(M)=M$.
Moreover, by the above, we can suppose that either $h|_M = \Id_M$, or $h|_M = \sigma|_M$.
Since $[\sigma (p_3)] \neq [p_3]$, $h|_M \neq \sigma|_M$, hence $h|_M = \Id_M$.
Then we can also assume that $h$ is the identity on $p_1$.

The intersection $p_1 \cap k$ has $4$ points.
They are fixed by $h$, since they are included in $p_1$. The curve 
$k$ has two arcs included in $K_0$.
It is easily shown using Epstein \cite{Epste1} that the image of each arc is isotopic to itself with respect to the extremities, hence we may assume that $h$ sends each arc onto itself.  
Each arc has two different extremities, and the extremities of one arc are different from the extremities of the other, hence $h$ preserves the orientation of each arc, therefore we can assume that $h$ is the identity on $k$.
Cutting $K_0$ along $k$ we get a disk, $D_0$, and an annulus, $A_0$.
One of the boundary components of $A_0$, denoted by $z_0$, is a boundary component of $N$.
It is clear that $h$ must send each of these pieces to itself.
Moreover, the restriction of $h$ to $D_0$ is isotopic to the identity with an isotopy that pointwise fixes the boundary, and the restriction of $h$ to $A_0$ is isotopic to the identity with an isotopy that pointwise fixes the boundary component of $A_0$ different from $z_0$. So, we can assume that $h$ is the identity on $K_0$, hence $h$ is the identity on the 
whole $N$.\end{proof}\\
  
\begin{figure}
\begin{center}
\epsfxsize=3.3in \epsfbox{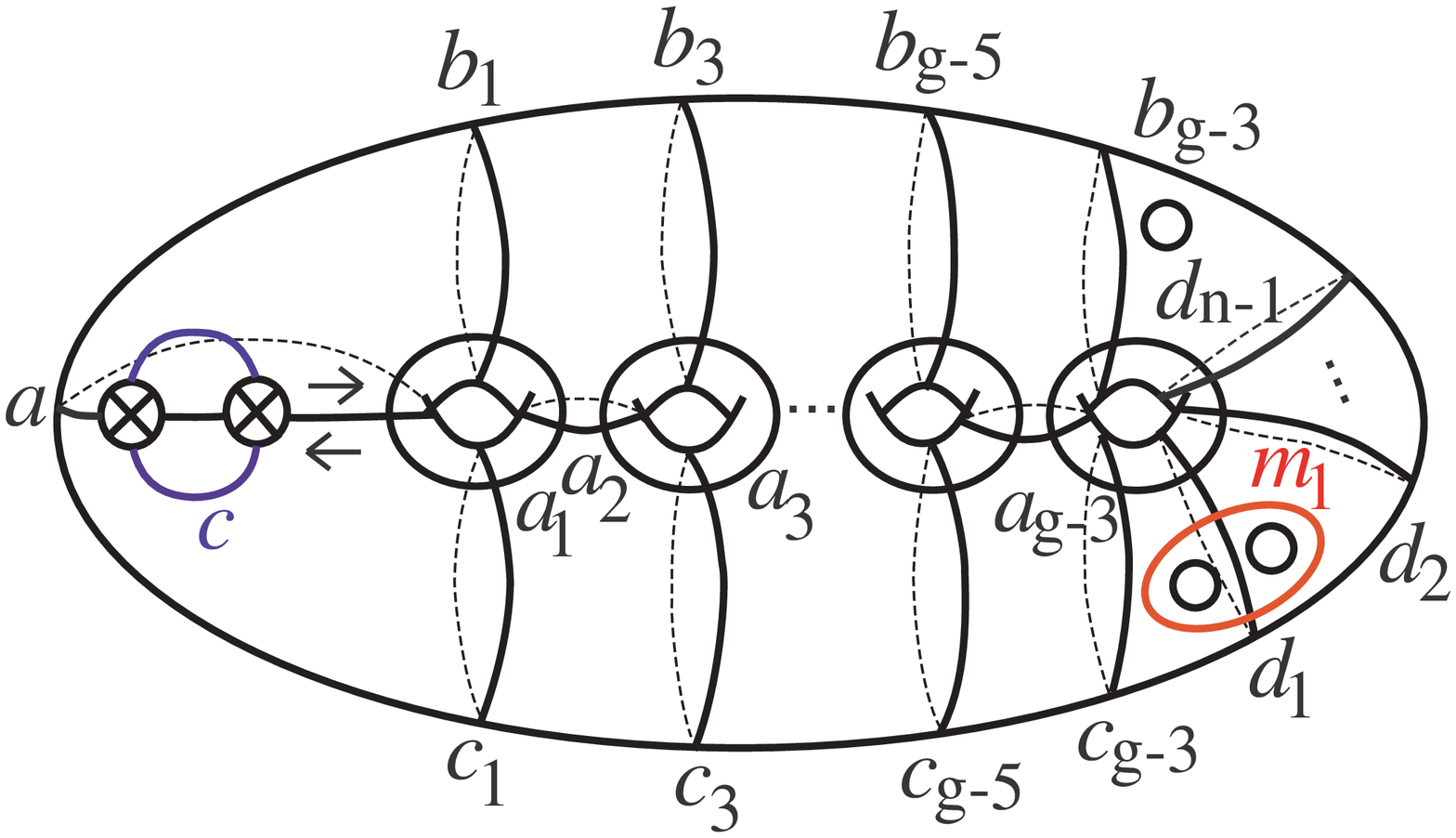}\hspace{-1cm} \epsfxsize=3.1in \epsfbox{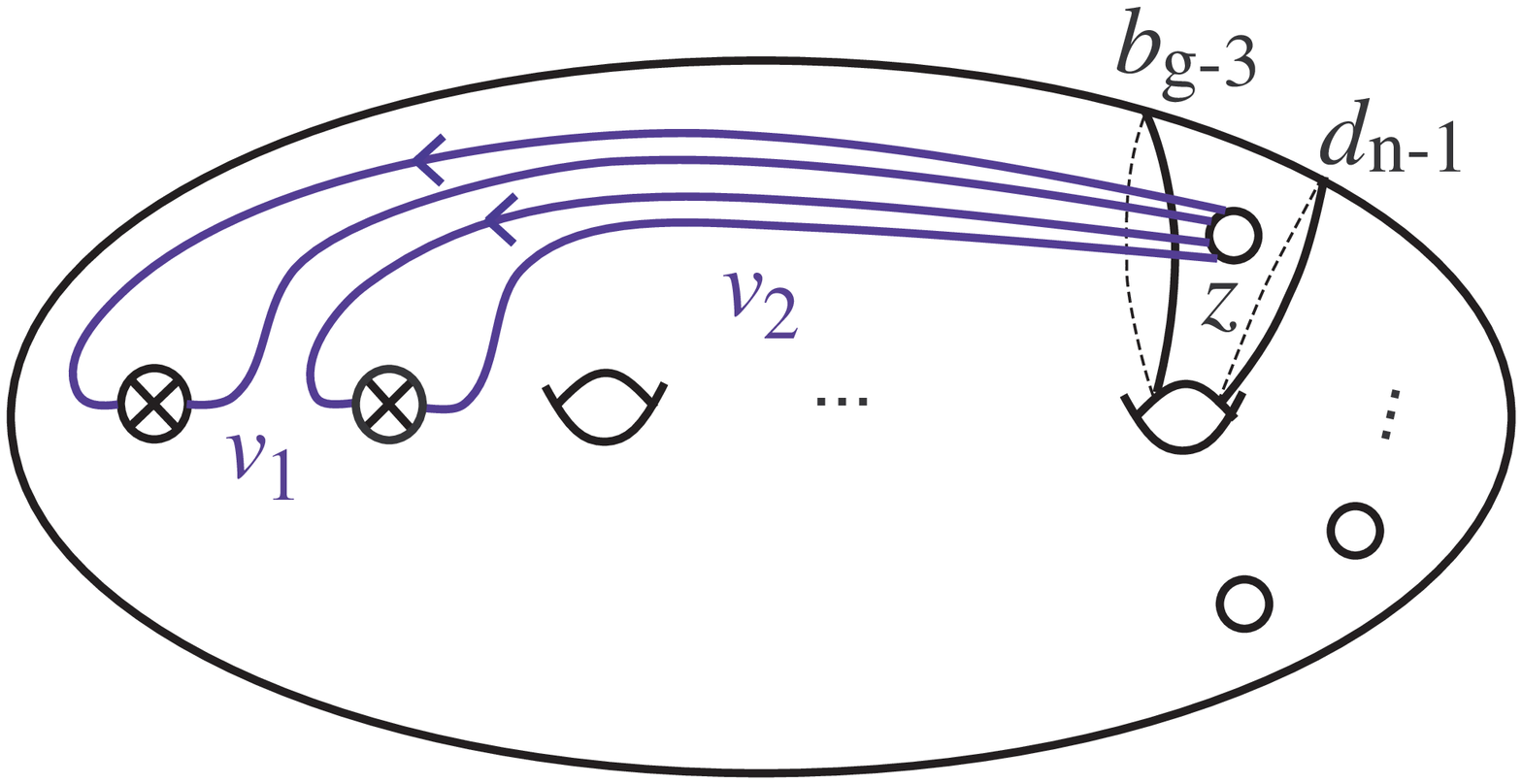}

\hspace{-1cm} (i) \hspace{6.5cm} (ii)

\hspace{0.1cm} \epsfxsize=3.1in \epsfbox{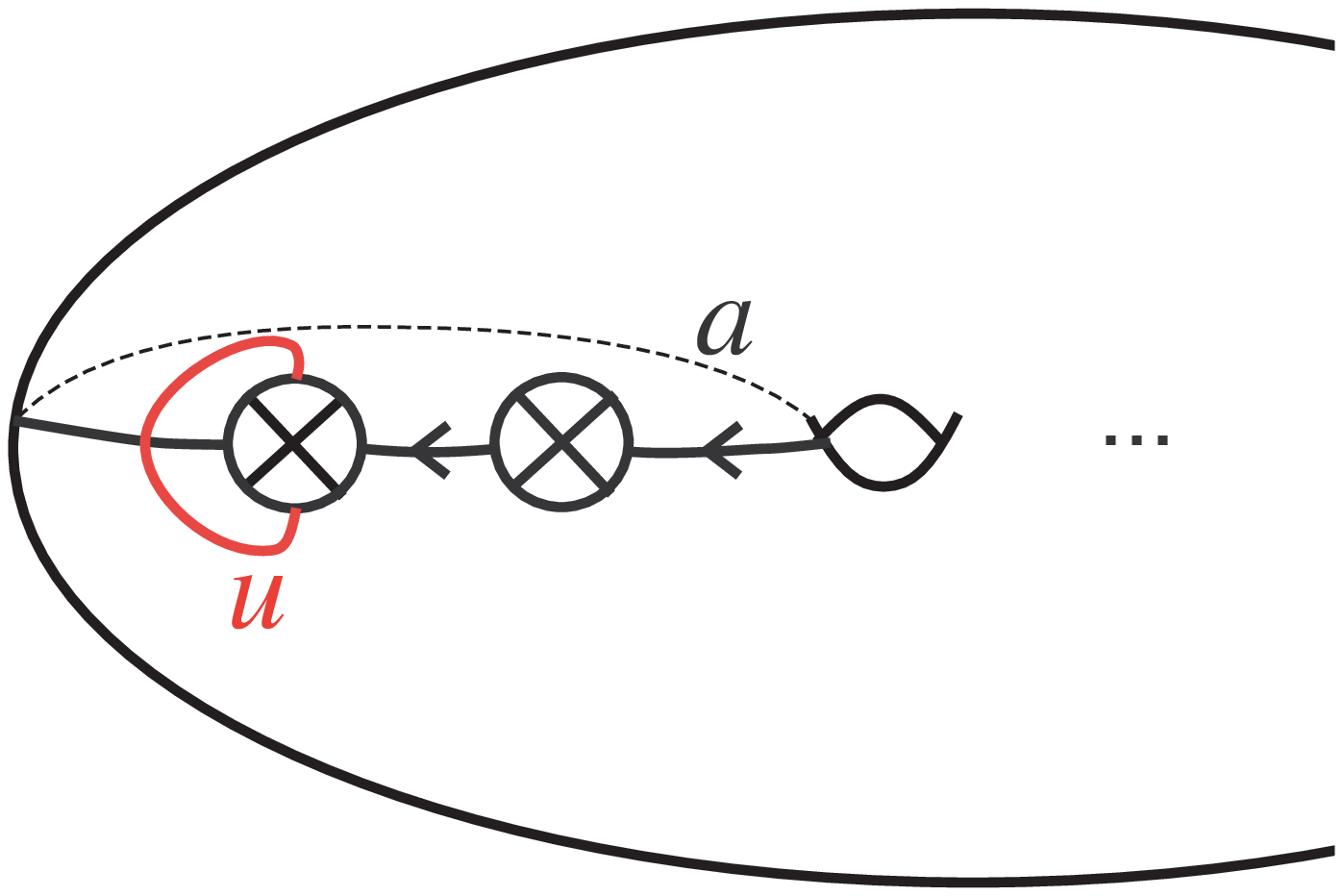} \hspace{-1cm} \epsfxsize=3.2in \epsfbox{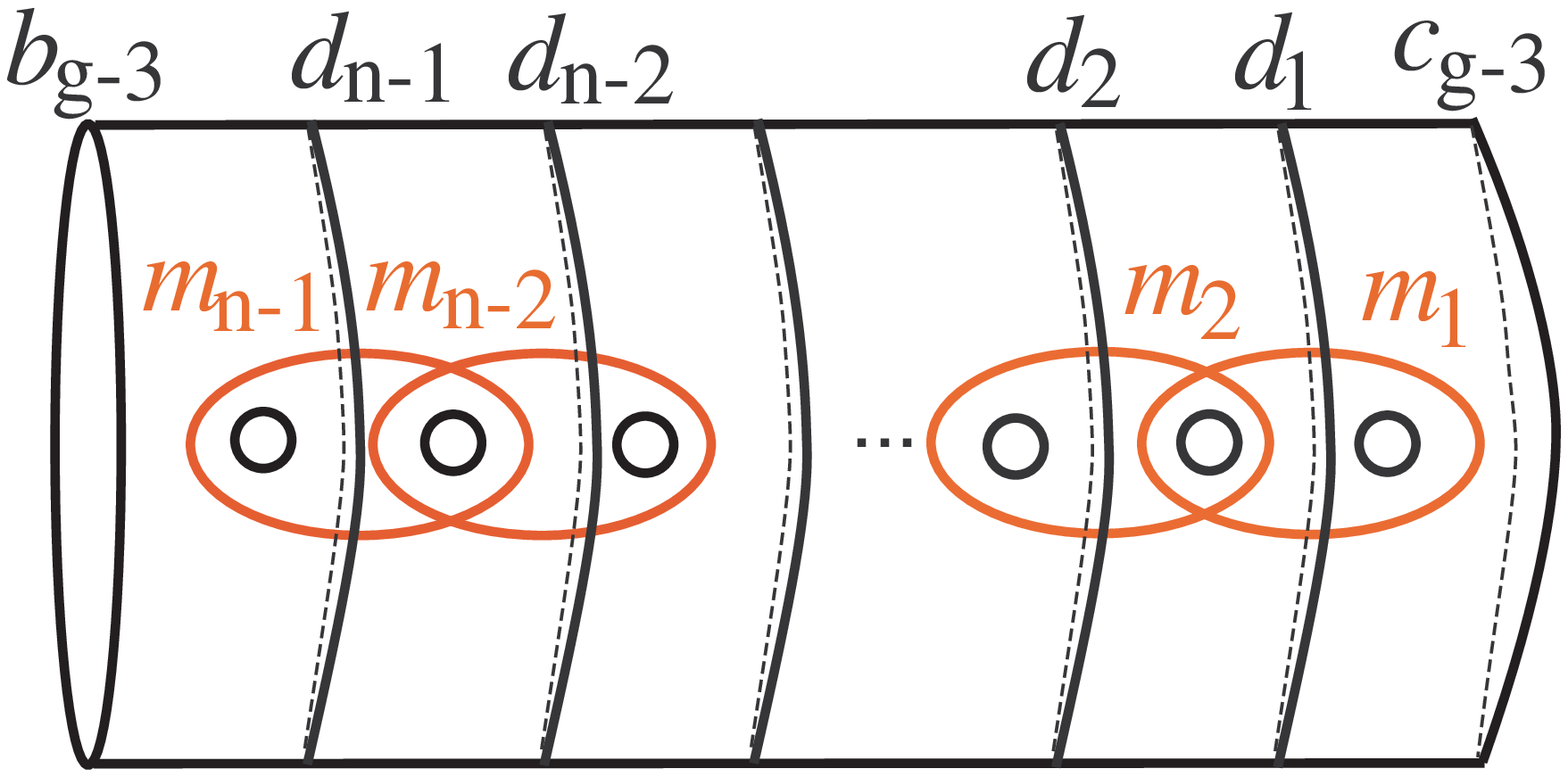}

\hspace{-1cm} (iii) \hspace{6.5cm} (iv)
 
\caption{Curve configuration XIII} \label{fig35}
\end{center}
\end{figure}

In the following lemma we consider the curves given in Figure \ref{fig35}. Let $t_x$ be the
Dehn twist about $x$. Let $\sigma_i$ be the half twist along $m_i$ (i.e. the elementary braid supported in a regular neighborhood of an arc connecting the two boundary components of $N$ that $m_i$ separates so that $\sigma_i ^2$ is a right Dehn twist along $m_i$.) Let $y= y_{u, a}$ be the crosscap slide of $u$ along $a$. We recall that the crosscap slide is 
defined as follows: Consider a M\"{o}bius band $A$ with one hole. By attaching another M\"{o}bius band $B$ to $A$ 
along one of the boundary components of $A$ we get a Klein bottle $K$ with one hole. By sliding $B$ once along the 
core of $A$, we get a homeomorphism of $K$ which is identity on the boundary of $K$. This homeomorphism can be 
extended by identity to a homeomorphism of the whole surface if $K$ is embedded in a surface. The isotopy class of this homeomorphism is called a crosscap slide. Let $\xi_1$ be the boundary slide of $z$ along $v_1$, and $\xi_2$ be the boundary slide of $z$ along $v_2$. We recall that  boundary slide $\xi_1$ is defined as follows: Let $A$ be a regular neighborhood of $z \cup v_1$ on $N$. Then $A$ is a M\"{o}bius band with one hole $z$ (a nonorientable surface of genus one with two boundary components). Let $k$ be the other boundary component of $A$. By sliding $z$ along the core of $A$ (in the direction of $v_1$) we get a homeomorphism which is identity on $k$. This homeomorphism can be extended by identity to a homeomorphism of $N$. The isotopy class of this homeomorphism is called boundary slide of $z$ along $v_1$.
  
\begin{lemma}
\label{prop1} If $g \geq 6$ and $g$ is even, then $Mod_N$ is generated by $\{t_x: x \in \{a, c, a_1, a_2,$ $\cdots, a_{g-3}, b_1, b_3, \cdots, b_{g-3}, c_1, c_3, \cdots, c_{g-3},$
$ d_1, d_2, \cdots, d_{n-1}\} \} \cup \{\sigma_1, \sigma_2, \cdots, \sigma_{n-1}, y, \xi_1, \xi_2\}$.\end{lemma}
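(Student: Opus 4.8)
The statement concerns only the structure of $Mod_N$ and involves no reference to $\lambda$, so the plan is to reduce it to a known finite generating set. I would start from one of the standard generating sets for the mapping class group of a nonorientable surface with (nonfixed) boundary, as provided by the work of Korkmaz and of Stukow--Szepietowski: $Mod_N$ is generated by the twist subgroup $\mathcal{T}_N$ (the subgroup generated by all Dehn twists about two-sided simple closed curves), together with one crosscap slide, one boundary slide for each boundary component, and the half-twists that realize the Coxeter generators of the symmetric group acting on the $n$ boundary components. The listed half-twists $\sigma_1,\dots,\sigma_{n-1}$ are exactly such Coxeter generators, so it remains to express the remaining standard generators in terms of the listed ones. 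The task thus splits into three parts: (1) the listed twists generate $\mathcal{T}_N$; (2) the listed crosscap slide $y=y_{u,a}$, together with the listed twists, produces every crosscap slide; (3) the listed boundary slides $\xi_1,\xi_2$, together with the $\sigma_i$ and the listed twists, produce every boundary slide.

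For part (1) the point is that the collection $\{a,c,a_1,\dots,a_{g-3},b_1,b_3,\dots,b_{g-3},c_1,c_3,\dots,c_{g-3},d_1,\dots,d_{n-1}\}$ is a filling configuration: cutting $N$ along its members leaves only disks, Möbius bands, and boundary-parallel annuli, and this configuration is exactly the one appearing (up to the change of coordinates principle) in the cited generation theorem. Hence, following the Humphries-type argument for orientable surfaces adapted to the nonorientable setting, the subgroup generated by these twists already contains twists about all curves obtainable from the configuration by repeated use of the chain relation, the lantern relation, and conjugation inside this subgroup; one checks that every topological type of two-sided simple closed curve on $N$ is reached this way, so $\mathcal{T}_N$ equals the subgroup generated by the listed twists. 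For part (2), $y=y_{u,a}$ is one of the standard crosscap-slide generators, and any other crosscap slide is conjugate to it by a homeomorphism effecting a change of coordinates between the corresponding one-holed Klein bottles; such a homeomorphism is a product of elements already obtained in parts (1) and of the other crosscap slides, which themselves reduce to $y$ by the same argument, so one runs a short descent to finish. For part (3), the boundary slide $\xi_i$ moves the boundary component $z$ through the crosscap determined by the one-sided curve $v_i$; conjugating by the half-twists $\sigma_j$ permutes the boundary component being slid, and conjugating by suitable twists moves the crosscap, so every boundary slide is a product of conjugates of $\xi_1$ or $\xi_2$ by elements already in the subgroup. Combining (1)--(3) with the known generating set gives the lemma; the case $g$ odd and the closed case are handled identically with the corresponding configurations.

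The main obstacle is part (1): one must verify, by an explicit analysis of the complement of the configuration and careful bookkeeping with the chain, lantern, and star relations, that twists about the listed curves really do generate all of $\mathcal{T}_N$ — that is, that this particular configuration is ``adequate'' for the nonorientable Humphries argument. The two delicate points are the dependence on the parity of the genus (the configuration is chosen precisely so that cutting along the $a_i,b_i,c_i$ leaves a one-holed Klein bottle, reflecting $g$ even) and the boundary bookkeeping in parts (2) and (3) — tracking which twists and half-twists are needed to pass between crosscap slides, respectively boundary slides, of different types. Once the combinatorial picture is set up these are routine, so the proof is essentially a reconciliation of the displayed generating set with one already in the literature.
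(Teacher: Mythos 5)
There is a genuine gap, and it sits exactly at the step you yourself flag as the main obstacle. Your part (1) --- that the twists about the listed curves already generate the whole twist subgroup $\mathcal{T}_N$ --- is both unproved and supported by an invalid inference: producing, via chain/lantern relations and conjugation inside the subgroup, a twist about a curve of \emph{every topological type} does not yield all of $\mathcal{T}_N$; you need twists about every \emph{isotopy class}, i.e. you must realize the relevant change-of-coordinates homeomorphisms by elements of the subgroup you are building, and in the nonorientable setting that is precisely the delicate point (it is also where $y$ and the $\xi_i$ normally intervene: conjugates such as $y t_x y^{-1} = t_{y(x)}^{\pm 1}$ are twists that there is no a priori reason to find among products of the listed twists alone). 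Note also that part (1) is stronger than what the lemma requires --- it suffices that the listed twists together with $\sigma_1,\dots,\sigma_{n-1}, y, \xi_1, \xi_2$ generate $Mod_N$ --- and you give no argument or reference that would establish it; as stated it may well fail. The same looseness affects parts (2)--(3): the change-of-coordinates maps by which you conjugate crosscap and boundary slides are only asserted, via a ``short descent,'' to lie in the subgroup already constructed.

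By contrast, the paper's proof is a direct reduction: the listed configuration is, up to the choice of boundary slides, the generating set of Korkmaz's Theorem 4.14 in \cite{K2}, so one simply follows Korkmaz's proof; the only new point is that his boundary slides and $\xi_1,\xi_2$ are conjugate, with conjugating elements that are products of the half twists $\sigma_i$, hence already in the candidate subgroup. Your sketch contains the germ of exactly this (you conjugate boundary slides by the $\sigma_j$ in part (3), and you observe that the configuration matches the one in the known generation theorem); if you verify that identification carefully, parts (1)--(3) collapse to that single conjugacy observation, which is the intended argument. As written, however, the proposal replaces this citation-plus-conjugacy check by a from-scratch Humphries-type argument whose crucial step is not justified.
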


\begin{proof} The proof follows from the proof of Theorem 4.14 given by Korkmaz in \cite{K2} by observing that his boundary slides and our boundary slides are conjugate to each other where the conjugating elements are products of half twists that we consider.\end{proof}\\

Let $G = \{t_x: x \in \{a, c, a_1, a_2,$ $ \cdots, a_{g-3}, b_1, b_3, \cdots, b_{g-3}, c_1, c_3, \cdots, c_{g-3},
d_1, d_2, \cdots,$ $ d_{n-1}\} \} \cup \{\sigma_1, \sigma_2, \cdots, \sigma_{n-1}, y, \xi_1, \xi_2\}$, where the curves are as shown in Figure \ref{fig35}.

\begin{figure}
\begin{center}
\hspace{-0.4cm} \epsfxsize=2.7in \epsfbox{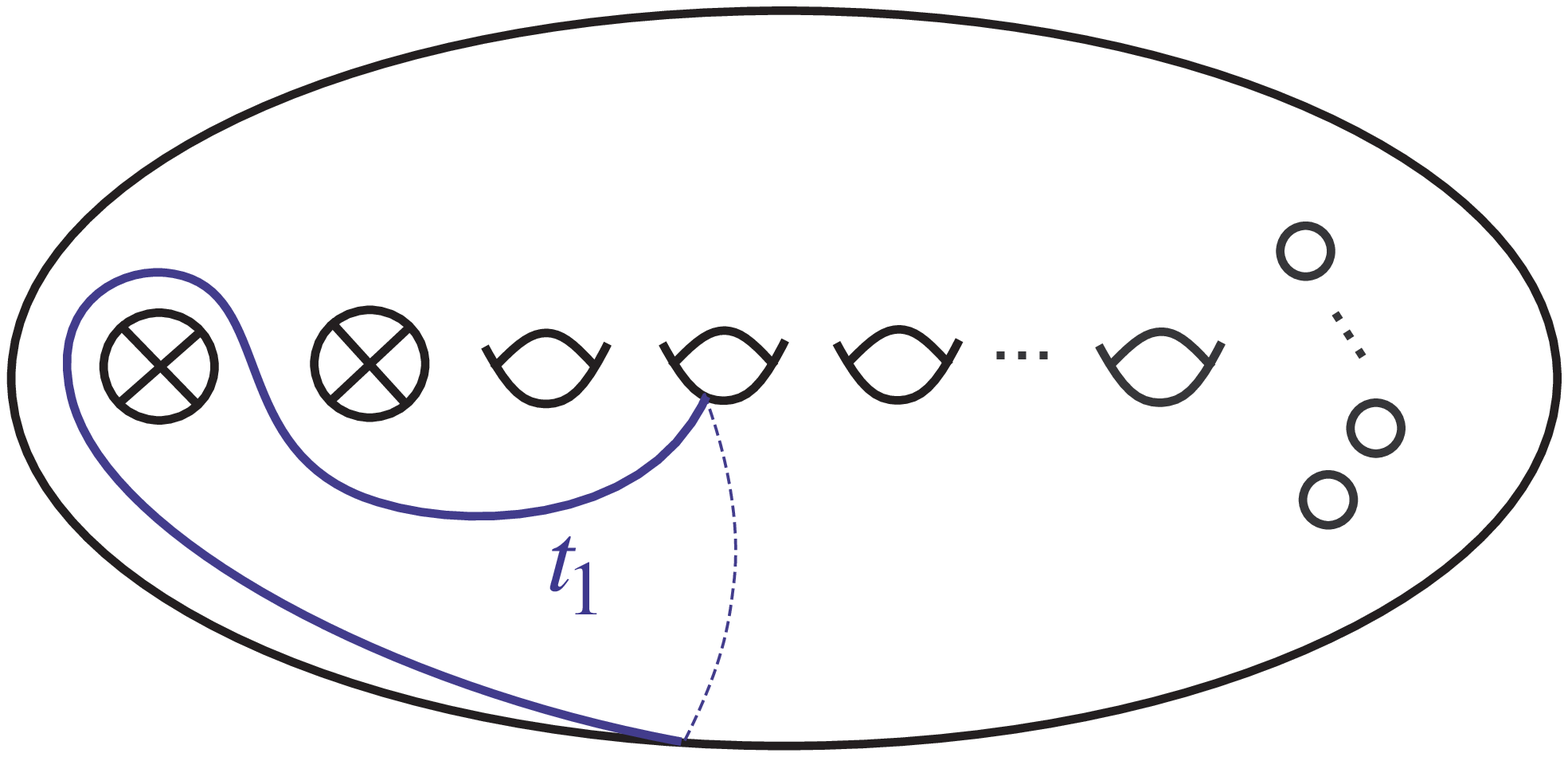} \hspace{0.2cm} \epsfxsize=2.7in \epsfbox{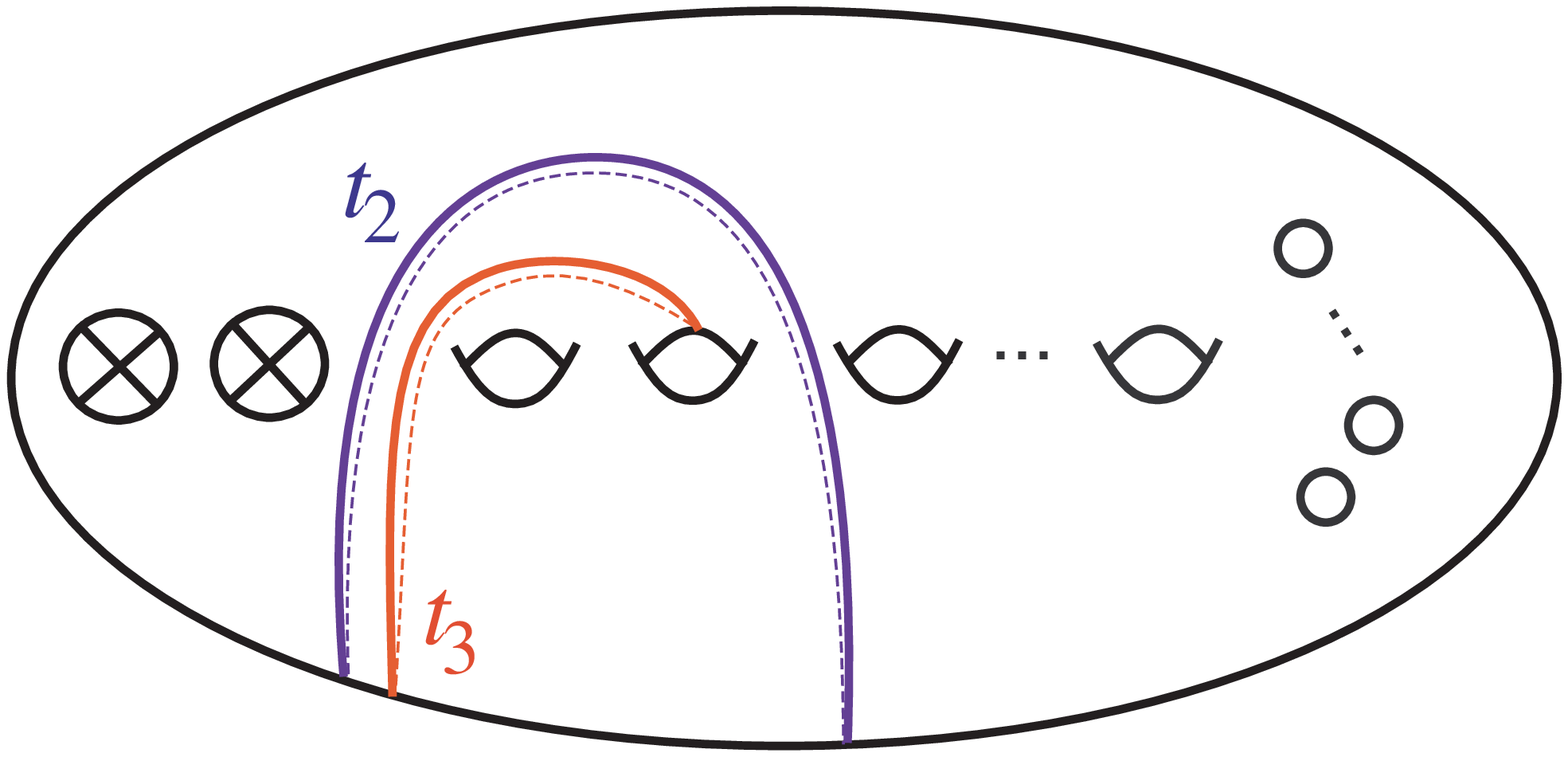} 

\hspace{-1cm} (i) \hspace{6.5cm} (ii)

\hspace{-0.4cm} \epsfxsize=2.7in \epsfbox{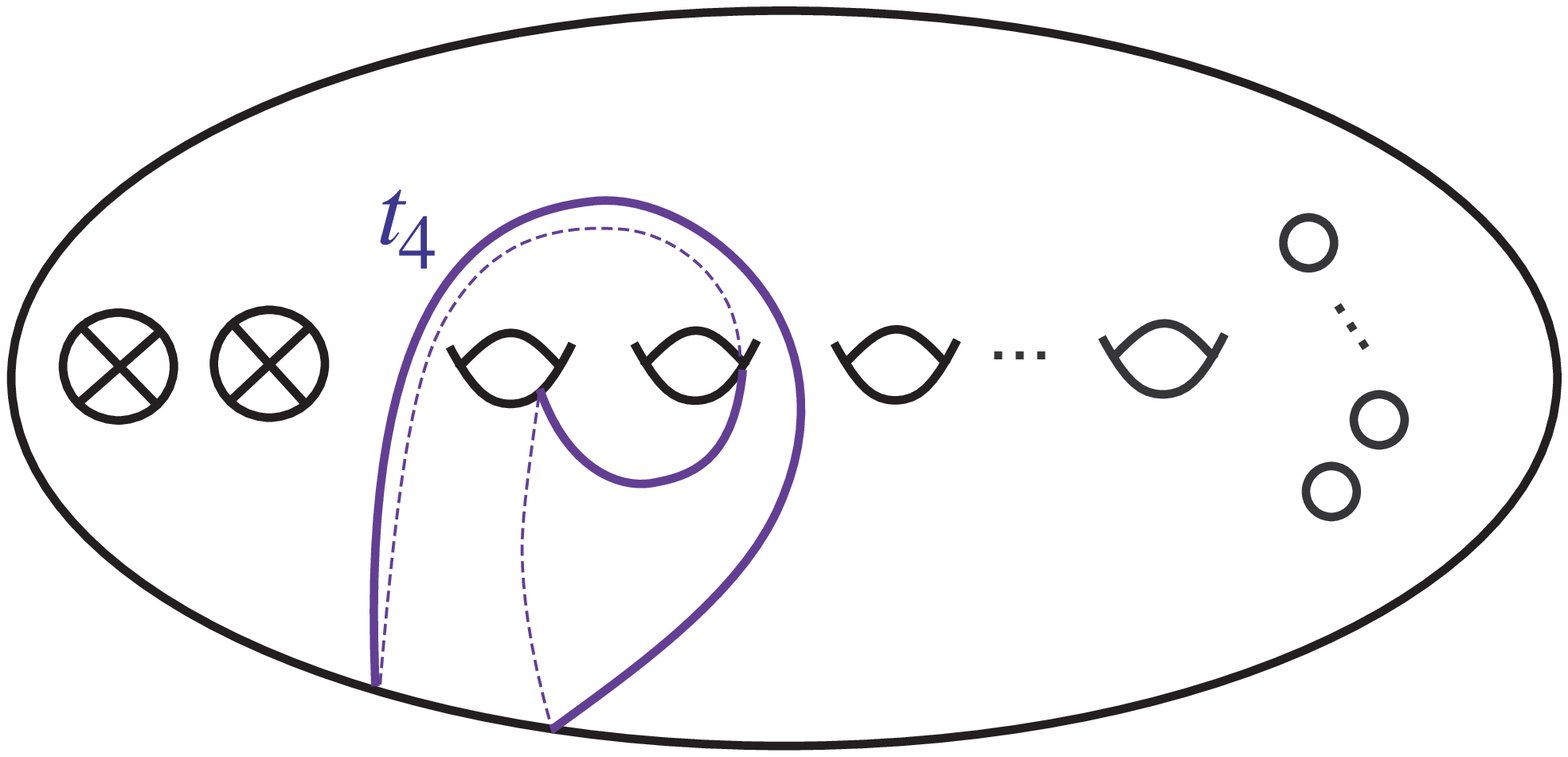} \hspace{0.2cm}  \epsfxsize=2.7in \epsfbox{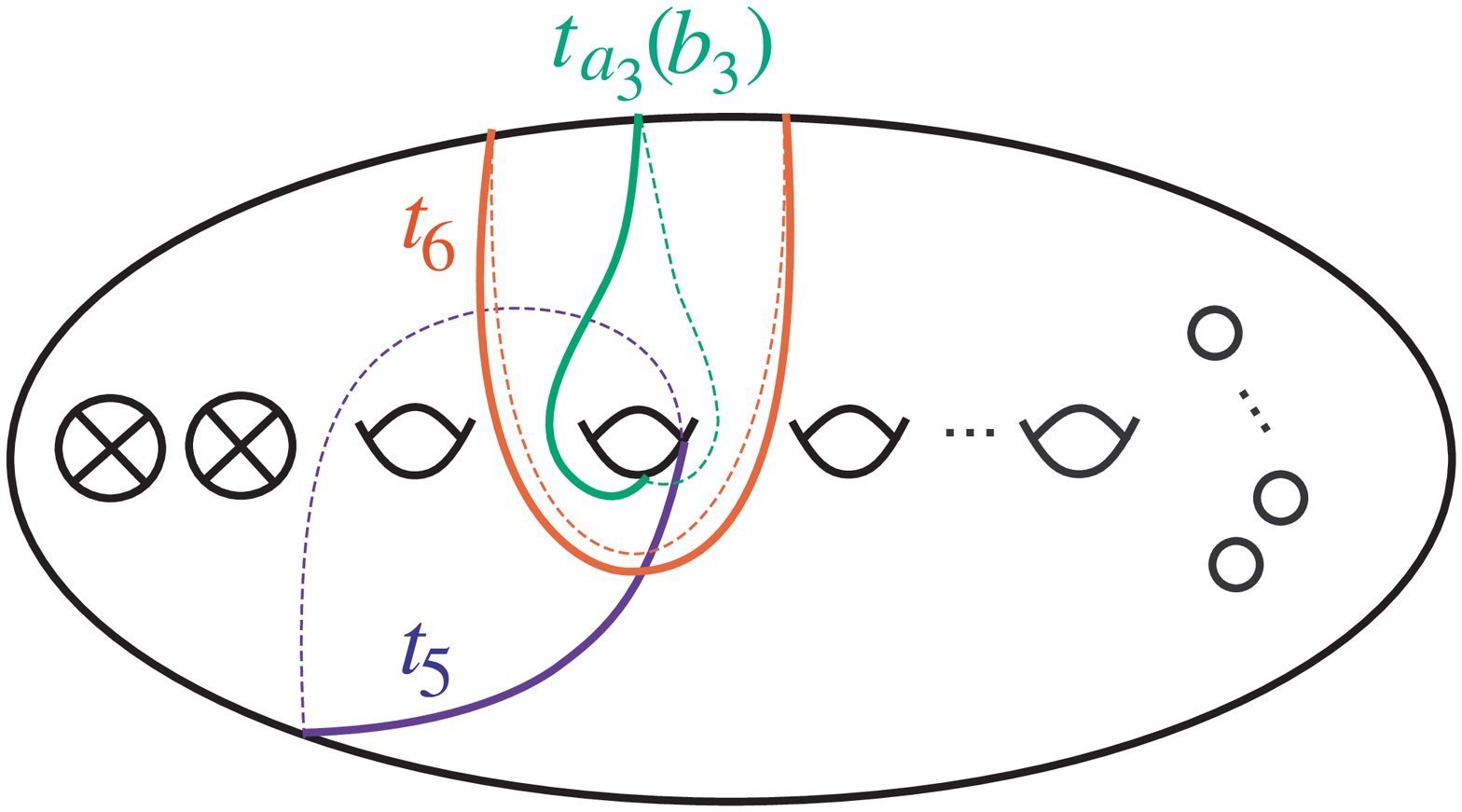}  

\hspace{-1cm} (iii)   \hspace{6.5cm} (iv)

\hspace{-0.4cm} \epsfxsize=2.7in \epsfbox{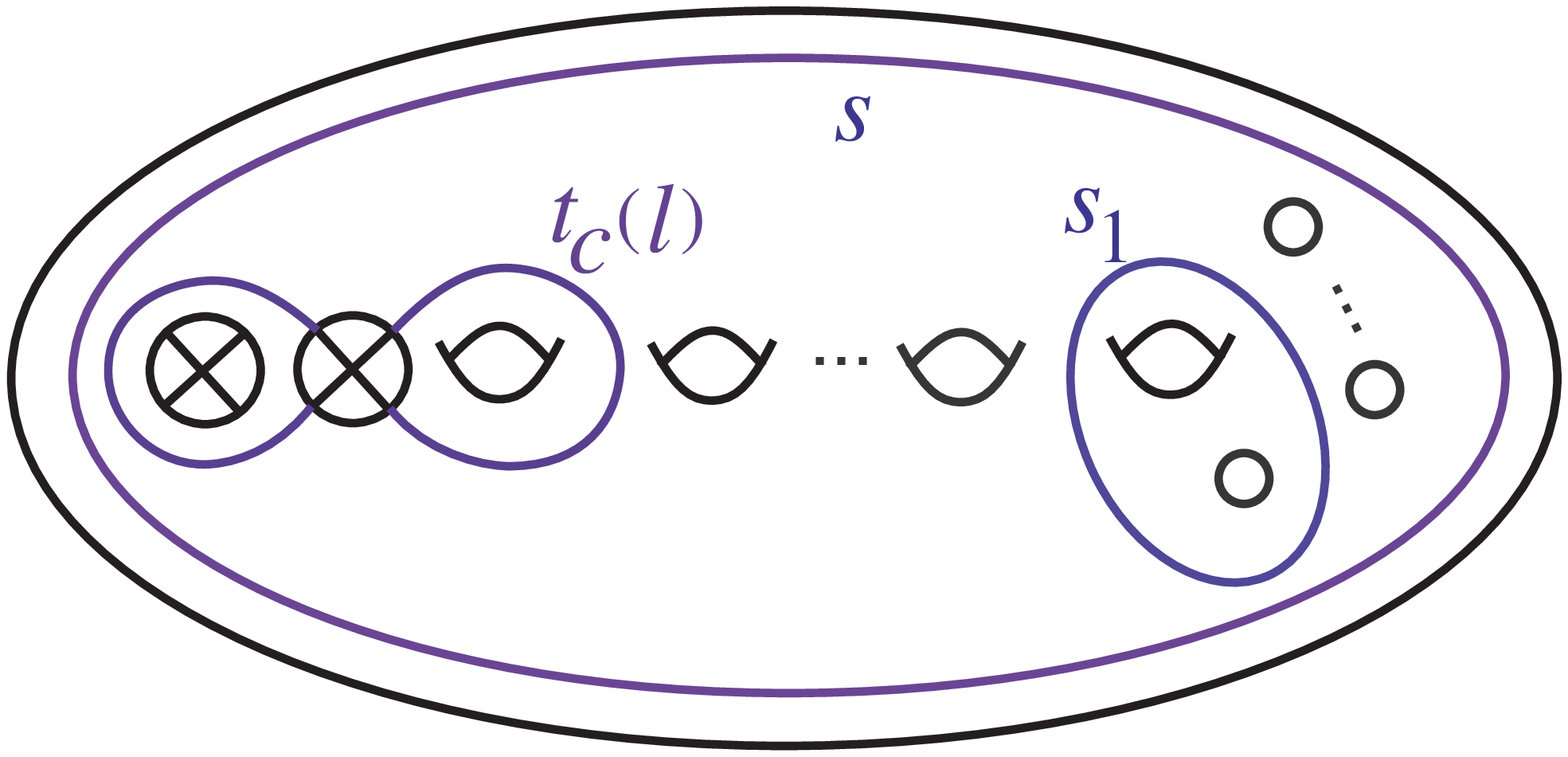} \hspace{0.2cm} \epsfxsize=2.7in \epsfbox{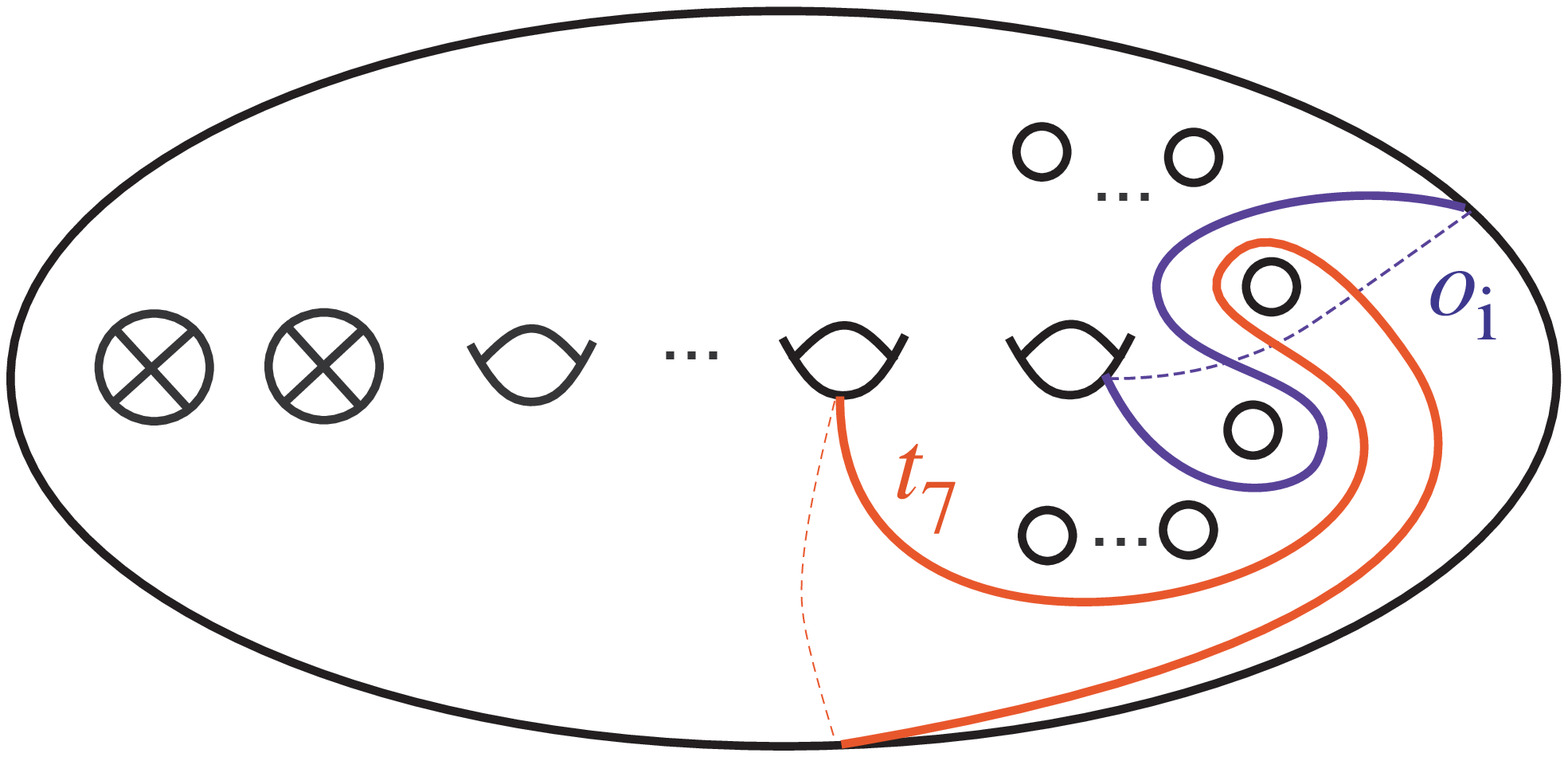}

\hspace{-1cm} (v) \hspace{6.5cm} (vi)

 \hspace{-0.4cm} \epsfxsize=2.7in \epsfbox{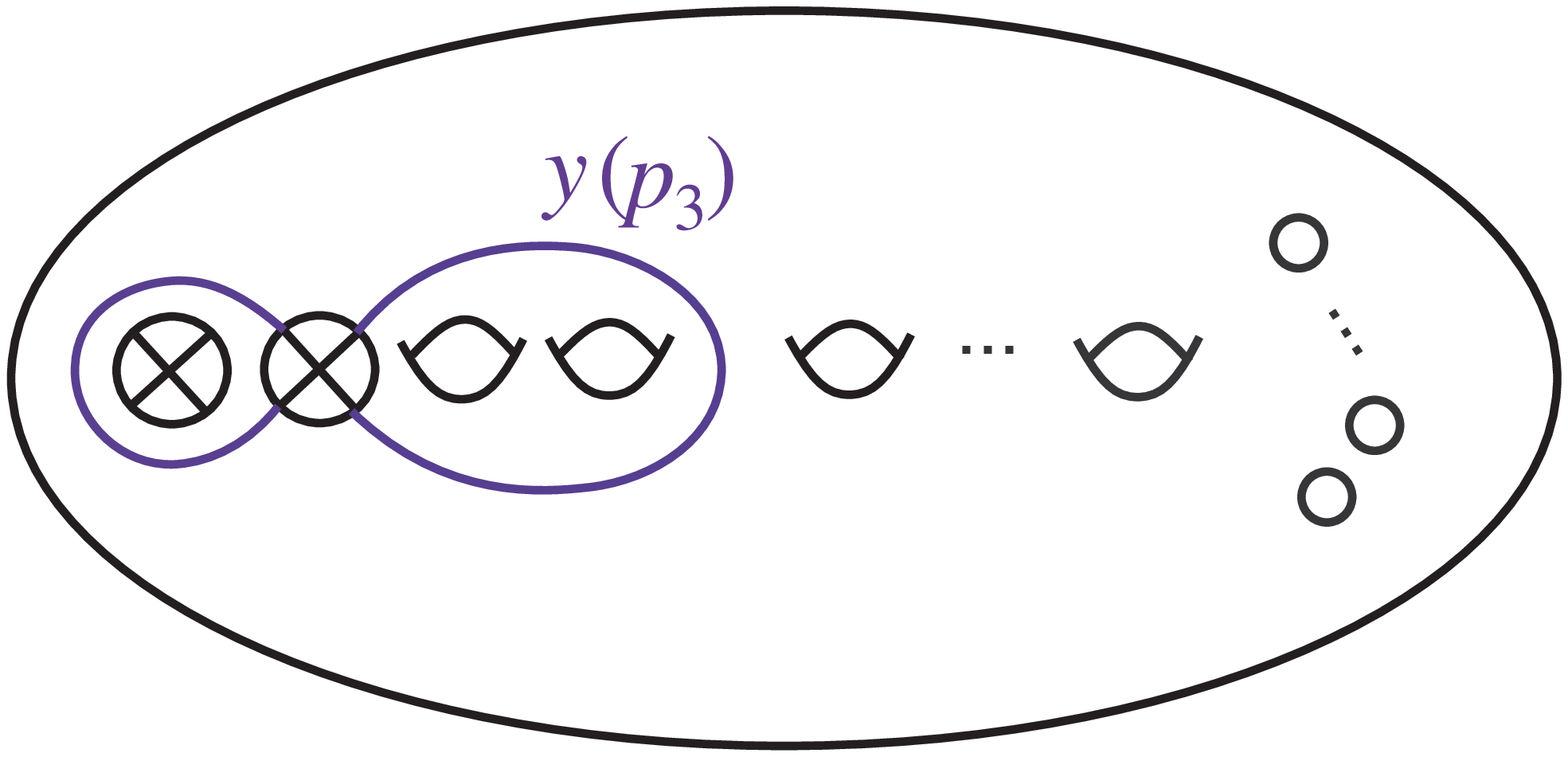} \hspace{0.2cm} \epsfxsize=2.7in \epsfbox{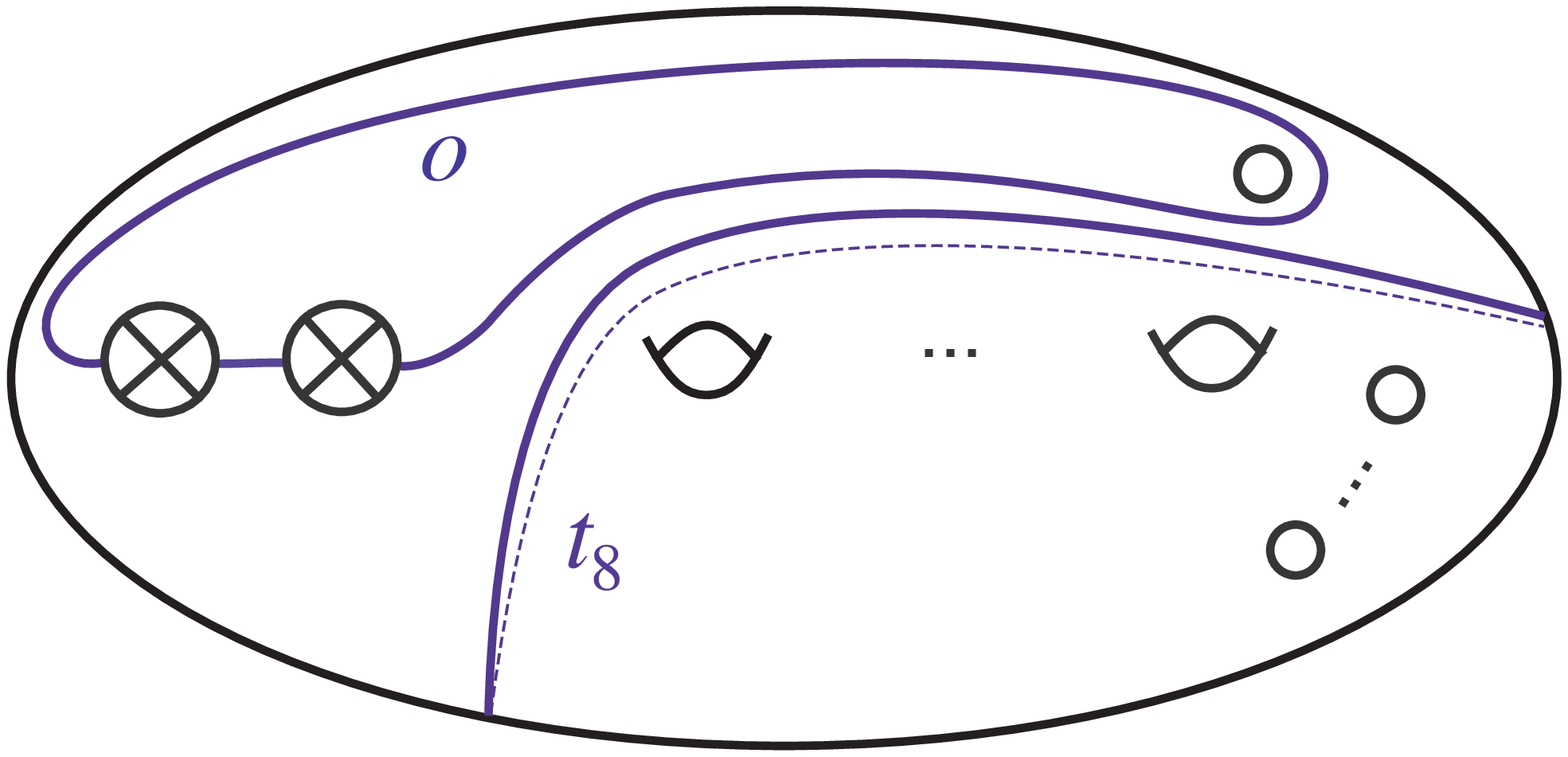}

\hspace{-1cm} (vii) \hspace{6.5cm} (viii)

\caption{Curve configuration XIV} \label{fig-new}
\end{center}
\end{figure}

\begin{lemma} \label{prop1} Suppose $g \geq 6$ and $g$ is even. $\forall \ f \in G$, $\exists$ a set $L_f \subset \mathcal{T}(N)$
such that $L_f$ has trivial stabilizer and $\lambda([x])= h([x])$ $\ \forall \ x \in L_f \cup f(L_f)$.\end{lemma}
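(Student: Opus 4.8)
The plan is to go through the generators $f\in G$ one by one; since the Dehn twists $t_{a_i}$, $t_{b_j}$, $t_{c_k}$, $t_{d_l}$ are pairwise interchanged by symmetries of the curve system $\mathcal{C}\cup\mathcal{B}_0\cup\mathcal{B}_1\cup\mathcal{B}_2$, and likewise the half twists $\sigma_i$ among themselves and $\xi_1,\xi_2$ with each other, there are only finitely many topological cases to treat, one for each of the configurations drawn in Figure~\ref{fig-new}. For each $f$ I would take $L_f$ to be the configuration of that figure corresponding to $f$, and verify three things: (1) $L_f$ has trivial stabilizer; (2) $\lambda([x])=h([x])$ for all $x\in L_f$; (3) $\lambda([x])=h([x])$ for all $x\in f(L_f)$.

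For (1) I would use that any set containing a set with trivial stabilizer again has trivial stabilizer, and arrange that each $L_f$ contains one of the configurations $\CC_1,\dots,\CC_4$ of Lemma~\ref{abc}; alternatively $L_f$ can be cut open along its curves exactly as in the proof of that lemma, reducing a homeomorphism fixing every curve of $L_f$ to its restrictions to disks, annuli, M\"obius bands and one- or two-holed Klein bottles, each of which is then forced to be isotopic to the identity relative to its boundary. For (2) I would arrange that the curves of $L_f$ lie in $\mathcal{C}\cup\mathcal{B}_0\cup\mathcal{B}_1\cup\mathcal{B}_2$, on which $\lambda$ and $h$ agree by Lemmas~\ref{curves}, \ref{B_0-even}, \ref{B_1-even} and \ref{A-even}; for any extra curve I would instead exhibit it as the unique isotopy class satisfying a prescribed list of geometric constraints (disjointness, geometric intersection one or two, bounding a pair of pants or a twice-holed projective plane together with named curves, adjacency in a top dimensional $P$-$S$ decomposition) with respect to curves already under control. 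All these constraints are preserved by $\lambda$ by Lemmas~\ref{adjacent}, \ref{piece1}, \ref{nonsep}, \ref{sep1}, \ref{intone}, \ref{int-twi} and \ref{int-twi-2} together with injectivity and superinjectivity, so the equality $\lambda([x])=h([x])$ propagates exactly as in Section~2 and in the proofs of Lemmas~\ref{curves} and \ref{A-even}.

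Step (3) is the main content and is the reason the particular configurations of Figure~\ref{fig-new} are chosen. For each $f$ I would compute $f(L_f)$ explicitly. When $f=t_x$ is a Dehn twist I would pick $L_f$ so that all but a few of its curves are disjoint from $x$, hence fixed by $f$; the finitely many moved curves $t_x([c])$ are then again pinned down by geometric constraints relative to the curves of $L_f$ (which are controlled by (2)), so $\lambda([t_x([c])])=h([t_x([c])])$ follows as in (2). The half twists $\sigma_i$ are handled the same way, using in addition that $\sigma_i$ permutes a pair of boundary components of $N$ and that a curve cutting off a pair of pants with one of those boundary components and nonorientable complement is recognized as in Lemma~\ref{piece2-aa}. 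For the crosscap slide $y$ and the boundary slides $\xi_1,\xi_2$ I would choose $L_f$ supported near the one- or two-holed Klein bottle region carrying the slide, so that $f$ moves only curves whose images still sit in regions where the Section~2 detection of Klein bottles, M\"obius bands and twice-holed projective planes applies, and so that each such image is still the unique curve realizing its list of intersection and $P$-$S$-decomposition data.

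I expect step (3) for $y$, $\xi_1$ and $\xi_2$ to be the main obstacle. Unlike Dehn twists, these homeomorphisms fix no curve merely because it is disjoint from a single curve; the images $f([c])$ of standard curves genuinely wind around crosscaps and are not among the standard families; and one must design $L_f$ carefully so that each such image is nevertheless characterized uniquely by intersection numbers (including the intersection-number-two patterns of Lemmas~\ref{int-twi} and \ref{int-twi-2}) and by adjacency relations involving only curves whose $\lambda$-image is already known, while keeping $L_f$ large enough to retain a trivial stabilizer. Checking that the configurations in Figure~\ref{fig-new} meet both requirements, case by case, is where the real work lies.
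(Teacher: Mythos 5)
Your outline follows the same skeleton as the paper's argument: take $L_f$ among the trivial-stabilizer configurations $\CC_1,\dots,\CC_4$ of Lemma \ref{abc}, extend the agreement $h([x])=\lambda([x])$ from $\mathcal{C}\cup\mathcal{B}_0\cup\mathcal{B}_1\cup\mathcal{B}_2$ to the remaining curves of those configurations by exhibiting each as the unique isotopy class with prescribed intersection data, and then control the finitely many curves moved by each generator $f$. But the decisive step is deferred rather than carried out, and at one point the plan as stated would actually fail. For $f=t_x$ a Dehn twist, the moved curve $t_x(y)$ is \emph{not} pinned down by disjointness and intersection constraints relative to curves already under control: up to isotopy there are exactly two curves meeting $x$ and $y$ once and disjoint from the boundary $z$ of a regular neighborhood of $x\cup y$, namely $t_x(y)$ and $t_x^{-1}(y)$, and these have identical intersection data with every reference curve available at that stage, so superinjectivity alone cannot separate them. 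The paper's proof resolves this chirality ambiguity by first pinning a single anchor, $t_{a_3}(b_3)$, through a chain of auxiliary curves $t_2,\dots,t_6$ whose handedness is ultimately inherited from the normalization of $\lambda([l])$ by the reflection $\phi_*$ in Lemma \ref{curves}, and then propagating the correct choice to all needed twists via the observation that for $|x\cap y_1|=|x\cap y_2|=1$, $y_1\cap y_2=\emptyset$, one has $i(t_x(y_1),t_x(y_2))=0$ while $i(t_x(y_1),t_x^{-1}(y_2))\neq 0$. Your proposal never identifies this mechanism, and without it step (3) breaks for every twist generator, and consequently also for $t_c$, the crosscap slide $y$ and the boundary slides, whose treatment in the paper leans on already-identified twisted curves (for instance $y(a_1)=t_c(l)$ and $s_1=(t_{d_1}^{-1}t_{c_{g-3}})(a_{g-3})$).

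Two smaller points: the sets $L_f$ in the paper are the configurations of Lemma \ref{abc} (Figure \ref{fig-l1a}), not the configurations of Figure \ref{fig-new}, whose curves $t_1,\dots,t_8,s_1,s$ serve only as auxiliary characterizing data; and for $f=\xi_2$ the paper takes $L_f=\xi_2^{-1}(\CC_4)$, so that $f(L_f)=\CC_4$ is the standard configuration and only the single curve $\xi_2^{-1}(c)=\xi_1(c)$ needs separate treatment — a trick worth adopting, since it spares you from characterizing the $\xi_2$-images of a whole configuration. As written, then, your text is a plausible plan whose hard cases — precisely the ones you flag at the end — are the content of the lemma, and one of its stated mechanisms (unique characterization of twisted images) is insufficient as it stands.
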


\begin{proof} We assume that $g \ge 8$.
The case $g=6$ can be proven in a similar way.
We consider the collections $\CC_1, \CC_2, \CC_3, \CC_4$ of curves of Lemma 3.6.
We already know that they have trivial stabilizers.
We first prove that $h([x])= \lambda ([x])$ for all $x \in \CC_1 \cup \CC_2 \cup \CC_3 \cup \CC_4 = \{ a_1, \dots, a_{g-3}, b_1, \dots, b_{g-3}, c_1, \dots, c_{g-3}, d_1, \dots, d_{n-1}, p_3, e,l,j,c,w\}$.
By Lemma 2.14, we know that this is true for $x \in \{ a_1, \dots, a_{g-3}, b_1, \dots, b_{g-3}, c_1, \dots, c_{g-3}, d_1, \dots, 
\allowbreak
d_{n-1}, e,l\}$.
The curve $p_3$ is the unique curve up to isotopy disjoint from $a_1, a_2, a_3, a_5, b_5, c_5, p$, which intersects each of $a_4, b_1,c_1$ once, which intersects $l$ nontrivially, and which bounds a pair of pants in $N$ together with $a_1$ and $a_3$.
Since $h([x]) = \lambda ([x])$ for all these curves and $\lambda$ preserves these properties, we have $h([p_3])= \lambda ([p_3])$.
The curve $c$ is the unique curve up to isotopy disjoint from $a_1,a_2,a_3,b_1,b_3,c_1,c_3,a$ which intersects $l$ nontrivially. 
Since $h([x]) = \lambda([x])$ for all these curves and $\lambda$ preserves these properties, we have $h([c]) = \lambda ([c])$.
The curve $j$ is the unique curve up to isotopy disjoint from $a_2, a_3, \dots, a_{g-3}, b_1, d_{n-1}$ which intersects $b_3$ nontrivially. 
Since $h([x]) = \lambda([x])$ for all these curves and $\lambda$ preserves these properties, we have $h([j]) = \lambda ([j])$.
To study the curve $w$ we use the curve $t_1$ depicted in Figure \ref{fig-new} (i).
The curve $t_1$ is the unique curve up to isotopy disjoint from $a_1,a_2,b_1,c_3,l$ which intersects $c_1$ nontrivially.
Since $h([x]) = \lambda([x])$ for all these curves and $\lambda$ preserves these properties, we have $h([t_1]) = \lambda ([t_1])$.
The curve $w$ is the unique curve up to isotopy disjoint from $b_1,c_1, t_1$ which intersects $a$ nontrivially.
Since $h([x]) = \lambda([x])$ for all these curves and $\lambda$ preserves these properties, we have $h([w]) = \lambda ([w])$.
  
Now, we prove that $h([t_{a_3} (b_3)]) = \lambda ([t_{a_3} (b_3)])$.
To do this we use the curves $t_2, t_3, t_4, t_5,$ $ t_6$ depicted in Figure \ref{fig-new} (ii)-(iv).
The curve $t_2$ is the unique curve up to isotopy disjoint from $a_1, a_2, a_3, a_5, b_5, 
\allowbreak
c_1, c_3, c_5, p$ which intersects $b_1$ nontrivially. 
Since $h([x]) = \lambda([x])$ for all these curves and $\lambda$ preserves these properties, we have $h([t_2]) = \lambda ([t_2])$.
The curve $t_3$ is the unique curve up to isotopy disjoint from $a_1, a_2, b_3, c_1, p$ which intersects $b_1$ nontrivially. 
Since $h([x]) = \lambda([x])$ for all these curves and $\lambda$ preserves these properties, we have $h([t_3]) = \lambda ([t_3])$.
The curve $t_4$ is the unique curve up to isotopy disjoint from $a_2, c_1, p_3, t_2, t_3$ which intersects $c_3$ nontrivially. 
Since $h([x]) = \lambda([x])$ for all these curves and $\lambda$ preserves these properties, we have $h([t_4]) = \lambda ([t_4])$.
The curve $t_5$ is the unique curve up to isotopy disjoint from $a_1, a_2, l, t_2, t_4$ which intersects $c_1$ once. 
Since $h([x]) = \lambda([x])$ for all these curves and $\lambda$ preserves these properties, we have $h([t_5]) = \lambda ([t_5])$.
We have $|a_3 \cap b_3|=1$ and $t_6$ is the boundary curve of a regular neighborhood of $a_3 \cup b_3$.
We have $h([a_3])= \lambda ([a_3])$ and $h([b_3]) = \lambda ([b_3])$, hence, by Lemma 2.6 and Lemma 2.7, $h([t_6]) = \lambda ([t_6])$.
Finally, $t_{a_3} (b_3)$ is the unique curve up to isotopy disjoint from $t_5,t_6$ which intersects $b_3$ nontrivially.
Since $h([x]) = \lambda([x])$ for all these curves and $\lambda$ preserves these properties, we have $h([t_{a_3} (b_3)]) = \lambda ([t_{a_3} (b_3)])$.

Now, we prove that $h ([t_x(y)]) = \lambda ([t_x(y)])$ for all $x,y \in \{ a_1, \dots, a_{g-3}, b_1, \dots, b_{g-3}, c_1,$ $ \dots, c_{g-3}, 
\allowbreak
d_1, \dots, d_{n-1}, a, p_3 \}$.
Let $x,y$ be two curves such that $| x \cap y|=1$, and let $z$ be the boundary curve of a regular neighborhood of $x \cup y$.
Suppose that $h ([x]) = \lambda( [x])$ and $h([y])=\lambda ([y])$.
We have $h([z])= \lambda ([z])$ by Lemma 2.6 and Lemma 2.7.
Up to isotopy, there are exactly two curves that intersect $x$ and $y$ once and are disjoint from $z$: $t_x(y)$ and $t_x^{-1} (y)$.
Since $\lambda$ preserves these properties, either $\lambda ([t_x(y)]) = h([t_x (y)])$, or $\lambda ([ t_x (y)]) = h ([t_x^{-1} (y)])$.
Note that, if $\lambda ([t_x(y)]) = h([t_x(y)])$, then $\lambda ([ t_x^{-1} (y)]) = h ([t_x^{-1} (y)])$, hence $\lambda ([t_y(x)]) = h ([t_y(x)])$, since $t_y(x)$ is isotopic to $t_x^{-1} (y)$.
Let $x,y_1,y_2$ be three curves such that $|x \cap y_1| = |x \cap y_2|=1$ and $y_1 \cap y_2 = \emptyset$.
We assume that $\lambda ([x]) = h ([x])$, $\lambda ([y_1]) = h ([y_1])$ and $\lambda ([y_2]) = h ([y_2])$. 
Observe that $i (t_x (y_1), t_x (y_2))=0$ and $i(t_x(y_1), t_x^{-1} (y_2)) \neq 0$.
So, by the above, if $\lambda ([t_x (y_1)]) = h ([t_x (y_1)])$, then $\lambda ([t_x (y_2)]) = h ([t_x (y_2)])$.
Now, since we already know that $h([t_{a_3} (b_3)]) = \lambda ([t_{a_3} (b_3)])$, the above implies that $\lambda ([t_x(y)]) = h([t_x(y)])$ for all  $x,y \in \{ a_1, \dots, a_{g-3}, b_1, \dots, b_{g-3}, c_1, \dots, c_{g-3}, d_1, \dots, d_{n-1}, a, p_3 \}$.
 
For $f=t_x$, where $x \in \{a_1, \dots, a_{g-3}, b_1, \dots, b_{g-3}, c_1, \dots, c_{g-3}, d_1, \dots, d_{n-1} \}$, we set $L_f = \CC_2$.
By the above, we know that $\lambda ([y]) = h ([y])$ for all $y \in L_f \cup f(L_f)$, except for $y=t_x(l)$ where $x \in \{ a_2, b_1, c_1 \}$.
In the latter case, using that $|a_1 \cap x|=|l \cap x|= 1$, $l \cap a_1 = \emptyset$, and $\lambda ([t_x(a_1)]) = h([t_x(a_1)])$, we show as above that $\lambda ([t_x(l)]) = h ([t_x(l)])$.
 
For $f=t_a$ we set $L_f = \CC_1$.
We know that $\lambda ([x]) = h ([x])$ for all $x \in L_f$, we have $f(x)=x$ for all $x \in \CC_1 \setminus \{ a_1, p_3\}$, and we also know by the above that $h ([t_a(x)]) = \lambda ([t_a(x)])$ for $x \in \{ a_1, p_3\}$.
So, $h ([x]) = \lambda ([x])$ for all $x \in L_f \cup f(L_f)$.

For $f=t_c$ we set $L_f = \CC_2$.
We know that $\lambda ([x]) = h ([x])$ for all $x \in L_f$ and we have $t_c(x)=x$ for all $x \in \CC_2 \setminus \{ l \}$.
So, it remains to show that $ h ([t_c(l)]) = \lambda ([t_c(l)])$.
To do this, we use the curves $s_1$ and $s$ depicted in Figure \ref{fig-new} (v).
We know that $h ([t_{c_{g-3}} (a_{g-3}) ]) = \lambda ([t_{c_{g-3}} (a_{g-3}) ])$.
Using again the same argument, we deduce that $h ([(t_{d_1}^{-1}t_{c_{g-3}}) (a_{g-3}) ]) = \lambda ([(t_{d_1}^{-1}t_{c_{g-3}})(a_{g-3}) ])$.
But $(t_{d_1}^{-1}t_{c_{g-3}}) (a_{g-3})=s_1$, hence $h ([s_1]) = \lambda ([s_1])$.
The curve $s$ is the unique curve up to isotopy disjoint from $a_1, \dots, a_{g-3}, m_1, \dots, m_{n-1}, l, p, s_1$.
Since $h([x]) = \lambda([x])$ for all these curves and $\lambda$ preserves these properties, we have $h([s]) = \lambda ([s])$.
Finally, $t_c (l)$ is the unique curve up to isotopy disjoint from $a_1, a_3, b_3, c_3, s, e_{4,0}$ which intersects $p$ nontrivially.
Since $h([x]) = \lambda([x])$ for all these curves and $\lambda$ preserves these properties, we have  $h([t_c(l)]) = \lambda ([t_c(l)])$.
 
For $f = \sigma_i$, where $i \in \{1, \dots, n-1\}$, we set $L_f = \CC_2$.
We know that $\lambda ([x]) = h ([x])$ for all $x \in L_f$, and we have $\sigma_i (x) = x$ for all $x \in \CC_2 \setminus \{ d_i \}$. 
So, it remains to show that $h ([ \sigma_i (d_i) ]) = \lambda ([ \sigma_i (d_i) ])$.
To do this, we use the curve $t_7$ depicted in Figure \ref{fig-new} (vi).
The curve $t_7$ is the unique curve up to isotopy disjoint from $a_{g-4}, a_{g-3}, c_{g-5}, d_{i+1}, s_1, m_1, \dots, m_{i-1}$ which intersects $c_{g-3}$ nontrivially. 
Since $h([x]) = \lambda([x])$ for all these curves and $\lambda$ preserves these properties, we have $h([t_7]) = \lambda ([t_7])$.
The curve $\sigma_i (d_i)$, which is shown as $o_i$ in Figure \ref{fig-new} (vi), is the unique curve up to isotopy disjoint from $d_{i-1}, d_{i+1}, t_7$ which intersects $d_i$ nontrivially.
Since $h([x]) = \lambda([x])$ for all these curves and $\lambda$ preserves these properties, we have $h([\sigma_i (d_i)]) = \lambda ([\sigma_i (d_i)])$.
 
For $f=y$ we set $L_f = \CC_1$.
We know that $\lambda ([x]) = h ([x])$ for all $x \in L_f$, and we have $y (x) = x$ for all $x \in \CC_1 \setminus \{ a_1, p_3 \}$. 
Moreover, we have $y(a_1) = t_c(l)$, hence, by the above, $h ([ y(a_1) ]) = \lambda ([ y(a_1) ])$.
It remains to show that $h ([ y(p_3) ]) = \lambda ([ y(p_3) ])$.
The curve $y(p_3)$ is depicted in Figure \ref{fig-new} (vii).
Observe that $y(p_3)$ is the unique curve up to isotopy disjoint from $a_1, a_2, a_3, a_5, b_5, c_5, s, t_c(l)$ which intersects $l$ nontrivially.
Since $h([x]) = \lambda([x])$ for all these curves and $\lambda$ preserves these properties, we have $h ([ y(p_3) ]) = \lambda ([ y(p_3) ])$.
 
For $f = \xi_1$ we set $L_f = \CC_3$.
We know that $h ([x]) = \lambda ([ x ])$ for all $x \in \CC_3$, and we have $\xi_1(x) = x$ for all $x \in \CC_3 \setminus \{ c\}$.
So, it remains to show that $h ([ \xi_1(c) ]) = \lambda ([\xi_1 (c) ])$.
The curve $\xi_1 (c)$ is drawn as the curve $o$ in Figure \ref{fig-new} (viii).
We consider the curve $t_8$ depicted in Figure \ref{fig-new} (viii).
We have $t_8 = v_{(g-2)/2, n-1}$, hence, by Lemma 3.2, $h ([ t_8 ]) = \lambda ([ t_8 ])$.
The curve $\xi_1 (c)$ is the unique curve up to isotopy disjoint from $a,c,s,t_8$ which intersects $p$ nontrivially.
Since $h([x]) = \lambda([x])$ for all these curves and $\lambda$ preserves these properties, we have $h ([ \xi_1 (c) ]) = \lambda ([ \xi_1 (c) ])$.
 
For $f = \xi_2$ we set $L_f = \xi_2^{-1}(\CC_4)$.
We know that $h([x]) = \lambda ([x])$ for all $x \in f(L_f) = \CC_4$, and we have $\xi_2^{-1} (x) = x$ for all $x \in \CC_4 \setminus \{c\}$.
So, it remains to show that $h ([ \xi_2^{-1}(c) ]) = \lambda ([\xi_2^{-1} (c) ])$.
But $\xi_2^{-1}(c) = \xi_1 (c)$, hence, by the above, $h ([ \xi_2^{-1}(c) ]) = \lambda ([\xi_2^{-1} (c) ])$.
\end{proof}
  
\begin{theorem} \label{A} If $g \geq 6$ and $g$ is even, then $h([x]) = \lambda([x])$ for every vertex $[x] \in \mathcal{T}(N)$.\end{theorem}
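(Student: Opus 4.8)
The plan is to transfer the equality $\lambda=h$ from $\mathcal{C}\cup\mathcal{B}_0\cup\mathcal{B}_1\cup\mathcal{B}_2$ to all of $\mathcal{T}(N)$ by using that $Mod_N$ acts on the vertices and that the curves already controlled meet every topological type. First I would replace $\lambda$ by $\psi:=h^{-1}\circ\lambda$; since $h^{-1}$ induces a simplicial automorphism of $\mathcal{T}(N)$ preserving both zero and nonzero geometric intersection, $\psi$ is again a superinjective simplicial self-map, and the assertion to be proved is exactly that $\psi=\Id$ on the vertex set. By Lemma \ref{A-even}, $\psi$ fixes every vertex of $\mathcal{D}:=\mathcal{C}\cup\mathcal{B}_0\cup\mathcal{B}_1\cup\mathcal{B}_2$, and, reading Lemma \ref{prop1} through this substitution, for every generator $f$ of the generating set $G$ of $Mod_N$ there is a subset $L_f\subset\mathcal{T}(N)$ with trivial stabilizer such that $\psi$ fixes every vertex of $L_f\cup f(L_f)$.

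The heart of the argument is to show that $\psi$ commutes with every element of $Mod_N$ acting on vertices; because $G$ generates $Mod_N$ it suffices to check this for each $f\in G$. Fix such an $f$ and consider the two superinjective simplicial self-maps $\psi\circ f$ and $f\circ\psi$ (composites of $\psi$ with the automorphism induced by $f$, in either order). For a vertex $v\in L_f$ one has $(\psi\circ f)(v)=\psi(f(v))=f(v)$ because $f(v)\in f(L_f)$, and $(f\circ\psi)(v)=f(\psi(v))=f(v)$ because $v\in L_f$; thus $\psi\circ f$ and $f\circ\psi$ agree on $L_f$. I would then upgrade this to $\psi\circ f=f\circ\psi$ on all vertices: $L_f$ has trivial stabilizer, and the proof of Lemma \ref{prop1} in fact pins down $\psi$ on the large Dehn-twist configuration $\{t_x(y)\}$ used there, so that $L_f$ enlarged by this data behaves as a rigid configuration in the sense of Aramayona-Leininger \cite{AL2}; together with the change-of-coordinates input of Lemma \ref{abc} this forces two superinjective simplicial maps agreeing there to coincide. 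Composing along a word in $G$ then gives $\psi\circ f=f\circ\psi$ for every $f\in Mod_N$.

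Finally, the sets $\mathcal{C},\mathcal{B}_0,\mathcal{B}_1,\mathcal{B}_2$ were chosen so that $\mathcal{D}$ contains a representative of every $Mod_N$-orbit of isotopy classes of essential two-sided simple closed curves on $N$: the nonseparating classes and the low-complexity classes occur in $\mathcal{C}$, while each $\mathcal{B}_i$ by construction contains a curve of every topological type in its family of separating curves. Hence any vertex of $\mathcal{T}(N)$ has the form $f(w)$ with $w\in\mathcal{D}$ and $f\in Mod_N$, and then $\psi(f(w))=f(\psi(w))=f(w)$ since $\psi$ commutes with $f$ and fixes $w$. Therefore $\psi=\Id$ on the vertex set, i.e. $h([x])=\lambda([x])$ for every vertex $[x]\in\mathcal{T}(N)$.

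I expect the one genuine difficulty to be the middle step: promoting the agreement of $\psi\circ f$ and $f\circ\psi$ on the trivial-stabilizer set $L_f$ to an honest commutation relation. Triviality of the stabilizer alone only controls \emph{mapping classes}, so the argument must invoke that $\psi$ already coincides with the homeomorphism $h$ on enough curves (the twist images controlled in Lemma \ref{prop1}) for $L_f$, enlarged by these, to determine a superinjective simplicial map; equivalently one may organize the whole endgame as an exhaustion of $\mathcal{T}(N)$ by rigid sets each containing $\mathcal{D}$ together with its images under the generators, along the lines of \cite{AL2} and \cite{IvMc}. Everything else — the reduction to $\psi$, the bookkeeping with the generators of $G$, and the orbit-coverage of $\mathcal{D}$ — is routine.
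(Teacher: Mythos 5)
There is a genuine gap, and it sits exactly where you flagged it: the promotion of the agreement of $\psi \circ f_*$ and $f_* \circ \psi$ on $L_f$ to an identity on all of $\mathcal{T}(N)$. Trivial stabilizer (Lemma \ref{abc}) is a statement about \emph{mapping classes} only: if two homeomorphism-induced maps agree on the isotopy classes of $L_f$, they are equal in $Mod_N$. Nothing in the paper upgrades $L_f$ --- even enlarged by the twist images controlled in Lemma \ref{prop1} --- to a rigid set in the Aramayona--Leininger sense, i.e.\ a set on which pointwise agreement forces two \emph{superinjective simplicial maps} to coincide globally. If such a rigidity statement were available, the whole theorem would already follow from Lemma \ref{A-even} (indeed from $\psi$ fixing $\CC_1$ pointwise), since $\psi$ fixes a trivial-stabilizer configuration; the entire difficulty is that this implication is not known a priori. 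As written, your commutation step is circular: to deduce $\psi\circ f_*=f_*\circ\psi$ from agreement on $L_f$ you would need both composites to be induced by homeomorphisms, which is essentially the statement being proved. (A secondary, smaller point: the change-of-coordinates coverage of every topological type is asserted in the paper for the larger set $\mathcal{X}=\mathcal{C}\cup\mathcal{B}_0\cup\mathcal{B}_1\cup\mathcal{B}_2\cup\bigcup_{f\in G}(L_f\cup f(L_f))$, not for $\mathcal{D}$ alone, so you should quantify over $\mathcal{X}$.)

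The paper closes the gap by an exhaustion argument that only ever compares mapping classes. Set $\mathcal{X}_1=\mathcal{X}$ and $\mathcal{X}_n=\mathcal{X}_{n-1}\cup\bigcup_{f\in G}\bigl(f(\mathcal{X}_{n-1})\cup f^{-1}(\mathcal{X}_{n-1})\bigr)$, so that $\mathcal{T}(N)=\bigcup_n\mathcal{X}_n$, and prove $\lambda=h$ on $\mathcal{X}_n$ by induction on $n$. The key observation in the inductive step is that every lemma of Sections 2--3 applies to an arbitrary superinjective simplicial map, in particular to $\lambda\circ f_*$; applying the inductive statement to $\lambda\circ f_*$ yields a mapping class $h_f$ with $h_f=\lambda$ on $f(\mathcal{X}_{n-1})$. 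Since $f(L_f)\subset\mathcal{X}_{n-1}\cap f(\mathcal{X}_{n-1})$, the two \emph{mapping classes} $h_f$ and $h$ agree on $f(L_f)$, and trivial stabilizer is invoked exactly where it is valid to conclude $h_f=h$; the symmetric argument with $L_f\subset\mathcal{X}_{n-1}\cap f^{-1}(\mathcal{X}_{n-1})$ handles $f^{-1}(\mathcal{X}_{n-1})$. This is the rigorous version of the ``exhaustion'' you allude to at the end of your proposal, and it is how your middle step should be repaired: replace the commutation claim by the inductive transport of the already-proved lemmas along $\lambda\circ f_*$, never using trivial stabilizer against a bare superinjective map.
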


\begin{proof} Let $\mathcal{X}= \mathcal{C}_0 \cup \mathcal{B}_0 \cup \mathcal{B}_1 \cup \mathcal{B}_2 \cup $
$\big( \bigcup _{f \in G} (L_f \cup f(L_f) \big ))$. For each vertex $x \in \mathcal{T}(N)$ there exists $r \in Mod_N$ and a
vertex $y \in \mathcal{X}$ such that $r(y)=x$. By Lemma \ref{A-even} and Lemma \ref{prop1}, we know that $h([y]) = \lambda([y])$ for each
$y \in \mathcal{X}$. Define $\mathcal{X}_n$ by induction on $n$. Let $\mathcal{X}_1 = \mathcal{X}$. For $n \geq 2$,
let $\mathcal{X}_n = \mathcal{X}_{n-1} \cup (\bigcup _{f \in G} (f(\mathcal{X}_{n-1}) \cup f^{-1}(\mathcal{X}_{n-1})))$.
We observe that $\mathcal{T}(N) = \bigcup _{n=1} ^{\infty} \mathcal{X}_n$. We prove that $h([x]) = \lambda([x])$ for all
$x \in \mathcal{X}_{n}$ by induction on $n$. We know it for $n=1$. Assume that $n \geq 2$ and $h([x]) = \lambda([x])$ for all
$x \in \mathcal{X}_{n-1}$. Let $f \in G$. There exists $h_f \in Mod_N$ such that $h_f([x]) = \lambda([x])$ for all
$x \in f(\mathcal{X}_{n-1})$. But $f(L_f) \subset \mathcal{X}_{n-1} \cap f(\mathcal{X}_{n-1})$. This implies $h_f = h$
since $f(L_f)$ has trivial stabilizer. Similarly, there exists $h'_f \in Mod_N$ such that $h'_f([x]) = \lambda([x])$ for all
$x \in f^{-1}(\mathcal{X}_{n-1})$. But $L_f \subset \mathcal{X}_{n-1} \cap f^{-1}(\mathcal{X}_{n-1})$ and $L_f$ 
has trivial stabilizer. Hence, $h'_f = h$. So, $h([x]) = \lambda([x])$ for each $x \in \mathcal{X}_n$. Since
$\mathcal{T}(N) = \bigcup _{n=1} ^{\infty} \mathcal{X}_n$, we have $h([x]) = \lambda([x])$ for all $x \in \mathcal{T}(N)$.\end{proof}\\
 
\begin{figure}[htb]
\begin{center}
\hspace{.1cm} \epsfxsize=3.2in \epsfbox{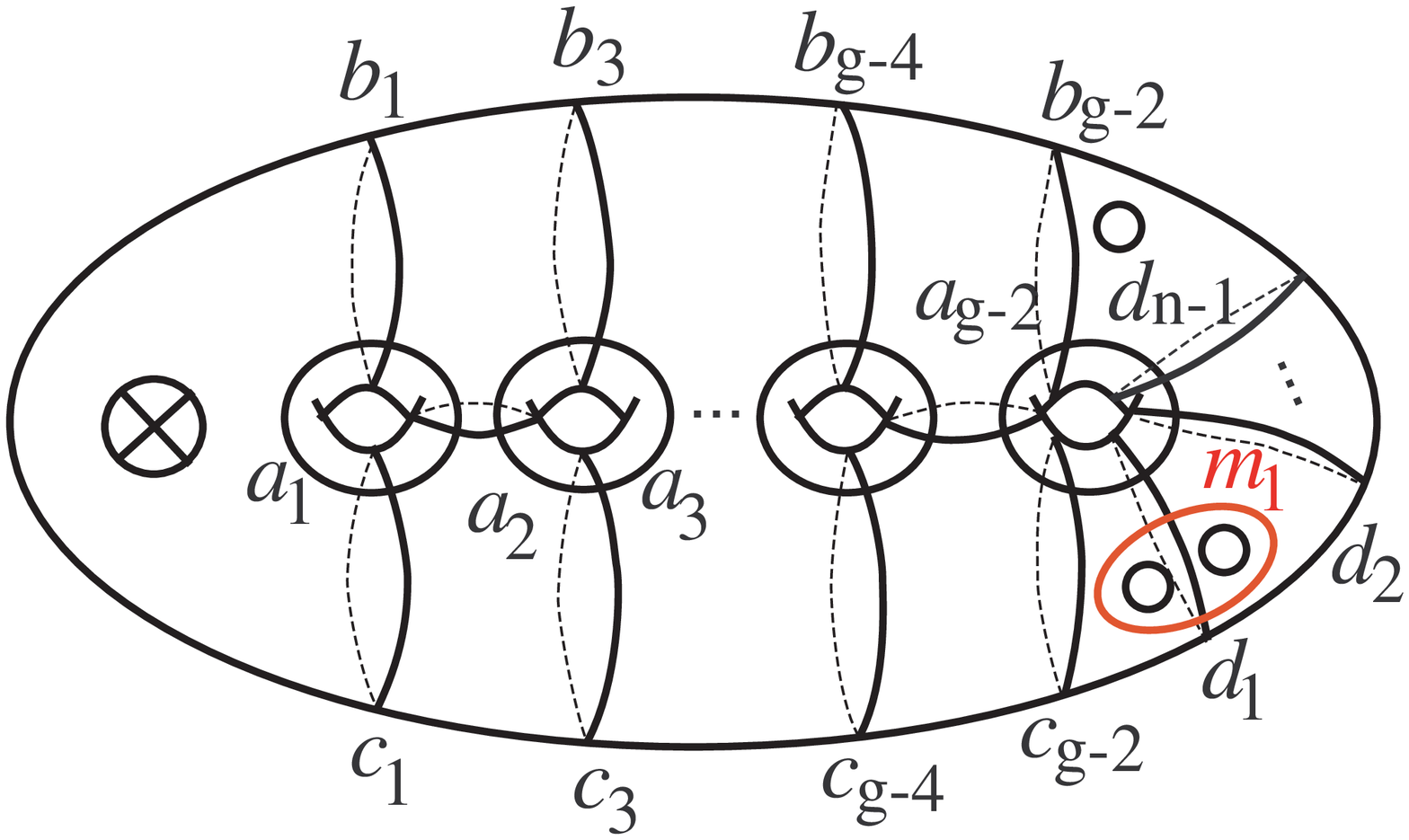} \hspace{-1.3cm} \epsfxsize=3.2in \epsfbox{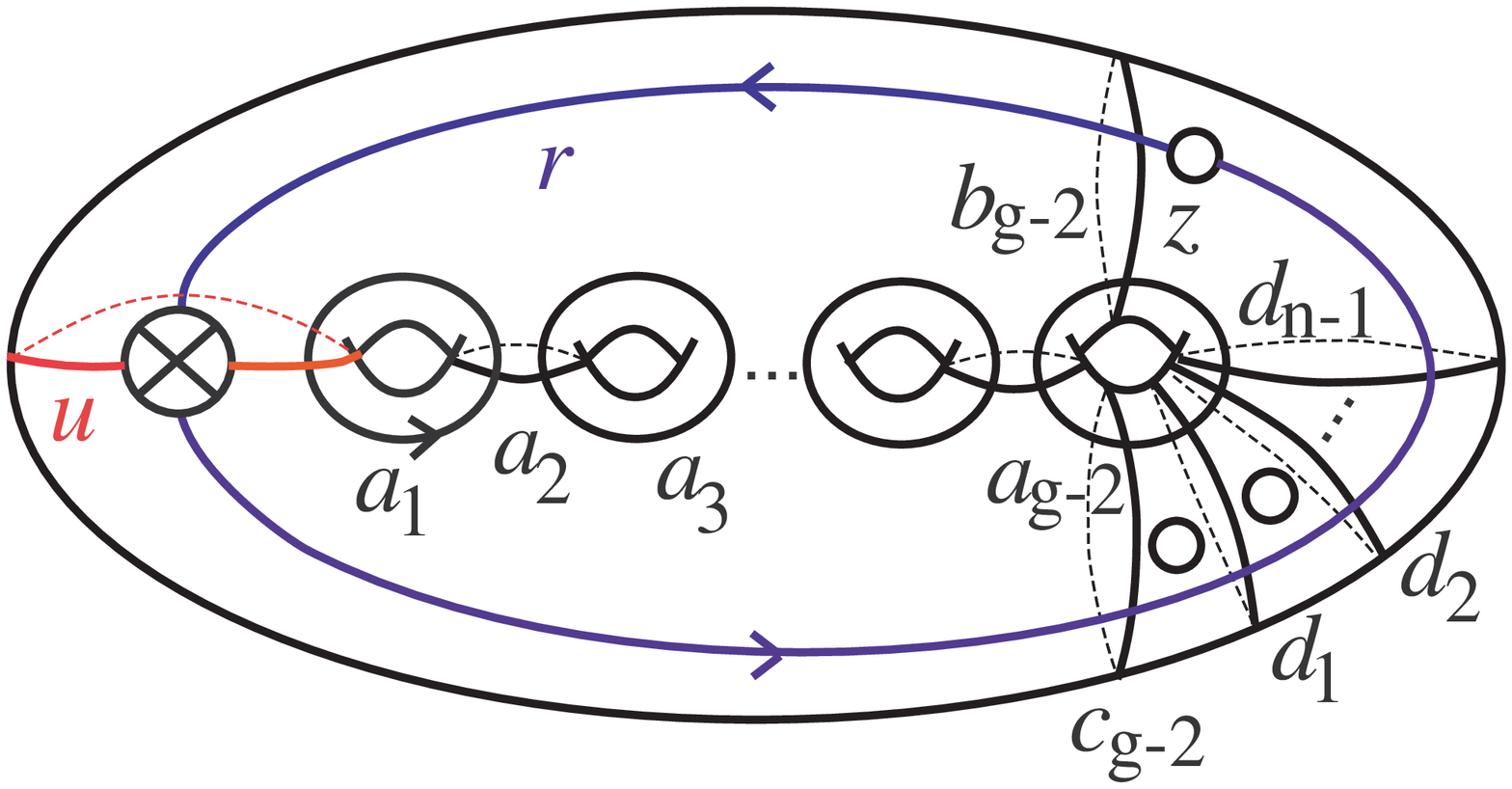}

\hspace{-1cm} (i) \hspace{6.4cm} (ii)

\hspace{.1cm} \epsfxsize=3.2in \epsfbox{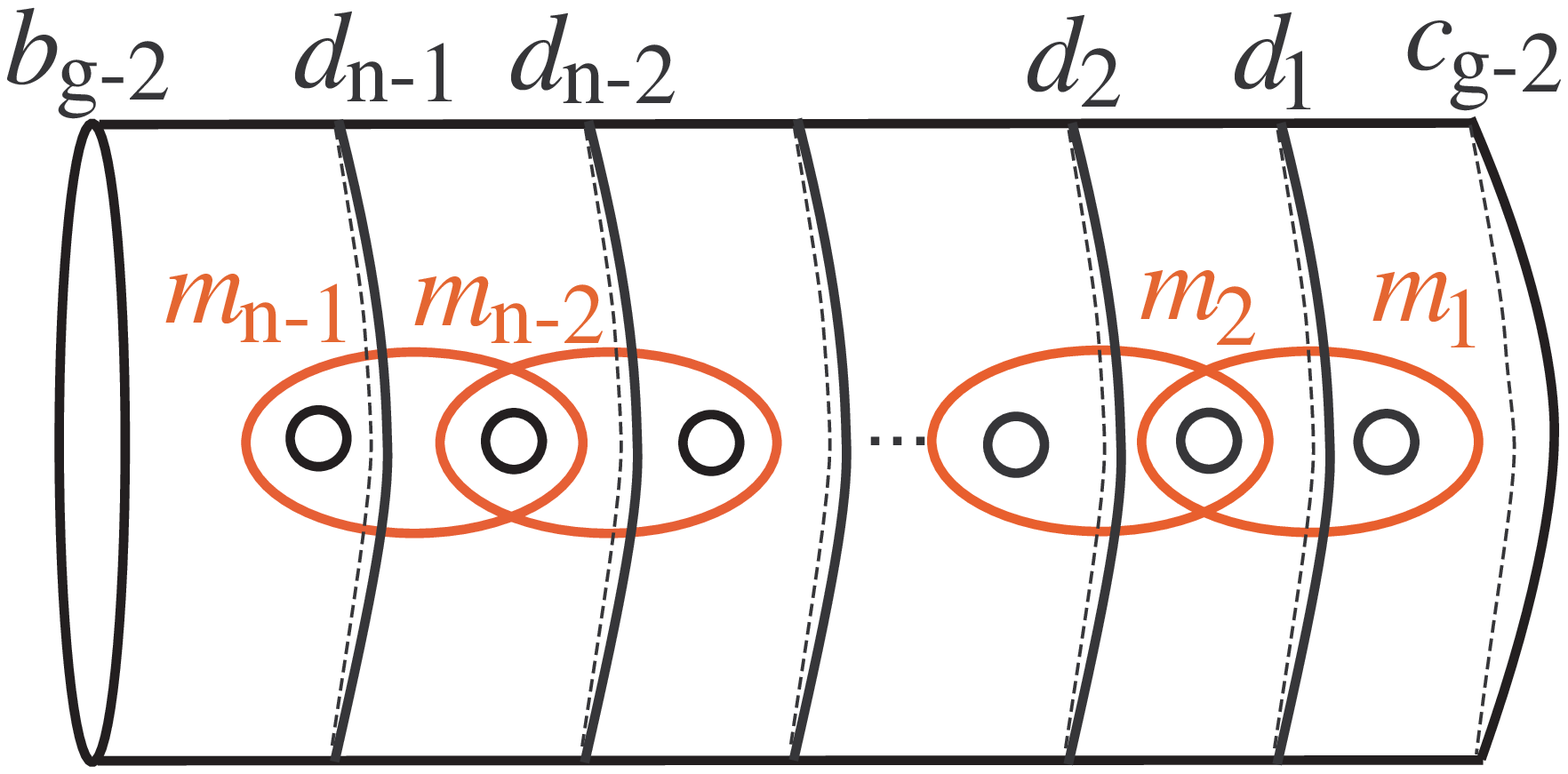}  \hspace{-1.3cm} \epsfxsize=3.2in \epsfbox{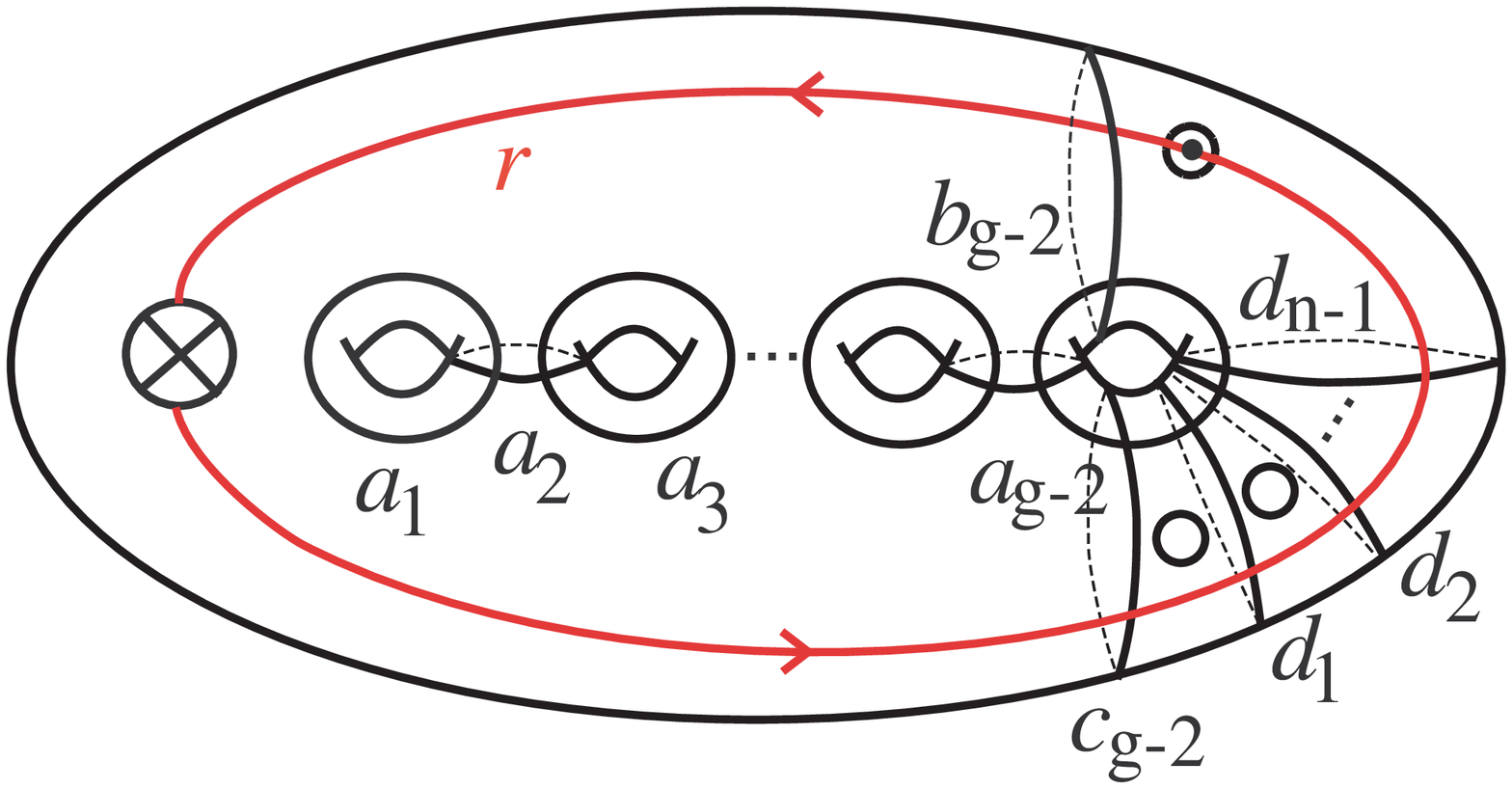}

\hspace{-1cm} (iii)  \hspace{6.2cm} (iv)

\caption{Curve configuration XV} \label{fig36}
\end{center}
\end{figure}
 
In the following lemma we consider the curves given Figure \ref{fig36} (i)-(iii).
Let $\sigma_i$ be the half twist along $m_i$. Let $y$ be the crosscap slide of $u$ along $a_1$. Let $\xi$ be the boundary slide of $z$ along $r$.
The proof of the lemma is similar to the proofs of Theorem 4.13 and Theorem 4.14 given by Korkmaz in \cite{K2}. We change only one loop in his generating set for the fundamental group of $R$ where $R$ is the surface obtained by gluing a disk along one boundary component of $N$ as shown in Figure \ref{fig36} (iv). We use the one sided curve given in Figure \ref{fig36} (iv) instead of the one sided curve given in his proof and consider the loops based at the center of the disk. The proof then follows similar.

\begin{lemma}
\label{prop2} If $g \geq 5$ and $g$ is odd, then $Mod_N$ is generated by $\{t_x: x \in \{a_1, a_2,$ $ \cdots, a_{g-2}, b_1,
b_3, \cdots, b_{g-2}, c_1, c_3, \cdots, c_{g-2}, d_1, d_2, \cdots, d_{n-1}\} \} \cup \{\sigma_1, \sigma_2, \cdots, \sigma_{n-1}, y, \xi\}$.\end{lemma}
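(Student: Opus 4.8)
The plan is to derive the statement from Korkmaz's generating sets for the mapping class groups of nonorientable surfaces of odd genus, given in Theorems 4.13 and 4.14 of \cite{K2}, by setting up a dictionary that translates his generators into ours. First I would recall Korkmaz's setup: he caps off one boundary component of $N$ by a disk to obtain a surface $R$, fixes a base point at the center of this disk, builds a generating set for $\pi_1(R)$ out of loops, and from this generating set produces, via point-pushing together with a finite list of Dehn twists, half twists, one crosscap slide, and one boundary slide, a generating set for $Mod_N$. The only modification needed is to replace the single one-sided loop in his $\pi_1$-generating set by the one-sided curve depicted in Figure \ref{fig36} (iv), based at the center of the capping disk. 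Since this new loop together with the remaining loops still generates $\pi_1(R)$, Korkmaz's argument carries over essentially verbatim and yields a generating set for $Mod_N$ attached to our chosen curve configuration.

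Next I would check, by direct comparison of Figure \ref{fig36} with Korkmaz's figures, that the Dehn twists appearing in his set are precisely the $t_x$ for $x$ among $a_1,\dots,a_{g-2}, b_1,b_3,\dots,b_{g-2}, c_1,c_3,\dots,c_{g-2}, d_1,\dots,d_{n-1}$, that his half twists are the $\sigma_i$, and that his crosscap slide agrees with $y=y_{u,a_1}$. The only place where the two lists genuinely differ is the boundary slide, and here I would argue, exactly as in the even-genus case established above, that Korkmaz's boundary slide and our boundary slide $\xi$ of $z$ along $r$ are conjugate inside $Mod_N$ by an element that is a product of the half twists $\sigma_1,\dots,\sigma_{n-1}$ already on our list. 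Consequently the two sets generate the same subgroup of $Mod_N$, namely all of it. Finally I would dispose of the closed case $n=0$ separately: there one uses Korkmaz's generating set for closed nonorientable surfaces of odd genus, in which only the Dehn twists $t_{a_i}, t_{b_i}, t_{c_i}$ and the crosscap slide $y$ survive, and the same dictionary applies word for word.

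The main obstacle I anticipate is the bookkeeping in the conjugacy claim for the two boundary slides. One must describe explicitly the supporting nonorientable-genus-one-with-two-holes subsurfaces of both boundary slides (the one in our normalization, involving $z$, $r$ and the other boundary component of a regular neighborhood of $z\cup r$, and Korkmaz's analogue), and then exhibit a homeomorphism carrying one configuration to the other which is isotopic to a product of the half twists $\sigma_i$. This is precisely the computation underlying Korkmaz's argument, transported to our choice of curves, and once it is in place the lemma follows immediately; I would present it in detail only for the representative case $n\ge 1$ and indicate that the small-$n$ and closed cases are obtained by the obvious simplifications.
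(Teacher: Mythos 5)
Your proposal matches the paper's own argument: the paper likewise derives the lemma from Korkmaz's Theorems 4.13 and 4.14 by capping off a boundary component, replacing the single one-sided loop in his generating set for $\pi_1(R)$ with the curve of Figure \ref{fig36} (iv), and translating his generators (twists, half twists, crosscap slide, boundary slide) into the ones listed, with the boundary-slide comparison handled just as in the even-genus case. So the proposal is correct and takes essentially the same route as the paper.
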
 

Let $G = \{t_x: x \in \{a_1, a_2,$ $ \cdots, a_{g-3}, b_1, b_3, \cdots, b_{g-3}, c_1, c_3, \cdots, 
c_{g-3}, d_1, d_2, \cdots,$ $ d_{n-1}\} \}$ $ \cup \{\sigma_1, \sigma_2, \cdots, \sigma_{n-1}, y, \xi\}$ where the curves are as shown in Figure \ref{fig36} (i)-(iii).

\begin{figure}[htb]
\begin{center}
\hspace{-0.4cm} \epsfxsize=2.7in \epsfbox{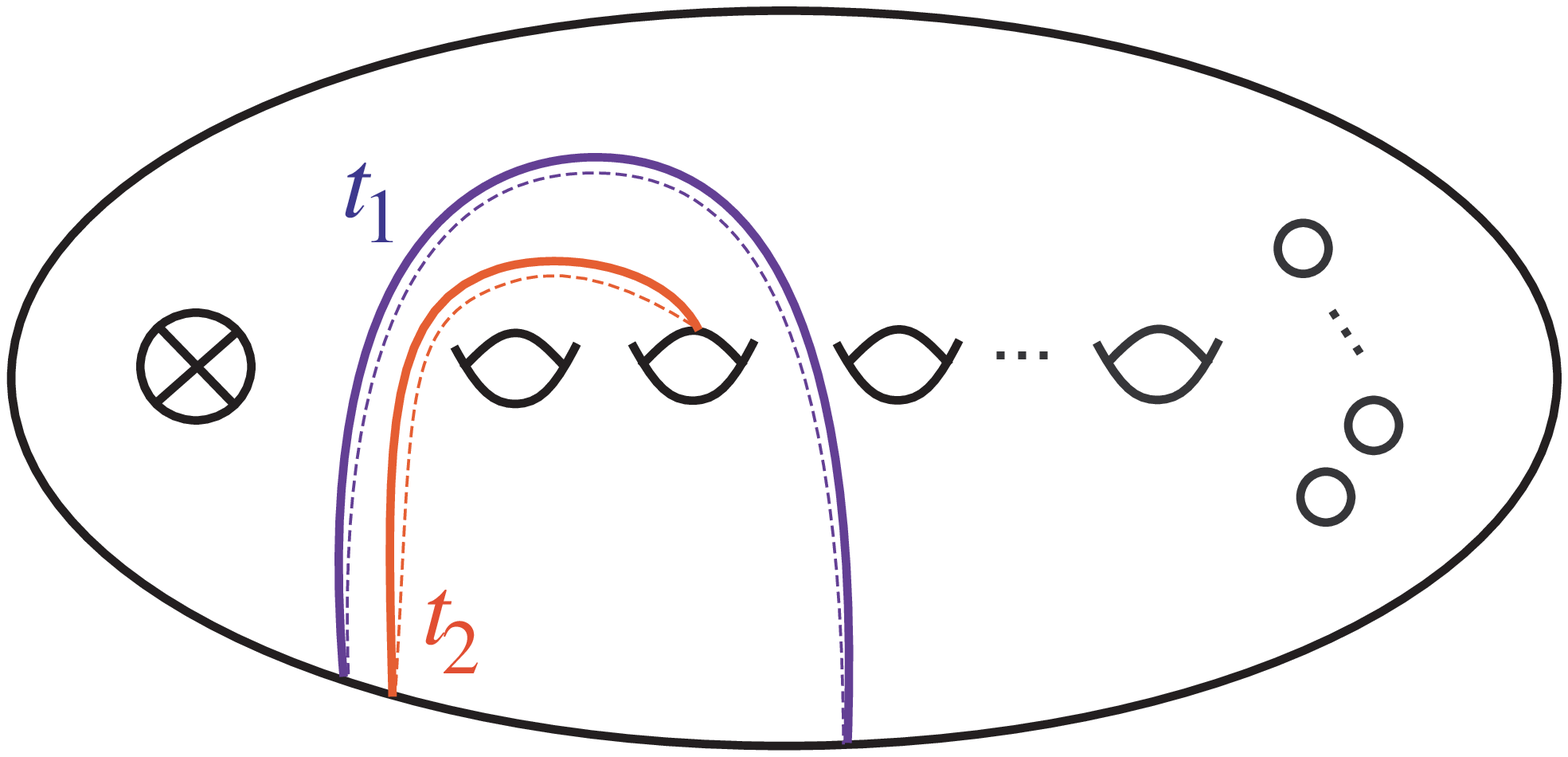} \hspace{0.2cm} \epsfxsize=2.7in \epsfbox{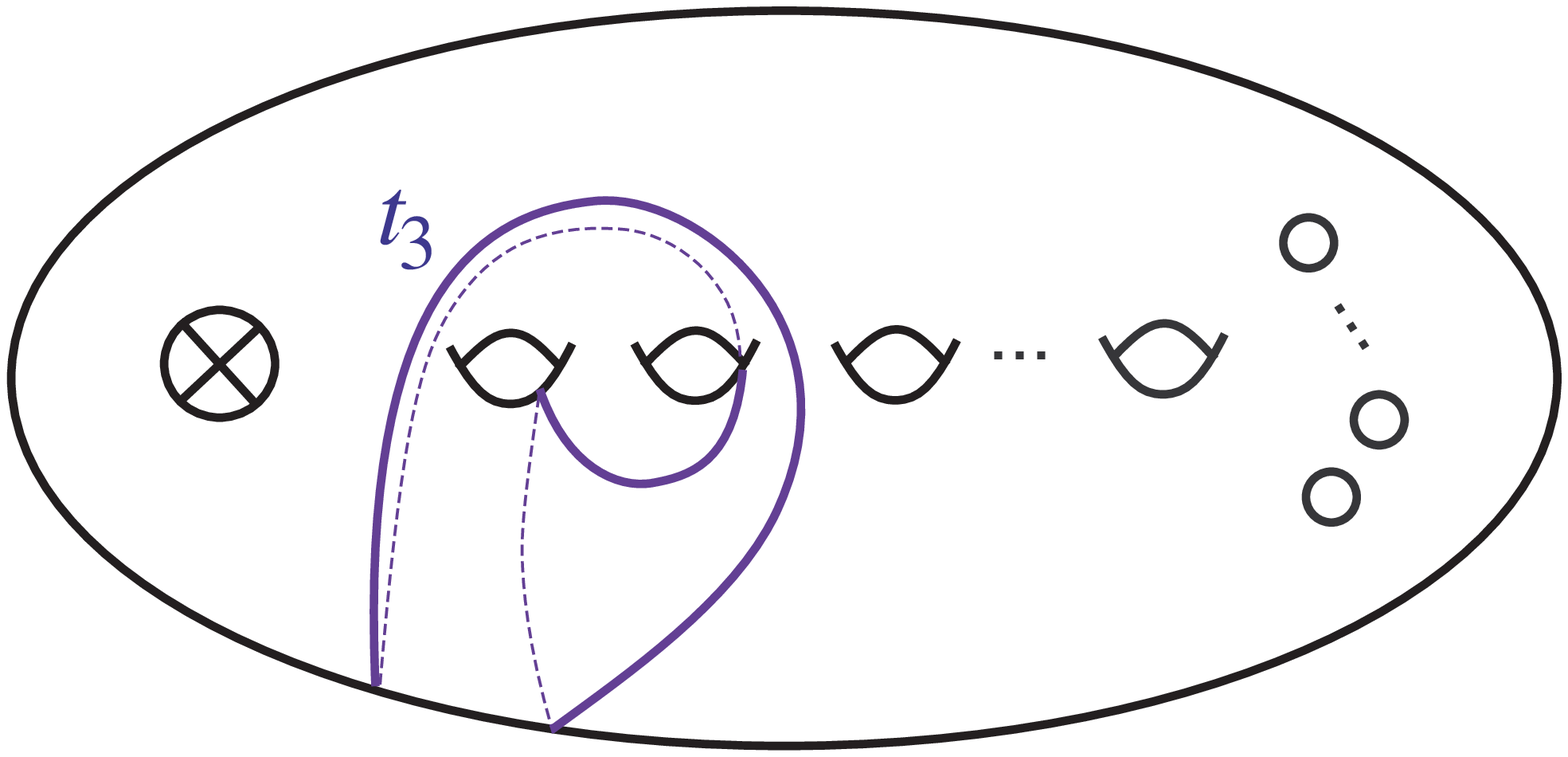} 

\hspace{-1cm} (i) \hspace{6.6cm} (ii)

\hspace{-0.4cm} \epsfxsize=2.7in \epsfbox{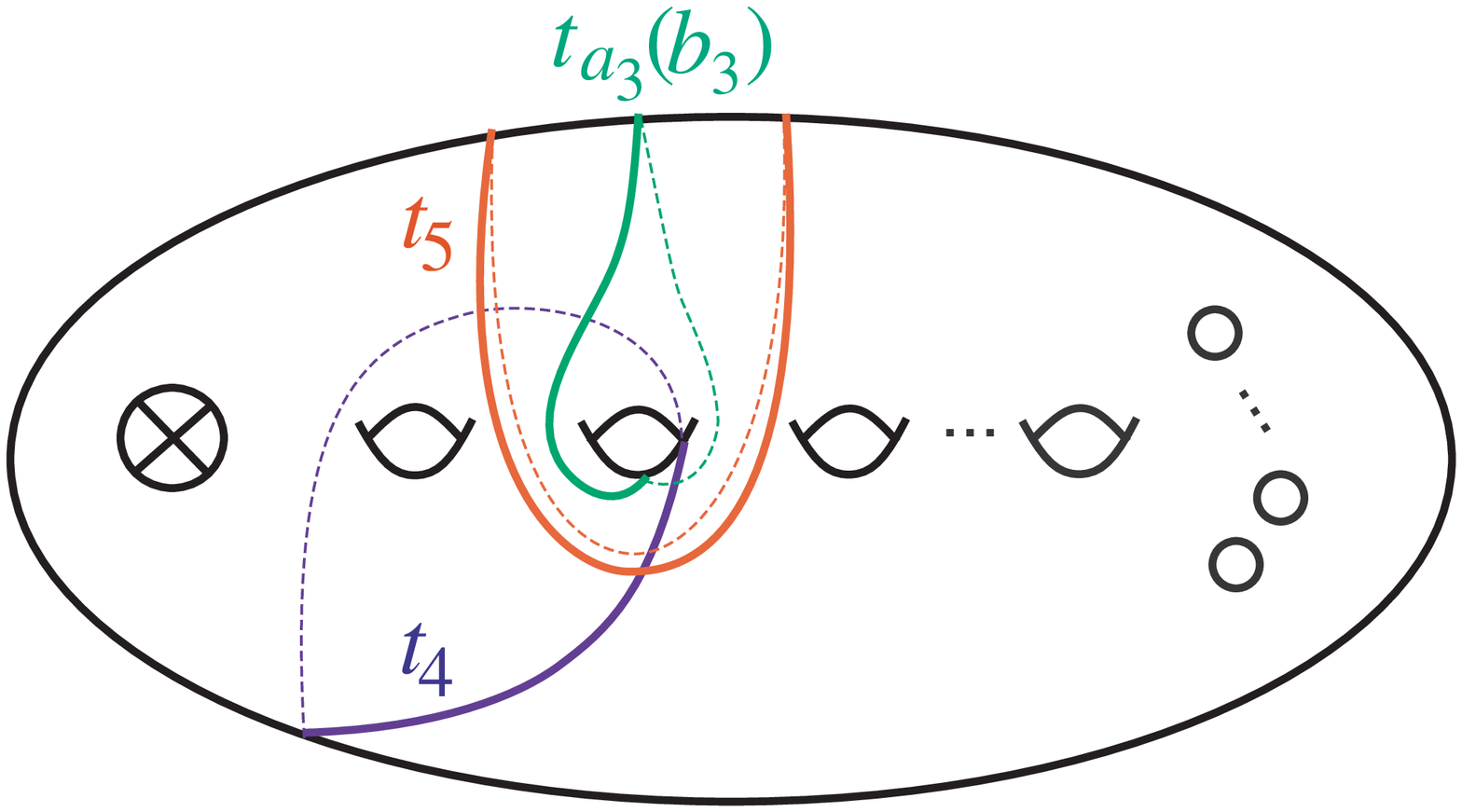} \hspace{0.2cm}  \epsfxsize=2.7in \epsfbox{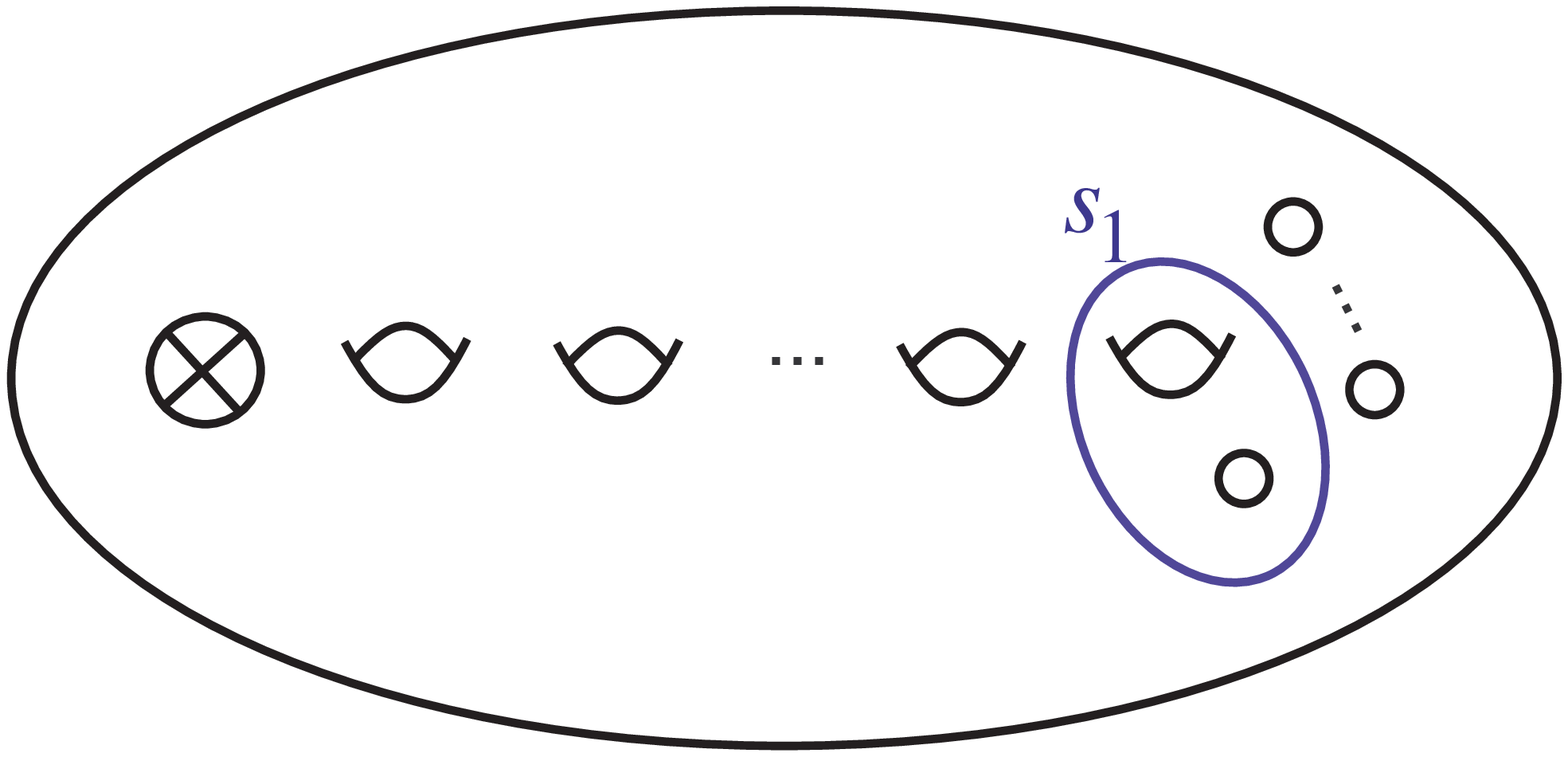}  

\hspace{-1cm} (iii)   \hspace{6.4cm} (iv)

\hspace{-0.4cm} \epsfxsize=2.7in \epsfbox{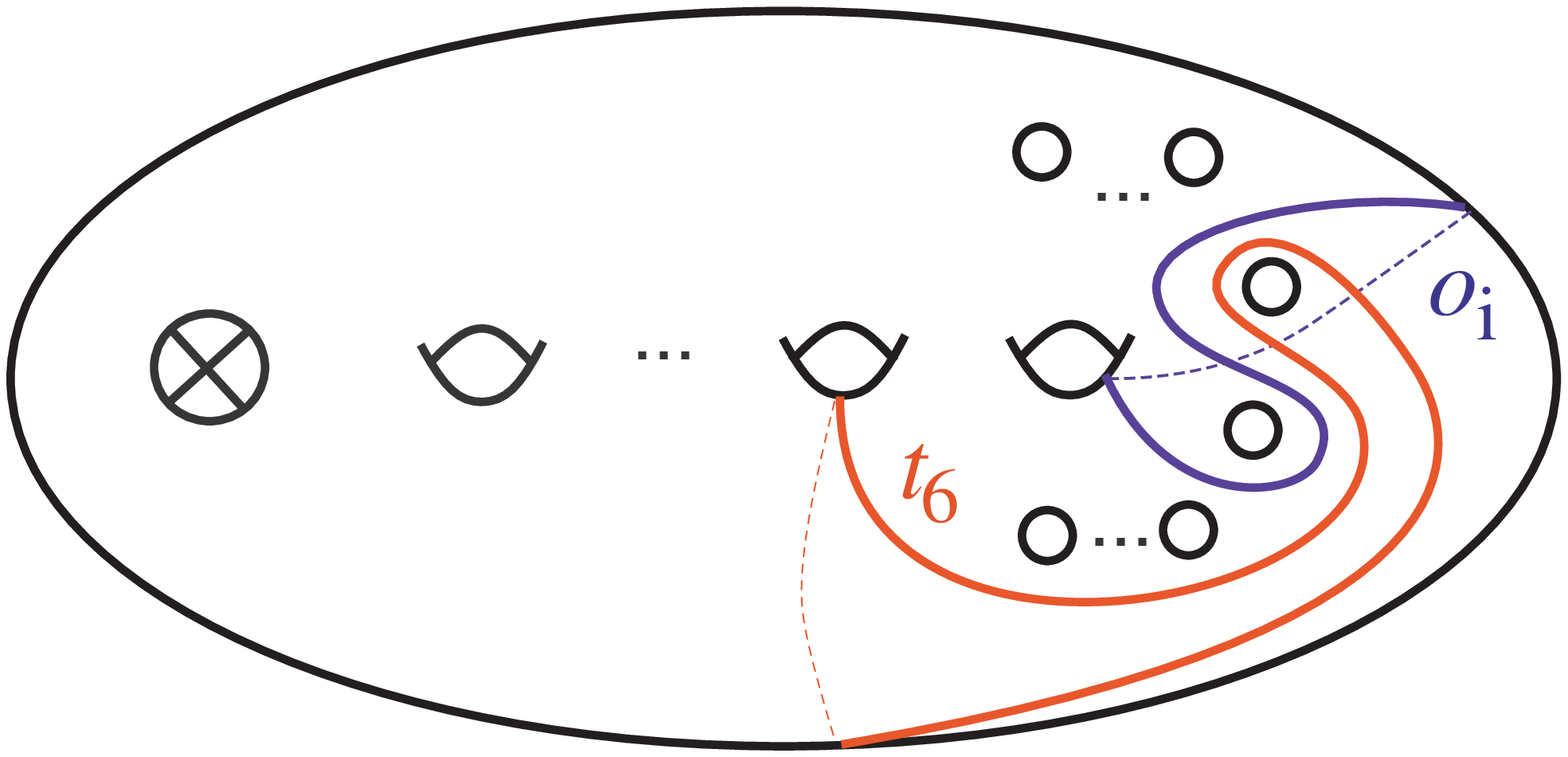}  

\hspace{-1cm} (v)  

\caption{Curve configuration XVI} \label{fig-new2}
\end{center}
\end{figure}

\begin{lemma}\label{prop2} Suppose $g \geq 5$ and $g$ is odd. $\forall \ f \in G$, $\exists$ a set $L_f \subset \mathcal{T}(N)$
such that $L_f$ has trivial stabilizer and $\lambda([x])= h([x])$ $\ \forall \ x \in L_f \cup f(L_f)$.\end{lemma}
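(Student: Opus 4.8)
The plan is to run, nearly line for line, the argument used for the even-genus case in Lemma~\ref{prop1}, replacing its input data by their odd-genus counterparts. The three collections $\CC_1,\CC_2,\CC_3$ introduced before Lemma~\ref{abcd} are known to have trivial stabilizers, so it suffices, for each generator $f\in G$, to attach to it one of them (or an $f$-translate of one) as $L_f$ and to check that $h$ and $\lambda$ agree on $L_f\cup f(L_f)$. The work therefore splits into: (a) showing $h([x])=\lambda([x])$ for every $x\in\CC_1\cup\CC_2\cup\CC_3$; (b) showing the same for the boundary curve of a regular neighbourhood of $x\cup y$ whenever $|x\cap y|=1$, and consequently for every curve of the form $t_x(y)$ with $x,y$ in the configuration; and (c) a case-by-case verification over the generators $t_{a_i},t_{b_i},t_{c_i},t_{d_i},\sigma_i,y,\xi$.

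For step (a), Lemma~\ref{curves-2} together with Lemmas~\ref{B_1-odd} and~\ref{A-odd} already gives $h([x])=\lambda([x])$ for $x$ in the base configuration $\mathcal{C}$ and in $\mathcal{B}_1\cup\mathcal{B}_2$, which covers all of $\CC_1\cup\CC_2\cup\CC_3$ except the auxiliary curves ($p_3$, $s_2$, $k$, and the like) appearing in Figure~\ref{fig-l5}. Each of these is singled out, up to isotopy, by a short list of conditions — prescribed geometric intersection numbers $0$, $1$ or nonzero with curves already controlled, together with the condition of bounding a pair of pants, or a twice-holed projective plane, with such curves — and every one of these conditions is preserved by $\lambda$ by Lemmas~\ref{intone}, \ref{piece1}, \ref{piece2-b}, \ref{piece2-bb} and Corollary~\ref{nonsep}. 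Since $\lambda$ is injective, the image of each auxiliary curve is forced, so $h$ and $\lambda$ agree on all of $\CC_1\cup\CC_2\cup\CC_3$; writing out the explicit characterising lists from the figures is routine. For step (b), when $|x\cap y|=1$ and $z$ bounds a regular neighbourhood of $x\cup y$, the pair $(x,y)$ lies in the one-holed torus cut off by $z$, so Lemmas~\ref{sep1} and~\ref{intone} give $h([z])=\lambda([z])$; then $t_x(y)$ and $t_x^{-1}(y)=t_y(x)$ are, up to isotopy, the only curves meeting both $x$ and $y$ once and disjoint from $z$, and the observation that $i(t_x(y_1),t_x(y_2))=0$ while $i(t_x(y_1),t_x^{-1}(y_2))\neq0$ (for $|x\cap y_i|=1$, $y_1\cap y_2=\emptyset$) lets one propagate the correct choice from a single base case. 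That base case, $h([t_{a_3}(b_3)])=\lambda([t_{a_3}(b_3)])$, is obtained exactly as in the even case, with the help of the auxiliary curves of Figure~\ref{fig-new2}, each again pinned down by a uniqueness characterisation. This yields $h([t_x(y)])=\lambda([t_x(y)])$ for all $x,y$ among the $a_i,b_i,c_i,d_i$.

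For step (c): for $f=t_x$ with $x$ one of the $a_i,b_i,c_i,d_i$, take $L_f=\CC_2$, so that $f$ fixes all but the one or two elements of $\CC_2$ it meets, whose images are supplied by (b) (with one extra application of (b) for the remaining $t_x(l)$-type curve where needed); for $f=\sigma_i$, take $L_f=\CC_2$, the only moved element being $d_i$, and $\sigma_i(d_i)$ is the unique curve up to isotopy disjoint from $d_{i-1},d_{i+1}$ and a suitable auxiliary curve while meeting $d_i$ nontrivially; for $f=y$ the crosscap slide, take $L_f=\CC_1$, the moved elements being $a_1$ (with $y(a_1)$ already handled in (b)) and $p_3$, whose image is pinned down by a characterisation; for $f=\xi$ the boundary slide, take $L_f=\CC_3$ (or $\xi^{-1}(\CC_3)$), the single moved element having its image fixed via a uniqueness argument that uses a curve of $\mathcal{B}_1$, available by Lemmas~\ref{B_1-odd} and~\ref{A-odd}. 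In every instance this gives $h([x])=\lambda([x])$ on $L_f\cup f(L_f)$, as required.

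The main obstacle is the combinatorial bookkeeping of steps (b) and (c): for each auxiliary curve one must verify that the listed $\lambda$-preserved conditions determine it \emph{uniquely} up to isotopy, and that this singles out the geometrically correct image rather than its mirror under the reflection $\phi$ used in Lemma~\ref{curves-2}. Because $\lambda$ has already been normalised so that $h$ and $\lambda$ agree on $\mathcal{C}$, this residual ambiguity is broken from the outset, and the disjointness-propagation trick then removes any remaining sign ambiguity in the twist images; the effort is concentrated in reading the correct conditions off the figures and in treating the small-genus and small-$n$ cases (for instance $g=5$ with $n\le 1$) separately, exactly as in the even-genus proof.
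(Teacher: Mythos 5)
Your overall strategy is the paper's: use the trivial-stabilizer configurations $\CC_1,\CC_2,\CC_3$ of Lemma \ref{abcd}, first extend the agreement $h=\lambda$ to these configurations and to the twist images $t_x(y)$ via the boundary-of-a-neighborhood curve (Lemmas \ref{sep1} and \ref{intone}), the base case $t_{a_3}(b_3)$, and the disjointness-propagation trick, and then verify the generators one by one. Steps (a) and (b) of your plan match the paper's proof essentially line for line, and your choice $L_f=\CC_2$ for the Dehn twists is only cosmetically different from the paper's $L_f=\CC_1$ (the paper's choice is cleaner, since $p_3$ is included in the propagation set and so no extra curve of type $t_x(s_2)$ has to be treated, whereas with $\CC_2$ you would still owe a check that the $t_x(s_2)$ can be handled by the same trick).

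The genuine problem is in step (c) for the crosscap slide and the boundary slide, where you transplanted the even-genus bookkeeping without adjusting for the odd-genus geometry. In the odd case $y$ is the crosscap slide along $a_1$, so it fixes $[a_1]$; the curves of the configuration it actually moves are $a_2$, $b_1$, $c_1$ (and, if you insist on $L_f=\CC_1$, you must also examine $p_3$, which meets $b_1$ and $c_1$). Your claim that the moved elements are ``$a_1$ and $p_3$'', with $y(a_1)$ handled by step (b), silently relies on the even-genus identity $y(a_1)=t_c(l)$, which has no counterpart here; the paper instead takes $L_f=\CC_2$ precisely because then $y(a_2)=k$ and $y(b_1)=c_1$ are already-controlled classes and only $y(c_1)$ needs a fresh uniqueness characterization. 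More seriously, for the single boundary slide $\xi$ (of $z$ along $r$) your assertion that only one element of $\CC_3$ is moved is false: $\xi$ moves every $c_1,c_3,\dots,c_{g-2}$ and every $d_1,\dots,d_{n-1}$, and the paper's treatment of this case is a chain of characterizations, each $\xi(c_i)$ pinned down using curves such as $s_1$, the $m_i\in\mathcal{B}_1$ and the previously identified $\xi(c_{g-4})$, then $\xi(c_{g-2})$ and the $\xi(d_i)$ in turn, ending with $\xi(d_{n-1})=b_{g-2}$. None of this is unrepairable---each moved curve yields to the same uniqueness technique you describe---but as written your verification for $f=y$ and $f=\xi$ does not go through, and this chain of characterizations is the actual content of those cases.
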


\begin{proof} We assume that $g \ge 7$. 
The case $g=5$ can be proven in a similar way.
We consider the collections of curves $\CC_1, \CC_2, \CC_3$ of Lemma 3.7.
We first prove that $h ([x]) = \lambda ([x])$ for all $x \in \CC_1 \cup \CC_2 \cup \CC_3 = \{ a_1, \dots, a_{g-2}, b_1, \dots, b_{g-2}, c_1, \dots, c_{g-2}, d_1, \dots,
\allowbreak
d_{n-1}, p_3,s_2, k\}$.
We already know by Lemma 2.17 that $h ([x]) = \lambda ([x])$ for all $x \in \{ a_1, \dots, a_{g-2}, 
\allowbreak
b_1, \dots, b_{g-2}, c_1, \dots, c_{g-2}, d_1, \dots, d_{n-1},k\}$.
The curve $p_3$ is the unique curve up to isotopy disjoint from $a_1, a_2, a_3, a_5, b_5, c_5$, which intersects $b_1$ and $c_1$ once, which intersects $l$ nontrivially, and which bounds a pair of pants together with $a_1$ and $a_3$.
Since $h([x]) = \lambda([x])$ for all these curves and $\lambda$ preserves these properties, we have $h ([p_3]) = \lambda ([p_3])$.
The curve $s_2$ is the unique curve up to isotopy disjoint from $a_1, a_3, a_5, b_1, b_5, c_1, c_5,$ $p_3$, which intersects $b_3$ nontrivially, and which is not isotopic to $a_3$.
Since $h([x]) = \lambda([x])$ for all these curves and $\lambda$ preserves these properties, we have $h ([s_2]) = \lambda ([s_2])$.
 
Now we show that $h ([t_{a_3} (b_3) ]) = \lambda ([t_{a_3} (b_3) ])$.
We proceed in the same way as in the proof of Lemma 3.9.
We consider the curves $t_1, t_2, t_3, t_4, t_5$ depicted in Figure \ref{fig-new2} (i)-(iii).
The curve $t_1$ is the unique curve up to isotopy disjoint from $a_1, a_2, a_3, a_5, b_5, c_1, c_3, c_5$ which intersects $b_1$ nontrivially.
Since $h([x]) = \lambda([x])$ for all these curves and $\lambda$ preserves these properties, we have $h ([t_1]) = \lambda ([t_1])$.
The curve $t_2$ is the unique curve up to isotopy disjoint from $a_1, a_2, b_3, c_1$ which intersects $b_1$ nontrivially.
Since $h([x]) = \lambda([x])$ for all these curves and $\lambda$ preserves these properties, we have $h ([t_2]) = \lambda ([t_2])$.
The curve $t_3$ is the unique curve up to isotopy disjoint from $a_2, c_1, p_3, t_1, t_2$ which intersects $c_3$ nontrivially. 
Since $h([x]) = \lambda([x])$ for all these curves and $\lambda$ preserves these properties, we have $h ([t_3]) = \lambda ([t_3])$.
The curve $t_4$ is the unique curve up to isotopy disjoint from $a_1, a_2, l, t_1,t_3$ which intersects $c_1$ nontrivially. 
Since $h([x]) = \lambda([x])$ for all these curves and $\lambda$ preserves these properties, we have $h ([t_4]) = \lambda ([t_4])$.
The curve $t_5$ is the boundary of a regular neighborhood of $a_3 \cup b_3$.
Since $h ([a_3]) = \lambda ([a_3])$ and $h ([b_3]) = \lambda ([b_3])$, by Lemma 2.6 and Lemma 2.7 we have $h ([t_5]) = \lambda ([t_5])$.
Finally, $t_{a_3} (b_3)$ is the unique curve up to isotopy disjoint from $t_4,t_5$ which intersects $b_3$ nontrivially. 
Since $h([x]) = \lambda([x])$ for all these curves and $\lambda$ preserves these properties, we have $h ([t_{a_3} (b_3) ]) = \lambda ([t_{a_3} (b_3) ])$.

We show using the same arguments as in the proof of Lemma 3.9 that, for all $x,y \in \{ a_1, \dots, 
\allowbreak
a_{g-2}, b_1, \dots, b_{g-2}, c_1, \dots, c_{g-2}, d_1, \dots, d_{n-1}, p_3\}$, we have $h ([ t_x (y)]) = \lambda ([t_x (y)])$.
For $f=t_x$, where $x \in  \{ a_1, \dots, a_{g-2}, b_1, \dots, b_{g-2}, c_1, \dots, c_{g-2}, d_1, \dots, d_{n-1}\}$, we set $L_f = \CC_1$. 
Then, by the above, $h ([y]) = \lambda ([y])$ for all $y \in L_f \cup f (L_f)$.
 
For $f = \sigma_i$, where $i\in \{1, \dots, n-1\}$, we set $L_f = \CC_1$.
We know that $ h([x]) = \lambda ([x])$ for all $x \in L_f$ and we have $f(x)=x$ for all $x \in \CC_1 \setminus \{d_i\}$.
It remains to prove that $h ([\sigma_i (d_i)]) = \lambda ([\sigma_i (d_i)])$.
We show using the same arguments as in the proof of Lemma 3.9 that $h ([s_1]) = \lambda ([s_1])$ and $h([t_6]) = \lambda ([t_6])$, where $s_1, t_6$ are the curves drawn in Figure \ref{fig-new2} (iv), (v). The curve $\sigma_i (d_i)$ is shown as $o_i$ in the figure. Again using the same arguments as in the proof of Lemma 3.9 we deduce that $h ([\sigma_i (d_i)]) = \lambda ([\sigma_i (d_i)])$.
  
For $f=y$ we set $L_f = \CC_2$.
We know that $h ([x]) = \lambda ([x])$ for all $x \in \CC_2$ and we have $y (x) = x$ for all $x \in \CC_2 \setminus \{ a_2,b_1,c_1\}$.
It remains to prove that $h ([y (x)]) = \lambda ([y (x)])$ for $x \in \{a_2,b_1,c_1\}$.
We have $y(a_2) = k$, hence $h ([ y(a_2)]) = \lambda ([y (a_2)])$.
We have $y(b_1) = c_1$, hence $h ([y (b_1)]) = \lambda ([y (b_1)])$.
The curve $y(c_1)$ is the unique curve up to isotopy disjoint from $a_3, b_3, c_1, c_3, k$ which intersects $a_2$ nontrivially. 
Since $h([x]) = \lambda([x])$ for all these curves and $\lambda$ preserves these properties, we have $h ([y (c_1)]) = \lambda ([y (c_1)])$.

For $f = \xi$ we set $L_f = \CC_3$.
We know that $h ([x]) = \lambda ([x])$ for all $x \in \CC_3$ and we have $f(x) = x$ for all $x \in \CC_3 \setminus \{c_1, \dots, c_{g-2}, d_1, \dots, d_{n-1}\}$.
Let $i \in \{1,3, \dots, g-4\}$.
Then $\xi (c_i)$ is the unique curve up to isotopy disjoint from $a_{i+1}, \dots, a_{g-3}, a_{g-2}, b_{g-2}, c_i, s_1, m_1, \dots,$ $m_{n-2}$ which intersects $c_{g-2}$ nontrivially. 
Since $h([x]) = \lambda([x])$ for all these curves and $\lambda$ preserves these properties, we have $h ([\xi (c_i)]) = \lambda ([\xi (c_i)])$.
If $n=1$, then $\xi (c_{g-2}) = b_{g-2}$, hence $h ([\xi (c_{g-2})]) = \lambda ([\xi (c_{g-2})])$.
Suppose that $n \ge 2$.
Then $\xi (c_{g-2})$ is the unique curve up to isotopy disjoint from $b_{g-2}, c_{g-2}, m_1, \dots, m_{n-2}, \xi (c_{g-4})$ which intersects $d_{n-1}$ nontrivially. 
Since $h([x]) = \lambda([x])$ for all these curves and $\lambda$ preserves these properties, we have $h ([\xi (c_{g-2})]) = \lambda ([\xi (c_{g-2})])$.
Let $i \in \{1, \dots, n-2\}$.
The curve $\xi (d_i)$ is the unique curve up to isotopy disjoint from $b_{g-2}, d_i, m_{i+1}, \dots, m_{n-2}, \xi (c_{g-4})$ which intersects $d_{n-1}$ nontrivially.
Since $h([x]) = \lambda([x])$ for all these curves and $\lambda$ preserves these properties, we have $h ([\xi (d_i)]) = \lambda ([\xi (d_i)])$.
We have $\xi (d_{n-1}) = b_{g-2}$, hence $h ([\xi (d_{n-1})]) = \lambda ([\xi (d_{n-1})])$.\end{proof}

\begin{theorem} \label{A} If $g \geq 5$ and $g$ is odd, then $h([x]) = \lambda([x])$ for every vertex $[x] \in \mathcal{T}(N)$.\end{theorem}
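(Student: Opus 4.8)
The plan is to repeat, with the odd-genus configurations, the argument that proved the even-genus theorem immediately preceding.  We keep the homeomorphism $h$ fixed at the beginning of Section~3 (here coming from Lemma \ref{curves-2}), so that $h([x]) = \lambda([x])$ for every $x \in \mathcal{C}$, and we set $\mathcal{X} = \mathcal{C} \cup \mathcal{B}_1 \cup \mathcal{B}_2 \cup \big( \bigcup_{f \in G} (L_f \cup f(L_f)) \big)$, where $G$ is the generating set of $Mod_N$ given by Lemma \ref{prop2} and the $L_f$ are the trivial‑stabilizer configurations supplied by the other Lemma \ref{prop2}.  Note that, unlike the even case, no auxiliary family $\mathcal{B}_0$ is needed: when $g$ is odd, a separating $2$-sided curve both of whose complementary pieces are nonorientable necessarily has one even‑genus side, so every such curve is already present, up to homeomorphism, in $\mathcal{B}_2$.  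The first step is to verify that $\mathcal{X}$ meets every $Mod_N$-orbit of vertices of $\mathcal{T}(N)$: by construction $\mathcal{C} \cup \mathcal{B}_1 \cup \mathcal{B}_2$ contains a representative of every topological type of nontrivial $2$-sided simple closed curve on $N$ (nonseparating with orientable or with nonorientable complement, separating with an orientable side, and separating with both sides nonorientable), and $Mod_N$ acts transitively on the curves of a fixed topological type by the change‑of‑coordinates principle for nonorientable surfaces.  By Lemma \ref{A-odd} and by Lemma \ref{prop2} (the one producing the sets $L_f$) we then have $h([x]) = \lambda([x])$ for every $x \in \mathcal{X}$.

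The second step is the exhaustion, set up exactly as in the even case: put $\mathcal{X}_1 = \mathcal{X}$ and, for $n \ge 2$, $\mathcal{X}_n = \mathcal{X}_{n-1} \cup \big( \bigcup_{f \in G} ( f(\mathcal{X}_{n-1}) \cup f^{-1}(\mathcal{X}_{n-1}) ) \big)$.  Since $G$ generates $Mod_N$ (Lemma \ref{prop2}) and $\mathcal{X}_1$ meets every $Mod_N$-orbit, we get $\mathcal{T}(N) = \bigcup_{n \ge 1} \mathcal{X}_n$.  One then proves $h([x]) = \lambda([x])$ for all $x \in \mathcal{X}_n$ by induction on $n$, the case $n = 1$ being the first step.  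For the inductive step, assume the equality on $\mathcal{X}_{n-1}$ and fix $f \in G$.  The key observation is that every lemma of Sections~2 and~3 was established for an arbitrary superinjective simplicial map, so, applied to the superinjective map $\lambda \circ f_*$ (where $f_*$ is the simplicial automorphism of $\mathcal{T}(N)$ induced by $f$) and conjugated back, together with the gluing of the resulting mapping classes along their trivial‑stabilizer overlaps, they produce $h_f \in Mod_N$ with $h_f([x]) = \lambda([x])$ for all $x \in f(\mathcal{X}_{n-1})$, exactly as in the even‑genus proof.  Now $f(L_f) \subset \mathcal{X}_{n-1} \cap f(\mathcal{X}_{n-1})$ — indeed $L_f \cup f(L_f) \subset \mathcal{X}_1 \subset \mathcal{X}_{n-1}$ — and $f(L_f)$ has trivial stabilizer, being a homeomorphic image of the trivial‑stabilizer set $L_f$ of Lemma \ref{abcd}.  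Since $h$ and $h_f$ both agree with $\lambda$ on $f(L_f)$, we conclude $h_f = h$, hence $h([x]) = \lambda([x])$ for all $x \in f(\mathcal{X}_{n-1})$; the same argument with $f^{-1}$ in place of $f$ (using $L_f \subset \mathcal{X}_{n-1} \cap f^{-1}(\mathcal{X}_{n-1})$) completes the inductive step, and letting $n \to \infty$ yields the theorem.

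The genuine work has already been done: the rigidity analysis is entirely contained in Sections~2 and~3 (the curve‑configuration lemmas that pin $\lambda$ down on growing finite sets, together with the trivial‑stabilizer computations of Lemma \ref{abcd} and the generating set of Lemma \ref{prop2}), and the present theorem is the formal wrap‑up of that machinery.  The only two points requiring care are (i) checking, case by case against the figures and including both the closed case and the small‑boundary cases, that $\mathcal{C} \cup \mathcal{B}_1 \cup \mathcal{B}_2$ really does contain a curve of every topological type on $N$, so that $\mathcal{X}$ meets every $Mod_N$-orbit; and (ii) the bookkeeping of the exhaustion, that is, justifying the assertion that there is a mapping class $h_f$ realizing $\lambda$ on the translate $f(\mathcal{X}_{n-1})$, which amounts to noting that the earlier lemmas hold for every superinjective map and then gluing the resulting classes along the trivial‑stabilizer overlaps $r(L_f)$.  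Neither point introduces anything essentially new beyond the even‑genus argument, so the main obstacle is simply to transcribe that argument correctly to the odd‑genus figures and configurations.
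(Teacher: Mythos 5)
Your proposal is correct and matches the paper's intended argument: the paper proves the odd-genus case by running the even-genus exhaustion verbatim with the odd-genus data (Lemma \ref{A-odd}, Lemma \ref{abcd}, the odd-genus generating set and the sets $L_f$), which is exactly what you do, including the observation that no analogue of $\mathcal{B}_0$ is needed since every separating curve with two nonorientable sides already has an even-genus side. Your added justifications (orbit coverage of $\mathcal{C}\cup\mathcal{B}_1\cup\mathcal{B}_2$ and the construction of $h_f$ by applying the machinery to $\lambda\circ f_*$ and conjugating back) only make explicit what the paper leaves implicit.
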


\begin{proof} The proof is similar to the even genus case.\end{proof}\\

{\bf Acknowledgements}\\

This work started during the authors' visit to ``Automorphisms of Free Groups" program in CRM, Barcelona. We thank the organizers, and
also CRM for the hospitality. The first author also thanks Peter Scott for helpful discussions about this project.

\vspace{0.3cm}

{\bf Elmas Irmak}

Bowling Green State University, Department of Mathematics and Statistics, Bowling

Green, 43403, OH, USA. 

e-mail: eirmak@bgsu.edu\\

{\bf Luis Paris}

IMB, UMR 5584, CNRS, Univ. Bourgogne Franche-Comt\'e, 21000 Dijon, France.

e-mail: lparis@u-bourgogne.fr
\end{document}